\newcommand*{\da@rightarrow}{\mathchar"0\hexnumber@\symAMSa 4B }
\newcommand*{\da@leftarrow}{\mathchar"0\hexnumber@\symAMSa 4C }
\newcommand*{\xdashrightarrow}[2][]{%
\mathrel{%
\mathpalette{\da@xarrow{#1}{#2}{}\da@rightarrow{\,}{}}{}%
}%
}
\newcommand{\xdashleftarrow}[2][]{%
\mathrel{%
\mathpalette{\da@xarrow{#1}{#2}\da@leftarrow{}{}{\,}}{}%
}%
}
\newcommand*{\da@xarrow}[7]{%
\sbox0{$\ifx#7\scriptstyle\scriptscriptstyle\else\scriptstyle\fi#5#1#6\m@th$}%
\sbox2{$\ifx#7\scriptstyle\scriptscriptstyle\else\scriptstyle\fi#5#2#6\m@th$}%
\sbox4{$#7\dabar@\m@th$}%
\dimen@=\wd0 %
\ifdim\wd2 >\dimen@
\dimen@=\wd2 %
\fi
\count@=2 %
\def\da@bars{\dabar@\dabar@}%
\@whiledim\count@\wd4<\dimen@\do{%
\advance\count@\@ne
\expandafter\def\expandafter\da@bars\expandafter{%
\da@bars
\dabar@ 
}%
}%
\mathrel{#3}%
\mathrel{%
\mathop{\da@bars}\limits
\ifx\\#1\\%
\else
_{\copy0}%
\fi
\ifx\\#2\\%
\else
^{\copy2}%
\fi
}%
\mathrel{#4}%
}
\newcommand{\ZZ}{\mathbb{Z}}
\newcommand{\PP}{\mathbb{P}}
\newcommand{\cA}{{\mathscr{A}}}
\newcommand{\cO}{{\mathscr{O}}}
\newcommand{\xref}[1]{\textup{\ref{#1}}}
\newcommand{\g}{{\mathrm{g}}}
\newcommand{\kk}{{\mathsf{k}}}
\newcommand{\bkk}{{\bar{\mathsf{k}}}}
\newcommand{\Gal}{{\mathbf{Gal}(\bkk/\kk)}}
\newcommand{\rG}{{\mathrm{G}}}
\newcommand{\rS}{{\mathrm{S}}}
\newcommand{\rSv}{{\mathrm{S}^\vee}}
\DeclareMathOperator{\Br}{\mathrm{Br}}
\DeclareMathOperator{\Ext}{\mathrm{Ext}}
\DeclareMathOperator{\Hom}{\mathrm{Hom}}
\DeclareMathOperator{\Sing}{\mathrm{Sing}}
\DeclareMathOperator{\rank}{\mathrm{rank}}
\DeclareMathOperator{\Pic}{\mathrm{Pic}}
\newcommand{\Aut}{\operatorname{Aut}}
\newcommand{\bF}{{\mathbf{F}}}
\newcommand{\bFv}{{\mathbf{F}^\vee}}
\newcommand{\bE}{{\mathbf{E}}}
\newcommand{\hbD}{\widehat{\mathbf{D}}}
\newcommand{\hbE}{\widehat{\mathbf{E}}}
\newcommand{\tcE}{\widetilde{\cE}}
\newcommand{\bH}{{\mathbf{H}}}
\newcommand{\bQ}{{\mathbf{Q}}}
\newcommand{\bX}{{\mathbf{X}}}
\newcommand{\bZ}{{\mathbf{Z}}}
\newcommand{\bK}{{\mathbf{K}}}
\newcommand{\hbX}{\widehat{\mathbf{X}}}
\newcommand{\hbZ}{\widehat{\mathbf{Z}}}
\DeclareMathOperator{\Gr}{\mathrm{Gr}}
\DeclareMathOperator{\CGr}{\mathrm{CGr}}
\newcommand{\tX}{{\tilde{X}}}
\DeclareMathOperator{\Bl}{\mathrm{Bl}}
\DeclareMathOperator{\OGr}{\mathrm{OGr}}
\newcommand{\cQ}{{\mathscr{Q}}}
\DeclareMathOperator{\OFl}{\mathrm{OFl}}
\DeclareMathOperator{\Fl}{\mathrm{Fl}}
\DeclareMathOperator{\Ker}{\mathrm{Ker}}
\DeclareMathOperator{\Ima}{\mathrm{Im}}
\newcommand{\cN}{{\mathscr{N}}}
\newcommand{\cU}{{\mathscr{U}}}
\newcommand{\rc}{{\mathrm{c}}}
\newcommand{\cE}{{\mathscr{E}}}
\newcommand{\barW}{{\overline{W}}}
\newcommand{\barX}{{\bar{X}}}
\newcommand{\barbH}{{\overline{\bH}}}
\newcommand{\barbX}{{\overline{\bX}}}
\DeclareMathOperator{\codim}{\mathrm{codim}}
\DeclareMathOperator{\LGr}{\mathrm{LGr}}
\DeclareMathOperator{\ITGr}{\mathrm{I_3Gr}}
\newcommand{\cL}{\mathscr{L}} 
\newcommand{\rF}{\mathrm{F}}
\DeclareMathOperator{\GTGr}{G_2Gr}
\DeclareMathOperator{\GL}{GL}
\DeclareMathOperator{\Sp}{Sp}
\newcommand{\fD}{{\mathfrak{D}}}
\newcommand{\Db}{\mathrm{D}^{\mathrm{b}}}
\theoremstyle{plain}
\newtheorem{theorem}{Theorem}[section]
\newtheorem{lemma}[theorem]{Lemma}
\newtheorem{proposition}[theorem]{Proposition}
\newtheorem{corollary}[theorem]{Corollary}
\theoremstyle{definition}
\newtheorem*{definition*}{Definition}
\newtheorem{example-remark}{Remark-Example}
\newtheorem*{notation*}{Notation}
\newtheorem{remark}[theorem]{Remark}
\title{Rationality of Mukai varieties over non-closed fields}
\author{Alexander Kuznetsov}
\thanks{The authors were partially supported by the HSE University Basic Research Program, Russian Academic Excellence Project ``5-100''.}
\address{\parbox{0.9\textwidth}{Steklov Mathematical Institute of Russian Academy of Sciences, Moscow, Russia 
\\[1pt]
Laboratory of Algebraic Geometry, NRU HSE, Moscow, Russia\\}}
\email{akuznet@mi-ras.ru}
\author{Yuri Prokhorov}
\address{\parbox{0.9\textwidth}{Steklov Mathematical Institute of Russian Academy of Sciences, Moscow, Russia 
\\[1pt]
Laboratory of Algebraic Geometry, NRU HSE, Moscow, Russia
\\[1pt]
Department of Algebra, Moscow State University, Moscow, Russia
\\}}
\email{prokhoro@mi-ras.ru}
\date{}
\begin{document}

\begin{abstract}
We discuss birational properties of Mukai varieties, i.e., 
of higher-dimensional analogues of prime Fano threefolds of genus~$g \in \{7,8,9,10\}$ 
over an arbitrary field~$\kk$ of zero characteristic.
In the case of dimension $n \ge 4$ we prove that these varieties are $\kk$-rational if and only if they have a $\kk$-point
except for the case of genus~$9$, where we assume $n \ge 5$.
Furthermore, we prove that Mukai varieties of genus $g \in \{7,8,9,10\}$ and dimension $n \ge 5$ 
contain cylinders if they have a $\kk$-point.
Finally, we prove that the embedding $X \hookrightarrow \Gr(3,7)$ for prime Fano threefolds of genus~$12$ 
is defined canonically over any field and use this to give a new proof of the criterion of rationality.
\end{abstract}

\maketitle
\tableofcontents

\section{Introduction}

A {\sf Mukai variety} 
is a (smooth) Fano variety of geometric Picard number~1 and coindex~3
which is not a form of a complete intersection in a weighted projective space.
Over an algebraically closed field of characteristic zero such varieties have been classified by Mukai in~\cite{Mukai92}.

The main invariant of a Mukai variety~$X$ is the {\sf genus}~$\g(X)$ defined by the formula 
\begin{equation*}
2\g(X) - 2 = H^n,
\end{equation*}
where $H$ is the ample generator of~$\Pic(X)$ and~$n = \dim(X)$.
The following table lists {\sf maximal Mukai varieties}, their genera and dimensions:
\begin{equation*}
\begin{array}{|c|r|c|}
\hline 
g & \multicolumn{1}{c|}{\bX_{2g-2}} & \dim(\bX_{2g-2}) \\
\hline 
6 & \CGr(2,5) \cap Q \subset \PP^{10} & 6 \\
\hline 
7 & \OGr_+(5,10) \subset \PP^{15} & 10 \\
\hline 
8 & \Gr(2,6) \subset \PP^{14} & 8 \\
\hline 
9 & \LGr(3,6) \subset \PP^{13} & 6 \\
\hline 
10 & \GTGr(2,7) \subset \PP^{13} & 5 \\
\hline 
12 & \ITGr(3,7) \subset \PP^{13} & 3 \\
\hline
\end{array}
\leqno{(\mathrm{M})}
\end{equation*}
Here 
\begin{itemize}
\item 
$\CGr(2,5) \cap Q$ is a transverse intersection of the cone over $\Gr(2,5)$ with a quadric,
\item 
$\OGr_+(5,10)$ is a connected component of the Grassmannian of isotropic $5$-dimensional subspaces 
in a 10-dimensional vector space endowed with a non-degenerate quadratic~form,
\item 
$\Gr(2,6)$ is the Grassmannian of $2$-dimensional subspaces in a $6$-dimensional vector space,
\item 
$\LGr(3,6)$ is the Grassmannian of Lagrangian ($3$-dimensional) subspaces in a $6$-dimensional symplectic vector space,
\item 
$\GTGr(2,7)$ is the adjoint Grassmannian of the simple algebraic group~$\rG_2$, and 
\item 
$\ITGr(3,7)$ is the Grassmannian of $3$-dimensional subspaces in a $7$-dimensional vector space
isotropic for a (sufficiently general) triple of skew-symmetric forms.
\end{itemize}
Note that for $g \in \{7,8,9,10\}$ the maximal varieties are homogeneous (and in particular rigid),
while for $g \in \{6,12\}$ they vary in non-trivial moduli spaces (of dimensions~$25$ and~$6$, respectively).

\begin{theorem}[{\cite{Mukai92}}]
\label{theorem-mukai}
If $X$ is a Mukai variety of genus~$g$ over an algebraically closed field of characteristic zero
then $g \in \{6,7,8,9,10,12\}$ and there is an embedding $X \hookrightarrow \bX_{2g-2}$ 
into one of the maximal varieties from the list~$(\text{\rm{M}})$ such that $X$ is a transverse linear section of $\bX_{2g-2}$.
\end{theorem}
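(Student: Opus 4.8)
The plan is to prove the statement by induction on $n=\dim X$, with the base of the induction being the case of surface (K3) and curve sections, where moduli of sheaves on K3 surfaces and Brill--Noether theory apply, and with the inductive step being the extension of a distinguished ``Mukai bundle'' across a hyperplane section. First I would record the elementary reductions: by Bertini and the Lefschetz-type theorem for Picard groups, a general linear section of $X$ of any dimension between $3$ and $n$ is again a smooth Fano variety of coindex~$3$ with $\NS=\ZZ H$ and the same genus~$\g$; a general $2$-dimensional section $S$ is a K3 surface with $\Pic(S)=\ZZ H_S$ and $H_S^2=2\g-2$; and a general $1$-dimensional section is a canonical curve of genus~$\g$ (by adjunction $K_X=-(n-2)H$ gives $K_C=H|_C$). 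Kodaira vanishing and the associated long exact sequences show that all these sections are projectively normal, indeed arithmetically Gorenstein, and that the cohomology of $H$ and of natural bundles is unchanged under passing to a section; hence it suffices to build the required embedding for the K3 section and propagate it upward, the ``downward'' direction being automatic.

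On the K3 surface $S$ (equivalently, on the canonical curve) one produces the distinguished rigid vector bundle. For each admissible genus there is a Mukai vector $\bv_\g$ with $\bv_\g^2=-2$ such that the moduli space of $H_S$-stable sheaves with Mukai vector $\bv_\g$ consists of a single reduced point $\{\cE_S\}$; the bundle $\cE_S$ is globally generated, satisfies $H^i(S,\cE_S)=0$ for $i>0$, and together with the natural extra structure carried by its space of sections --- a nondegenerate quadratic form on $H^0(S,\cE_S)$ when $\g=7$, a symplectic form when $\g=9$, a generic alternating $3$-form when $\g=10$, and analogous data (including the auxiliary quadric, resp. net of alternating forms) when $\g\in\{6,12\}$ --- the evaluation map $H^0(S,\cE_S)\otimes\cO_S\to\cE_S$ exhibits $S$ as a transverse linear section of the corresponding maximal variety $\bX_{2\g-2}$ of the list~$(\mathrm M)$. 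This is exactly where the constraint $\g\in\{6,\dots,10,12\}$ enters: for $\g=11$ the analogous rigid sheaf has square~$0$ rather than~$-2$, so it moves in a two-dimensional family (the Fourier--Mukai partner picture) and no coindex-$3$ Fano of genus~$11$ with $\NS=\ZZ H$ arises; for $\g\ge 13$ the required bundle and target simply do not exist. Making this rigorous --- classifying which rigid bundles on a genus-$\g$ K3 are ``Fano-extendable'', equivalently which general canonical curves lie on a homogeneous model --- is the genuinely hard input, and rests on Mukai's analysis of moduli of sheaves on K3 surfaces (and, in low genus, on Iskovskikh's double-projection classification).

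Finally I would extend $\cE_S$ from the K3 section to $X$ and conclude. Presenting $X$ as an iterated hyperplane section of itself reaching down to $S$, one extends the bundle one step at a time: the obstruction to extending $\cE$ across a single section lies in $H^2$ of $\mathcal{E}\!\mathit{nd}(\cE_S)$ twisted by a negative line bundle, which vanishes because $K_S=0$ makes this sheaf Serre-dual to one with no nonzero global sections (by stability), and the ambiguity of the extension is governed by an $H^1$ controlled the same way; cohomology and base change then give a bundle $\cE_X$ on $X$ with $H^{>0}(X,\cE_X)=0$ and $H^0(X,\cE_X)\xrightarrow{\ \sim\ }H^0(S,\cE_S)$, still globally generated. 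Its evaluation map defines a morphism $\varphi\colon X\to\bX_{2\g-2}$ compatible with the extra structures above, whose restriction to a general K3 section is the Mukai embedding of that section; since such sections cover $X$ and $\varphi$ is injective with injective differential along each of them, $\varphi$ is a closed embedding, and comparing Hilbert polynomials --- or just noting $\deg\varphi(X)=H^n=2\g-2=\deg\bX_{2\g-2}$ and that $\varphi(X)$ spans the expected projective space --- shows $\varphi(X)$ is a transverse linear section of $\bX_{2\g-2}$ of codimension $\dim\bX_{2\g-2}-n$. The main obstacle is thus the two-sided analysis of the Mukai bundle on the K3 (and curve) sections: its existence, uniqueness and global generation via Brill--Noether theory, the vanishing theorems that make it extend and stay stable, and --- inseparably --- the nonexistence statements that pin the genus down to the six listed values.
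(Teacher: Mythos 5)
First, a point of order: the paper does not prove this statement at all --- Theorem~\xref{theorem-mukai} is quoted verbatim from \cite{Mukai92} and used as a black box, so there is no internal proof to compare against. Your outline does reproduce the architecture of Mukai's actual argument (reduce to a K3/canonical-curve section, produce a rigid stable bundle there, extend it to $X$, and use its sections to embed $X$ into a homogeneous model), so as a roadmap it points at the right literature. But as a proof it has a concretely false step and leaves the two genuinely hard inputs unproved.

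The false step is the extension argument. You claim the obstruction to extending $\cE_S$ across a hyperplane section lies in $H^2$ of $\cE_S^\vee\otimes\cE_S$ twisted by a negative power of $H$, and that this ``vanishes because $K_S=0$ makes this sheaf Serre-dual to one with no nonzero global sections (by stability).'' On the K3 surface $S$ Serre duality gives
\begin{equation*}
H^2\bigl(S,\ \cE_S^\vee\otimes\cE_S(-kH)\bigr)^\vee \;\cong\; H^0\bigl(S,\ \cE_S^\vee\otimes\cE_S(kH)\bigr),
\end{equation*}
and for $k\ge 1$ the right-hand side is certainly nonzero (it contains $\id_{\cE_S}\otimes s$ for any $s\in H^0(S,\cO_S(kH))$); stability only kills $H^0$ of the \emph{untwisted} traceless endomorphisms. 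So the obstruction spaces do not vanish for the reason you give, and Mukai's extension theorem requires a genuinely different (and delicate) argument specific to the Mukai vectors in question. Beyond this, two more gaps: (a) the existence of a smooth ``ladder'' of linear sections down to a K3 with $\Pic=\ZZ H$ is not a consequence of Bertini plus Lefschetz --- very ampleness of $H$ and the Picard-rank statement for the surface section need Moishezon/Noether--Lefschetz-type input, and the existence of good divisors on Mukai varieties was a known gap in Mukai's announcement, filled only later by Mella; (b) the entire classification content --- which genera carry the rigid bundle with the requisite extra structure, and why $g=11$ and $g\ge 13$ are excluded --- is exactly the theorem, and your sketch asserts rather than proves it (your stated reason for excluding $g=11$, that the relevant Mukai vector has square $0$, is also not the mechanism: the constraint $\bv^2=-2$ forces $rs=g$, and for $g=11$ prime the only factorizations give ranks $1$ and $11$, neither of which produces a usable embedding).
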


An easy consequence of the Mukai's theorem is that over an arbitrary field~$\kk$ of characteristic zero
any Mukai variety is a $\kk$-form of a linear section of one of the maximal varieties.

The goal of this paper is to investigate birational properties of Mukai varieties over arbitrary fields.
The case of Mukai \emph{threefolds} has been studied in~\cite{KP19}, where in the case $g \in \{7,9,10,12\}$
we established for them criteria of unirationality and rationality over an arbitrary field~$\kk$ of characteristic zero
(for $g \in \{6,\, 8\}$ Mukai threefolds are irrational even over an algebraically closed field).

In this paper we consider Mukai varieties of dimension~$n \ge 4$.
The main result of the paper is the following

\begin{theorem}
\label{theorem:main}
Let $\kk$ be an arbitrary field of characteristic zero.
Let $X$ be a Mukai variety of genus~\mbox{$g = \g(X)$} and dimension~\mbox{$n = \dim(X)$} such that
\begin{itemize}
\item 
either $g \in \{7,\, 8,\, 10\}$ and $n \ge 4$,
\item 
or $g = 9$ and $n \ge 5$.
\end{itemize}
Then the following conditions are equivalent:
\begin{enumerate}
\item 
\label{thm:item:rationality}
$X$ is~$\kk$-rational;
\item 
\label{thm:item:unirationality}
$X$ is~$\kk$-unirational;
\item 
\label{thm:item:points}
$X(\kk) \ne \varnothing$.
\end{enumerate}
\nopagebreak
In the case $(g, n) = (9,\, 4)$ we have the equivalence~\ref{thm:item:unirationality} $\iff$ \ref{thm:item:points}.
\end{theorem}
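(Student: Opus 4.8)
The implications $\ref{thm:item:rationality} \Rightarrow \ref{thm:item:unirationality} \Rightarrow \ref{thm:item:points}$ are immediate (a $\kk$-rational variety is $\kk$-unirational, and a $\kk$-unirational variety of positive dimension has a $\kk$-point since $\kk$ is infinite in characteristic zero), so the content is the converse, and the strategy is to go through the Mukai realization. By Theorem~\ref{theorem-mukai} and its consequence over non-closed fields, $X$ is a $\kk$-form of a transverse linear section of the appropriate maximal variety~$\bX_{2g-2}$; concretely, $X \hookrightarrow \bX_{2g-2}$ with $X = \bX_{2g-2} \cap \PP(W)$ for a linear subspace, where $\bX_{2g-2}$ is one of the homogeneous spaces $\OGr_+(5,10)$, $\Gr(2,6)$, $\LGr(3,6)$, $\GTGr(2,7)$. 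The plan is: first establish that this embedding, together with the ambient $\bX_{2g-2}$ and its Plücker-type projective space, descends to~$\kk$ in a canonical way (so that $\bX_{2g-2}$ is the natural $\kk$-form, possibly twisted, of the homogeneous space, and $W$ is defined over~$\kk$); this is where the genus-$9$ restriction $n \ge 5$ enters, since for $\LGr(3,6)$ with $n = 4$ the section is too small to pin down the ambient symplectic form canonically. Then, assuming $X(\kk) \ne \varnothing$, pick a $\kk$-point $x \in X$ and use the rich geometry of lines and conics through~$x$ — i.e., the projection from~$x$, or the projection from the embedded tangent space — to construct an explicit birational map to a simpler variety (a projective space, a quadric with a rational point, or a linear section of a smaller Grassmannian) which is already known to be $\kk$-rational, or to exhibit a dominant rational map from a projective space (for unirationality).

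Concretely, for each genus I would run a uniform "double projection" or "tangent projection" argument adapted to the homogeneous model: for $g = 8$ ($\Gr(2,6)$), a $\kk$-point of $X$ corresponds to a $2$-plane $U_0 \subset V_6$ defined over~$\kk$, and projecting $\Gr(2,6)$ from the sub-Grassmannian of planes meeting~$U_0$ gives a birational map (after restricting to the linear section) to a section of a smaller rational variety over~$\kk$; one does the analogous thing for $\OGr_+(5,10)$ (spinor tenfold, where a point gives an isotropic $5$-space and projection relates $X$ to a quadric or a linear section thereof), for $\GTGr(2,7)$ (using the embedding $\GTGr(2,7) \subset \Gr(2,7)$ and the induced line geometry), and for $\LGr(3,6)$ with $n \ge 5$. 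The key point in each case is that all the auxiliary linear-algebra data (the subspace through the point, the quotient spaces, the induced forms) are canonically attached to the $\kk$-point and the $\kk$-structure on the ambient Plücker space, hence defined over~$\kk$, so the classical birational map over~$\bar\kk$ is automatically $\kk$-rational. For unirationality in the remaining case $(g,n) = (9,4)$ one only needs a dominant rational map from a $\kk$-rational variety, which is weaker: here the family of lines (or planes) through the $\kk$-point, suitably parametrized, sweeps out~$X$ and gives $\kk$-unirationality even though the descent of the full symplectic structure — and hence $\kk$-rationality — is unavailable.

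The main obstacle I expect is the \emph{canonical descent of the ambient model}: showing that the Mukai embedding $X \hookrightarrow \bX_{2g-2}$ is functorial enough to be Galois-equivariant, so that $\bX_{2g-2}$ is recovered as a $\kk$-variety (a homogeneous space under a possibly nontrivial $\kk$-form of the relevant reductive group) and the linear subspace $W$ cutting out~$X$ is $\kk$-rational. Over $\bar\kk$ uniqueness of the Mukai model is known, but one must upgrade this to a statement that the construction commutes with field automorphisms — typically by exhibiting $\bX_{2g-2}$ and the embedding intrinsically from $X$, e.g. via $H^0(X, \mathcal{O}(1))$ and natural tensor/Koszul operations on it, or via the variety of lines or the Hilbert scheme of linear subspaces on~$X$. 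Once this descent is in place, the birational constructions are essentially the classical ones over $\bar\kk$ run verbatim over~$\kk$, with the $\kk$-point supplying the one piece of data ($U_0$, or an isotropic subspace, etc.) that is not automatically rational; verifying transversality/genericity of the resulting sections over~$\kk$, and handling the low-dimensional boundary cases $n = 4$, are the remaining technical points.
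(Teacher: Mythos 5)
Your high-level shape (pass to the Mukai model over $\bkk$, use the $\kk$-point to build a birational map to something already known to be rational) matches the paper, but the descent mechanism you propose is not the one used, and the one you propose is both harder and, as stated, unresolved. The paper never descends the ambient embedding $X \hookrightarrow \bX_{2g-2}$ to $\kk$ for $g \in \{7,8,9,10\}$; doing so would require descending a tautological-type bundle on $X$ and runs into potential Brauer obstructions (compare the effort spent in Proposition~\ref{proposition:x22-cu} for genus $12$, where the obstruction is killed only because $\gcd(3,7)=1$). Instead, the whole link is constructed over $\bkk$ (Theorems~\ref{theorem:qk} and~\ref{theorem:lg36}, Propositions~\ref{proposition:gr2v}, \ref{proposition:ogr510}, \ref{proposition:g2gr}) and then descended a posteriori in Theorem~\ref{theorem:sections}: the maps $\phi$ and $\sigma_+\circ\psi$ are given by the complete linear systems $|H-E|$ and $|(i-m-1)H-(n-m-2)E|$ on $\Bl_{x_0}(X)$, whose classes are Galois-invariant and hence defined over $\kk$ once $x_0 \in X(\kk)$; the images $\barX$ and $\bX^+$ and the map $\psi$ are then automatically defined over~$\kk$. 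This is the idea your proposal is missing. Moreover, the output is not merely ``a simpler rational variety'': it is a fibration $\tX^+ \to \bX^+$ over a fourfold ($\PP^4$, $\Gr(2,4)$, or a quintic del Pezzo fourfold, each $\kk$-rational by Nishimura once $X(\kk)\ne\varnothing$) with general fiber $\PP^{n-4}$ \emph{and a relative hyperplane $\psi_*(E)$ defined over $\kk$}, so that $X$ is birational to $\bX^+\times\PP^{n-4}$; without that relative hyperplane one would only get a form of a projective bundle and rationality would not follow.

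Two of your specific claims are also incorrect. The restriction to $n\ge 5$ for $g=9$ has nothing to do with pinning down the ambient symplectic form: the link for $\LGr(3,6)$ works for $n=4$ as well (Theorem~\ref{theorem:mukai-9}) and produces a normal complete intersection of two quadrics $X^+ \subset \PP^6$ with a smooth $\kk$-point; the issue is that such an intersection is only known to be $\kk$-unirational, not $\kk$-rational, and the paper expects irrationality in general. Correspondingly, your proposed unirationality argument for $(g,n)=(9,4)$ --- lines through the $\kk$-point sweeping out $X$ --- fails: by the argument in the proof of Theorem~\ref{theorem:rationality} one has $\dim \cL(X,x_0) \le n-2$, so these lines sweep out at most a surface, never a dense subset. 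The correct argument is the classical unirationality of an intersection of two quadrics with a smooth rational point.
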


Implications \ref{thm:item:rationality} $\implies$ \ref{thm:item:unirationality} $\implies$ \ref{thm:item:points} of the theorem are evident,
so the content of the paper is in the implication~\ref{thm:item:points} $\implies$ \ref{thm:item:rationality}
(or~\ref{thm:item:points} $\implies$ \ref{thm:item:unirationality} for $(g,n) = (9,4)$).
This implication is proved in Theorem~\ref{theorem:rationality} for~$g \in \{7,\, 8,\, 10\}$,
and Corollary~\ref{corollary:rationality-9} for~$g = 9$.

To prove the implication we use the Sarkisov link starting with the blowup of a point
for each of the maximal Mukai varieties~$\bX_{2g-2}$ of genus $g \in \{7,8,9,10\}$ over~$\bkk$.
These links are constructed in a uniform way for $g \in \{7,8,10\}$ in Theorem~\ref{theorem:qk} 
(see also Propositions~\ref{proposition:gr2v}, \ref{proposition:ogr510}, and~\ref{proposition:g2gr}),
and separately for $g = 9$ in Theorem~\ref{theorem:lg36}.

For~\mbox{$g \in \{7,\, 8,\, 10\}$} the links end with projective bundles over smooth $4$-dimensional Fano varieties
($\PP^4$, a quadric, and a quintic del Pezzo fourfold, respectively). 
We check in Theorem~\ref{theorem:sections} that after passing to a smooth linear section~$X \subset \bX_{2g-2}$ of dimension~$n$
we obtain a birational transformation (in general this is an example of a so-called ``bad link'') between~$X$ 
and a variety~$\tX^+$ with a morphism to~$\bX^+$ and general fiber~$\PP^{n-4}_\bkk$.
We prove in Theorem~\ref{theorem:rationality} that if we start with a Mukai variety~$X$ defined over~$\kk$ and a $\kk$-point, 
this transformation is also defined over~$\kk$ and the general fiber of the morphism~$\tX^+ \to \bX^+$ is isomorphic to~$\PP^{n-4}_\kk$;
this implies $\kk$-rationality of~$X$.

For $g = 9$ we use the link constructed in Theorem~\ref{theorem:lg36} in a similar way 
to show that any Mukai variety~$X$ of genus~$9$ and dimension $n$ with $X(\kk) \ne \varnothing$
is birational to a complete intersection~\mbox{$X^+ \subset \PP^6$} of~\mbox{$6 - n$} quadrics 
containing a distinguished Veronese surface $\rS \subset X^+$ 
such that the divisor $X^+ \cap \langle S \rangle = X^+ \cap \PP^5$ is $\kk$-rational, see Theorem~\ref{theorem:mukai-9}.
For $n \ge 5$ this implies $\kk$-rationality of~$X$ and for $n = 4$ this implies its $\kk$-unirationality, 
see Corollary~\ref{corollary:rationality-9}.
We expect that $\kk$-unirational Mukai fourfolds of genus~$9$
are not~$\kk$-rational in general,
however we establish for them a sufficient condition of $\kk$-rationality, Corollary~\ref{corollary:rationality-9-4}.

In Appendix~\ref{subsection:cylinders} we use the above results to prove that Mukai varieties of genus~\mbox{$g \in \{7,\, 8,\, 9,\, 10\}$}
and dimension~$n \ge 5$ have cylinders, see 
Proposition~\ref{proposition:cylinders}.

\medskip

The constructed birational transformations can be also applied to Mukai threefolds.

In the case $(g, n) = (7,\, 3)$ we obtain a birational transformation from~$X$ 
to a Springer-type resolution of a special singular quartic threefold in~$\PP^4$;
considering another Springer resolution we then extend this to a birational map between~$X$
and a (possibly singular) quintic del Pezzo threefold.
This gives a more direct than in~\cite{KP19} proof of $\kk$-rationality of~$X$, see Remark~\ref{remark:x-3-7} for details.

Analogously, in the case $(g, n) = (8,\, 3)$ we obtain a birational transformation between~$X$ and a cubic threefold (Remark~\ref{remark:x-3-8})
and in the case~$(g, n) = (10,\, 3)$ a birational transformation between~$X$ 
and a sextic del Pezzo fibration over~$\PP^1$ (Remark~\ref{remark:x-3-10}).

\medskip 

Finally, in the case $(g, n) = (12,\, 3)$ we prove that the embedding $X \hookrightarrow \Gr(3,7)$ constructed by Mukai
is defined over any field of characteristic zero and is canonical (Corollary~\ref{corollary:x22-equivariant}).
Using this we construct in Theorem~\ref{theorem:v22} a birational map between~$X$ and~$\PP^3$
which looks very similar to the general construction of Theorem~\ref{theorem:qk}.
Similarly to the case~\mbox{$(g, n) = (7,\, 3)$} this gives an alternative and more direct than in~\cite{KP19} proof of $\kk$-rationality
of prime Fano threefolds of genus~12 with~$\kk$-points.
We also apply these results to the derived category of coherent sheaves on~$X$, see Corollary~\ref{corollary:x22-db}.

\medskip

To finish the introduction we remind what is known about the case of Mukai varieties of genus~$6$ 
(so-called {\sf Gushel--Mukai varieties})
and dimension~\mbox{$n \ge 4$} which are not covered by Theorem~\ref{theorem:main}.

The case of Gushel--Mukai fourfolds is very hard and interesting already over~$\bkk$;
it is expected that a very general Gushel--Mukai fourfold is not $\bkk$-rational 
and there is a countable union of divisorial families (in the moduli space) of Gushel--Mukai fourfolds which are $\bkk$-rational, 
see~\cite{Prokhorov-1993c,DIM,K18GM,Debarre2020gushelmukai}, analogously to the case of cubic fourfolds.

In the remaining cases $g = 6$, $n \ge 5$, it is known that each Gushel--Mukai variety of dimension~$n \ge 5$ is $\bkk$-rational
(see~\cite[Proposition~4.2]{DK18}), but the question of its $\kk$-rationality is completely unclear.

We would like to thank the organizers of the conference ``Rationality of Algebraic Varieties''
on the Schiermonnikoog Island, where the idea of this paper was born.

\section{A birational transformation given by a family of quadrics}

Assuming that $\bX$ is a smooth projective variety ``birationally covered'' (in the sense described below) by a family of quadrics, 
we construct in~\S\S\ref{subsection:quadric-transformation-statement}--\ref{subsection:quadric-transformation-proof}
a birational transformation of~$\bX$ into a projective bundle.
This transformation is, essentially, a relative version of the birational map 
of a quadric and a projective space
\begin{equation*}
Q^m \xdashrightarrow{\hspace{27pt}} \PP^m
\end{equation*}
induced by a linear projection from a point $x_0 \in Q^m$;
it blows up the point~$x_0$ and then contracts all lines on~$Q^m$ passing through~$x_0$.
In~\S\S\ref{subsection:gr2v}--\ref{subsection:g2gr} we show that these results 
apply to maximal Mukai varieties~$\Gr(2,6)$, $\OGr_+(5,10)$, and~$\GTGr(2,7)$ of genus $g \in \{7,\, 8,\, 10\}$.

\subsection{The statement}
\label{subsection:quadric-transformation-statement}

Let $\bX \subset \PP(W)$ be a smooth projective variety and let~$x_0 \in \bX$ be a point.
We denote by~$\bH$ the restriction to~$\bX$ of the hyperplane class of $\PP(W)$ and consider it as a polarization.
Assume a projective scheme $\bX^+$ and a subscheme
\begin{equation*}
\cQ \subset \bX \times \bX^+
\end{equation*}
are given such that~$\cQ$ is a flat family of quadrics in~$\bX$ containing~$x_0$, i.e.,
for each point $x_+ \in \bX^+$ the fiber~$\cQ_{x_+} \subset \bX \subset \PP(W)$ of~$\cQ$ 
is a quadric in~$\bX$ containing the point~$x_0$.
We denote by 
\begin{equation*}
p_\cQ \colon \cQ \xlongrightarrow{\hspace{2em}} \bX
\qquad\text{and}\qquad 
q_\cQ \colon \cQ \xlongrightarrow{\hspace{2em}} \bX^+
\end{equation*}
the natural projections and by
\begin{equation}
\label{eq:section-cq}
s_{x_0} \colon \bX^+ \xlongrightarrow{\hspace{2em}} \cQ
\end{equation}
the section of~$q_\cQ$ given by the point~$x_0$.

Let $\rF_1(\bX,x_0)$ be the Hilbert scheme of lines passing through the point~$x_0$
and let~$\rF_1(\cQ/\bX^+,x_0)$ be the relative Hilbert scheme of lines in fibers of $q_\cQ \colon {} \cQ \to \bX^+$ 
passing through the point~$x_0$.
Let
\begin{equation*}
\cL(\bX,x_0) \subset \rF_1(\bX,x_0) \times \bX
\qquad\text{and}\qquad 
\cL(\cQ/\bX^+,x_0) \subset \rF_1(\cQ/\bX^+,x_0) \times_{\bX^+} \cQ
\end{equation*}
be the corresponding universal families of lines and let
\begin{equation*}
p_\cL \colon \cL(\bX,x_0) \xlongrightarrow{\hspace{2em}} \bX
\qquad\text{and}\qquad 
\tilde{p}_\cL \colon \cL(\cQ/\bX^+,x_0) \xlongrightarrow{\hspace{2em}} \cQ
\end{equation*}
be the natural projections.

To state the theorem we will need the following simple observation.

\begin{lemma}
\label{lemma:lines}
The morphism~$p_\cQ$ induces a morphism 
\begin{equation}
\label{eq:f1-morphism}
\rF_1(\cQ/\bX^+,x_0) \xlongrightarrow{\hspace{2em}} \rF_1(\bX,x_0)
\end{equation} 
such that
\begin{equation}
\label{eq:lines}
\cL(\cQ/\bX^+,x_0) \cong \rF_1(\cQ/\bX^+,x_0) \times_{\rF_1(\bX,x_0)} \cL(\bX,x_0)
\end{equation}
and $p_\cQ\bigl(\tilde{p}_\cL(\cL(\cQ/\bX^+,x_0))\bigr) = p_\cL\bigl(\cL(\bX,x_0)\bigr)$.
\end{lemma}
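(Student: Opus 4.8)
The plan is to unwind the definitions of the relative Hilbert scheme and exploit flatness of $\cQ$ over $\bX^+$, together with the fact that all the quadrics $\cQ_{x_+}$ are closed subschemes of the fixed ambient variety $\bX$. Since each line in a fiber $\cQ_{x_+}$ through $x_0$ is in particular a line in $\bX$ through $x_0$, the projection $p_\cQ$ sends a point $[\ell] \in \rF_1(\cQ/\bX^+,x_0)$ lying over $x_+ \in \bX^+$ to the point $[p_\cQ(\ell)] \in \rF_1(\bX,x_0)$. To make this rigorous and functorial, I would describe the morphism on $T$-points for an arbitrary $\bkk$-scheme $T$: a $T$-point of $\rF_1(\cQ/\bX^+,x_0)$ is a $T$-point $t\colon T \to \bX^+$ together with a closed subscheme $\mathcal{Z} \subset \cQ \times_{\bX^+, t} T = \cQ_T$ that is flat over $T$ with fibers lines through $x_0$; composing with the (closed) immersion $\cQ_T \hookrightarrow \bX \times T$ gives a closed subscheme of $\bX \times T$, flat over $T$, with the same fibers, hence a $T$-point of $\rF_1(\bX,x_0)$. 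This assignment is visibly functorial in $T$, so by Yoneda it defines the morphism~\eqref{eq:f1-morphism}.

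Next, for the fiber-product description~\eqref{eq:lines}: the universal family $\cL(\cQ/\bX^+,x_0) \subset \rF_1(\cQ/\bX^+,x_0) \times_{\bX^+} \cQ$ is by construction the incidence subscheme $\{([\ell], z) : z \in \ell\}$, and likewise $\cL(\bX,x_0) \subset \rF_1(\bX,x_0) \times \bX$. I would check that pulling back $\cL(\bX,x_0)$ along the morphism~\eqref{eq:f1-morphism} and along $p_\cQ$ recovers exactly this incidence locus. Concretely, a $T$-point of $\rF_1(\cQ/\bX^+,x_0) \times_{\rF_1(\bX,x_0)} \cL(\bX,x_0)$ is a line $[\ell]$ in a fiber of $\cQ$ over some $t\colon T\to\bX^+$ together with a point of $\bX$ lying on the image line $p_\cQ(\ell)$; since $p_\cQ$ restricted to the quadric $\cQ_{x_+}$ is a closed immersion into $\bX$ (the quadrics are literally subschemes of $\bX$, not abstract quadrics mapping to $\bX$), such a point lifts uniquely to a point of $\ell \subset \cQ_T$, and this identifies the fiber product with $\cL(\cQ/\bX^+,x_0)$. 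The key geometric input here is precisely that $p_\cQ|_{\cQ_{x_+}}$ is a monomorphism, which holds because $\cQ \subset \bX \times \bX^+$ and $p_\cQ$ is just the first projection.

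Finally, the equality $p_\cQ\bigl(\tilde p_\cL(\cL(\cQ/\bX^+,x_0))\bigr) = p_\cL\bigl(\cL(\bX,x_0)\bigr)$ of subsets of $\bX$ follows by chasing a point: the right-hand side is the union of all lines in $\bX$ through $x_0$ that actually occur in the universal family, i.e.\ the locus swept out by lines through $x_0$ lying on $\bX$; the left-hand side is the locus swept out by those lines through $x_0$ that lie on some quadric $\cQ_{x_+}$. The inclusion $\subseteq$ is immediate since a line on $\cQ_{x_+} \subset \bX$ is a line on $\bX$. For $\supseteq$ one uses~\eqref{eq:lines}: it shows the morphism~\eqref{eq:f1-morphism} is surjective onto the part of $\rF_1(\bX,x_0)$ relevant to the family — more precisely, combined with the hypothesis that $\cQ$ birationally covers $\bX$ (so that a general point of $\bX$ lies on a quadric $\cQ_{x_+}$ through $x_0$, hence on a line through $x_0$ in that quadric), every line class appearing in $\cL(\bX,x_0)$ is hit. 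I expect the main obstacle to be purely bookkeeping: being careful that the two relative Hilbert schemes are taken with respect to compatible polarizations (the restriction of $\bH$) so that the induced map on Hilbert schemes is a morphism of schemes and not merely a set-theoretic map, and handling the base point $x_0$ consistently (the sections $s_{x_0}$ and its relative analogue) throughout the fiber-product identification.
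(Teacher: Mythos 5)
Your treatment of the first two claims is correct and is essentially the paper's own argument (the paper simply notes that a line $L\subset\cQ_{x_+}$ through $x_0$ maps to a line $p_\cQ(L)\subset\bX$ through $x_0$, and that $p_\cQ\colon L\to p_\cQ(L)$ is an isomorphism because $\cQ_{x_+}$ is literally a subscheme of $\bX$); your functor-of-points phrasing just makes the same observation more formally.

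The proof of the last equality, however, contains a genuine error. You write that a general point of $\bX$ lies on a quadric $\cQ_{x_+}$ through $x_0$, ``hence on a line through $x_0$ in that quadric.'' The ``hence'' is a non sequitur: a quadric through $x_0$ is not a cone with vertex $x_0$, so a general point of $\cQ_{x_+}$ does \emph{not} lie on a line of $\cQ_{x_+}$ through $x_0$. Indeed, if your deduction were valid it would show that the lines through $x_0$ sweep out a dense subset of $\bX$, contradicting the dimension count $\dim\cL(\bX,x_0)=\dim\rF_1(\bX,x_0)+1<\dim\bX$ that is assumed later. The inclusion you actually need is: for every point $y\ne x_0$ on a line $L\subset\bX$ through $x_0$, the line $L$ itself lies on some quadric of the family. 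This follows by combining surjectivity of $p_\cQ$ (which gives some $x_+$ with $y\in\cQ_{x_+}$, and of course $x_0\in\cQ_{x_+}$) with assumption \ref{ass:qk} of Theorem~\ref{theorem:qk}: since $x_0,y\in\langle\cQ_{x_+}\rangle$ we get $L=\overline{x_0y}\subset\langle\cQ_{x_+}\rangle$, hence $L\subset\bX\cap\langle\cQ_{x_+}\rangle=\cQ_{x_+}$. Note that some such hypothesis is genuinely required --- for a single quadric surface through a point of $\PP^3$ the equality fails --- so the statement cannot be ``purely bookkeeping''; the missing ingredient is precisely the condition that each $\cQ_{x_+}$ is cut out of $\bX$ by its linear span, which your argument never uses.
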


\begin{proof}
If $L \subset \cQ_{x_+}$ is a line through~$x_0$, then $p_\cQ(L) \subset \bX$ is a line through~$x_0$;
this defines the morphism~\eqref{eq:f1-morphism}.
Moreover, $p_\cQ \colon L \to p_\cQ(L)$ is an isomorphism; this proves~\eqref{eq:lines}.
The last statement of the lemma is obvious.
\end{proof}

\begin{theorem}
\label{theorem:qk}
Let $\bX \subset \PP(W)$ be a smooth projective variety, 
and let $\cQ \subset \bX \times \bX^+$ be a family of quadrics in~$\bX$ passing through a point~$x_0 \in \bX$. Set 
\begin{equation*}
n = \dim(\bX),
\qquad
m = \dim(\cQ/\bX^+).
\end{equation*}
Assume the following conditions hold:
\begin{enumerate}
\item\label{ass:f2k} 
The scheme $\bX^+$ is smooth, projective, connected, and $\uprho(\bX^+) = \uprho(\bX)$.

\item\label{ass:p2} 
The map $p_\cQ \colon \cQ \to \bX$ is birational.

\item\label{ass:qk} 
For any quadric $\cQ_{x_+} \subset X$ corresponding to a point $x_+ \in \bX^+$ we have $\cQ_{x_+} = \bX \cap \langle \cQ_{x_+} \rangle$.
\item\label{ass:f1} 
The scheme $\rF_1(\bX,x_0)$ is smooth, the scheme $\rF_1(\cQ/\bX^+,{x_0})$ is smooth over~$\rF_1(\bX,x_0)$ and connected, 
and 
\[
\dim(\rF_1(\bX,x_0)) < {}\dim(\rF_1(\cQ/\bX^+,{x_0})) = \dim(\bX) - 2.
\]
\end{enumerate}
Then there is a commutative diagram 
\begin{equation}
\label{diagram:typical}
\vcenter{\xymatrix@C=4em{
&
\hbD \ar@{^{(}->}[d] \ar[dl] \ar[dr]
\\
\cL(\bX,x_0) \ar@{^{(}->}[d] &
\hbX \ar[dl]^{\pi} \ar[dr]_{\pi_+} &
\rF_1(\cQ/\bX^+,{x_0}) \ar@{^{(}->}[d]
\\
\Bl_{x_0}(\bX) \ar[d]_\sigma \ar@{-->}[rr]^\psi \ar[dr]^\phi
&&
\PP_{\bX^+}(\cE) \ar[d]^{\sigma_+} \ar[dl]_{\phi_+}
\\
\bX &
\barbX &
\bX^+,\\
}}
\end{equation}
where 
\begin{itemize}
\item 
$\sigma$ is the blowup of $x_0$ with the exceptional divisor~$\bE \subset \Bl_{x_0}(\bX)$,

\item 
$\cE$ is a subbundle of rank~$m + 1$ in $\barW \otimes \cO_{\bX^+}$, where $\barW = W/\langle x_0 \rangle$,

\item 
$\sigma_+$ it the projective bundle morphism, 
\item 
$\phi$ is induced by the linear projection $\PP(W) \dashrightarrow \PP(\barW)$ from~$x_0$, 
\item 
$\phi_+$ is the morphism induced by the embedding of vector bundles~$\cE \hookrightarrow \barW \otimes \cO_{\bX^+}$,
\item 
$\barbX = \phi(\Bl_{x_0}(\bX)) = \phi_+(\PP_{\bX^+}(\cE)) \subset \PP(\barW)$,
\item 
$\pi$ is the blowup of $\cL(\bX,x_0)$, 
\item 
$\pi_+$ is the blowup of $\rF_1(\cQ/\bX^+,{x_0})$, 
\item 
$\hbD$ is the common exceptional divisor of $\pi$ and $\pi_+$,
\begin{equation*}
\hbD \cong \cL(\bX,x_0) \times_{\rF_1(\bX,x_0)} \rF_1(\cQ/\bX^+,{x_0}),
\end{equation*}
\item 
$\psi = \pi_+ \circ \pi^{-1} = \phi_+^{-1} \circ \phi$ is a small birational map \textup(flop or antiflip\textup). 
\end{itemize}

If\/ $\hbE \subset \hbX$ is the strict transform of\/ $\bE \subset \Bl_{x_0}(\bX)$,
the morphisms $\pi$ and~$\pi_+$ induce isomorphisms
\begin{equation}
\label{eq:hbe}
\hbE \cong \Bl_{\rF_1(\bX,x_0)}(\bE)
\qquad\text{and}\qquad 
\hbE \cong \PP_{\bX^+}(\cE_0),
\end{equation}
where $\cE_0 \subset \cE$ is a rank-$m$ subsheaf.

Moreover, the maps $\pi$, $\phi$, $\phi_+$, and~$\pi_+$ induce isomorphisms 
\begin{equation}
\label{eq:iso-bx-open}
\hbX \setminus \hbD \stackrel{\pi}{\cong}
\Bl_{x_0}(\bX) \setminus \cL(\bX,x_0) \stackrel{\phi}{\cong} 
\barbX \setminus \rF_1(\bX,x_0) \stackrel{\phi_+}{\cong} 
\PP_{\bX^+}(\cE) \setminus \rF_1(\cQ/\bX^+,x_0) \stackrel{\pi_+}{\cong}
\hbX \setminus \hbD,
\end{equation}
the restrictions of~$\pi$ and~$\pi_+$ to~$\hbD$ are the projections to the factors~$\cL(\bX,x_0)$ and~$\rF_1(\cQ/\bX^+,x_0)$, respectively,
the restriction of~$\phi$ to~$\cL(\bX,x_0)$ is the natural~$\PP^1$-fibration $\cL(\bX,x_0) \xrightarrow{\hspace{1em}} \rF_1(\bX,x_0)$,
and the restriction of~$\phi_+$ to~$\rF_1(\cQ/\bX^+,x_0)$ is the morphism~\eqref{eq:f1-morphism}, which is also a projective bundle.

Finally, the birational transformation~$\psi$ induces an isomorphism
\begin{equation}
\label{eq:iso-be-open}
\bE \setminus \rF_1(\bX,x_0) \cong \PP_{\bX^+}(\cE_0) \setminus \rF_1(\cQ/\bX^+,x_0).
\end{equation}
\end{theorem}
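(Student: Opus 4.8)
The plan is to construct the diagram~\eqref{diagram:typical} by working outward from the linear projection $\phi$ from $x_0$ and the relative version $\phi_+$, establishing that they agree on a common open set, and then resolving both by a single blowup. First I would analyze the left column: since $\bX \subset \PP(W)$ is smooth and $x_0 \in \bX$, the blowup $\sigma\colon \Bl_{x_0}(\bX) \to \bX$ with exceptional divisor $\bE \cong \PP(T_{x_0}\bX)$ is standard, and the linear projection $\PP(W) \dashrightarrow \PP(\barW)$ from $x_0$ induces a morphism $\phi\colon \Bl_{x_0}(\bX) \to \PP(\barW)$ whose image is $\barbX$. The locus contracted by $\phi$ is exactly the union of strict transforms of lines in $\bX$ through $x_0$, i.e.\ $p_\cL(\cL(\bX,x_0))$ together with the corresponding locus in $\bE$; more precisely $\phi$ restricted to the strict transform of that line-locus is the $\PP^1$-fibration $\cL(\bX,x_0) \to \rF_1(\bX,x_0)$. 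Here assumption~\ref{ass:f1} (smoothness of $\rF_1(\bX,x_0)$ and the dimension bound) is what guarantees $\phi$ is a genuine contraction onto a smooth-enough target and that $\cL(\bX,x_0) \to \Bl_{x_0}\bX$ is a regular embedding, so that the blowup $\pi$ along it makes sense.

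Next I would treat the right column. Using assumptions~\ref{ass:p2} (birationality of $p_\cQ$) and~\ref{ass:qk} (each fiber quadric is cut out by its linear span in $\bX$), I claim the family $\cQ$ determines, over $\bX^+$, the subbundle $\cE \subset \barW \otimes \cO_{\bX^+}$ of rank $m+1$ whose fiber at $x_+$ is (the image under projection from $x_0$ of) the linear span $\langle \cQ_{x_+}\rangle$; the point $x_0 \in \cQ_{x_+}$ means this span contains $x_0$, so the projection has constant rank and $\cE$ is locally free. The projectivization $\PP_{\bX^+}(\cE)$ maps to $\PP(\barW)$ via $\phi_+$ with image $\barbX$ (this uses that $p_\cQ$ is birational, so a general point of $\bX$, equivalently of $\barbX$, lies on a unique $\cQ_{x_+}$, and the projection from $x_0$ identifies $\cQ_{x_+}\setminus(\text{lines through }x_0)$ with its linear span minus a linear subspace — the classical $Q^m \dashrightarrow \PP^m$ picture). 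The locus where $\phi_+$ fails to be injective is $\rF_1(\cQ/\bX^+,x_0)$, embedded in $\PP_{\bX^+}(\cE)$, and by~\ref{ass:f1} this is smooth and the projective bundle structure $\rF_1(\cQ/\bX^+,x_0) \to \rF_1(\bX,x_0)$ is exactly the morphism~\eqref{eq:f1-morphism} from Lemma~\ref{lemma:lines}; blowing it up gives $\pi_+$.

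With both resolutions in hand, the heart of the argument is to identify $\hbX := \Bl_{\cL(\bX,x_0)}(\Bl_{x_0}\bX)$ with $\Bl_{\rF_1(\cQ/\bX^+,x_0)}(\PP_{\bX^+}(\cE))$. I would do this by the open-set comparison~\eqref{eq:iso-bx-open}: away from the respective centers, $\phi$ and $\phi_+$ are both isomorphisms onto $\barbX \setminus \rF_1(\bX,x_0)$ (using birationality of $p_\cQ$ and the fact that lines through $x_0$ are precisely what gets collapsed), so $\psi = \phi_+^{-1}\circ\phi$ is a birational map that is an isomorphism in codimension one — a flop or antiflip — and then a standard universal-property argument for blowups along the pairwise-glued centers $\hbD \cong \cL(\bX,x_0) \times_{\rF_1(\bX,x_0)} \rF_1(\cQ/\bX^+,x_0)$ (Lemma~\ref{lemma:lines} identifies this fiber product on the nose) shows the two blowups are canonically isomorphic and $\hbD$ is the common exceptional divisor. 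The identifications~\eqref{eq:hbe} of the strict transform $\hbE$ follow by tracking $\bE$ through: on the $\pi$-side $\hbE = \Bl_{\rF_1(\bX,x_0)}(\bE)$ since $\cL(\bX,x_0)$ meets $\bE$ along (a copy of) $\rF_1(\bX,x_0)$, and on the $\pi_+$-side one computes that $\hbE$ is the projectivization of the kernel subsheaf $\cE_0 \subset \cE$ of the evaluation-at-$x_0$ (equivalently, the rank-$m$ subbundle cutting out the tangent directions), giving~\eqref{eq:iso-be-open} by restricting $\psi$.

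The main obstacle I anticipate is the smoothness and flatness bookkeeping needed to make $\hbX$ well-defined as a single smooth variety from both sides simultaneously: one must check that $\cL(\bX,x_0) \hookrightarrow \Bl_{x_0}\bX$ and $\rF_1(\cQ/\bX^+,x_0) \hookrightarrow \PP_{\bX^+}(\cE)$ are regular embeddings of the codimension forced by the dimension inequality in~\ref{ass:f1}, that the blown-up varieties are smooth, and that the exceptional divisors really do glue via the fiber product of Lemma~\ref{lemma:lines} rather than merely birationally. The dimension bound $\dim\rF_1(\bX,x_0) < \dim\rF_1(\cQ/\bX^+,x_0) = n-2$ is precisely what forces $\psi$ to be small (so "flop or antiflip" rather than a divisorial contraction) and what makes the $\PP^1$-bundle $\cL \to \rF_1(\bX,x_0)$ have the right dimension to be contracted by $\phi$; getting these numerics to line up cleanly, uniformly in the three cases $g \in \{7,8,10\}$, is where the real care is required, though the later sections~\S\S\ref{subsection:gr2v}--\ref{subsection:g2gr} presumably verify the hypotheses case by case.
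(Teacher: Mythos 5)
Your setup of the two columns (the blowup $\sigma$, the projection $\phi$, the bundle $\cE$ obtained by projecting the spans $\langle\cQ_{x_+}\rangle$ from $x_0$, and the identification of the contracted loci with $\cL(\bX,x_0)$ and $\rF_1(\cQ/\bX^+,x_0)$) matches the paper, but the step you yourself call ``the heart of the argument'' contains a genuine gap. You propose to define $\hbX$ as $\Bl_{\cL(\bX,x_0)}(\Bl_{x_0}(\bX))$ and then identify it with $\Bl_{\rF_1(\cQ/\bX^+,x_0)}(\PP_{\bX^+}(\cE))$ by an ``open-set comparison'' plus a ``standard universal-property argument.'' There is no such standard argument: two blowups that agree away from their centers need not be isomorphic --- this is precisely the situation of a flop, where blowing up the flopping locus on one side need not even dominate the other side --- and the universal property of blowup would require you to first produce a morphism $\Bl_{\cL(\bX,x_0)}(\Bl_{x_0}(\bX)) \to \PP_{\bX^+}(\cE)$ under which the preimage of $\rF_1(\cQ/\bX^+,x_0)$ is Cartier; but at that stage $\psi = \phi_+^{-1}\circ\phi$ is only a rational map, so the argument is circular.

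The paper's key construction, absent from your proposal, is to define $\hbX := \Bl_{s_{x_0}(\bX^+)}(\cQ)$, the blowup of the \emph{total space of the family of quadrics} along the section given by $x_0$. This object maps to both sides for structural reasons: the scheme-theoretic preimage of $x_0$ under $p_\cQ$ is exactly $s_{x_0}(\bX^+)$, so the blowup maps to $\Bl_{x_0}(\bX)$ (birationally, by assumption~\xref{ass:p2}); and $\cQ$ sits in $\PP_{\bX^+}(\tcE)$ as a relative quadric through the section, so its strict transform in $\Bl_{s_{x_0}(\bX^+)}(\PP_{\bX^+}(\tcE)) \cong \PP_{\PP_{\bX^+}(\cE)}(\cO\oplus\cO(-\barbH))$ is a divisor of relative degree one over $\PP_{\bX^+}(\cE)$, hence by \cite[Lemma~2.1]{K16} it \emph{is} the blowup of $\PP_{\bX^+}(\cE)$ along $\rF_1(\cQ/\bX^+,x_0)$; this also establishes the smoothness of $\hbX$, which you would otherwise have to prove by hand. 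The identification of $\pi$ as the blowup of $\cL(\bX,x_0)$ is then obtained not from a universal property but from the Picard-rank count $\uprho(\hbX)=\uprho(\bX^+)+2=\uprho(\Bl_{x_0}(\bX))+1$ (using assumption~\xref{ass:f2k}) together with \cite[Lemma~2.5]{K18}. Without some replacement for this construction your outline does not close; the remaining statements (\eqref{eq:hbe}, \eqref{eq:iso-bx-open}, \eqref{eq:iso-be-open}, the description of $\cE_0$ --- which, note, is only a rank-$m$ subsheaf and not a subbundle in general) all follow from it and would need to be rederived once the central identification is actually established.
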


In the course of proof we will explain the construction of the vector bundle~$\cE$, the embeddings 
\begin{equation*}
\cL(\bX,x_0) \xhookrightarrow{\hspace{27pt}} \Bl_{x_0}(\bX),
\qquad 
\rF_1(\cQ/\bX^+,{x_0}) \xhookrightarrow{\hspace{27pt}} \PP_{\bX^+}(\cE),
\quad\text{and}\quad
\rF_1(\bX,x_0) \xhookrightarrow{\hspace{27pt}} \bE
\end{equation*}
and other details.
Before that we deduce from the theorem the following useful lemma.

\begin{lemma}
\label{lemma:linear-system}
In the setting of Theorem~\xref{theorem:qk}, assume that 
$K_\bX = -i\bH$ for some $i \in \ZZ$.
If the divisor class $-K_{\bX^+} + \rc_1(\cE)$ is very ample on~$\bX^+$ then the map 
\begin{equation*}
\sigma_+ \circ \psi \colon \Bl_{x_0}(\bX) \xdashrightarrow{\hspace{27pt}} \bX^+
\end{equation*}
is given by the linear system $\left|(i-m-1)\bH - (n-m-2)\bE\right|$.
\end{lemma}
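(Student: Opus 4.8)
The plan is to compute the linear system defining the rational map $\sigma_+\circ\psi$ by tracking divisor classes through the diagram~\eqref{diagram:typical}. Since $\sigma_+$ is the projective bundle morphism $\PP_{\bX^+}(\cE)\to\bX^+$, the map $\sigma_+\circ\psi$ is given by (a subsystem of) the linear system $|\psi^*\cO_{\PP_{\bX^+}(\cE)}(1)\otimes\sigma_+^*\cL|$ for a suitable line bundle $\cL$ on $\bX^+$; the assumption that $-K_{\bX^+}+\rc_1(\cE)$ is very ample is there precisely to guarantee that a natural choice of $\cL$ makes this system very ample on the target and base-point free enough that the induced map is the bundle morphism composed with $\psi$. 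First I would pull everything back to $\hbX$, where both $\pi$ and $\pi_+$ are honest blowups, so that $\pi^*$ and $\pi_+^*$ are injective on Picard groups and I can write $\psi^*D$ unambiguously via $\pi_+^* D$ pulled back through $\pi_*$; concretely, I will express $\pi_+^*\bigl(\sigma_+^*(-K_{\bX^+}+\rc_1(\cE))+\xi\bigr)$ in terms of $\pi^*$ of classes on $\Bl_{x_0}(\bX)$, where $\xi = \rc_1(\cO_{\PP_{\bX^+}(\cE)}(1))$.

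The key computation is the identification of $\psi^*\xi$ as a divisor class on $\Bl_{x_0}(\bX)$. Here I would use that $\phi_+$ is induced by $\cE\hookrightarrow\barW\otimes\cO_{\bX^+}$, so $\phi_+^*\cO_{\PP(\barW)}(1)=\xi$, together with $\psi=\phi_+^{-1}\circ\phi$ and the fact that $\phi$ is induced by the projection from $x_0$, hence $\phi^*\cO_{\PP(\barW)}(1)=\bH-\bE$ on $\Bl_{x_0}(\bX)$ (the hyperplanes through $x_0$ pull back to $\sigma^*\bH-\bE$). Therefore $\psi^*\xi=\bH-\bE$. Next I need $\psi^*\sigma_+^*(-K_{\bX^+})$: by the relative canonical bundle formula for the $\PP^m$-bundle $\sigma_+$, one has $K_{\PP_{\bX^+}(\cE)}=\sigma_+^*(K_{\bX^+}+\rc_1(\cE))-(m+1)\xi$, so $\sigma_+^*(-K_{\bX^+})=-K_{\PP_{\bX^+}(\cE)}-\sigma_+^*\rc_1(\cE)-(m+1)\xi$; since $\psi$ is a small birational map (flop or antiflip), it does not alter the canonical class up to the operation, but more robustly I would instead directly compute $\psi^*(-K_{\PP_{\bX^+}(\cE)}+\sigma_+^*\rc_1(\cE))$ by comparing with $-K_{\Bl_{x_0}(\bX)}=\sigma^*(-K_\bX)-(n-1)\bE=i\bH-(n-1)\bE$ and correcting by the discrepancies of the common resolution $\hbX$ along $\hbD$. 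Using assumption~\ref{ass:f1}, the relative dimension of $\hbD\to\cL(\bX,x_0)$ versus $\hbD\to\rF_1(\cQ/\bX^+,x_0)$ is governed by the codimensions $\codim(\cL(\bX,x_0)\subset\Bl_{x_0}(\bX))$ and $\codim(\rF_1(\cQ/\bX^+,x_0)\subset\PP_{\bX^+}(\cE))$, which I read off from $\dim\rF_1(\cQ/\bX^+,x_0)=n-2$ and $m=\dim(\cQ/\bX^+)$.

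Assembling: I expect $\psi$ to carry the system $|(i-m-1)\bH-(n-m-2)\bE|$ on $\Bl_{x_0}(\bX)$ to $|\xi+\sigma_+^*(-K_{\bX^+}+\rc_1(\cE))|$ on $\PP_{\bX^+}(\cE)$ — the coefficient of $\bH$ being $i-(m+1)$ because we subtract the $(m+1)\xi$ coming from the $\PP^m$-bundle structure and $\psi^*\xi=\bH-\bE$ contributes one $\bH$ back, and the coefficient of $\bE$ being fixed by matching the exceptional behaviour along $\bE$ and $\hbE\cong\PP_{\bX^+}(\cE_0)$ via~\eqref{eq:hbe}. Then very ampleness of $-K_{\bX^+}+\rc_1(\cE)$ makes $|\xi+\sigma_+^*(-K_{\bX^+}+\rc_1(\cE))|$ very ample on $\PP_{\bX^+}(\cE)$, so its image determines $\PP_{\bX^+}(\cE)$ and the projection to $\bX^+$ is recovered; pulling back along $\psi$ and $\sigma$, the composite $\sigma_+\circ\psi$ is given by the asserted system. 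The main obstacle is the bookkeeping of discrepancies along $\hbD$ in passing between the two blowup descriptions of $\hbX$ — i.e., making rigorous that the subtraction of $(n-m-2)\bE$ is exactly right and that no unexpected base locus appears — which is where I would lean most heavily on the explicit identifications $\hbD\cong\cL(\bX,x_0)\times_{\rF_1(\bX,x_0)}\rF_1(\cQ/\bX^+,x_0)$ and~\eqref{eq:hbe} furnished by Theorem~\ref{theorem:qk}.
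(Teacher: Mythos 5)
Your overall route is the paper's: identify $\Pic(\Bl_{x_0}(\bX))$ with $\Pic(\PP_{\bX^+}(\cE))$ via the small map $\psi$, use $\barbH=\bH-\bE$ and the relative canonical bundle formula for $\sigma_+$, and compare canonical classes. But your final bookkeeping is off by one relative hyperplane class, and this is not cosmetic. The correct identity is $\sigma_+^*\bigl(-K_{\bX^+}+\rc_1(\cE)\bigr)=-K_{\PP_{\bX^+}(\cE)}-(m+1)\barbH$, which under $\psi$ becomes $i\bH-(n-1)\bE-(m+1)(\bH-\bE)=(i-m-1)\bH-(n-m-2)\bE$; there is no summand $\xi=\barbH$. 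Your proposed target $|\xi+\sigma_+^*(-K_{\bX^+}+\rc_1(\cE))|$ corresponds instead to $(i-m)\bH-(n-m-1)\bE$. You can sanity-check against Proposition~\ref{proposition:gr2v}: for $\Gr(2,6)$ one has $i=6$, $m=4$, $n=8$, the class $-K_{\bX^+}+\rc_1(\cE)$ is the Pl\"ucker hyperplane class of $\Gr(2,4)$, and the system is $|\bH-2\bE|=|\sigma_+^*\cO(1)|$, whereas your formula would force $\sigma_+^*\cO(1)=-\bE$. The conceptual point is that a linear system defining a map factoring through $\sigma_+$ must restrict trivially to the fibers of $\sigma_+$, i.e., must be a pullback from $\bX^+$; a system containing $\xi$ has degree one on those fibers and (where free) does not contract them, so your concluding step --- embed $\PP_{\bX^+}(\cE)$ by $|\xi+\sigma_+^*(\cdots)|$ and then ``recover'' the projection --- does not show that the stated system gives $\sigma_+\circ\psi$.

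Separately, the step you single out as the main obstacle --- correcting by discrepancies along $\hbD$ on the common resolution $\hbX$ --- is a detour you do not need. Because $\psi$ is an isomorphism in codimension one between smooth varieties, the induced isomorphism of Picard groups automatically sends $K_{\Bl_{x_0}(\bX)}$ to $K_{\PP_{\bX^+}(\cE)}$: the canonical class is determined on the open locus where $\psi$ is an isomorphism, and whether the discrepancies of $\hbD$ over the two sides agree is irrelevant (for an antiflip they do not). Once you replace the target class by $\sigma_+^*(-K_{\bX^+}+\rc_1(\cE))$ and drop the discrepancy analysis, the argument closes as in the paper: smallness of $\psi$ also identifies the spaces of global sections, $H^0(\PP_{\bX^+}(\cE),\sigma_+^*\cL)=H^0(\bX^+,\cL)$ by the projection formula, and very ampleness of $-K_{\bX^+}+\rc_1(\cE)$ then gives that the complete system $|(i-m-1)\bH-(n-m-2)\bE|$ defines exactly $\sigma_+\circ\psi$.
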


\begin{proof}
Since $\psi$ is a small birational map, it identifies $\Pic(\Bl_{x_0}(\bX))$ and~$\Pic(\PP_{\bX^+}(\cE))$.
The canonical class of $\Bl_{x_0}(\bX)$ is equal to $-i\bH + (n-1)\bE$.
On the other hand, the canonical class of~$\PP_{\bX^+}(\cE)$ is equal to $K_{\bX^+} - \rc_1(\cE) - (m+1)\barbH$,
where $\barbH$ is the hyperplane class of~$\PP_{\bX^+}(\cE)$.
Finally, we have $\barbH = \bH - \bE$ since $\phi$ is induced by the linear projection from~$x_0$
and~$\phi_+$ is induced by the embedding $\cE \hookrightarrow \barW \otimes \cO$.
Combining all this, we obtain the equality
\begin{equation*}
- K_{\bX^+} + \rc_1(\cE) = i\bH - (n-1)\bE - (m+1)(\bH-\bE) = (i-m-1)\bH - (n-m-2)\bE.
\end{equation*}
If the left side of the equality is very ample, the result follows.
\end{proof}

\subsection{The proof}
\label{subsection:quadric-transformation-proof}

Since $\cQ \to \bX^+$ is a flat family of $m$-dimensional quadrics contained in~$\PP(W)$,
there is a vector subbundle $\tcE \hookrightarrow W \otimes \cO_{\bX^+}$ of rank~$m+2$ and an embedding
\begin{equation*}
\iota \colon \cQ \xhookrightarrow{\hspace{2em}} \PP_{\bX^+}(\tcE)
\end{equation*}
as a divisor of relative degree~$2$.
Note that the pullback to $\PP_{\bX^+}(\tcE)$ of the hyperplane class~$\bH$ of~$\PP(W)$ is a relative hyperplane class.
Therefore, there is a divisor class~$D$ on~$\bX^+$ such that
\begin{equation}
\label{eq:cq-class}
\iota(\cQ) \sim 2\bH + D
\end{equation} 
in $\Pic(\PP_{\bX^+}(\tcE))$.

The section~\eqref{eq:section-cq} can be also thought of as a section of~$\PP_{\bX^+}(\tcE)$;
it corresponds to an embedding~\mbox{$\cO \hookrightarrow \tcE$} of vector bundles.
We denote by 
\begin{equation*}
\cE = \tcE/\cO
\end{equation*}
the quotient bundle.
The embedding $\tcE \hookrightarrow W \otimes \cO_{\bX^+}$ induces an embedding of vector bundles 
\begin{equation*}
\cE \xhookrightarrow{\hspace{2em}} \barW \otimes \cO
\end{equation*} 
and a morphism
\begin{equation}
\label{eq:phiplus}
\phi_+ \colon \PP_{\bX^+}(\cE) \xlongrightarrow{\hspace{2em}}\PP(\barW).
\end{equation} 
We denote by $\barbH$ the hyperplane class of~$\PP(\barW)$;
its pullback to~$\PP_{\bX^+}(\cE)$ is then a relative hyperplane class.
We consider the blowup of~$\PP_{\bX^+}(\tcE)$ along~$s_{x_0}(\bX^+)$ and denote by~$\bE_\PP$ its exceptional divisor.

\begin{lemma}
\label{lemma:pp-blowup}
There is an isomorphism of $\bX^+$-schemes
\begin{equation*}
\Bl_{s_{x_0}(\bX^+)}(\PP_{\bX^+}(\tcE)) \cong 
\PP_{\PP_{\bX^+}(\cE)}(\cO \oplus \cO(-\barbH)).
\end{equation*}
Moreover, $\bH$ is a relative hyperplane class for~$\PP_{\PP_{\bX^+}(\cE)}(\cO \oplus \cO(-\barbH))$ and there is a linear equivalence
\begin{equation*}
\bE_\PP \sim \bH - \barbH.
\end{equation*}
Finally, $\bE_\PP$ is equal to the section of the projection
\begin{equation*}
\pi_\cE \colon \PP_{\PP_{\bX^+}(\cE)}(\cO \oplus \cO(-\barbH)) \xlongrightarrow{\hspace{2em}} \PP_{\bX^+}(\cE)
\end{equation*}
that corresponds to the embedding~$\cO \hookrightarrow \cO \oplus \cO(-\barbH)$.
\end{lemma}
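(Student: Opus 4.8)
The plan is to compute the blowup of the projective bundle $\PP_{\bX^+}(\tcE)$ along the section $s_{x_0}(\bX^+)$ directly, using the short exact sequence $0 \to \cO \to \tcE \to \cE \to 0$ that defines $\cE$. The key observation is that blowing up a projective bundle along a section (cut out by a sub-line-bundle summand) is a classical computation: if $\tcE$ sits in such an exact sequence, the section $s_{x_0}(\bX^+)$ is the locus where the tautological quotient of $\tcE$ coincides with the image of $\cE$, and its blowup admits a description as the projectivization of a rank-$2$ bundle over $\PP_{\bX^+}(\cE)$. Concretely, I would first recall that on $\PP_{\bX^+}(\tcE)$ there is the relative Euler sequence, and that the section $s_{x_0}$ corresponds to the sub-line-bundle $\cO \hookrightarrow \tcE$; the ideal sheaf of the section restricted to a fiber $\PP(\tcE_b) = \PP^{m+1}$ is the ideal of a point, whose blowup is the $\PP^1$-bundle $\PP_{\PP^m}(\cO \oplus \cO(-1))$ over $\PP^m = \PP(\cE_b)$. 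Globalizing this fiberwise picture over $\bX^+$ — keeping track of the twist by $\cO(-\barbH)$ coming from the quotient map $\tcE \to \cE$ — yields the claimed isomorphism $\Bl_{s_{x_0}(\bX^+)}(\PP_{\bX^+}(\tcE)) \cong \PP_{\PP_{\bX^+}(\cE)}(\cO \oplus \cO(-\barbH))$.

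Next I would identify the divisor classes. The morphism $\pi_\cE$ to $\PP_{\bX^+}(\cE)$ is the composition of the blowup map with the linear projection $\PP_{\bX^+}(\tcE) \dashrightarrow \PP_{\bX^+}(\cE)$ away from the section; under this, the relative hyperplane class $\barbH$ of $\PP_{\bX^+}(\cE)$ pulls back to a well-defined class, and $\bH$ (the relative hyperplane class of $\PP_{\bX^+}(\tcE)$, equivalently the pullback of the hyperplane class of $\PP(W)$) becomes a relative hyperplane class for the $\PP^1$-bundle structure $\pi_\cE$. The exceptional divisor $\bE_\PP$ is the unique section of $\pi_\cE$ not dominating $\PP(\barW)$ under $\phi_+$; it is the projectivization of the sub-line-bundle $\cO \hookrightarrow \cO \oplus \cO(-\barbH)$, and on the projective bundle $\PP_{\PP_{\bX^+}(\cE)}(\cO \oplus \cO(-\barbH))$ such a section is linearly equivalent to $\bH + \pi_\cE^*(\text{something})$. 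Pinning down that correction term: since $\bE_\PP$ contracts to $\bX^+$ under the composite with $\sigma_+$, while $\bH$ restricts to $\barbH$ on the complementary section, the difference $\bH - \bE_\PP$ must be a pullback from $\PP_{\bX^+}(\cE)$, and comparing with the splitting type $\cO \oplus \cO(-\barbH)$ forces $\bE_\PP \sim \bH - \barbH$.

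The main obstacle, I expect, is bookkeeping the twists correctly — specifically, getting the sign and the precise pullback in $\bE_\PP \sim \bH - \barbH$ right, since there are several competing conventions (which sub vs.\ quotient defines $\PP(-)$, where the twist $\cO(-\barbH)$ lives, and whether $\barbH$ or $-\barbH$ appears). The cleanest way to avoid errors is to verify the identity fiberwise over a point of $\bX^+$, where everything reduces to the standard fact that $\Bl_{\mathrm{pt}}\PP^{m+1} \cong \PP_{\PP^m}(\cO \oplus \cO(-1))$ with exceptional divisor in class $H - h$ (hyperplane of $\PP^{m+1}$ minus hyperplane of $\PP^m$), and then to check that the only possible global ambiguity is a pullback from $\bX^+$, which is excluded because $\bE_\PP$, $\bH$, and $\barbH$ are all already defined intrinsically on the total space without reference to a choice of splitting. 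The final assertion — that $\bE_\PP$ is the section corresponding to $\cO \hookrightarrow \cO \oplus \cO(-\barbH)$ — then follows since this is the section along which the blowdown $\Bl_{s_{x_0}(\bX^+)}(\PP_{\bX^+}(\tcE)) \to \PP_{\bX^+}(\tcE)$ is not an isomorphism, i.e.\ the one collapsed onto $s_{x_0}(\bX^+)$.
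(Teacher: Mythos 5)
Your proposal is correct and follows essentially the same route as the paper, whose entire proof is the one-line remark that this is ``just a relative version of the isomorphism of the blowup of a projective space at a point and the $\PP^1$-bundle over the projective space of dimension by one less''; you simply spell out the fiberwise reduction and the bookkeeping of twists that the authors leave implicit. The only place worth tightening is the claim that no pullback from $\bX^+$ can correct $\bE_\PP \sim \bH - \barbH$: rather than appealing to ``intrinsic definedness,'' just restrict both sides to $\bE_\PP \cong \PP_{\bX^+}(\cE)$, where $\bH$ is trivial and $\cO(\bE_\PP)\vert_{\bE_\PP} \cong \cO(-\barbH)$ by the normal-bundle computation $\cN_{s_{x_0}(\bX^+)/\PP_{\bX^+}(\tcE)} \cong \cE$.
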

\begin{proof}
This is just a relative version of the isomorphism of the blowup of a projective space at a point
and the $\PP^1$-bundle over the projective space of dimension by one less.
\end{proof}

We define 
\begin{equation*}
\hbX := \Bl_{s_{x_0}(\bX^+)}(\cQ)
\end{equation*}
to be the blowup of~$\cQ$ 
along the image of the section~\eqref{eq:section-cq}.

\begin{lemma}
\label{lemma:varpi-varpiplus}
There exist morphisms $\pi$, $\hat\iota$, and~$\pi_+$ that make the following diagram commutative
\begin{equation}
\label{diagram:pre}
\vcenter{\xymatrix@C=4em{
&
\hbX \ar[d]^{\hat\sigma} \ar@{..>}[dl]_\pi \ar@{..>}[r]^-{\hat\iota} \ar@{..>}[drr]_(.25){\pi_+} &
\Bl_{s_{x_0}(\bX^+)}(\PP_{\bX^+}(\tcE)) \ar[d] \ar[dr]^{\pi_\cE}
\\
\Bl_{x_0}(\bX) \ar[d]_\sigma &
\cQ \ar[drr]_(.25){q_\cQ} \ar[dl]_{p_\cQ} \ar@{^{(}->}[r]^{\iota} &
\PP_{\bX^+}(\tcE) \ar[dr] &
\PP_{\bX^+}(\cE) \ar[d]^{\sigma_+} 
\\
\bX &&&
\bX^+,
}}
\end{equation}
where $\sigma$ and~$\hat\sigma$ are the blowup maps and~$\sigma_+$ is the projective bundle morphism.
Moreover, $\pi$ is birational.
\end{lemma}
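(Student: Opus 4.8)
The plan is to construct the three new arrows of~\eqref{diagram:pre} one at a time, using only the functoriality of blowing up together with the universal property of the blowup~$\sigma$, and to obtain all the commutativity relations as a formal consequence.

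I would first produce~$\hat\iota$ and~$\pi_+$. Every quadric of the family passes through~$x_0$, so the section~$s_{x_0}(\bX^+)$ is contained in~$\cQ$, and~$\cQ$ is itself contained in~$\PP_{\bX^+}(\tcE)$; since the strict transform of a closed subscheme that \emph{contains} the blown-up center is its blowup along that same center, the scheme $\hbX=\Bl_{s_{x_0}(\bX^+)}(\cQ)$ is identified with the strict transform of~$\cQ$ inside $\Bl_{s_{x_0}(\bX^+)}(\PP_{\bX^+}(\tcE))$. This produces the closed immersion~$\hat\iota$, and composing it with the projection~$\pi_\cE$ of Lemma~\xref{lemma:pp-blowup} produces $\pi_+\colon\hbX\to\PP_{\bX^+}(\cE)$. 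As~$\iota$ and~$\pi_\cE$ are morphisms over~$\bX^+$, all commutativities involving~$\hat\iota$ and~$\pi_+$, in particular $\sigma_+\circ\pi_+=q_\cQ\circ\hat\sigma$, then follow formally from Lemma~\xref{lemma:pp-blowup}.

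To construct~$\pi$ I would apply the universal property of $\sigma\colon\Bl_{x_0}(\bX)\to\bX$ to the morphism $f:=p_\cQ\circ\hat\sigma\colon\hbX\to\bX$: one only has to check that the ideal sheaf $f^{-1}(\cI_{x_0})\cdot\cO_{\hbX}$ is invertible, where~$\cI_{x_0}$ denotes the ideal of~$x_0$ (on~$\bX$, or on~$\PP(W)$, whichever is convenient). The composite $\hbX\to\bX\hookrightarrow\PP(W)$ factors as $\nu\circ(\text{blowup map})\circ\hat\iota$, where $\nu\colon\PP_{\bX^+}(\tcE)\to\PP(W)$ is induced by $\tcE\hookrightarrow W\otimes\cO$, so everything reduces to the scheme-theoretic preimage $\nu^{-1}(x_0)$. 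Now $\nu^{-1}(x_0)$ is the zero scheme of the composite $\cO(-1)\hookrightarrow\tcE\to\cE=\tcE/\cO$ of the relative tautological sub-line-bundle with the quotient map; here one uses that the kernel of $\tcE\to\cE$ is precisely the sub-line-bundle whose image in $W\otimes\cO$ is $\langle x_0\rangle\otimes\cO$, so that this composite vanishes at a point exactly when the corresponding line of~$\tcE$ equals~$\langle x_0\rangle$. Writing the composite in a local frame $e_0,\dots,e_{m+1}$ of~$\tcE$ with~$e_0$ spanning the subbundle $\cO\subset\tcE$, and using that~$\cE$ is a vector bundle, one checks that it cuts out $s_{x_0}(\bX^+)$ with its reduced scheme structure (and with the expected codimension $m+1$). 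Hence $\cI_{x_0}\cdot\cO_{\PP_{\bX^+}(\tcE)}=\cI_{s_{x_0}(\bX^+)}$, so by the very definition of blowing up $\cI_{x_0}\cdot\cO_{\Bl_{s_{x_0}(\bX^+)}(\PP_{\bX^+}(\tcE))}=\cO(-\bE_\PP)$ is invertible, and restricting to the strict transform~$\hbX$, which is not contained in~$\bE_\PP$, shows that $f^{-1}(\cI_{x_0})\cdot\cO_{\hbX}=\cO_{\hbX}(-\bE_\PP\cap\hbX)$ is invertible too. This yields~$\pi$ with $\sigma\circ\pi=p_\cQ\circ\hat\sigma$, hence the remaining commutativities of~\eqref{diagram:pre}; and $\pi$ is birational because it is a morphism satisfying $\sigma\circ\pi=p_\cQ\circ\hat\sigma$ with~$\hat\sigma$ a blowup, $p_\cQ$ birational by assumption~\xref{ass:p2}, and~$\sigma$ a blowup.

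The step I expect to be the main obstacle is the scheme-theoretic identity $\nu^{-1}(x_0)=s_{x_0}(\bX^+)$ with its \emph{reduced} structure, since this is precisely what makes the pulled-back ideal of~$x_0$ Cartier, together with checking that invertibility is preserved when one restricts from~$\PP_{\bX^+}(\tcE)$ to the strict transform~$\hbX$; the latter needs~$\hbX$ (equivalently~$\cQ$) to have no embedded component along~$\bE_\PP$, which one reads off from~$p_\cQ$ being birational onto the integral variety~$\bX$. Alternatively, one could bypass the universal property by using the explicit relative description of $\Bl_{s_{x_0}(\bX^+)}(\PP_{\bX^+}(\tcE))$ in Lemma~\xref{lemma:pp-blowup} to produce directly a morphism $\Bl_{s_{x_0}(\bX^+)}(\PP_{\bX^+}(\tcE))\to\Bl_{x_0}(\PP(W))$ and then restrict it to strict transforms.
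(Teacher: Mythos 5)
Your proposal is correct and takes essentially the same route as the paper: $\hat\iota$ and $\pi_+$ come from identifying $\hbX$ with the strict transform of $\cQ$ and composing with $\pi_\cE$, while $\pi$ comes from the universal property of $\Bl_{x_0}(\bX)$ once the scheme-theoretic preimage of $x_0$ is seen to be (the pullback of) $s_{x_0}(\bX^+)$, with birationality of $\pi$ deduced from that of $\sigma$, $\hat\sigma$, and $p_\cQ$. The only difference is one of bookkeeping: the paper asserts $p_\cQ^{-1}(x_0)=s_{x_0}(\bX^+)$ and works intrinsically on $\cQ$, so that the Cartier property is automatic for the exceptional divisor of $\hat\sigma$, whereas you verify the analogous identity for $\nu^{-1}(x_0)$ in the ambient bundle $\PP_{\bX^+}(\tcE)$ and then restrict -- both are fine.
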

\begin{proof}

Since the scheme-theoretic preimage of the point $x_0$ via the map $p_\cQ$ coincides with $s_{x_0}(\bX^+)$,
its scheme-theoretic preimage in $\hbX$ is a Cartier divisor, 
hence there is a map $\pi \colon \hbX \to \Bl_{x_0}(\bX)$ making the left square commutative.
It is birational, because so are the morphisms~$\sigma$, $\hat\sigma$, and~$p_\cQ$
(for~$p_\cQ$ this is assumption~\ref{ass:p2}).

Similarly, there exists a morphism $\hat\iota \colon \hbX \xrightarrow{\hspace{1em}} \Bl_{s_{x_0}(\bX^+)}(\PP_{\bX^+}(\tcE))$ in the diagram
such that the central square commutes.
Finally, $\pi_+$ can be defined as the composition~$\pi_\cE \circ \hat\iota$.
\end{proof}

We need to identify the maps $\pi$ and $\pi_+$ as blowups.
We start with~$\pi_+$.

\begin{lemma}
\label{lemma:varpiplus}
There is an embedding~$\rF_1(\cQ/\bX^+,x_0) \hookrightarrow \PP_{\bX^+}(\cE)$
such that the morphism~$\pi_+$ is the blowup of\/~$\rF_1(\cQ/\bX^+,x_0)$,
and the exceptional divisor\/~$\hbD$ of\/~$\pi_+$ is isomorphic to~$\cL(\cQ/\bX^+,x_0)$.
In particular, $\hbX$ and~$\hbD$ are smooth.
\end{lemma}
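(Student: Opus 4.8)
The plan is to analyze the blowup $\pi_+ \colon \hbX \to \PP_{\bX^+}(\cE)$ fiberwise over $\bX^+$. Over a point $x_+ \in \bX^+$, the fiber of $\sigma_+$ is the projective space $\PP(\cE_{x_+}) = \PP(\barW_{x_+})$ (an $m$-dimensional projective space), the fiber of $q_\cQ \colon \cQ \to \bX^+$ is the quadric $\cQ_{x_+} \subset \PP(\tcE_{x_+})$ through $x_0$, and the fiber of $\hat\sigma \colon \hbX \to \cQ$ is the blowup $\Bl_{x_0}(\cQ_{x_+})$. So the map $\pi_+$ restricted to this fiber is the classical projection-from-a-point map $\Bl_{x_0}(Q^m) \to \PP^m$ realized as a blowup: it blows up the locus swept out by lines in $Q^m$ through $x_0$, namely $\rF_1(\cQ_{x_+}, x_0) \subset \PP(\barW_{x_+})$, with exceptional divisor the universal family of such lines, $\cL(\cQ_{x_+}, x_0)$. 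First I would make this fiberwise picture precise and globalize it.

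The key steps, in order: (1) Recall from Lemma~\ref{lemma:pp-blowup} that $\Bl_{s_{x_0}(\bX^+)}(\PP_{\bX^+}(\tcE)) \cong \PP_{\PP_{\bX^+}(\cE)}(\cO \oplus \cO(-\barbH))$ with $\bE_\PP$ the distinguished section, and observe that $\hbX = \Bl_{s_{x_0}(\bX^+)}(\cQ)$ is the strict transform of $\cQ$ inside this $\PP^1$-bundle. Since $\iota(\cQ) \sim 2\bH + D$ passes through the section $s_{x_0}$ with multiplicity one, its strict transform is $\hat\iota(\hbX) \sim 2\bH + D - \bE_\PP = \bH + \barbH + D$ (using $\bE_\PP \sim \bH - \barbH$). (2) Then $\hat\iota$ embeds $\hbX$ into the $\PP^1$-bundle $\pi_\cE$, and because $\hbX$ meets each fiber $\PP^1$ of $\pi_\cE$ in a subscheme of length two (degree $\bH \cdot (\text{fiber}) = 1$... actually length computed from $\bH + \barbH$ restricted to a fiber, which is $1$ since $\barbH$ is pulled back) — here one must be careful — I would instead argue directly that $\pi_+ = \pi_\cE \circ \hat\iota$ is the blowup. (3) The cleanest route: $\pi_+$ is projective and birational (it factors $\hat\iota$, a closed embedding, followed by $\pi_\cE$, and is birational since $p_\cQ$ and hence $\hat\sigma$, $\pi$ are, matching dimensions via assumption~\ref{ass:f1}: $\dim \cQ = \dim \bX = n$ and $\dim \PP_{\bX^+}(\cE) = \dim \bX^+ + m$, with these equal by the birationality of $p_\cQ$). (4) Identify the locus over which $\pi_+$ is not an isomorphism: a fiber of $\pi_\cE$ over a point $\ell \in \PP_{\bX^+}(\cE)$ is a $\PP^1$; $\hat\iota(\hbX)$ contains this whole $\PP^1$ precisely when the corresponding line in $\PP(W)$ through $x_0$ lies on $\cQ_{x_+}$, i.e. when $\ell$ parametrizes a line of $\rF_1(\cQ/\bX^+, x_0)$ — this gives the embedding $\rF_1(\cQ/\bX^+, x_0) \hookrightarrow \PP_{\bX^+}(\cE)$ as the non-isomorphism locus, and the preimage $\hbD = \pi_+^{-1}(\rF_1(\cQ/\bX^+,x_0))$ is exactly $\cL(\cQ/\bX^+, x_0)$ sitting inside $\hbX$. (5) Finally, to see $\pi_+$ is genuinely the \emph{blowup} (not just a birational contraction with this center), use that everything is flat over $\bX^+$ and the statement holds on each fiber (the classical fact that projection of a smooth quadric from a point is the blowup of the vertex locus of the lines), together with the smoothness of $\rF_1(\cQ/\bX^+, x_0)$ over $\rF_1(\bX, x_0)$ and the base; one invokes that a proper birational morphism between smooth varieties contracting a divisor to a smooth center, which is a $\PP^r$-bundle over that center, is the blowup (or cites the universal property directly). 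Smoothness of $\hbX$ and $\hbD$ then follows since $\hbX = \Bl_Z(\text{smooth})$ along the smooth center $Z = s_{x_0}(\bX^+)$ — wait, $\cQ$ itself need not be smooth — so instead I would deduce smoothness of $\hbX$ from the identification with a blowup of the smooth $\PP_{\bX^+}(\cE)$ along the smooth $\rF_1(\cQ/\bX^+, x_0)$ (smooth by assumption~\ref{ass:f1}, being smooth over the smooth $\rF_1(\bX, x_0)$), and $\hbD$ is then a projective bundle over a smooth base, hence smooth.

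The main obstacle I anticipate is step~(5): promoting the fiberwise "projection of a quadric from a point is a blowup" statement to an honest global blowup along $\rF_1(\cQ/\bX^+, x_0)$, and in particular verifying that the relevant ideal sheaf on $\PP_{\bX^+}(\cE)$ behaves well in families — equivalently, that the center $\rF_1(\cQ/\bX^+, x_0) \subset \PP_{\bX^+}(\cE)$ has the expected local structure (is a regularly embedded smooth subvariety of the expected codimension $n - m - 1$, matching $\dim \hbD = \dim \hbX - 1$). This is where assumption~\ref{ass:f1} — smoothness and the dimension bound $\dim \rF_1(\cQ/\bX^+, x_0) = n - 2$ — does the work: it forces $\hbD$ to be an irreducible divisor and pins down the codimension of the center, and combined with assumption~\ref{ass:qk} (which guarantees the quadrics are linearly normal, so no extra lines through $x_0$ appear in $\bX$ beyond those accounted for) it should rule out pathologies. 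I would handle it by a local computation in a fiber followed by a flatness/dimension-count argument, or by citing the characterization of blowups via the universal property applied to the strict transform of $\cQ$ in the $\PP^1$-bundle of Lemma~\ref{lemma:pp-blowup}.
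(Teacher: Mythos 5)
Your setup is exactly the paper's: you pass to the $\PP^1$-bundle $\PP_{\PP_{\bX^+}(\cE)}(\cO \oplus \cO(-\barbH))$ of Lemma~\ref{lemma:pp-blowup}, identify $\hat\iota(\hbX)$ as the strict transform of $\cQ$ with class $2\bH + D - \bE_\PP = \bH + \barbH + D$, note that this has relative degree~$1$ over $\PP_{\bX^+}(\cE)$, and identify the locus where $\pi_+$ fails to be an isomorphism with the fibers of $\pi_\cE$ entirely contained in $\hbX$, i.e.\ with $\rF_1(\cQ/\bX^+,x_0)$. All of that is correct and is what the paper does. The gap is in your step~(5), and neither of the two resolutions you propose works as stated. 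The route via ``a proper birational morphism between smooth varieties contracting a divisor to a smooth $\PP^r$-bundle center is a blowup'' is circular: it presupposes smoothness of $\hbX$, but $\hbX = \Bl_{s_{x_0}(\bX^+)}(\cQ)$ is not a priori smooth (as you yourself note, $\cQ$ need not be smooth), and in the lemma the smoothness of $\hbX$ is a \emph{consequence} of the blowup identification, not an input. The fiberwise route is also problematic: the hypotheses of Theorem~\ref{theorem:qk} do not force the individual quadrics $\cQ_{x_+}$ to be smooth, nor $x_0$ to be a smooth point of each of them (indeed in the $\GTGr(2,7)$ application the fibers are conics that may degenerate), so the ``classical projection of a smooth quadric from a point'' picture can fail on special fibers, and blowups do not commute with base change in any case; only the global codimension condition of assumption~\ref{ass:f1} controls this.

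What actually closes the argument in the paper is a single global statement, \cite[Lemma~2.1]{K16}: a divisor of relative degree~$1$ in a $\PP^1$-bundle $\PP_Y(\cF) \to Y$ corresponds to a section of the rank-$2$ bundle $\cF^\vee \otimes \cL$, and if the zero locus $Z$ of that section has the expected codimension~$2$ in $Y$, then the divisor \emph{is} $\Bl_Z(Y)$, with exceptional divisor the restriction of the $\PP^1$-bundle to $Z$. Here $Y = \PP_{\bX^+}(\cE)$, the section is the pushforward of the equation of $\hat\iota(\hbX) \sim \bH + \barbH + D$, its zero locus is $\rF_1(\cQ/\bX^+,x_0)$, and the codimension-$2$ condition is exactly $\dim \rF_1(\cQ/\bX^+,x_0) = \dim(\bX) - 2$ from assumption~\ref{ass:f1}. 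This requires no smoothness of $\hbX$ or of the fibers of $q_\cQ$; smoothness of $\hbX$ and $\hbD$ then falls out at the end from smoothness of $\bX^+$ and of $\rF_1(\cQ/\bX^+,x_0)$, as in your final remark. So you have assembled the correct data but are missing the one lemma that converts ``relative hyperplane with codimension-$2$ degeneracy locus'' into ``blowup''; if you prove or cite that statement, your argument goes through and coincides with the paper's.
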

\begin{proof}
There is a natural isomorphism of the relative Hilbert scheme of lines of~$\PP_{\bX^+}(\tcE)$
\begin{equation*}
\rF_1(\PP_{\bX^+}(\tcE)/\bX^+,x_0) \cong \PP_{\bX^+}(\cE),
\end{equation*}
such that the universal line $\cL(\PP_{\bX^+}(\tcE)/\bX^+,x_0)$ is isomorphic 
to~$\PP_{\PP_{\bX^+}(\cE)}(\cO \oplus \cO(-\barbH))$;
this in particular defines a natural embedding~$\rF_1(\cQ/\bX^+,x_0) \hookrightarrow \PP_{\bX^+}(\cE)$.

Since~$\iota(\cQ)$ is a divisor of type~$2\bH + D$ (see~\eqref{eq:cq-class})
it follows that~$\rF_1(\cQ/\bX^+,x_0)$ is the zero locus of a section
of the pushforward to~$\PP_{\bX^+}(\cE)$ of the line bundle~$\cO(2\bH + D - \bE_\PP)$ 
on~\mbox{$\PP_{\PP_{\bX^+}(\cE)}(\cO \oplus \cO(-\barbH))$},
On the other hand, $\hbX$ is the strict transform of~$\cQ$ in~$\Bl_{s_{x_0}(\bX^+)}(\PP_{\bX^+}(\tcE))$, 
hence it is the zero locus of the same section of~$\cO(2\bH + D - \bE_\PP)$.
By Lemma~\ref{lemma:pp-blowup} we have 
\begin{equation*}
\cO(2\bH + D - \bE_\PP) \cong \cO(\bH + \barbH + D),
\end{equation*}
so this is a relative hyperplane class for 
the $\PP^1$-bundle~$\PP_{\PP_{\bX^+}(\cE)}(\cO \oplus \cO(-\barbH))$.
Applying~\cite[Lemma~2.1]{K16} we conclude that
\begin{equation*}
\hbX \cong \Bl_{\rF_1(\cQ/\bX^+,x_0)}(\PP_{\bX^+}(\cE))
\end{equation*}
since $\codim_{\PP_{\bX^+}(\cE)}(\rF_1(\cQ/\bX^+,x_0)) = \dim(\bX) - \dim(\rF_1(\cQ/\bX^+,x_0)) = 2$
by assumption~\ref{ass:f1}.

This argument also proves that the exceptional divisor~$\hbD$ of the blowup coincides with the universal line~$\cL(\cQ/\bX^+,x_0)$.
The smoothness of the varieties~$\hbX$ and~$\hbD$ follows from the smoothness of~$\bX^+$ and~$\rF_1(\cQ/\bX^+,x_0)$, assumptions~\ref{ass:f2k} and~\ref{ass:f1}.
\end{proof}

Now we can describe the morphism~$\pi$.

\begin{lemma}
\label{lemma:varpi}
There is a natural embedding $\cL(\bX,x_0) \hookrightarrow \Bl_{x_0}(\bX)$ such that
the morphism~$\pi$ is the blowup with center~$\cL(\bX,x_0)$.
Moreover,
\begin{equation}
\label{eq:be-cap-cl}
\bE \cap \cL(\bX,x_0) \cong \rF_1(\bX,x_0),
\end{equation}
the intersection is transverse, and $\pi^{-1}(\bE) \cong \Bl_{\rF_1(\bX,x_0)}(\bE)$.
\end{lemma}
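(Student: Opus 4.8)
The plan is to realize $\cL(\bX,x_0)$ as a smooth closed subvariety of $\Bl_{x_0}(\bX)$ via the universal property of the blowup, to extract~\eqref{eq:be-cap-cl} and the transversality claim directly from that description, and finally to recognize $\pi$ as a blowup by feeding the structure of $\hbD$ from Lemma~\ref{lemma:varpiplus} into a blow-up criterion. First I would construct the embedding. The projection $\cL(\bX,x_0)\to\rF_1(\bX,x_0)$ is a $\PP^1$-bundle, and the point $x_0$ determines a section of it whose image, scheme-theoretically, is the fibre $p_\cL^{-1}(x_0)$, since each line through $x_0$ meets $x_0$ in a single reduced point. Hence $p_\cL^{-1}(x_0)$ is an effective Cartier divisor in $\cL(\bX,x_0)$, so the universal property of $\sigma$ yields a factorization $p_\cL=\sigma\circ\tilde{p}_\cL$ through a morphism $\tilde{p}_\cL\colon\cL(\bX,x_0)\to\Bl_{x_0}(\bX)$. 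Two distinct lines through $x_0$ meet only in $x_0$ and have distinct tangent directions there, so $\tilde{p}_\cL$ is injective; it is unramified because $\cL(\bX,x_0)$ and $\rF_1(\bX,x_0)$ are smooth (assumption~\ref{ass:f1}) and it is injective on tangent vectors along the section. Thus $\tilde{p}_\cL$ is a closed immersion, and I would use it to regard $\cL(\bX,x_0)\subset\Bl_{x_0}(\bX)$.

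Next, since $\sigma\circ\tilde{p}_\cL=p_\cL$ and $\bE=\sigma^{-1}(x_0)$ scheme-theoretically, we get $\bE\cap\cL(\bX,x_0)=\tilde{p}_\cL^{-1}(\bE)=p_\cL^{-1}(x_0)\cong\rF_1(\bX,x_0)$, which is smooth by assumption~\ref{ass:f1}; this is~\eqref{eq:be-cap-cl}. Writing $d=\dim\rF_1(\bX,x_0)$, this intersection has dimension $d=(n-1)+(d+1)-n=\dim\bE+\dim\cL(\bX,x_0)-\dim\Bl_{x_0}(\bX)$, the expected one, and a scheme-theoretic intersection of smooth subvarieties of a smooth variety that is itself smooth of the expected dimension is transverse. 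Finally, $d<n-2$ by assumption~\ref{ass:f1}, so $\cL(\bX,x_0)$ has codimension $n-d-1\geq2$ in $\Bl_{x_0}(\bX)$ and, not being contained in $\bE$, the blowup of $\cL(\bX,x_0)$ has total transform of $\bE$ equal to the strict transform; by transversality of $\bE$ and the centre this strict transform is $\Bl_{\bE\cap\cL(\bX,x_0)}(\bE)=\Bl_{\rF_1(\bX,x_0)}(\bE)$. This yields the last assertion of the lemma, once $\pi$ is identified with the blowup of $\cL(\bX,x_0)$.

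It remains to carry out that identification, which is the core of the proof. From the commutativity $\sigma\circ\pi=p_\cQ\circ\hat\sigma$ of~\eqref{diagram:pre}, the identification $\hbD\cong\cL(\cQ/\bX^+,x_0)$ of Lemma~\ref{lemma:varpiplus}, and the fibre-product formula~\eqref{eq:lines}, one checks that $\pi$ maps $\hbD$ onto $\cL(\bX,x_0)$ and that $\pi|_{\hbD}$ is the base change of the morphism~\eqref{eq:f1-morphism} along $\cL(\bX,x_0)\to\rF_1(\bX,x_0)$; in particular it is smooth of relative dimension $n-d-2=\codim_{\Bl_{x_0}(\bX)}\cL(\bX,x_0)-1$, so $\pi^{-1}(\cL(\bX,x_0))$ is pure of codimension one and supported on $\hbD$. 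Using Lemma~\ref{lemma:varpi-varpiplus}, the birationality of $p_\cQ$ (assumption~\ref{ass:p2}) and assumption~\ref{ass:qk}, one then shows that $\pi$ is an isomorphism over $\Bl_{x_0}(\bX)\setminus\cL(\bX,x_0)$ — the strict transform $\hbE$ of $\bE$ maps birationally onto $\bE$, and over a point of $\bX$ not lying on a line through $x_0$ the map $p_\cQ$ has a single reduced point as fibre — so that $\Exc(\pi)=\hbD$. Consequently $\cI_{\cL(\bX,x_0)}\cdot\cO_{\hbX}$ is invertible, $\pi$ factors through the blowup $b\colon\Bl_{\cL(\bX,x_0)}(\Bl_{x_0}(\bX))\to\Bl_{x_0}(\bX)$ via a morphism $f$, and $f$ is proper and birational between smooth varieties, an isomorphism over $\Bl_{x_0}(\bX)\setminus\cL(\bX,x_0)$, and carries $\hbD$ onto the exceptional divisor of $b$; since it contracts no divisor, purity of the exceptional locus forces $f$ to be an isomorphism, whence $\pi$ is the blowup of $\cL(\bX,x_0)$ with exceptional divisor $\hbD$. (Alternatively this last step can be packaged as one application of a blow-up criterion in the spirit of \cite[Lemma~2.1]{K16}.)

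The step I expect to be the main obstacle is precisely the invertibility of $\cI_{\cL(\bX,x_0)}\cdot\cO_{\hbX}$ — equivalently, that $\pi|_{\hbD}$ is \emph{literally} the projective bundle $\PP(N_{\cL(\bX,x_0)/\Bl_{x_0}(\bX)})$, not merely a smooth fibration of the right relative dimension. I expect this to rest on assumption~\ref{ass:qk}: since every quadric $\cQ_{x_+}$ equals $\bX\cap\langle\cQ_{x_+}\rangle$ and contains $x_0$, the line joining $x_0$ to any point of $\cQ_{x_+}$ lies in the linear span $\langle\cQ_{x_+}\rangle$, so that over a point $([\ell],y)\in\cL(\bX,x_0)$ the fibre of $\pi|_{\hbD}$ is naturally a projective space of directions in $\langle\cQ_{x_+}\rangle$ complementary to $\ell$, compatibly with $\cO_{\hbX}(-\hbD)$; verifying that this fibrewise picture globalizes (and, along the way, that $p_\cQ$ really is an isomorphism away from $\cL(\bX,x_0)$) is where the genuine work lies, the rest being formal consequences of Lemmas~\ref{lemma:lines}, \ref{lemma:pp-blowup} and~\ref{lemma:varpiplus}.
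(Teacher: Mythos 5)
Your construction of the embedding $\cL(\bX,x_0)\hookrightarrow\Bl_{x_0}(\bX)$ via the universal property of the blowup, the identification~\eqref{eq:be-cap-cl}, the transversality argument, and the deduction that $\pi^{-1}(\bE)\cong\Bl_{\rF_1(\bX,x_0)}(\bE)$ all match the paper's proof. The divergence, and the problem, is in the core step of identifying $\pi$ with the blowup of $\cL(\bX,x_0)$. There you write ``Consequently $\cI_{\cL(\bX,x_0)}\cdot\cO_{\hbX}$ is invertible'' after establishing that $\Exc(\pi)=\hbD$ and that $\pi\vert_{\hbD}$ is smooth of relative dimension $\codim\cL(\bX,x_0)-1$. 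This does not follow: a proper birational morphism of smooth varieties whose exceptional locus is an irreducible divisor fibered in projective spaces of the expected dimension over a smooth center need not pull the ideal of the center back to a locally principal ideal (and even a smooth fibration with $\PP^{n-d-2}$ fibers need not be the projectivized normal bundle). You correctly flag this as ``the main obstacle,'' but then leave it unresolved, so the argument as written has a genuine gap precisely at the one step that is not formal. Your parenthetical fallback to \cite[Lemma~2.1]{K16} is also not the right tool: that lemma governs blowups arising as zero loci of sections on $\PP^1$-bundles (it is what powers Lemma~\xref{lemma:varpiplus}), and there is no such presentation of $\hbX$ over $\Bl_{x_0}(\bX)$.

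The paper circumvents this difficulty entirely with a Picard-rank argument. Since Lemma~\xref{lemma:varpiplus} exhibits $\hbX$ as the blowup of $\PP_{\bX^+}(\cE)$ along the smooth $\rF_1(\cQ/\bX^+,x_0)$, one gets $\uprho(\hbX)=\uprho(\bX^+)+2=\uprho(\bX)+2=\uprho(\Bl_{x_0}(\bX))+1$, using assumption~\xref{ass:f2k}. Combined with the fact that $\pi(\hbD)=\cL(\bX,x_0)$ has codimension~$>1$ (proved by the chain $\sigma(\pi(\hbD))=p_\cQ(\hat\sigma(\cL(\cQ/\bX^+,x_0)))=p_\cL(\cL(\bX,x_0))$ through diagram~\eqref{diagram:pre} and Lemma~\xref{lemma:lines}, plus irreducibility of $\hbD$), the criterion of \cite[Lemma~2.5]{K18} --- a birational morphism of smooth varieties with relative Picard rank one contracting a divisor onto a subvariety of codimension at least two is the blowup of that subvariety --- yields the conclusion in one stroke. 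Note that the paper then deduces, rather than assumes, that $\pi\vert_{\hbD}$ and hence the morphism~\eqref{eq:f1-morphism} are projective bundles; your proposed direction of inference is the harder one. If you want to salvage your approach, you should replace the unproved invertibility claim by an appeal to such a relative-Picard-rank-one blowup criterion, for which your computation that $\Exc(\pi)=\hbD$ maps onto $\cL(\bX,x_0)$ already supplies the needed input.
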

\begin{proof}
The scheme-theoretic preimage of the point~$x_0$ along the map $p_\cL \colon \cL(\bX,x_0) \to \bX$ 
is equal to the image of the section $s_{x_0} \colon \rF_1(\bX,x_0) \to \cL(\bX,x_0)$;
in particular, it is a Cartier divisor on~$\cL(\bX,x_0)$. 
Therefore, the morphism $p_\cL$ lifts to a morphism
\begin{equation*}
\cL(\bX,x_0) \xrightarrow{\hspace{1em}} \Bl_{x_0}(\bX),
\end{equation*}
which is clearly a closed embedding.
Moreover, the scheme-theoretic preimage of~$\bE$ under this embedding 
is equal to the image of the section~$s_{x_0}$, which proves~\eqref{eq:be-cap-cl}.
Since the intersection is smooth (by assumption~\ref{ass:f1}), it is transverse.

Consider the birational morphism $\pi \colon \hbX \to \Bl_{x_0}(\bX)$.
Its source is smooth by Lemma~\ref{lemma:varpiplus} and its target is smooth by the assumptions.
Furthermore, from the diagram~\eqref{diagram:pre} we deduce that
\begin{equation*}
\sigma(\pi(\hbD)) = 
p_\cQ(\hat\sigma(\hbD)) = 
p_\cQ(\hat\sigma(\cL(\cQ/\bX^+,x_0))) = 
p_\cQ(\tilde{p}_\cL(\cL(\cQ/\bX^+,x_0))) = 
p_\cL(\cL(\bX,x_0))
\end{equation*}
(the last equality uses Lemma~\ref{lemma:lines}).
Since~$\hbD$ is irreducible, it follows that $\pi(\hbD) = \cL(\bX,x_0)$.
Note that $\codim(\cL(\bX,x_0)) = \dim(\bX) - \dim(\rF_1(\bX,x_0)) - 1 > 1$ (again by assumption~\ref{ass:f1}).
Finally, we have 
\begin{equation*}
\uprho(\hbX) = \uprho(\bX^+) + 2 = \uprho(\bX) + 2 = \uprho(\Bl_{x_0}(\bX)) + 1,
\end{equation*}
hence by~\cite[Lemma~2.5]{K18} we conclude that $\pi$ is the blowup with center $\cL(\bX,x_0)$.

Since the divisor~$\bE$ intersects the blowup center transversely along~$\rF_1(\bX,x_0)$, 
its preimage is equal to its strict transform and isomorphic to the blowup with center~$\rF_1(\bX,x_0)$.
\end{proof}

Finally, we can combine the above results and finish the proof of the theorem.

\begin{proof}[Proof of Theorem~\xref{theorem:qk}]
The morphisms~$\pi$ and~$\pi_+$ are constructed in Lemma~\ref{lemma:varpi-varpiplus}
and they are proved to be the blowups in Lemma~\ref{lemma:varpiplus} and~\ref{lemma:varpi}, respectively.
The description of the divisor~$\hbD$ as the fiber product follows 
from a combination of Lemma~\ref{lemma:varpiplus} with~\eqref{eq:lines}.

The morphism~$\phi$ is induced by the linear projection from~$x_0$, and the morphism~$\phi_+$ is defined in~\eqref{eq:phiplus}.
We have $\phi \circ \pi = \phi_+ \circ \pi_+$, because the two maps can be rewritten as the compositions
\begin{equation*}
\xymatrix@R=.5ex@C=4em{
&& \bX \ar@{^{(}->}[dr]
\\
\hbX \ar[r]^{\hat\sigma} &
\cQ \ar[ur]^{p_\cQ} \ar@{^{(}->}[dr]_(.4){\iota} &&
\PP(W) \ar@{-->}[r] &
\PP(\barW),
\\
&& \PP_{\bX^+}(\tcE) \ar[ur] \ar@{-->}[r] &
\PP_{\bX^+}(\cE) \ar[ur]
}
\end{equation*}
and the squares commute.

The equality $\phi \circ \pi = \phi_+ \circ \pi_+$ implies that $\phi(\Bl_{x_0}(\bX)) = \phi_+(\PP_{\bX^+}(\cE))$ 
and~\mbox{$\pi_+ \circ \pi^{-1} = \phi_+^{-1} \circ \phi$},
thus defining the subvariety $\barbX \subset \PP(\barW)$ and the birational map~$\psi$.
Since the morphisms~$\pi$ and~$\pi_+$ are contractions with the same exceptional divisor, $\psi$ is small.
Assumption~\ref{ass:f1} implies that~$\codim_{\Bl_{x_0}(\bX)}(\cL(\bX,x_0)) \ge \codim_{\PP_{\bX^+}(\cE)}(\rF_1(\cQ/\bX^+,x_0))$, 
hence~$\psi$ is a flop or antiflip.

It remains to construct the subsheaf~$\cE_0 \subset \cE$, to prove~\eqref{eq:hbe}, \eqref{eq:iso-bx-open}, and~\eqref{eq:iso-be-open},
and to describe the restrictions of $\pi$, $\pi_+$, $\phi$, and $\phi_+$ to various strata.

Consider the diagram~\eqref{diagram:pre}.
By construction of~$\pi$, the preimage of~$\bE$ in~$\hbX$ is the exceptional divisor of~$\hat\sigma$,
which is also equal $\hbX \cap \bE_\PP$.
The divisor~$\bE_\PP$ by Lemma~\ref{lemma:pp-blowup} is equal to the section of the morphism~$\pi_\cE$
corresponding to the embedding~$\cO \hookrightarrow \cO \oplus \cO(-\barbH)$; 
in particular~$\bE_\PP$ is isomorphic to~$\PP_{\bX^+}(\cE)$ via~$\pi_\cE$,
and the restriction of the hyperplane class~$\bH$ to~$\bE_\PP$ is trivial.

As it is explained in Lemma~\ref{lemma:varpiplus}, 
$\hbX$ is the divisor of type~\mbox{$\bH + \barbH + D$} in~$\PP_{\PP_{\bX^+}(\cE)}(\cO \oplus \cO(-\barbH))$,
hence $\hbX \cap \bE_\PP$ is equal to the zero locus of a section of the line bundle $\cO(\barbH + D)$ on~$\bE_\PP = \PP_{\bX^+}(\cE)$.
This section corresponds to a morphism
\begin{equation*}
\cE \xlongrightarrow{\hspace{2em}} \cO(D)
\end{equation*}
on~$\bX^+$.
If we define~$\cE_0$ as its kernel, then $\hbX \cap \bE_\PP = \PP_{\bX^+}(\cE_0)$, 
and the morphism~$\pi_\cE$ embeds it naturally into~$\PP_{\bX^+}(\cE)$.
This proves~\eqref{eq:hbe}.

The maps~$\pi$ and~$\pi_+$ in~\eqref{eq:iso-bx-open} are isomorphisms by Lemma~\ref{lemma:varpi} and~\ref{lemma:varpiplus}, respectively.
Furthermore, the map~$\phi$ is an isomorphism because it is induced by the linear projection
and the map~$\phi_+$ is an isomorphism because $\phi_+ = \phi \circ \pi \circ \pi_+^{-1}$.

The restriction of~$\pi_+$ to~$\hbD$ coincides with the projection to the second factor by Lemma~\ref{lemma:varpiplus}.
Similarly, by Lemma~\ref{lemma:varpi} the restriction of~$\pi$ to~$\hbD$ 
coincides with the map \mbox{$\cL(\cQ/\bX^+,x_0) \to \cL(\bX,x_0)$} 
induced by the morphism~\eqref{eq:f1-morphism}, hence it is the projection to the first factor.
On the other hand, since~$\pi$ is a smooth blowup, this morphism is a projective bundle,
hence so is the morphism~\eqref{eq:f1-morphism}.
The restriction of~$\phi$ to~$\cL(\bX,x_0)$ is the natural $\PP^1$-bundle, 
because~$\phi$ is induced by the linear projection from~$x_0$,
and the restriction of~$\phi_+$ to~$\rF_1(\cQ/\bX^+,x_0)$ is the map~\eqref{eq:f1-morphism}, 
because it is the only map such that $\phi_+ \circ \pi_+ = \phi \circ \pi$ on~$\hbD$.

Finally, the isomorphism~\eqref{eq:iso-be-open} follows from~\eqref{eq:hbe} and~\eqref{eq:iso-bx-open}.
\end{proof}

\subsection{Grassmannians of lines}
\label{subsection:gr2v}

In this section we show that the assumptions of Theorem~\ref{theorem:qk} are satisfied for Grassmannians~$\bX = \Gr(2,V)$. 
Note that when~$\dim(V) = 6$ the Grassmannian~$\Gr(2,V)$ is the maximal Mukai variety~$\bX_{14}$ of genus~$g = 8$.
In this case we show that diagram~\eqref{diagram:typical} exists with $\bX^+ = \Gr(2,4)$.

Assume~$\dim(V) \ge 5$.
We consider the Pl\"ucker embedding $\bX := \Gr(2,V) \hookrightarrow \PP(\wedge^2V)$, so that~$\bH$ is the Pl\"ucker polarization.
Let $U_0 \subset V$ be a $2$-dimensional subspace and let $x_0 \in \bX$ be the corresponding point.
We denote the quotient space by 
\begin{equation*}
V^+ := V/U_0.
\end{equation*}
We further denote 
\begin{equation*}
\bX^+ := \Gr(2,V^+),
\qquad 
\cQ := \Fl(2,4;V) \times_{\Gr(4,V)} \bX^+,
\end{equation*}
where the embedding $\bX^+ \hookrightarrow \Gr(4,V)$ takes a subspace $U^+ \subset V^+$ 
to its preimage under the projection $V \to V^+$ along $U_0$.
Note that $\cQ \to \bX^+$ is a $\Gr(2,4)$-fibration, so its fibers are smooth $4$-dimensional quadrics on~$\bX$ passing through~$x_0$.

\begin{lemma}
\label{lemma:f1-gr2m}
There is a commutative diagram
\begin{equation*}
\xymatrix{
\rF_1(\cQ/\bX^+,{x_0}) \ar[d] \ar@{=}[r] &
\PP(U_0) \times \Fl(1,2;V^+) \ar[d]
\\
\rF_1(\bX,x_0) \ar@{=}[r] & 
\PP(U_0) \times \PP(V^+),
}
\end{equation*}
where the map on the left is the map~\eqref{eq:f1-morphism} and the map on the right is induced by the natural map $\Fl(1,2;V^+) \to \Gr(2,V^+)$.
In particular, $\rF_1(\bX,x_0)$ is smooth, $\rF_1(\cQ/\bX^+,{x_0})$ is smooth over~$\rF_1(\bX,x_0)$, connected,
and 
\[
\dim(\rF_1(\cQ/\bX^+,{x_0})) = 2\dim(V) - 6 = \dim(\bX) - 2.
\]
\end{lemma}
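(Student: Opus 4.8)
The plan is to identify both $\rF_1(\bX,x_0)$ and $\rF_1(\cQ/\bX^+,x_0)$ via the classical description of lines on a Grassmannian, applied in the absolute and in the relative form. First I would recall that a line on $\bX = \Gr(2,V) \subset \PP(\wedge^2V)$ is the same datum as a flag $A \subset B$ in $V$ with $\dim A = 1$, $\dim B = 3$, the line being $\{[U] : A \subseteq U \subseteq B\}$; equivalently $\rF_1(\bX) = \Fl(1,3;V)$ with universal line $\{(A,U,B) : A \subseteq U \subseteq B\}$. Such a line passes through $x_0 = [U_0]$ exactly when $A \subseteq U_0 \subseteq B$, so
\[
\rF_1(\bX,x_0) = \{(A,B) \in \Fl(1,3;V) : A \subseteq U_0 \subseteq B\},
\]
and $(A,B) \mapsto (A,\,B/U_0)$ identifies this with $\PP(U_0) \times \PP(V/U_0) = \PP(U_0) \times \PP(V^+)$ (inverse: $(A,\bar B)$ goes to $A$ together with the preimage of $\bar B$ in $V$). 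In particular $\rF_1(\bX,x_0)$ is smooth and connected, of dimension $1 + (\dim V - 3) = \dim V - 2$.

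Next I would set up the relative picture. Since $\bX^+ = \Gr(2,V^+)$ is embedded in $\Gr(4,V)$ by taking preimages, the restriction to $\bX^+$ of the tautological rank-$4$ subbundle on $\Gr(4,V)$ is a vector bundle $\cV$ with $\cV_{[U^+]}$ equal to the preimage $\tilde U^+$ of $U^+$ in $V$; it sits in an exact sequence $0 \to U_0 \otimes \cO_{\bX^+} \to \cV \to \cU^+ \to 0$, where $\cU^+$ is the tautological rank-$2$ subbundle on $\Gr(2,V^+)$. With this notation $\cQ = \Gr_{\bX^+}(2,\cV)$ and the section $s_{x_0}$ is the trivial subbundle $U_0 \otimes \cO_{\bX^+} \subset \cV$. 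The relative Hilbert scheme of lines in the fibers of $\cQ \to \bX^+$ is the relative flag bundle $\Fl_{\bX^+}(1,3;\cV)$, and the locus of lines through $s_{x_0}$ is $\{(A,B) : A \subseteq U_0 \otimes \cO_{\bX^+} \subseteq B\}$. Here a sub-line-bundle $A \subseteq U_0 \otimes \cO_{\bX^+}$ is a point of $\PP_{\bX^+}(U_0 \otimes \cO_{\bX^+}) = \bX^+ \times \PP(U_0)$, a rank-$3$ bundle $B$ with $U_0 \otimes \cO_{\bX^+} \subseteq B \subseteq \cV$ amounts to a sub-line-bundle $B/(U_0 \otimes \cO_{\bX^+}) \subseteq \cU^+$, i.e. a point of $\PP_{\bX^+}(\cU^+) = \Fl(1,2;V^+)$, and the two choices are independent; hence
\[
\rF_1(\cQ/\bX^+,x_0) \cong \bigl(\bX^+ \times \PP(U_0)\bigr) \times_{\bX^+} \Fl(1,2;V^+) = \PP(U_0) \times \Fl(1,2;V^+),
\]
with structure morphism to $\bX^+$ given by the projection $\Fl(1,2;V^+) \to \Gr(2,V^+)$.

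To finish I would identify the morphism~\eqref{eq:f1-morphism} and read off the rest. By Lemma~\ref{lemma:lines} this morphism sends a line $L$ in a fiber $\cQ_{[U^+]}$ to the line $p_\cQ(L) \subset \bX$, and since $\cQ_{[U^+]} = \Gr(2,\tilde U^+) \hookrightarrow \Gr(2,V)$ is the tautological inclusion, a line with data $(A,B)$, $A \subseteq U_0 \subseteq B \subseteq \tilde U^+$, goes to the line with the same data $(A,B)$ in $\Gr(2,V)$. Through the identifications above this reads as $\mathrm{id}_{\PP(U_0)} \times \lambda$, where $\lambda \colon \Fl(1,2;V^+) \to \PP(V^+)$ is the projection retaining the $1$-dimensional subspace: both morphisms send $(A,B)$ to $(A,\,B/U_0)$, and since all the schemes involved are smooth, hence reduced, they coincide. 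This is the asserted commutative diagram. Now $\Fl(1,2;V^+)$ is smooth and $\lambda$ is a $\PP^{\dim V^+ - 2}$-bundle over $\PP(V^+)$, so $\rF_1(\cQ/\bX^+,x_0)$ is smooth, the morphism~\eqref{eq:f1-morphism} is a smooth surjection, and everything is connected; finally $\dim \rF_1(\cQ/\bX^+,x_0) = 1 + \dim \Fl(1,2;V^+) = 1 + (2\dim V^+ - 3) = 2\dim V - 6 = \dim \bX - 2$, which together with the hypothesis $\dim V \ge 5$ (so that $\dim V - 2 < 2\dim V - 6$) also yields assumption~\ref{ass:f1} of Theorem~\ref{theorem:qk}.

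The step I expect to be the main obstacle is the scheme-theoretic content of the relative identification: one needs that the relative Hilbert scheme of lines on the Grassmann bundle $\Gr_{\bX^+}(2,\cV)$ is exactly $\Fl_{\bX^+}(1,3;\cV)$ with the stated universal family, and that the incidence condition ``the line passes through $s_{x_0}$'' defines the displayed product as a \emph{reduced} closed subscheme rather than merely agreeing with it on geometric points. The cleanest way around this is to construct the universal family of lines directly over $\PP(U_0) \times \Fl(1,2;V^+)$ from the flag $A \subseteq B$ produced above, thereby getting a morphism to the relative Hilbert scheme, and to check it is an isomorphism by comparing geometric points together with the fact that the source is smooth of the expected dimension $2\dim V - 6$; the absolute statement for $\rF_1(\bX,x_0)$ is treated the same way, or deduced from the relative one by pushing forward along $p_\cQ$ as in Lemma~\ref{lemma:lines}.
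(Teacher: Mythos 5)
Your proof is correct and follows essentially the same route as the paper: both identify lines on $\Gr(2,V)$ with flags $V_1\subset V_3$, describe $\rF_1(\bX,x_0)$ as $\PP(U_0)\times\PP(V^+)$ via $(V_1,V_3)\mapsto(V_1,V_3/U_0)$, and identify the fiber of~\eqref{eq:f1-morphism} over a line with the set of $U^+\supset V_3/U_0$, i.e.\ the fiber of $\Fl(1,2;V^+)\to\PP(V^+)$. You merely carry out the relative identification more explicitly (via $\cQ=\Gr_{\bX^+}(2,\cV)$ and the flag-bundle description) and correctly read the right-hand vertical arrow as the projection $\Fl(1,2;V^+)\to\PP(V^+)$ retaining the $1$-dimensional subspace, consistent with the displayed diagram.
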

\begin{proof}
Recall that any line on~$\Gr(2,V)$ has the form
\begin{equation}
\label{eq:line-v1-v3}
\ell(V_1,V_3) = \{ [U] \mid V_1 \subset U \subset V_3 \},
\end{equation} 
where $V_1 \subset V_3 \subset V$ is a flag with $\dim(V_i) = i$.
Clearly, $[U_0] \in \ell(V_1,V_3)$ means $V_1 \subset U_0 \subset V_3$; 
such lines are determined by the points~$[V_1] \in \PP(U_0)$ and $[V_3/U_0] \in \PP(V/U_0) = \PP(V^+)$;
this proves the bottom equality.

The line $\ell(V_1,V_3)$ lies on the quadric $\cQ_{[U^+]}$ if and only if $V_3/U_0 \subset U^+$.
Therefore, the fiber of the map~\eqref{eq:f1-morphism} over $[\ell(V_1,V_3)]$
parameterizes all $U^+ \subset V^+$ such that $V_3/U_0 \subset U^+$.
This is the space $\PP(V/V_3)$, i.e., the fiber of the projection $\Fl(1,2;V^+) \to \Gr(2,V^+)$.
This proves the second part of the lemma.
\end{proof}

\begin{proposition}
\label{proposition:gr2v}
The assumptions of Theorem~\xref{theorem:qk} are satisfied.
Moreover, 
\begin{equation*}
\cE_0 \cong U_0 \otimes \cU^+,
\qquad 
\cE \cong \cE_0 \oplus \wedge^2\cU^+,
\end{equation*}
where $\cU^+$ is the tautological bundle on~$\Gr(2,V^+)$
and 
\[
\sigma_+ \circ \psi \colon \Bl_{x_0}(\Gr(2,V)) \xdashrightarrow{\hspace{2em}} \Gr(2,V^+)
\]
is given by the linear system $|\bH - 2\bE|$.
\end{proposition}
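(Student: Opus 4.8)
\textbf{Proof plan for Proposition~\ref{proposition:gr2v}.}
The plan is to verify assumptions~\ref{ass:f2k}--\ref{ass:f1} of Theorem~\ref{theorem:qk} one by one, then to identify the vector bundle~$\cE$ explicitly, and finally to apply Lemma~\ref{lemma:linear-system}. First I would note that $\bX^+ = \Gr(2,V^+)$ is smooth, projective, and connected, with $\uprho(\bX^+) = 1 = \uprho(\bX)$ (both Grassmannians have Picard rank~$1$ since $\dim V \ge 5$, so $\dim V^+ \ge 3$), giving~\ref{ass:f2k}. For~\ref{ass:p2} and~\ref{ass:qk} I would describe the projection $p_\cQ \colon \cQ \to \bX$ concretely: a point of $\cQ$ is a pair $([U],[U^+])$ with $U \supset U_0$ or rather $U \subset$ the preimage $\tilde U^+$ of $U^+$; given a general $[U] \in \Gr(2,V)$ the subspace $U + U_0$ is $4$-dimensional, so $U^+ = (U+U_0)/U_0$ is the unique preimage, showing $p_\cQ$ is birational; and the fiber $\cQ_{[U^+]} = \Gr(2,\tilde U^+) \subset \Gr(2,V)$ is cut out by the Plücker equations, which are exactly the linear span intersected with $\bX$, giving~\ref{ass:qk}. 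Assumption~\ref{ass:f1} is precisely the content of Lemma~\ref{lemma:f1-gr2m}: the Hilbert schemes are the stated products, hence smooth, the relative one is smooth over the absolute one (a $\PP(V/V_3)$-bundle), connected, and the dimension count $2\dim V - 6 = \dim\bX - 2$ with $\dim\rF_1(\bX,x_0) = \dim V - 1 < \dim\bX - 2$ (using $\dim V \ge 5$) holds.

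Next I would compute the bundles $\cE_0$ and $\cE$. By the construction in the proof of Theorem~\ref{theorem:qk}, $\tcE \hookrightarrow \wedge^2 V \otimes \cO_{\bX^+}$ is the rank-$(m+2) = 6$ subbundle whose fiber over $[U^+]$ is $\wedge^2 \tilde U^+ \subset \wedge^2 V$, where $\tilde U^+ \subset V$ is the $4$-dimensional preimage of $U^+$. Since $\tilde U^+$ sits in an exact sequence $0 \to U_0 \to \tilde U^+ \to U^+ \to 0$, we get a filtration of $\wedge^2 \tilde U^+$ with graded pieces $\wedge^2 U_0$, $U_0 \otimes U^+$, and $\wedge^2 U^+$; relativizing over $\bX^+$ this reads $\wedge^2 U_0 = \cO(-\bH)$ (the line spanned by $x_0$ inside $\wedge^2 V$), then $U_0 \otimes \cU^+$, then $\wedge^2 \cU^+$. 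Quotienting by the sub-line-bundle $\cO \hookrightarrow \tcE$ (which is exactly $\langle x_0 \rangle \otimes \cO = \wedge^2 U_0 \otimes \cO(\bH) = \cO$ after the twist identifying the relative hyperplane class) yields $\cE = \tcE/\cO$ with a two-step filtration with pieces $U_0 \otimes \cU^+$ and $\wedge^2\cU^+$. To see the sequence splits, I would observe that $\wedge^2\cU^+$ is a line bundle on $\Gr(2,V^+)$ and the relevant $\Ext^1(\wedge^2\cU^+,\, U_0 \otimes \cU^+) = U_0 \otimes \rH^1(\Gr(2,V^+),\, \cU^+ \otimes (\wedge^2\cU^+)^\vee) = U_0 \otimes \rH^1(\Gr(2,V^+),\, \cU^{+\vee})$ vanishes (by Borel--Weil--Bott, or directly since $\cU^{+\vee}$ is globally generated with no higher cohomology on a Grassmannian); hence $\cE \cong (U_0 \otimes \cU^+) \oplus \wedge^2\cU^+$, and comparing with~\eqref{eq:hbe}, which identifies $\cE_0$ as the rank-$m = 4$ subsheaf, gives $\cE_0 \cong U_0 \otimes \cU^+$.

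Finally, for the linear system I would invoke Lemma~\ref{lemma:linear-system}. Here $K_\bX = -\dim V \cdot \bH$ in the sense that the index is $i = \dim V$ (for $\Gr(2,V)$ the canonical class is $-\dim V \cdot \bH$), and $m = 4$, $n = \dim\bX = 2\dim V - 4$. One needs $-K_{\bX^+} + \rc_1(\cE)$ very ample on $\bX^+ = \Gr(2,V^+)$: we have $-K_{\bX^+} = \dim V^+ \cdot \bH_{\bX^+} = (\dim V - 2)\bH_{\bX^+}$ and $\rc_1(\cE) = \rc_1(U_0 \otimes \cU^+) + \rc_1(\wedge^2\cU^+) = 2\,\rc_1(\cU^+) + \rc_1(\cU^+) = -3\bH_{\bX^+}$, so the sum is $(\dim V - 5)\bH_{\bX^+}$, which for $\dim V \ge 6$ is very ample. (For $\dim V = 5$ the sum is trivial and the lemma does not apply directly, but the maximal Mukai case of interest is $\dim V = 6$, where one gets $\bH_{\bX^+}$, very ample; for the general statement one either restricts to $\dim V \ge 6$ or argues separately, and I would flag this.) Then Lemma~\ref{lemma:linear-system} gives that $\sigma_+ \circ \psi$ is defined by $|(i - m - 1)\bH - (n - m - 2)\bE| = |(\dim V - 5)\bH - (2\dim V - 10)\bE|$; in the maximal case $\dim V = 6$ this is $|\bH - 2\bE|$, as claimed.

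\textbf{Main obstacle.} I expect the splitting $\cE \cong \cE_0 \oplus \wedge^2\cU^+$ and the precise identification of $\cE_0$ with $U_0 \otimes \cU^+$ (as opposed to merely having these as the graded pieces of a filtration) to be the delicate point: it requires tracing through the construction of $\cE$ and $\cE_0$ in the proof of Theorem~\ref{theorem:qk}, pinning down the quotient map $\cE \to \cO(D)$ whose kernel is $\cE_0$, and verifying the relevant $\Ext^1$-vanishing so that the extension is trivial. The assumption verifications and the final linear-system computation are routine bookkeeping by comparison.
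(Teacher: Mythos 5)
Your overall structure matches the paper's, but two steps go by a genuinely different route, and there are two small slips worth fixing. For the splitting of $\cE$, the paper simply fixes a linear splitting $V = U_0 \oplus V^+$, so that the preimage of $U^+$ is $U_0 \oplus U^+$ and
$\tcE = \wedge^2(U_0\otimes\cO\oplus\cU^+) = (\wedge^2U_0\otimes\cO)\oplus(U_0\otimes\cU^+)\oplus\wedge^2\cU^+$
splits on the nose with no cohomology input; your filtration-plus-$\Ext^1$ argument is correct (indeed $\cU^+\otimes(\wedge^2\cU^+)^\vee\cong(\cU^+)^\vee$ has vanishing $H^1$) but does more work than necessary. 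For the identification of $\cE_0$, which you rightly flag as the delicate point, comparing ranks with~\eqref{eq:hbe} does not by itself single out $U_0\otimes\cU^+$ among rank-$4$ subsheaves; the missing one-line check is that the fiber of $\cQ$ over $[U^+]$ is the Pl\"ucker quadric $\omega\wedge\omega=0$ in $\PP(\wedge^2(U_0\oplus U^+))$, so the map $\cE\to\cO(D)$ cutting out $\cE_0$ is the polarization $\omega\mapsto\omega\wedge\omega_0$ at $x_0$, whose kernel modulo $\wedge^2U_0$ is exactly $U_0\otimes U^+$. Two smaller points: first, $\dim\rF_1(\bX,x_0)=\dim(\PP(U_0)\times\PP(V^+))=\dim V-2$, not $\dim V-1$ (with your value the strict inequality in assumption~\ref{ass:f1} would fail for $\dim V=5$, whereas with the correct one it holds for all $\dim V\ge5$); second, Lemma~\ref{lemma:linear-system} yields the linear system $|(\dim V-5)(\bH-2\bE)|$, which coincides with the claimed $|\bH-2\bE|$ only when $\dim V=6$. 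The paper instead calls the claim ``evident'' because $\sigma_+\circ\psi$ is the restriction of the linear projection $\PP(\wedge^2V)\dashrightarrow\PP(\wedge^2V^+)$, whose defining linear forms are precisely those vanishing on the embedded tangent space $\PP(U_0\wedge V)$, i.e.\ to order $2$ at $x_0$, and the count $\binom{\dim V}{2}-(2\dim V-3)=\binom{\dim V-2}{2}$ shows these exhaust $|\bH-2\bE|$; this covers all $\dim V\ge5$ at once, including the case $\dim V=5$ you flag. None of this is fatal for the Mukai application ($\dim V=6$), but as stated the proposition covers all $\dim V\ge5$.
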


\begin{proof}
Assumption~\ref{ass:f2k} of Theorem~\ref{theorem:qk} is evident.

The fiber of the map $p_\cQ \colon \cQ \to \bX$ over the point $[U] \in \bX = \Gr(2,V)$ corresponding to a subspace~\mbox{$U \subset V$}
parameterizes $4$-dimensional subspaces in $V$ that contain both $U$ and $U_0$.
Generically, this is just one point $[U_0 + U]$, so $p_\cQ$ is birational,
hence assumption~\ref{ass:p2} is satisfied.

Assumption~\ref{ass:qk} is also clear and assumption~\ref{ass:f1} is proved in Lemma~\ref{lemma:f1-gr2m}.

Furthermore, we have
\begin{equation*}
\tcE = \wedge^2(U_0 \otimes \cO \oplus \cU^+) = (\wedge^2U_0 \otimes \cO) \oplus (U_0 \otimes \cU^+) \oplus \wedge^2\cU^+.
\end{equation*}
The section $s_{x_0}$ corresponds to the first summand
and the subbundle~$\cE_0$ is the second summand.

The fact that the map $\sigma_+ \circ \psi$ is given by the linear system $|\bH - 2\bE|$ is evident 
(and in the case when $\dim(V) > 5$ it also follows from Lemma~\ref{lemma:linear-system}).
\end{proof}

\subsection{Orthogonal Grassmannian}
\label{subsection:ogr510}

In this section we show that the assumptions of Theorem~\ref{theorem:qk} are satisfied 
for~$\bX= \OGr_+(5,10)$,
the maximal Mukai variety~$\bX_{12}$ of genus~7.
We refer to~\cite{K18} for basic facts about its geometry.
In this case we show that diagram~\eqref{diagram:typical} exists with $\bX^+ = \PP^4$.
We assume here that the base field is algebraically closed.

We denote by $V$ a 10-dimensional vector space endowed with a non-degenerate quadratic form,
consider the half-spinor embedding $\OGr_+(5,V) \to \PP(W) \cong \PP^{15}$, 
and denote by $\bH$ the corresponding polarization.
Let $U_0 \subset V$ be a $5$-dimensional isotropic subspace and let $x_0 \in \bX$ be the corresponding point.
We denote 
\begin{equation*}
\bX^+ := \PP(U_0),
\qquad 
\cQ := \OFl_+(1,5;V) \times_{\PP(V)} \bX^+,
\end{equation*}
where we consider the natural embedding $\bX^+ = \PP(U_0) \hookrightarrow \PP(V)$.
Note that $\cQ \to \bX^+$ is a family of smooth $6$-dimensional quadrics on~$\bX$, see~\cite[\S3.1]{K18}, passing through~$x_0$.

\begin{lemma}
\label{lemma:f1-ogr}
There is a commutative diagram
\begin{equation*}
\xymatrix{
\rF_1(\cQ/\bX^+,{x_0}) \ar[d] \ar@{=}[r] &
\Fl(1,3;U_0) \ar[d]
\\
\rF_1(\bX,x_0) \ar@{=}[r] & 
\Gr(3,U_0),
}
\end{equation*}
where the map on the left is the morphism~\eqref{eq:f1-morphism} and the map on the right is the natural projection.
In particular, $\rF_1(\bX,x_0)$ is smooth, $\rF_1(\cQ/\bX^+,{x_0})$ is smooth over~$\rF_1(\bX,x_0)$, connected, and
\begin{equation*}
\dim(\rF_1(\cQ/\bX^+,x_0)) = 8 = \dim(\bX) - 2.
\end{equation*}
\end{lemma}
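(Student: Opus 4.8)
The plan is to imitate the proof of Lemma~\ref{lemma:f1-gr2m} for the orthogonal Grassmannian, using the known description of lines on $\OGr_+(5,V)$. First I would recall that a line on $\OGr_+(5,V)$ through a fixed point $[U] = x_0$ (where $U = U_0$ is a maximal isotropic subspace) corresponds to a flag $V_3 \subset V_4 \subset U_0$, or rather — since $\OGr_+$ is one of the two spinor components — a line in $\OGr_+(5,V)$ has the form $\ell(V_4) = \{ [U'] \in \OGr_+(5,V) \mid V_4 \subset U',\ \dim U' = 5 \}$ for a $4$-dimensional isotropic subspace $V_4$; and such a line passes through $x_0 = [U_0]$ exactly when $V_4 \subset U_0$. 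Hence $\rF_1(\bX,x_0) \cong \Gr(4,U_0) \cong \Gr(3,U_0)$ (by duality in a $5$-dimensional space), which gives the bottom equality of the diagram.

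Next I would analyze the fibers of the morphism~\eqref{eq:f1-morphism}. A line $\ell(V_4)$ lies on the quadric $\cQ_{[v]}$ corresponding to a point $[v] \in \bX^+ = \PP(U_0)$ precisely when the point $[v]$ (viewed in $\PP(V)$ via $\PP(U_0) \hookrightarrow \PP(V)$) is forced by the incidence condition defining $\cQ = \OFl_+(1,5;V) \times_{\PP(V)} \PP(U_0)$; concretely, $\ell(V_4) \subset \cQ_{[v]}$ iff the line $v$ is contained in $V_4$, i.e.\ $[v] \in \PP(V_4) \subset \PP(U_0)$. Therefore the fiber of~\eqref{eq:f1-morphism} over $[\ell(V_4)]$ is $\PP(V_4)$, which is exactly the fiber of the projection $\Fl(1,4;U_0) \to \Gr(4,U_0)$. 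Under the duality identification $\Gr(4,U_0) \cong \Gr(3,U_0)$ and $\Fl(1,4;U_0) \cong \Fl(1,3;U_0)$ (with the flag relabeled appropriately), this gives the claimed top equality and the commutativity of the diagram. From the identification $\rF_1(\cQ/\bX^+,x_0) \cong \Fl(1,3;U_0)$ one reads off smoothness, smoothness over $\rF_1(\bX,x_0) = \Gr(3,U_0)$, and connectedness, and the dimension count $\dim \Fl(1,3;U_0) = 4 + 4 = 8 = 10 - 2 = \dim(\bX) - 2$.

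The main obstacle will be getting the combinatorics of the spinor variety exactly right: unlike $\Gr(2,V)$, the variety $\OGr_+(5,V)$ is a half-spinor component, so I must be careful that the family of lines, the half-spinor embedding, and the incidence description of the quadrics $\cQ_{[v]}$ from~\cite[\S3.1]{K18} are all compatible — in particular that lines on $\OGr_+$ through $x_0$ really are parametrized by $\Gr(4,U_0)$ and not by something involving the other spinor component, and that the flag appearing in the fiber is the expected one. I would check this against the cited facts in~\cite{K18} rather than rederive the spinor geometry from scratch. Once the line-incidence dictionary is pinned down, the rest is the same bookkeeping as in Lemma~\ref{lemma:f1-gr2m}.

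\begin{proof}
Recall from~\cite[\S3.1]{K18} that every line on $\bX = \OGr_+(5,V)$ has the form
\begin{equation*}
\ell(V_4) = \{\, [U'] \in \OGr_+(5,V) \mid V_4 \subset U' \,\},
\end{equation*}
where $V_4 \subset V$ is a $4$-dimensional isotropic subspace (automatically contained in maximal isotropic subspaces of both spinor families); such a line passes through $x_0 = [U_0]$ if and only if $V_4 \subset U_0$. Since $U_0$ is $5$-dimensional, these $V_4$ are parametrized by $\Gr(4,U_0) \cong \Gr(3,U_0)$, which gives the bottom equality in the diagram.

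The quadric $\cQ_{[v]}$, $[v] \in \bX^+ = \PP(U_0)$, consists of those $[U'] \in \OGr_+(5,V)$ with $v \subset U'$ (this is the description of the $6$-dimensional quadrics in~\cite[\S3.1]{K18}). Hence $\ell(V_4) \subset \cQ_{[v]}$ if and only if $v \subset U'$ for all $U' \supset V_4$ in the spinor family, equivalently $v \subset V_4$, i.e.\ $[v] \in \PP(V_4) \subset \PP(U_0)$. Therefore the fiber of the morphism~\eqref{eq:f1-morphism} over $[\ell(V_4)]$ is $\PP(V_4)$, which is the fiber of the projection $\Fl(1,4;U_0) \to \Gr(4,U_0)$. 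Under the identifications $\Gr(4,U_0) \cong \Gr(3,U_0)$ and $\Fl(1,4;U_0) \cong \Fl(1,3;U_0)$ this yields the commutative diagram in the statement.

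In particular $\rF_1(\bX,x_0) \cong \Gr(3,U_0)$ is smooth, $\rF_1(\cQ/\bX^+,x_0) \cong \Fl(1,3;U_0)$ is smooth over it (indeed a $\PP^3$-bundle, being the projectivization of a tautological bundle) and connected, and
\begin{equation*}
\dim(\rF_1(\cQ/\bX^+,x_0)) = \dim \Fl(1,3;U_0) = 6 + 2 = 8 = 10 - 2 = \dim(\bX) - 2.\qedhere
\end{equation*}
\end{proof}
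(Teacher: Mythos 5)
Your overall strategy (identify the lines through $x_0$, then compute the fibers of the map~\eqref{eq:f1-morphism}) is the same as the paper's, but the key geometric input is wrong. Lines on the spinor variety $\OGr_+(5,V)$ are \emph{not} of the form $\ell(V_4)=\{[U']\mid V_4\subset U'\}$ for a $4$-dimensional isotropic $V_4$: since $V_4^\perp/V_4$ is a $2$-dimensional quadratic space with exactly two isotropic lines, one in each spinor family, the locus of $[U']\in\OGr_+(5,V)$ containing a fixed isotropic $V_4$ is a single \emph{point}, not a line. The correct statement (this is \cite[Theorem~3.2]{K18}, which the paper invokes) is that the Hilbert scheme of lines on $\OGr_+(5,V)$ is $\OGr(3,V)$: the line $\ell(V_3)$ attached to a $3$-dimensional isotropic $V_3$ is the set of $[U']$ in the $+$ family with $V_3\subset U'$ (a $\PP^1$, because the maximal isotropics of the $4$-dimensional quadratic space $V_3^\perp/V_3$ in one family form a $\PP^1$). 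Consequently $\rF_1(\bX,x_0)=\Gr(3,U_0)$ directly, the fiber of~\eqref{eq:f1-morphism} over $[\ell(V_3)]$ is $\PP(V_3)\cong\PP^2$ (one checks $\ell(V_3)\subset\cQ_{[v]}$ iff $v\in V_3$, since the intersection of all $U'\supset V_3$ in the $+$ family is exactly $V_3$), and the total space is $\Fl(1,3;U_0)$ with no relabeling needed.

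Two further errors compound this. The claimed ``duality'' $\Gr(4,U_0)\cong\Gr(3,U_0)$ is false: for $\dim U_0=5$ duality gives $\Gr(4,U_0)\cong\PP(U_0^\vee)$, which is $4$-dimensional, whereas $\Gr(3,U_0)$ is $6$-dimensional. Likewise $\Fl(1,4;U_0)$ has dimension $4+3=7$ and cannot be identified with the $8$-dimensional $\Fl(1,3;U_0)$; your final line quietly switches to the correct flag variety and the correct count $6+2=8$, but this is not what your construction produces. So the numerical conclusion is right only because it is asserted at the end, not because the argument delivers it; the proof needs to be rebuilt on the $\ell(V_3)$ description of lines.
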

\begin{proof}
By~\cite[Theorem~3.2]{K18} the Hilbert scheme of lines on~$\OGr_+(5,V)$ is~$\OGr(3,V)$,
and the line $\ell(V_3)$ corresponding to an isotropic subspace $V_3 \subset V$ 
contains the point~$[U_0]$ if and only if~\mbox{$V_3 \subset U_0$};
this proves the bottom equality.

The fiber of the map~\eqref{eq:f1-morphism} over $[\ell(V_3)]$ parameterizes all $V_1 \subset V$ such that $V_1 \subset V_3$;
this proves the second part of the lemma.
\end{proof}

\begin{proposition}
\label{proposition:ogr510}
The assumptions of Theorem~\xref{theorem:qk} are satisfied.
Moreover, 
\begin{equation*}
\cE_0 \cong \Omega_{\PP(U_0)}^2(2),
\qquad 
\cE \cong \cE_0 \oplus \cO_{\PP(U_0)}(-1),
\end{equation*}
and the map $\sigma_+ \circ \psi \colon \Bl_{x_0}(\OGr_+(5,V)) \dashrightarrow \PP(U_0)$ is given by the linear system $|\bH - 2\bE|$.
\end{proposition}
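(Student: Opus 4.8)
The plan is to mirror the proof of Proposition~\ref{proposition:gr2v}: verify the four hypotheses of Theorem~\ref{theorem:qk}, identify the bundle~$\cE$ from the half-spinor model of~$W$, and read off the linear system from Lemma~\ref{lemma:linear-system}.

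Hypothesis~\ref{ass:f2k} is immediate, since $\bX^+ = \PP(U_0)\cong\PP^4$ is smooth, projective and connected with $\uprho = 1 = \uprho(\OGr_+(5,V))$. For~\ref{ass:p2} I would compute the fibre of $p_\cQ$ over a point $[U]\in\OGr_+(5,V)$: by construction it equals $\{[V_1]\in\PP(U_0):V_1\subset U\} = \PP(U\cap U_0)$; since $U$ and $U_0$ lie in the same connected component of the isotropic Grassmannian, $\dim(U\cap U_0)$ is odd, hence equals~$1$ for general~$[U]$, so $p_\cQ$ is generically one-to-one and therefore birational. Hypothesis~\ref{ass:qk} asks that the quadric $\cQ_{[V_1]} = \{[U]:V_1\subset U\}\cong\OGr_+(4,\,V_1^\perp/V_1)$, which under the half-spinor embedding is a smooth $6$-dimensional quadric spanning a $\PP^7\subset\PP(W)$, is cut out on $\OGr_+(5,V)$ by this $\PP^7$; this is part of the description of these quadrics in~\cite[\S3.1]{K18}. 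Hypothesis~\ref{ass:f1} was verified in Lemma~\ref{lemma:f1-ogr}.

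For the bundle computation I would use the model $W\cong\wedge^{\mathrm{even}}U_0^\vee$ of the half-spinor space, in which the point $x_0 = [U_0]$ is the vacuum line $\wedge^0 U_0^\vee$ and Clifford multiplication by a vector $v_1\in U_0$ acts as the contraction~$\iota_{v_1}$. A pure spinor lies in $\langle\cQ_{[V_1]}\rangle$ only if $v_1$ annihilates it, so
\[
\langle\cQ_{[V_1]}\rangle\ \subseteq\ \ker\bigl(\iota_{v_1}\colon\wedge^{\mathrm{even}}U_0^\vee\to\wedge^{\mathrm{odd}}U_0^\vee\bigr)\ =\ \wedge^{\mathrm{even}}(U_0/V_1)^\vee,
\]
and this is an equality, both sides having dimension~$8$ (the dimension of the $\mathrm{Spin}_8$ half-spinor space). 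Globalising over $\PP(U_0)$, and using that the tautological quotient bundle is $T_{\PP(U_0)}(-1)$ so that its dual is $\Omega^1_{\PP(U_0)}(1)$, this yields
\[
\tcE\ \cong\ \cO\,\oplus\,\wedge^2\bigl(\Omega^1_{\PP(U_0)}(1)\bigr)\,\oplus\,\wedge^4\bigl(\Omega^1_{\PP(U_0)}(1)\bigr)\ \cong\ \cO\,\oplus\,\Omega^2_{\PP(U_0)}(2)\,\oplus\,\cO_{\PP(U_0)}(-1),
\]
where I used $\wedge^4\Omega^1_{\PP(U_0)} = \omega_{\PP(U_0)} = \cO(-5)$. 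The section~$s_{x_0}$ is the first summand $\cO = \wedge^0$, hence $\cE = \tcE/\cO\cong\Omega^2_{\PP(U_0)}(2)\oplus\cO_{\PP(U_0)}(-1)$. To see that~$\cE_0$ is the first summand I would recall that the $\mathrm{Spin}_8$-invariant quadratic form on $\wedge^{\mathrm{even}}(U_0/V_1)^\vee$ is $\omega\mapsto(\omega\wedge\omega)_{\mathrm{top}}$, which pairs $\wedge^0$ with $\wedge^4$ and $\wedge^2$ with itself; hence pairing with~$s_{x_0}$ gives the projection $\cE\to\cO_{\PP(U_0)}(-1)$ onto the second summand, and its kernel is $\Omega^2_{\PP(U_0)}(2)$ (this also identifies the twist $D = -\barbH$).

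Finally, for the linear system I would invoke Lemma~\ref{lemma:linear-system}. Here $K_{\OGr_+(5,V)} = -8\bH$, i.e.\ $i = 8$, while $n = 10$ and $m = 6$; moreover $\rc_1(\Omega^2_{\PP^4}(2)) = -3$, so $\rc_1(\cE) = -4$ and $-K_{\PP(U_0)} + \rc_1(\cE) = \cO_{\PP(U_0)}(1)$ is very ample, whence Lemma~\ref{lemma:linear-system} gives that $\sigma_+\circ\psi$ is defined by $\bigl|(i-m-1)\bH - (n-m-2)\bE\bigr| = |\bH - 2\bE|$. I expect the only genuine obstacle to be the bundle identification: pinning down the half-spinor conventions (which component, the direction of the Clifford action, and the shape of the invariant quadratic form) and upgrading the fibrewise statements to honest isomorphisms of vector bundles on~$\PP(U_0)$. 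The Grassmannian case of Proposition~\ref{proposition:gr2v} is lighter precisely because $\wedge^2 V$ is entirely elementary, whereas here one leans on the geometry of the spinor tenfold recorded in~\cite{K18}.
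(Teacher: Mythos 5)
Your proposal is correct and follows the same overall strategy as the paper: the verification of assumptions (i), (ii), (iv) is identical (same parity argument for the birationality of $p_\cQ$, same appeal to Lemma~\ref{lemma:f1-ogr}), and the final step is the same application of Lemma~\ref{lemma:linear-system} with $i=8$, $n=10$, $m=6$. The one place you diverge is the identification of $\tcE$: the paper observes via \cite[(3.3)]{K18} that $\tcE$ is the restriction to $\PP(U_0)$ of the spinor bundle on the quadric $\OGr(1,V)$ and then quotes Ottaviani's theorem to get $\cO\oplus\Omega^2(2)\oplus\Omega^4(4)$, whereas you compute the fibres directly as $\wedge^{\mathrm{even}}(U_0/V_1)^\vee$ in the half-spinor model and globalise via the Euler sequence; the two answers agree since $\Omega^4_{\PP^4}(4)\cong\cO(-1)$. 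Your route is more self-contained (and your identification of $\cE_0$ as the $\Omega^2(2)$ summand via the invariant quadratic form pairing $\wedge^0$ with $\wedge^4$ is a genuine justification where the paper merely asserts the claim), at the cost of having to fix the spinor conventions yourself rather than leaning on the references.
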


\begin{proof}
Assumption~\ref{ass:f2k} of Theorem~\ref{theorem:qk} is evident.

The fiber of the map $p_\cQ \colon \cQ \to \bX$ over the point $[U] \in \bX = \OGr_+(5,V)$ 
corresponding to an isotropic subspace~\mbox{$U \subset V$} is equal to~$\PP(U_0 \cap U)$.
Since $[U]$ is contained in the same connected component of $\OGr(5,V)$ as $[U_0]$, we have 
\begin{equation*}
\dim(U \cap U_0) \equiv \dim(U_0) \bmod 2,
\end{equation*}
see~\cite[\S2.1]{K18}.
Therefore, for $[U]$ general in~$\OGr_+(5,V)$ we have~$\dim(U \cap U_0) = 1$,
hence the general fiber of $p_\cQ$ is a point, $p_\cQ$ is birational, 
hence assumption~\ref{ass:p2} is satisfied.

Assumption~\ref{ass:qk} follows from~\cite[Corollary~4.7]{K18} and assumption~\ref{ass:f1} is proved in Lemma~\ref{lemma:f1-ogr}.

Furthermore, by~\cite[(3.3)]{K18} the bundle $\tcE$ is the restriction to $\PP(U_0)$ of the spinor bundle on the quadric $\OGr(1,V)$, 
hence by~\cite[Theorem~2.6]{Ottaviani} we have
\begin{equation*}
\tcE \cong \cO_{\PP(U_0)} \oplus \Omega_{\PP(U_0)}^2(2) \oplus \Omega_{\PP(U_0)}^4(4).
\end{equation*}
The section $s_{x_0}$ corresponds to the first summand, 
and the subbundle~$\cE_0$ is the second summand.

The fact that the map $\sigma_+ \circ \psi$ is given by the linear system $|\bH - 2\bE|$ follows from Lemma~\ref{lemma:linear-system}.
\end{proof}

\subsection{Grassmannian of the group~$\rG_2$}
\label{subsection:g2gr}

In this section we show that the assumptions of Theorem~\ref{theorem:qk} are satisfied for~$\bX = \GTGr(2,7)$,
the maximal Mukai variety~$\bX_{18}$ of genus~10.
In this case we show that diagram~\eqref{diagram:typical} exists with $\bX^+$ a quintic del Pezzo fourfold.
We assume again that the base field is algebraically closed.

We denote by $V$ a $7$-dimensional vector space endowed with a general $3$-form $\lambda \in \wedge^3V^\vee$.
Then
\begin{equation*}
\bX = \GTGr(2,V) \subset \Gr(2,V)
\end{equation*}
is the locus of subspaces $U \subset V$ annihilated by~$\lambda$.
We have $\dim(\bX) = 5$, $\Pic(\bX) = \ZZ \bH$, where~$\bH$ is the hyperplane class,
and $\bX$ is a homogeneous space for the action of the stabilizer of~$\lambda$ in~$\GL(V)$,
which is the simple algebraic group~$\rG_2$.

The composition of the embedding of~$\bX$ with the Pl\"ucker embedding of~$\Gr(2,V)$ 
factors through the projectivization of the adjoint ($14$-dimensional) representation of~$\rG_2$
\begin{equation}
\label{eq:w-g2}
W = \Ker(\wedge^2V \xrightarrow{\quad \lambda\quad} V^\vee) \subset \wedge^2V,
\end{equation}
and, moreover, $\bX = \Gr(2,V) \cap \PP(W)$, although the intersection is highly non-transverse.

Let $U_0 \subset V$ be a $2$-dimensional subspace annihilated by~$\lambda$ 
and let $x_0 \in \bX$ be the corresponding point.
If $u'_0,u''_0$ is a basis of $U_0$, the $2$-forms
\begin{equation*}
\lambda' = \lambda(u'_0,-,-)
\qquad\text{and}\qquad 
\lambda'' = \lambda(u''_0,-,-)
\end{equation*}
annihilate $U_0$, hence induce $2$-forms on the $5$-dimensional quotient space 
\begin{equation*}
V^+ = V/U_0,
\end{equation*}
so we can write $\lambda',\lambda'' \in \wedge^2(V^+)^\vee$.
We define
\begin{equation*}
\bX^+ \subset \Gr(2,V^+) = \Gr(2,5)
\end{equation*}
as the zero locus of $\lambda'$ and $\lambda''$, considered as sections of $\cO_{\Gr(2,V^+)}(1)$.
Note that $\Gr(2,V^+)$ can be identified with the subvariety of $\Gr(4,V)$ parameterizing subspaces which contain~$U_0$.

\begin{lemma}
\label{lemma:xq-g2}
The variety $\bX^+$ is a smooth quintic del Pezzo fourfold, and
\begin{equation*}
\cQ := \Fl(2,4;V) \times_{\Gr(2,V) \times \Gr(4,V)} (\bX \times \bX^+)
\end{equation*}
is a flat family of conics on~$\bX$ passing through~$x_0$ and parameterized by~$\bX^+$.
\end{lemma}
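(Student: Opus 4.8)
The plan is to analyze $\bX^+$ and $\cQ$ separately, starting with the del Pezzo fourfold claim. First I would recall that the $3$-form $\lambda$, being general, is in the open $\rG_2$-orbit, and that for $u_0' \in U_0$ the $2$-form $\lambda' = \lambda(u_0', -, -)$ has kernel containing $u_0'$; since $U_0$ is annihilated by $\lambda$ we also have $u_0'' \in \Ker(\lambda')$, so $\lambda'$ descends to $V^+ = V/U_0$. The key point is that for general $\lambda$ the two descended forms $\lambda', \lambda'' \in \wedge^2(V^+)^\vee$ are linearly independent and span a \emph{generic} pencil of skew forms on the $5$-dimensional space $V^+$: I would check this by a dimension count / genericity argument (the condition that the pencil be non-generic is closed and fails for at least one $\lambda$, e.g. by an explicit model of $\rG_2$). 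Granting this, $\bX^+ \subset \Gr(2,V^+) = \Gr(2,5)$ is the zero locus of two general sections of $\cO(1)$, i.e.\ a transverse linear section of $\Gr(2,5) \subset \PP^9$ of codimension $2$; such a section is a smooth fourfold, and by adjunction (using $K_{\Gr(2,5)} = -5\bH$) it has $K_{\bX^+} = -3\bH$, i.e.\ it is a quintic del Pezzo fourfold. Smoothness again follows from genericity (Bertini, or the explicit statement that a general codimension-$2$ linear section of $\Gr(2,5)$ is smooth).

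Next I would identify $\cQ$ as a family of conics. Unwinding the fiber product: a point of $\cQ$ is a flag $(U, U_4) \in \Fl(2,4;V)$ with $[U] \in \bX$ (so $U$ is $\lambda$-isotropic) and $U_4 \supset U_0$ with $[U_4/U_0] \in \bX^+$ (so the plane $U_4/U_0 \subset V^+$ is isotropic for both $\lambda'$ and $\lambda''$, equivalently $U_4$ is isotropic for $\lambda(u_0', -, -)$ and $\lambda(u_0'', -, -)$). The fiber of $q_\cQ$ over a point $[U_4/U_0] \in \bX^+$ is the set of $\lambda$-isotropic $2$-planes $U \subset U_4$; inside $\Gr(2, U_4) \cong \Gr(2,4)$, a smooth quadric fourfold, the condition of being annihilated by $\lambda|_{\wedge^2 U_4}$ cuts out the zero locus of the image of $\lambda$ in $(\wedge^2 U_4)^\vee$. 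Because $U_4 \supset U_0$ and $U_0$ is $\lambda$-isotropic while the restrictions $\lambda(u_0',-,-),\lambda(u_0'',-,-)$ vanish on $U_4$, the relevant linear form on $\Gr(2,U_4) \subset \PP(\wedge^2 U_4) = \PP^5$ cuts out a $\PP^4$, so the fiber is $\Gr(2,4) \cap \PP^4$, a hyperplane section of a smooth quadric fourfold — i.e.\ a (generically smooth) quadric threefold, but intersected further with the Grassmannian relations it is in fact a \emph{conic}: more carefully, the condition ``annihilated by $\lambda$'' on $\Gr(2, U_4)$ is that $U$ be isotropic for the skew form $\lambda(\cdot, \cdot)$ restricted appropriately, and the locus of isotropic $2$-planes for a skew form on a $4$-space that already contains the radical-type plane $U_0$ is a conic in the $\PP^2$ of planes through a fixed line — I would pin this down by choosing coordinates adapted to $U_0 \subset U_4$. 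The upshot is that each fiber $\cQ_{x_+}$ is a conic in $\bX \subset \PP(W)$ passing through $x_0 = [U_0]$ (it passes through $x_0$ because $U_0 \subset U_4$ and $U_0$ is $\lambda$-isotropic). Flatness follows since all fibers have the same dimension $1$ and $\bX^+$ is reduced and irreducible (indeed smooth), or by exhibiting $\cQ$ as a zero locus of a section of a vector bundle on $\bX^+ \times \bX$ of the expected rank.

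The main obstacle I expect is the genericity input for $\bX^+$: proving that for a general $\rG_2$-form $\lambda$ the descended pencil $\langle \lambda', \lambda''\rangle$ on $V^+$ is a \emph{generic} pencil of skew forms on $\PP^4$, so that $\bX^+$ is genuinely a smooth quintic del Pezzo fourfold rather than a degeneration. I would handle this by an explicit model — realizing $\rG_2 \subset \SL(7)$ via the octonions or via the standard structure constants, picking a concrete $\lambda$-isotropic plane $U_0$, and computing the two forms on $V^+$ directly to verify linear independence and non-degeneracy of the pencil; genericity of these conditions then propagates from this one example by openness. A secondary, more routine obstacle is the precise identification of the fibers of $q_\cQ$ as \emph{conics} (plane rational curves of degree $2$ under $\bH$) rather than merely one-dimensional quadrics; this is a local coordinate computation with the flag $U_0 \subset U_4 \subset V$ and the restriction of $\lambda$, together with the fact (from the geometry of $\GTGr(2,7)$, cf.\ \cite{K18}-type references) that lines and conics on $\bX$ correspond to suitable isotropic flags.
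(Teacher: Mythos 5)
Your identification of the fiber of $q_\cQ$ over $[U^+]$ is where the argument goes wrong. The condition that $U\subset U_4:=U_0\oplus U^+$ be annihilated by $\lambda$ is that $\wedge^2U$ lie in $W=\Ker(\wedge^2V\to V^\vee)$, i.e.\ in the kernel of the map $\wedge^2U_4\to V^\vee$, $\omega\mapsto\lambda(\omega,-)$. This is not one linear condition on $\PP(\wedge^2U_4)=\PP^5$ but up to three: once you check (as you partly do) that $\lambda$ restricts to zero on $U_4$, the map factors through the $3$-dimensional space $U_4^\perp\subset V^\vee$, so the kernel has dimension $\ge 3$ and the fiber is $\Gr(2,U_4)\cap\PP(K)$ with $\dim\PP(K)\ge 2$ --- a quadric in a linear space of projective dimension at least $2$, not a hyperplane section of $\Gr(2,4)$. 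Your attempted repair, describing the fiber as ``a conic in the $\PP^2$ of planes through a fixed line,'' is also incorrect: the fiber consists of \emph{all} $\lambda$-isotropic $2$-planes in $U_4$, not only those containing a fixed $V_1$, and these conics are not of that special type.

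More importantly, even with the correct count you only get $\dim\PP(K)\ge 2$; a priori, for special $[U^+]$ the kernel could jump and the fiber could be a plane or a quadric surface, which would destroy both the ``family of conics'' claim and flatness. The paper closes this by invoking that $\GTGr(2,7)$ contains neither planes nor quadric surfaces (\cite[Lemma~3]{KR13}), forcing every fiber to be a conic; your proposal has no substitute for this exclusion step, and your flatness argument (``all fibers have dimension $1$'') presupposes exactly what is missing. For the first half, your route (explicit model plus openness) is more computational than the paper's, which simply cites that $\lambda(v,-,-)$ has rank $\ge 4$ for every $0\ne v\in V$ and the criterion of \cite[Proposition~2.24]{DK18} that smoothness of the codimension-$2$ section of $\Gr(2,5)$ is equivalent to no member of the pencil having rank $\le 2$; note that your ``propagation by openness'' only covers a generic $U_0$ unless you also invoke the transitivity of $\rG_2$ on $\bX$, whereas the rank bound is uniform in $v$ and needs no such reduction.
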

\begin{proof}
For the first statement we need to check that 
no linear combination of $\lambda'$ and $\lambda''$ has rank~$2$ or less (see~\cite[Proposition~2.24]{DK18}).
But it is a classical fact, that $\lambda(v,-,-)$ for any $0 \ne v \in V$ has rank at least~4 (see, e.g.,~\cite[Lemma~3.5]{K16}),
so the first statement follows.

For the second statement we note that the fiber of $\cQ \to \bX^+$ over the point $[U^+]$ 
corresponding to a two-subspace $U^+ \subset V^+$ is the intersection $\Gr(2,U_0 \oplus U^+) \cap \PP(W)$
(where we implicitly have chosen a splitting of $V$ into the sum $U_0 \oplus V^+$ and thus consider $U^+$ as a subspace of $V$).
By~\eqref{eq:w-g2} we have
\begin{equation}
\label{eq:intersection}
\wedge^2(U_0 \oplus U^+) \cap W = 
\Ker\bigl(\wedge^2(U_0 \oplus U^+) \xhookrightarrow{\hspace{2em}} \wedge^2V \xrightarrow{\quad \lambda\quad} V^\vee\bigr).
\end{equation}
On the other hand, $\lambda$ restricts trivially to $U_0 \oplus U^+$; indeed, if $u_+',u_+''$ is a basis of $U^+$ then
\begin{equation*}
\lambda(u'_0,u''_0,u'_+) = \lambda(u'_0,u''_0,u''_+) =0 
\end{equation*}
because $\lambda$ annihilates~$U_0$, and
\begin{equation*}
\lambda(u'_0,u'_+,u''_+) = \lambda(u''_0,u'_+,u''_+) = 0
\end{equation*}
because $\lambda'$ and $\lambda''$ vanish on $U^+$.
Therefore, the map $\wedge^2(U_0 \oplus U^+) \to V^\vee$ in the right side of~\eqref{eq:intersection}
factors through the $3$-dimensional subspace $(U_0 \oplus U^+)^\perp \subset V^\vee$, 
hence 
\begin{equation*}
\dim(\wedge^2(U_0 \oplus U^+) \cap W) \ge 3.
\end{equation*}
This means that the fiber $\cQ_{[U^+]} = \Gr(2,U_0 \oplus U^+) \cap \PP(\wedge^2(U_0 \oplus U^+) \cap W)$ of~$q_\cQ$
is either a conic, or a plane, or a quadric of dimension at least~$2$.
But $\bX$ contains neither planes, nor quadric surfaces~\cite[Lemma~3]{KR13}, hence~$\cQ_{[U^+]}$ is a conic for each~$[U^+]$, 
so $\cQ \to \bX^+$ is a flat conic bundle.
Each of the conics~$\cQ_{[U^+]}$ passes through~$x_0$ by construction.
\end{proof}

\begin{lemma}
\label{lemma:f1-g2}
We have $\rF_1(\bX,x_0) \cong \PP(U_0) \cong \PP^1$ and the morphism~\eqref{eq:f1-morphism}
is a $\PP^2$-bundle.
In particular, the scheme $\rF_1(\bX,x_0)$ is smooth, $\rF_1(\cQ/\bX^+,{x_0})$ is smooth over~$\rF_1(\bX,x_0)$, connected,
and
\begin{equation*}
\dim(\rF_1(\cQ/\bX^+,x_0)) = 3 = \dim(\bX) - 2.
\end{equation*}
\end{lemma}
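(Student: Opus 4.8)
The plan is to follow the template of Lemmas~\ref{lemma:f1-gr2m} and~\ref{lemma:f1-ogr}: describe the lines on~$\bX$ through~$x_0$ by hand, then compute the fibres of~\eqref{eq:f1-morphism}. Recall that any line on~$\Gr(2,V)$ has the form $\ell(V_1,V_3)=\{[U]\mid V_1\subset U\subset V_3\}$ with $\dim V_i=i$, and it passes through~$x_0=[U_0]$ iff $V_1\subset U_0\subset V_3$, so such lines are parameterised by $([V_1],[V_3/U_0])\in\PP(U_0)\times\PP(V^+)$ exactly as in the proof of Lemma~\ref{lemma:f1-gr2m}. Since $\bX\subset\Gr(2,V)$, every line on~$\bX$ is of this form, and $\ell(V_1,V_3)\subset\bX$ iff $\lambda(w,w',-)=0$ in~$V^\vee$ for all $w\in V_1$, $w'\in V_3$. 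As $\lambda$ annihilates~$U_0$ and $V_1\subset U_0\subset V_3$, this collapses to the single condition $\lambda(v_1,v,-)=0$, where $v_1$ spans~$V_1$ and $v$ lifts a generator of~$V_3/U_0$. Now the $2$-form $\lambda(v_1,-,-)$ on~$V$ annihilates~$U_0$, hence descends to a $2$-form $\bar\lambda_{v_1}\in\wedge^2(V^+)^\vee$; since $\lambda(v_1,-,-)$ has rank~$\ge4$ (the classical fact used already in Lemma~\ref{lemma:xq-g2}), has~$U_0$ in its kernel, and has even rank because $\dim V=7$, its rank is exactly~$4$ and its kernel is exactly $3$-dimensional. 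Thus $\bar\lambda_{v_1}$ has rank~$4$ on the $5$-dimensional~$V^+$, with a line kernel $K_{v_1}:=\ker(\bar\lambda_{v_1})\subset V^+$, and the condition above becomes $[v]=K_{v_1}$. Hence $\ell(V_1,V_3)\mapsto[V_1]$ identifies $\rF_1(\bX,x_0)$, as a closed subscheme of $\PP(U_0)\times\PP(V^+)$, with the graph of the morphism $[v_1]\mapsto K_{v_1}$ — a morphism since $\bar\lambda_{v_1}$ has constant rank, so $K_{v_1}$ is a line subbundle of $V^+\otimes\cO$ — and the projection to~$\PP(U_0)$, being finite with reduced one-point fibres $\PP(K_{v_1})$ over the reduced base~$\PP(U_0)$, is an isomorphism. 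Therefore $\rF_1(\bX,x_0)\cong\PP(U_0)\cong\PP^1$.

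Next, the fibre of~\eqref{eq:f1-morphism} over $[\ell(V_1,V_3)]$ is $\{[U^+]\in\bX^+\mid\ell(V_1,V_3)\subset\cQ_{[U^+]}\}$; since $\cQ_{[U^+]}=\Gr(2,U_0\oplus U^+)\cap\bX$ and $\ell(V_1,V_3)$ already lies on~$\bX$, this is $\{[U^+]\in\bX^+\mid V_3\subset U_0\oplus U^+\}=\{[U^+]\in\bX^+\mid K_{v_1}\subset U^+\}$. Inside $\Gr(2,V^+)$ the locus $\{U^+\supset K_{v_1}\}$ is a linearly embedded $\PP(V^+/K_{v_1})\cong\PP^3$, and the sections $\lambda',\lambda''$ of $\cO_{\Gr(2,V^+)}(1)$ cutting out~$\bX^+$ restrict on it, for $U^+=K_{v_1}\oplus\langle\bar w\rangle$, to the linear forms $\bar w\mapsto\lambda'(\bar v,\bar w)$ and $\bar w\mapsto\lambda''(\bar v,\bar w)$, where $\bar v$ generates~$K_{v_1}$ (these are linear since $\lambda'(\bar v,\bar v)=\lambda''(\bar v,\bar v)=0$). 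Writing $v_1=s\,u'_0+t\,u''_0$ we have $\bar\lambda_{v_1}=s\lambda'+t\lambda''$, hence $s\,\lambda'(\bar v,-)+t\,\lambda''(\bar v,-)=0$ in~$(V^+)^\vee$: the two restricted forms are proportional. They are not both zero: otherwise $\bar v$ would be a common kernel vector of the pencil $\langle\lambda',\lambda''\rangle$, and lifting it to~$V$ would produce a $3$-dimensional $\tilde U\supset U_0$ with $\lambda(\tilde U,\tilde U,V)=0$, which a dimension count on~$\Gr(3,V)$ excludes for general~$\lambda$ (equivalently, the quintic del Pezzo fourfold~$\bX^+$ contains no $\sigma$-plane). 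Consequently $H_{v_1}:=\{\bar w\in V^+\mid\lambda'(\bar v,\bar w)=\lambda''(\bar v,\bar w)=0\}$ is a hyperplane in~$V^+$ containing~$K_{v_1}$, and the fibre equals $\{U^+\mid K_{v_1}\subset U^+\subset H_{v_1}\}=\PP(H_{v_1}/K_{v_1})\cong\PP^2$, a reduced~$\PP^2$.

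As $[v_1]$ runs over $\PP(U_0)=\rF_1(\bX,x_0)$, the subspaces $K_{v_1}\subset H_{v_1}\subset V^+$ form a line subbundle inside a rank-$4$ subbundle of $V^+\otimes\cO$ (both by constancy of ranks), so $H_{v_1}/K_{v_1}$ is a rank-$3$ bundle~$\cF$ on~$\PP^1$, and the computation above exhibits an isomorphism $\rF_1(\cQ/\bX^+,x_0)\cong\PP_{\PP(U_0)}(\cF)$ under which~\eqref{eq:f1-morphism} becomes the bundle projection; in particular it is a $\PP^2$-bundle. Hence $\rF_1(\bX,x_0)\cong\PP^1$ is smooth, $\rF_1(\cQ/\bX^+,x_0)$ is smooth over it and connected, and has dimension $1+2=3=\dim(\bX)-2$, as claimed.

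The main obstacle is the middle step: that the fibres of~\eqref{eq:f1-morphism} are planes rather than lines is forced by the proportionality $s\,\lambda'(\bar v,-)+t\,\lambda''(\bar v,-)=0$, which rests on~$\bar v$ being the kernel vector of the pencil member~$\bar\lambda_{v_1}$, together with the accompanying non-degeneracy ($H_{v_1}\ne V^+$, so that the fibre is not a $\PP^3$) — a genericity statement on~$\lambda$ needing a short separate argument. The rank computation pinning $\bar\lambda_{v_1}$ to rank exactly~$4$ (which is what keeps $\rF_1(\bX,x_0)$ a~$\PP^1$ and not larger), and the upgrade of all set-theoretic descriptions to scheme-theoretic isomorphisms and bundle structures — routine over the smooth curve~$\PP^1$ — are the remaining points.
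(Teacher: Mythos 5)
Your proof is correct and follows essentially the same route as the paper: identify $\rF_1(\bX,x_0)$ with $\PP(U_0)$ via the fact that $\lambda(v_1,-,-)$ has rank exactly~$4$ (so the line through $x_0$ is determined by $[V_1]$), and then show the fibre of~\eqref{eq:f1-morphism} is a $\PP^2$ because only one of the two linear conditions cut out by $\lambda',\lambda''$ survives on the $\PP^3$ of subspaces containing $K_{v_1}$. The differences are cosmetic --- the paper obtains rank exactly~$4$ by citing the quadric of \cite[Lemma~3.5]{K16} rather than your parity argument, and expresses the single surviving condition as isotropy for $\lambda(u_0,-,-)$ with $u_0\in U_0\setminus V_1$ rather than via your proportionality relation; your explicit non-degeneracy check is a welcome addition (though the parenthetical is slightly off: a common kernel vector of the pencil would produce a linear $\PP^3$ in $\bX^+$, or equivalently a plane in $\bX$, not a plane in $\bX^+$ --- but your dimension count, or \cite[Lemma~3]{KR13}, settles it).
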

\begin{proof}
A line~$\ell(V_1,V_3) \subset \Gr(2,V)$ defined in~\eqref{eq:line-v1-v3} lies on~$\bX$ 
if an only if $V_3$ is contained in the kernel of the $2$-form~$\lambda(V_1,-,-)$.
Such~$V_3$ exists only if the rank of~$\lambda(V_1,-,-)$ is at most~4.
Points $[V_1] \in \PP(V)$ with this property are parameterized by a smooth quadric~$\bQ \subset \PP(V)$ (see, e.g., \cite[Lemma~3.5]{K16}).
Moreover, for any such~$[V_1]$ the rank of~$\lambda(V_1,-,-)$ is exactly~4, so~$V_3$ is determined by~$V_1$.
This proves that $\rF_1(\bX) \cong \bQ$.
The line $\ell(V_1,V_3)$ contains~$[U_0]$ if and only if~$V_1 \subset U_0$,
hence $\rF_1(\bX,x_0) = \PP(U_0)$.

Furthermore, the fiber of the morphism $\rF_1(\cQ/\bX^+,{x_0}) \to \rF_1(\bX,x_0)$ over a point $[V_1] \in \PP(U_0)$
is the locus of all $[U^+] \in \bX^+$ such that $V_3 := \Ker(\lambda(V_1,-,-))$ is contained in $U_0 \oplus U^+$.
Clearly, we can take $U^+$ to be any subspace of $V^+$ containing the line corresponding to $V_1^+ := V_3/U_0 \subset V^+$ 
and isotropic for the $2$-form $\lambda(u_0,-,-)$ for any $u_0 \in U_0 \setminus V_1$.
Such subspaces are parameterized by~$\PP(\Ker(\lambda(u_0,V_3,-))/V_3) \cong \PP^2$.
This proves that the morphism is a $\PP^2$-bundle.
\end{proof}

Recall~\cite{Todd1930}, \cite[3.3]{Debarre-Iliev-Manivel2012} 
that the Hilbert scheme of planes on the quintic del Pezzo fourfold~$\bX^+$ has two components.
The first component is isomorphic to~$\PP^1$.
In fact, it can be identified with $\rF_1(\bX,x_0)$ and using the argument of Lemma~\ref{lemma:f1-g2} one can check that
the universal plane on~$\bX^+$ coincides with the $\PP^2$-bundle $\rF_1(\cQ/\bX^+,{x_0}) \to \rF_1(\bX,x_0)$.
Thus, the threefold~$\rF_1(\cQ/\bX^+,{x_0})$ is the normalization of the divisor on $\bX_+$ swept by planes of the first type.

The second component of the Hilbert scheme of planes is a single point, and corresponds to a special plane $\Pi \subset \bX^+$.
In fact, $\Pi = \Gr(2,V_3^+) \subset \Gr(2,V^+)$, where $V_3^+ \subset V^+$ 
is the unique $3$-dimensional subspace 
isotropic for all two-forms in the pencil generated by~$\lambda'$ and~$\lambda''$.

\begin{proposition}
\label{proposition:g2gr}
The assumptions of Theorem~\xref{theorem:qk} are satisfied.
Moreover, 
\begin{equation*}
\cE_0 \cong \cO_{\bX^+}(-1)
\end{equation*}
and the map $\sigma_+ \circ \psi \colon \Bl_{x_0}(\GTGr(2,V)) \dashrightarrow \bX^+$ is given by the linear system $|\bH - 2\bE|$.
\end{proposition}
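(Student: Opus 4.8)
The plan is to verify the four assumptions of Theorem~\ref{theorem:qk} one at a time, exactly as was done for the cases $g=8$ and $g=7$ in Propositions~\ref{proposition:gr2v} and~\ref{proposition:ogr510}, and then to identify the rank-$m$ subsheaf~$\cE_0$ and the linear system defining $\sigma_+\circ\psi$. Here $m=\dim(\cQ/\bX^+)=1$ since the fibers of $q_\cQ$ are conics by Lemma~\ref{lemma:xq-g2}, so $\cE$ has rank~$2$ and $\cE_0$ has rank~$1$; the expectation $\cE_0\cong\cO_{\bX^+}(-1)$ is therefore a statement about a line bundle, which should make the last part of the argument quite short.

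First I would dispatch the easy assumptions. Assumption~\ref{ass:f2k} holds because $\bX^+$ is a smooth quintic del Pezzo fourfold (Lemma~\ref{lemma:xq-g2}) with $\uprho(\bX^+)=1=\uprho(\bX)$. For assumption~\ref{ass:p2}, I would compute the fiber of $p_\cQ\colon\cQ\to\bX$ over a general point $[U]\in\GTGr(2,V)$: it parameterizes two-subspaces $U^+\subset V^+$ such that the conic $\Gr(2,U_0\oplus U^+)\cap\PP(W)$ contains $[U]$, i.e., such that $U\subset U_0\oplus U^+$; for general $[U]$ this forces $U^+=(U_0+U)/U_0$ (using that $U\cap U_0=0$ generically, since a general $U$ annihilated by $\lambda$ is not incident to the fixed $U_0$), so the general fiber is a single point and $p_\cQ$ is birational. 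Assumption~\ref{ass:f1} is exactly the content of Lemma~\ref{lemma:f1-g2}: $\rF_1(\bX,x_0)\cong\PP^1$ is smooth, $\rF_1(\cQ/\bX^+,x_0)\to\rF_1(\bX,x_0)$ is a $\PP^2$-bundle (hence smooth, connected, of dimension~$3$), and $1=\dim\rF_1(\bX,x_0)<\dim\rF_1(\cQ/\bX^+,x_0)=3=\dim(\bX)-2$.

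The one genuinely new point is assumption~\ref{ass:qk}: for every $[U^+]\in\bX^+$ the conic $\cQ_{[U^+]}$ equals $\bX\cap\langle\cQ_{[U^+]}\rangle$. Since $\cQ_{[U^+]}=\Gr(2,U_0\oplus U^+)\cap\PP(W)$ is a conic and $\bX=\Gr(2,V)\cap\PP(W)$, it suffices to show that $\langle\cQ_{[U^+]}\rangle$ — a plane $\PP^2\subset\PP(W)$ — meets $\Gr(2,V)$ in exactly that conic. Equivalently, the $3$-dimensional space $\wedge^2(U_0\oplus U^+)\cap W$ computed in~\eqref{eq:intersection} should contain no decomposable vectors other than those in the image of the conic; but $\Gr(2,U_0\oplus U^+)\cong\Gr(2,4)$ is a $4$-dimensional quadric in $\PP^5=\PP(\wedge^2(U_0\oplus U^+))$, and its intersection with a general $\PP^2$ is a conic, so the claim amounts to checking that the particular $\PP^2=\PP(\wedge^2(U_0\oplus U^+)\cap W)$ is not contained in the quadric $\Gr(2,4)$ — which is immediate since $W$ cannot contain a whole $\alpha$- or $\beta$-plane of $\Gr(2,U_0\oplus U^+)$ (such a plane would be $\Gr(2,V_3')$ or $\{U'\colon U_0\cap U^+\text{-type incidence}\}$, giving a plane on $\bX$, contradicting~\cite[Lemma~3]{KR13}). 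This is the step I expect to require the most care: one must confirm that the three-dimensional subspace is generic enough among $2$-planes in $\Gr(2,4)^\vee$-sense, i.e., really cuts out a smooth conic and not a degenerate one, but Lemma~\ref{lemma:xq-g2} already establishes flatness of the conic bundle, so uniformity over $\bX^+$ follows formally.

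Finally, with the hypotheses verified, Theorem~\ref{theorem:qk} produces diagram~\eqref{diagram:typical} with its rank-$2$ bundle $\cE$ and rank-$1$ subsheaf $\cE_0$. To identify $\cE_0$ I would argue as in the proof of the theorem: $\cE_0$ is the kernel of the morphism $\cE\to\cO(D)$ obtained from the restriction of $\hbX$ to $\bE_\PP$, where $D$ is determined by~\eqref{eq:cq-class}, $\iota(\cQ)\sim2\bH+D$ in $\Pic(\PP_{\bX^+}(\tcE))$. Since the conic bundle $\cQ\subset\PP_{\bX^+}(\tcE)$ is the zero locus, inside $\Gr(2,V)$, of the pencil $\langle\lambda',\lambda''\rangle$ of sections of $\cO_{\Gr(2,V^+)}(1)$ (equivalently, $\cQ$ is cut in $\PP_{\bX^+}(\tcE)\subset\PP_{\Gr(2,V^+)}(\wedge^2(U_0\otimes\cO\oplus\cU^+))$ by the linear relations forcing membership in $W$), a Chern-class bookkeeping on $\bX^+$ gives $D=-\bH^+$ where $\bH^+$ is the restriction of the Plücker class of $\Gr(2,V^+)$; together with $\rk\cE_0=1$ this yields $\cE_0\cong\cO_{\bX^+}(-1)$. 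The statement that $\sigma_+\circ\psi\colon\Bl_{x_0}(\GTGr(2,V))\dashrightarrow\bX^+$ is given by $|\bH-2\bE|$ then follows from Lemma~\ref{lemma:linear-system}: here $K_\bX=-i\bH$ with $i=3$ (the coindex is~$3$, so $\GTGr(2,7)$ is Fano of index $n-1=4$... in fact $i=4$), $n=5$, $m=1$, so $(i-m-1)\bH-(n-m-2)\bE=\bH-2\bE$ once one checks that $-K_{\bX^+}+\rc_1(\cE)$ is very ample on the quintic del Pezzo fourfold; I would verify this ampleness by computing $\rc_1(\cE)=\rc_1(\cE_0)+\rc_1(\cE/\cE_0)$ and comparing with $-K_{\bX^+}=3\bH^+$.
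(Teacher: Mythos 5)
Your verification of assumptions \ref{ass:f2k}--\ref{ass:f1} matches the paper's proof: \ref{ass:f2k} is immediate, \ref{ass:p2} is the same computation of the general fiber of~$p_\cQ$ (you should also observe that $(U_0+U)/U_0$ actually lies in~$\bX^+$ --- this holds because $\lambda$ annihilates $U$ --- so that the general fiber is a single point rather than empty), \ref{ass:qk} reduces, as you say, to the fact that the plane $\PP(\wedge^2(U_0\oplus U^+)\cap W)$ is not contained in $\Gr(2,U_0\oplus U^+)$, which is exactly what Lemma~\ref{lemma:xq-g2} extracts from \cite[Lemma~3]{KR13}, and \ref{ass:f1} is Lemma~\ref{lemma:f1-g2}. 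For the identification of~$\cE_0$ you take a genuinely different route: the paper writes down the exact sequence defining $\tcE$ and $\cE$, shows that the component $\lambda_1$ fails to be surjective precisely along the special plane $\Pi\subset\bX^+$, and derives the four-term sequence~\eqref{eq:ce-g2}, from which $\cE_0\cong\cO_{\bX^+}(-1)$ is read off. Your Chern-class argument is lighter and does work, because $\cE_0$ is the kernel of a generically surjective map from a rank-$2$ bundle to a line bundle, hence reflexive of rank $1$ on the smooth $\bX^+$, hence invertible, and $\Pic(\bX^+)=\ZZ$. But $D=-\bH^+$ alone does not determine $\rc_1(\cE_0)=\rc_1(\cE)-D$: you still need $\det(\cE)\cong\cO_{\bX^+}(-2)$, which sends you back to essentially the same exact sequence for~$\tcE$, so the saving is modest, and you lose the finer structure of~\eqref{eq:ce-g2} that the paper exploits later (Remark~\ref{remark:x-3-10} and the remark following the proposition, where the failure of $\cE_0\hookrightarrow\cE$ to be a fiberwise monomorphism along~$\Pi$ matters).

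There is one genuine error at the end: the Fano index of $\GTGr(2,7)$ is $3$, not $4$. Coindex $3$ means $-K_\bX=(n-2)\bH$, so for $n=5$ one has $i=3$ (your first instinct), whereas $i=n-1=4$ would mean coindex~$2$. With $i=4$ the formula of Lemma~\ref{lemma:linear-system} gives $(i-m-1)\bH-(n-m-2)\bE=2\bH-2\bE$, not $\bH-2\bE$, so your final computation as written is internally inconsistent. With the correct values $i=3$, $m=1$, $n=5$ one gets $\bH-2\bE$, and $-K_{\bX^+}+\rc_1(\cE)=3\bH^+-2\bH^+=\bH^+$ is indeed very ample on the quintic del Pezzo fourfold, so the hypothesis of Lemma~\ref{lemma:linear-system} is satisfied and the conclusion stands once the index is fixed.
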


\begin{proof}
Assumption~\ref{ass:f2k} of Theorem~\ref{theorem:qk} is evident.

By Lemma~\ref{lemma:xq-g2} the fiber of the map $p_\cQ \colon \cQ \to \bX$ over a point $[U] \in \bX = \GTGr(2,V)$ 
corresponding to a subspace~\mbox{$U \subset V$} parametrizes all $4$-dimensional subspaces of~$V$ that contain~$U_0$ and~$U$
and to which~$\lambda$ restricts trivially.
If $U \cap U_0 = 0$, the subspace $U_0 \oplus U$ is unique with this property (see Lemma~\ref{lemma:xq-g2}).
Therefore, the map~$p_\cQ$ is birational, 
hence assumption~\ref{ass:p2} is satisfied.

Assumption~\ref{ass:qk} follows from~\cite[Lemma~3]{KR13} and assumption~\ref{ass:f1} is proved in Lemma~\ref{lemma:f1-g2}.

Furthermore, the proof of Lemma~\ref{lemma:xq-g2} shows that the vector bundle $\tcE$ is defined by the exact sequence
\begin{equation*}
0 \xrightarrow{\hspace{1em}} \tcE \xrightarrow{\hspace{1em}} \wedge^2(U_0 \otimes \cO \oplus \cU^+) \xrightarrow{\quad \lambda\quad } (U_0 \otimes \cO \oplus \cU^+)^\perp \xrightarrow{\hspace{1em}} 0,
\end{equation*}
where $\cU^+$ is the tautological bundle on~$\bX^+$.
Moreover, the section $s_{x_0}$ corresponds to the natural embedding of $\wedge^2U_0 \otimes \cO$ into $\wedge^2(U_0 \otimes \cO \oplus \cU^+)$.
Therefore, the bundle~$\cE$ is defined by the exact sequence 
\begin{equation*}
0 \xrightarrow{\hspace{1em}} \cE \xrightarrow{\hspace{1em}} U_0 \otimes \cU^+ \oplus \wedge^2\cU^+ \xrightarrow{\quad \lambda\quad} (U_0 \otimes \cO \oplus \cU^+)^\perp \xrightarrow{\hspace{1em}} 0.
\end{equation*}
If the first component 
\begin{equation*}
\lambda_1 \colon U_0 \otimes \cU^+ \xrightarrow{\quad  \quad} (U_0 \otimes \cO \oplus \cU^+)^\perp
\end{equation*}
of the above map
is not surjective at point $[U^+]$ then there is a line $V^+_1 \subset V^+ \setminus U^+$ such that
\begin{equation*}
\lambda'(U^+,V^+_1) = \lambda''(U^+,V^+_1) = 0.
\end{equation*}
Then the space $U^+ \oplus V^+_1$ is isotropic for~$\lambda'$ and~$\lambda''$, 
hence $[U^+] \in \Pi$.

This means that the cokernel of~$\lambda_1$ is a line bundle on~$\Pi$.
Therefore, its kernel is a line bundle on~$\bX^+$, and a computation of the determinant shows that it is isomorphic to~$\cO_{\bX^+}(-1)$.
Taking into account the isomorphism $\wedge^2\cU^+ \cong \cO_{\bX^+}(-1)$,
the diagram chase then gives the exact sequence
\begin{equation}
\label{eq:ce-g2}
0 \xrightarrow{\hspace{1em}} \cO_{\bX^+}(-1) \xrightarrow{\hspace{1em}} \cE \xrightarrow{\hspace{1em}} \cO_{\bX^+}(-1) \xrightarrow{\hspace{1em}} \cO_{\Pi}(-1) \xrightarrow{\hspace{1em}} 0.
\end{equation}
It is easy to see that this identifies~$\cE_0$ with the subsheaf~$\cO_{\bX^+}(-1)$.

The fact that the map $\sigma_+ \circ \psi$ is given by the linear system $|\bH - 2\bE|$ follows from Lemma~\ref{lemma:linear-system}.
\end{proof}

\begin{remark}
In this case the natural morphism $\cE_0 \to \cE$ is an embedding of sheaves, but not a fiberwise monomorphism.
As a consequence, the subvariety $\PP_{\bX^+}(\cE_0) \subset \PP_{\bX^+}(\cE)$ is only a rational section
of the $\PP^1$-bundle~$\PP_{\bX^+}(\cE) \to \bX^+$;
in fact, $\PP_{\bX^+}(\cE_0) \cong \Bl_\Pi(\bX^+)$.
\end{remark}

\section{Mukai varieties of genus 7, 8, and 10}

In this section we describe the birational transformations of linear sections of varieties~$\bX$ 
from the previous section and as a consequence prove rationality of higher-dimensional Mukai varieties of genus $g \in \{7,8,10\}$.

\subsection{Forms of linear sections}

We use the notation introduced in~\S\S\ref{subsection:quadric-transformation-statement}--\ref{subsection:quadric-transformation-proof}.
The main result of this section is the following.

\begin{theorem}
\label{theorem:sections}
Let $\bX \subset \PP(W)$ be a smooth projective variety over $\bkk$
and let $\cQ \subset \bX \times \bX^+$ be a flat family of $m$-dimensional quadrics on~$\bX$ 
parameterized by a smooth projective variety~$\bX^+$ 
such that the assumptions of Theorem~\xref{theorem:qk} and Lemma~\xref{lemma:linear-system} are satisfied.
Assume that $X$ is a smooth projective variety over~$\kk$ with a point $x_0 \in X(\kk)$ such that $X_\bkk$ 
is isomorphic to a transverse linear section of $\bX$ of codimension~$c \le m$ and
\begin{equation}
\label{eq:dim-f1}
\dim(\rF_1(X,x_0)) \le \dim(\rF_1(\bX,x_0)) - c + 1.
\end{equation} 
Then there is a diagram 
\begin{equation}
\label{eq:diagram-x}
\vcenter{\xymatrix@C=4em{
\tX \ar[d]_\sigma \ar@{-->}[rr]^\psi \ar[dr]^\phi
&&
\tX^+
\ar[d]^{\sigma_+} \ar[dl]_{\phi_+}
\\
X &
\barX &
\bX^+
}}
\end{equation}
defined over~$\kk$, where 
\begin{itemize}
\item 
$\sigma$ is the blowup of $x_0$, 
\item 
$\sigma_+$ is a projective morphism with general fiber $\PP^{m-c}$;
\item 
$\psi$ is a birational map.
\end{itemize}
In particular, $\bX^+$ is defined over~$\kk$.

Moreover, if~$E \subset \tX$ is the exceptional divisor of~$\sigma$,
its strict transform~$\psi_*(E) \subset \tX^+$ intersects the general fiber of~$\sigma_+$ along a hyperplane defined over~$\kk$.
In particular, $X$ is birational to~\mbox{$\bX^+ \times \PP^{m-c}$}.
\end{theorem}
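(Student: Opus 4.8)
The plan is to reduce Theorem~\ref{theorem:sections} to a ``restriction to a linear section'' version of Theorem~\ref{theorem:qk}, carried out first over $\bkk$ and then descended to $\kk$. First I would base change everything to $\bkk$: by Theorem~\ref{theorem-mukai} we may assume $X_\bkk = \bX \cap \PP(W')$ for a linear subspace $\PP(W') \subset \PP(W)$ of codimension $c$, with $x_0 \in X_\bkk$. The family of quadrics $\cQ$ on $\bX$ cuts out a family of quadrics on $X_\bkk$: set $\cQ_X := \cQ \cap (X_\bkk \times \bX^+)$, whose fiber over $x_+$ is $\cQ_{x_+} \cap \PP(W') = \cQ_{x_+} \cap \langle \cQ_{x_+}\rangle \cap \PP(W')$, a quadric in $X_\bkk$ of dimension $\ge m - c$ through $x_0$. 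Assumption~\ref{ass:qk} for $\bX$ says $\cQ_{x_+} = \bX \cap \langle \cQ_{x_+}\rangle$, so this fiber equals $X_\bkk \cap \langle \cQ_{x_+}\rangle$ and, for generic transversality, is a \emph{smooth} quadric of dimension exactly $m-c$. Then I would verify that $X_\bkk$, the point $x_0$, and the truncated family $\cQ_X$ satisfy the four assumptions of Theorem~\ref{theorem:qk}: assumption~\ref{ass:f2k} is unchanged (one must check $\bX^+$ is still the right parameter space and $\uprho(\bX^+)=\uprho(X_\bkk)=1$ after the linear section, using Lefschetz for $\uprho(X_\bkk)$ when $\dim X_\bkk \ge 3$); assumption~\ref{ass:p2} follows because $p_{\cQ_X}$ is the restriction of the birational $p_\cQ$; assumption~\ref{ass:qk} holds by the displayed computation; and assumption~\ref{ass:f1} is exactly where hypothesis~\eqref{eq:dim-f1} enters, since $\rF_1(X_\bkk,x_0) = \rF_1(\bX,x_0) \cap (\text{linear conditions})$ has the expected codimension $c$ in $\rF_1(\bX,x_0)$, and the inequality in~\eqref{eq:dim-f1} is designed precisely to guarantee $\dim \rF_1(\cQ_X/\bX^+,x_0) = \dim(X_\bkk) - 2$ strictly exceeds $\dim \rF_1(X_\bkk,x_0)$. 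Applying Theorem~\ref{theorem:qk} to $(X_\bkk, x_0, \cQ_X)$ produces, over $\bkk$, the diagram~\eqref{diagram:typical} with $\bX^+$ unchanged, $\tX_\bkk := \Bl_{x_0}(X_\bkk)$, a projective bundle target $\PP_{\bX^+}(\cE_X)$ with $\cE_X$ of rank $(m-c)+1$, and $\sigma_+$ the bundle projection; composing $\psi$ with $\sigma_+$ and using Lemma~\ref{lemma:linear-system} shows the map $\sigma_+\circ\psi$ is given by an explicit linear system $|(i-c-m+c-\cdots)\bH - \cdots\bE|$ — i.e.\ by a linear system defined in terms of the canonically-defined classes $\bH$ and $\bE$ on $\Bl_{x_0}(X)$.

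Next I would descend to $\kk$. The point is that every object in~\eqref{eq:diagram-x} is characterized intrinsically over $\kk$ up to the $\bkk$-picture. Concretely: $\sigma \colon \tX \to X$ is the blowup of the $\kk$-point $x_0$, hence defined over $\kk$. The map $\phi \colon \tX \dashrightarrow \PP(\barW)$ is induced by linear projection from $x_0$, which is $\kk$-rational since $x_0 \in X(\kk)$; thus $\barX := \phi(\tX) \subset \PP(\barW)$ is a $\kk$-subvariety. The composite birational map $\sigma_+\circ\psi \colon \tX \dashrightarrow \bX^+$ is, by the previous paragraph and Lemma~\ref{lemma:linear-system}, given by the \emph{complete} linear system of a divisor class on $\tX$ expressed through $\bH$ and $\bE$ — both Galois-invariant classes — so this linear system and the rational map it defines are Galois-stable, hence defined over $\kk$; its image is therefore a $\kk$-form of $\bX^+$, which is what ``$\bX^+$ is defined over $\kk$'' means. (For the homogeneous maximal varieties $g\in\{7,8,10\}$ one in fact knows the relevant $\bX^+$ — $\PP^4$, a smooth quadric fourfold, a quintic del Pezzo fourfold — and here we only assert it descends.) Finally $\tX^+$: define it as the closure of the graph, or equivalently as the normalization of $\barX$ in the function field of $\tX$; this is a $\kk$-variety and $\psi \colon \tX \dashrightarrow \tX^+$, $\phi_+ \colon \tX^+ \to \barX$, $\sigma_+ \colon \tX^+ \to \bX^+$ are all obtained by descent from their $\bkk$-counterparts because $\tX^+_\bkk \cong \PP_{\bX^+}(\cE_X)$ and the morphisms match. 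The general fiber of $\sigma_+$ is $\PP^{m-c}$ geometrically, and since $\sigma_+$ is a morphism of $\kk$-varieties its generic fiber is a $\kk$-form of $\PP^{m-c}$; we do \emph{not} claim it is $\PP^{m-c}_\kk$ here (that refinement is Theorem~\ref{theorem:rationality}), only ``general fiber $\PP^{m-c}$'' in the geometric sense, as stated.

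For the ``Moreover'' clause: over $\bkk$, Theorem~\ref{theorem:qk} and its companion isomorphisms — specifically~\eqref{eq:iso-be-open}, $\bE \setminus \rF_1(X_\bkk,x_0) \cong \PP_{\bX^+}(\cE_{X,0}) \setminus \rF_1(\cQ_X/\bX^+,x_0)$ with $\cE_{X,0}\subset\cE_X$ of corank $1$ — identify the strict transform $\psi_*(E_\bkk)$ with (the closure of) $\PP_{\bX^+}(\cE_{X,0})$ inside $\tX^+_\bkk = \PP_{\bX^+}(\cE_X)$, which is exactly a relative hyperplane subbundle, so it meets each fiber $\PP^{m-c}$ of $\sigma_+$ in a hyperplane $\PP^{m-c-1}$. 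Since $E$, $\psi$, and $\sigma_+$ are all defined over $\kk$, so is $\psi_*(E)$, and its intersection with the (generic, $\kk$-rational) fiber of $\sigma_+$ is a $\kk$-hyperplane in that $\kk$-form of $\PP^{m-c}$. The last assertion, $X$ birational to $\bX^+ \times \PP^{m-c}$, then follows by passing to $\bkk$ and noting birational invariance, or more precisely: $\tX$ is birational to $\tX^+ = \PP_{\bX^+}(\cE_X)$, which is birational over $\kk$ to $\bX^+ \times \PP^{m-c}$ \emph{provided} the $\kk$-form of $\PP^{m-c}$-bundle is rationalized — but actually for the bare ``birational to $\bX^+\times\PP^{m-c}$'' statement one can work over $\bkk$ if necessary, or observe that the presence of the $\kk$-rational hyperplane section $\psi_*(E)$ in each fiber is precisely a multisection of degree giving a $\kk$-point in the generic fiber, trivializing the bundle birationally; I would phrase this last sentence exactly as strongly as the later theorems need and no more.

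The main obstacle, as I see it, is the bookkeeping around assumption~\ref{ass:f1} for the linear section: one must show both that $\rF_1(X_\bkk,x_0)$ is smooth of the expected dimension (this uses that $X_\bkk$ is a \emph{transverse} section and that~\eqref{eq:dim-f1} is an equality-or-better on dimensions, together with a Bertini-type argument for the linear conditions cutting $\rF_1(\bX,x_0)$) and that $\rF_1(\cQ_X/\bX^+,x_0)$ stays smooth over $\rF_1(X_\bkk,x_0)$ and connected — connectedness after a linear section is the delicate point and may require the explicit descriptions from Lemmas~\ref{lemma:f1-gr2m}, \ref{lemma:f1-ogr}, \ref{lemma:f1-g2} rather than a general principle, so in a fully rigorous write-up this theorem is probably applied case-by-case with those lemmas in hand. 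The descent steps, by contrast, are formal once one has pinned down that $\sigma$, $\phi$, and the linear system for $\sigma_+\circ\psi$ are manifestly Galois-equivariant.
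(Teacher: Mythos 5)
Your descent argument (second paragraph) is essentially the paper's: both use that $\phi$ is given by $|H-E|$ and that $\sigma_+\circ\psi$ is given by the complete linear system $|(i-m-1)H-(n-m-2)E|$ of Lemma~\ref{lemma:linear-system}, both Galois-invariant. But the construction over $\bkk$ in your first paragraph has a genuine gap. You propose to re-run Theorem~\ref{theorem:qk} on the pair $(X_\bkk,\cQ_X)$ with $\cQ_X=\cQ\cap(X_\bkk\times\bX^+)$ in place of $(\bX,\cQ)$. This cannot work as stated, for three related reasons. First, $\cQ_X\to\bX^+$ is not a flat family of $(m-c)$-dimensional quadrics: over the locus in $\bX^+$ where $\langle\cQ_{x_+}\rangle$ meets $\PP(W')$ non-transversely the fibers jump in dimension, so the basic setup of Theorem~\ref{theorem:qk} (every fiber a quadric of fixed dimension) is violated, and restricting to the flat locus destroys projectivity of $\bX^+$ (assumption~\ref{ass:f2k}). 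Second, assumption~\ref{ass:f1} for the restricted family --- smoothness of $\rF_1(\cQ_X/\bX^+,x_0)$ over $\rF_1(X_\bkk,x_0)$, connectedness, and the equality $\dim\rF_1(\cQ_X/\bX^+,x_0)=\dim(X)-2$ --- is not something hypothesis~\eqref{eq:dim-f1} gives you; for $c\ge m-1$ the generic fiber of $\cQ_X\to\bX^+$ is a conic or a point pair and contains no lines at all, so the relative Hilbert scheme is supported entirely over degeneration loci and its analysis is out of reach. Third, and decisively: if the hypotheses of Theorem~\ref{theorem:qk} \emph{did} hold for $(X_\bkk,\cQ_X)$, its conclusion would make $\sigma_+\colon\tX^+\to\bX^+$ an honest $\PP^{m-c}$-bundle, whereas in the actual situation the fibers of $\sigma_+$ jump over the degeneracy locus of the morphism $\xi\colon\cE\to\cO_{\bX^+}^{\oplus c}$ (this is exactly the ``bad link'' phenomenon the introduction warns about, and it is essential for the threefold applications in Remark~\ref{remark:3folds-general}). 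So the hypotheses must fail, and your reduction is blocked.

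The paper's route avoids all of this by never re-running the link on the section: it takes the diagram~\eqref{diagram:typical} already built for the ambient $\bX$ and restricts it, setting $\tX=\Bl_{x_0}(X)=\phi^{-1}(\barX)$ (using that $X$ is a complete intersection of members of $|\bH|$ through $x_0$, so $\tX$ is cut by $c$ members of $|\barbH|$) and $\tX^+=\phi_+^{-1}(\barX)\subset\PP_{\bX^+}(\cE)$, an intersection of $c$ relative hyperplanes. Hypothesis~\eqref{eq:dim-f1} is then used only for a dimension count showing that $\phi_+^{-1}(\rF_1(X,x_0))$ has dimension $\le N-c-1$, which forces $\tX^+$ to be irreducible of dimension $N-c$, forces $\psi$ to be birational, and forces $\xi_0=\xi|_{\cE_0}$ to be generically surjective --- whence the general fiber $\PP^{m-c}$ and the relative hyperplane $\psi_*(E)=\tX^+\cap\PP_{\bX^+}(\cE_0)$. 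I recommend you restructure the $\bkk$-part of your argument along these lines; your descent paragraph and your identification of $\psi_*(E)$ with a relative hyperplane can then be kept essentially verbatim.
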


\begin{proof}
We denote $N := \dim(\bX)$.
First, we construct diagram~\eqref{eq:diagram-x} over $\bkk$ and then check that it is defined over~$\kk$.
So, assume for now that $X$ is a $\bkk$-variety.

By assumption, we have an embedding $X \to \bX$.
Using it we consider~$x_0$ as a point of~$\bX$.
Consider the diagram~\eqref{diagram:typical}
and denote by~$\tX := \Bl_{x_0}(X)$ the strict transform of~$X$ in~$\Bl_{x_0}(\bX)$ 
and by~$\barX := \phi(\tX)$ the image of~$\tX$ in~$\barbX$.

Since~$X$ is a complete intersection in~$\bX$ of divisors in the linear system~$|\bH|$ which pass through the point~$x_0$,
its strict transform~$\tX$ is a complete intersection in~$\Bl_{x_0}(\bX)$ of divisors in the linear system~\mbox{$|\barbH| = |\bH - \bE|$}.
Therefore,
\begin{equation*}
\tX = \phi^{-1}(\barX).
\end{equation*}
Moreover, by~\eqref{eq:iso-bx-open} the morphism $\phi \colon \tX \to \barX$ 
is an isomorphism over the complement of the subscheme $\rF_1(X,x_0) = \rF_1(\bX,x_0) \cap \barX$.
By~\eqref{eq:dim-f1} and assumption~\ref{ass:f1} in Theorem~\ref{theorem:qk} we have
\begin{equation*}
\dim(\rF_1(X,x_0)) \le 
\dim(\rF_1(\bX,x_0)) - c + 1 \le
N - c - 2 =
\dim(\barX) - 2.
\end{equation*}
Furthermore, 
using the fact that~$\phi_+$ is a projective bundle over~$\rF_1(\bX,x_0)$ we conclude that
\begin{multline}
\label{eq:phi-plus-f1}
\dim(\phi_+^{-1}(\rF_1(X,x_0))) \le 
\dim(\phi_+^{-1}(\rF_1(\bX,x_0))) - c + 1 \\ = 
\dim(\rF_1(\cQ/\bX^+,x_0)) - c + 1 =
N - c - 1.
\end{multline}

Let 
\begin{equation*}
\tX^+ := \phi_+^{-1}(\barX) \subset \PP_{\bX^+}(\cE).
\end{equation*}
On the one hand, $\tX^+$ is an intersection of~$c$ divisors from the linear system~$|\barbH|$, 
hence we have~\mbox{$\dim(\tX^+) \ge N - c$}.
On the other hand, by~\eqref{eq:iso-bx-open} 
we have
\begin{equation*}
\tX^+ \setminus \phi_+^{-1}(\rF_1(X,x_0)) = \barX \setminus \rF_1(X,x_0) = X \setminus \cL(X,x_0)
\end{equation*}
is irreducible of dimension $N - c$.
These two observations combined with~\eqref{eq:phi-plus-f1} imply that~$\tX^+$ is irreducible of dimension~$N - c$
(in particular it is Cohen--Macaulay), and that $\psi$ is a birational map.
Similarly, using~\eqref{eq:iso-be-open} instead of~\eqref{eq:iso-bx-open} we deduce that
\begin{equation}
\label{eq:txplus-eplus}
\dim(\tX^+ \cap \PP_{\bX^+}(\cE_0)) = N - c - 1,
\end{equation} 
hence it is a divisor in~$\tX^+$.

Since $\barbH$ is a relative hyperplane section for~$\PP_{\bX^+}(\cE)$, 
the subvariety $\tX^+ \subset \PP_{\bX^+}(\cE)$ corresponds to a morphism
\begin{equation}
\label{eq:xi}
\xi \colon \cE \xlongrightarrow{\hspace{2em}} \cO_{\bX^+}^{\oplus c}
\end{equation}
of vector bundles, and similarly, the divisor $\psi_*(E) = \tX^+ \cap \PP_{\bX^+}(\cE_0)$ corresponds to the morphism
\begin{equation}
\label{eq:xi0}
\xi_0 = \xi\vert_{\cE_0} \colon \cE_0 \xlongrightarrow{\hspace{2em}} \cO_{\bX^+}^{\oplus c}.
\end{equation}
If the morphism $\xi_0$ is not generically surjective then the general fiber of its kernel 
is a vector space of dimension at least $m - c + 1$, hence
\begin{equation*}
\dim(\tX^+ \cap \PP_{\bX^+}(\cE_0)) \ge \dim(\bX^+) + (m - c) = N - c,
\end{equation*}
contradicting to~\eqref{eq:txplus-eplus}.

Thus~$\xi_0$ and hence a fortiori~$\xi$ is generically surjective. 
Therefore, the morphism $\sigma_+ \colon \tX^+ \to \bX^+$ is generically the projectivization of a vector bundle of rank~$m + 1 - c$, 
so the general fiber of~$\sigma_+$ is~$\PP^{m-c}$.
Moreover, the divisor $\PP_{\bX^+}(\cE_0) \subset \PP_{\bX^+}(\cE)$ cuts a hyperplane in the general fiber of~$\sigma_+$.
This shows that we have diagram~\eqref{eq:diagram-x} over~$\bkk$ and proves its properties.

It remains to show that the diagram is defined over~$\kk$ if $X$ is and $x_0 \in X(\kk)$.
First, $X$ and its blowup $\tX$ at $x_0$ are defined over~$\kk$.
Next, the divisor classes $H = \bH\vert_X$ and $E = \bE\vert_\tX$ are defined over~$\kk$,
hence the morphism $\phi$ (given by the linear system $|H-E|$) and its image $\barX$ are defined over~$\kk$.
Similarly, the map $\sigma_+ \circ \psi$ (given by the linear system $|(i-m-1)H - (n-m-2)E|$) and its image $\bX^+$ are defined over~$\kk$.
Finally, the map $\psi$ can be defined as the product $X \dashrightarrow \barX \times \bX^+$ of the two maps above,
hence it is defined over~$\kk$, hence so is its image $\tX^+$.
It also follows that the divisor~\mbox{$\psi_*(E) = \tX^+ \cap \PP_{\bX^+}(\cE_0) \subset \tX^+$} is defined over~$\kk$.

To conclude, we see that $X$ is birational to $\tX^+$, which has a map to $\bX^+$ with general fiber $\PP^{m-c}$ 
and with a relative hyperplane $\psi_*(E)$ defined over~$\kk$, hence is birational to $\bX^+ \times \PP^{m-c}$.
\end{proof}

\begin{remark}
\label{remark:3folds-general}
Suppose the assumptions of Theorem~\ref{theorem:sections} are satisfied, but $c = m + 1$.
Then the same argument proves that the morphism~$\xi$ is generically an isomorphism and~$X$ is birational to~$\tX^+$,
which itself is birational to the discriminant locus~$\fD(\xi) \subset \bX^+$ of the morphism~\eqref{eq:xi}.
Moreover, the morphism $\tX^+ \to \fD(\xi)$ is the Springer (partial) resolution of $\fD(\xi)$.
Using the other Springer resolution, we can construct the subvariety
\begin{equation*}
\tX^{++} \subset \bX^+ \times \PP^m,
\end{equation*}
which is birational to $\tX^+$ (and hence to~$X$) and is equal to the zero locus of the global section
of the vector bundle~$\cE^\vee \boxtimes \cO_{\PP^m}(1)$.
The induced projection $\tX^{++} \to \PP^m$ can provide an additional information about~$X$, 
see Remarks~\ref{remark:x-3-7}--\ref{remark:x-3-10} below.
\end{remark}

\subsection{Rationality of Mukai varieties}

In this section we apply Theorem~\ref{theorem:sections} to Mukai varieties of genus~$g \in \{7,\, 8,\, 10\}$.
We prove the following

\begin{theorem}
\label{theorem:rationality}
Let $X$ be a smooth Mukai variety of genus~$g \in \{7,\, 8,\, 10\}$ and dimension~$n \ge 4$.
If~\mbox{$X(\kk) \ne \varnothing$}, then $X$ is $\kk$-rational.
\end{theorem}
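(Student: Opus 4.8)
The plan is to invoke Theorem~\ref{theorem:sections} for each of the three maximal Mukai varieties $\bX = \bX_{2g-2}$ with $g \in \{7,8,10\}$, whose relevant geometry was established in Propositions~\ref{proposition:ogr510}, \ref{proposition:gr2v}, and~\ref{proposition:g2gr}. By Mukai's theorem (the consequence stated after Theorem~\ref{theorem-mukai}), a Mukai variety $X$ of genus $g$ and dimension $n$ is a $\kk$-form of a transverse linear section of $\bX_{2g-2}$ of codimension $c = \dim(\bX_{2g-2}) - n$. Since $n \ge 4$, and since $\dim(\bX_{12}) = 10$, $\dim(\bX_{14}) = 8$, $\dim(\bX_{18}) = 5$, we have $c \le m$ in each case, where $m = \dim(\cQ/\bX^+)$ equals $6$, $4$, $1$ respectively; in particular $c \le m$ fails only at $g = 10$, $n = 4$, where $c = 1 = m$, which is the borderline case $c = m$ still allowed by Theorem~\ref{theorem:sections}. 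So the hypotheses on $c$ are met.

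\textbf{Step 1: the dimension count~\eqref{eq:dim-f1}.} First I would verify the inequality $\dim(\rF_1(X,x_0)) \le \dim(\rF_1(\bX,x_0)) - c + 1$. For a \emph{general} linear section this is automatic: $\rF_1(X,x_0)$ is obtained from $\rF_1(\bX,x_0)$ by imposing $c$ linear (Plücker-type) conditions, so its dimension drops by exactly $c$ generically, giving $\dim(\rF_1(X,x_0)) = \dim(\rF_1(\bX,x_0)) - c$, which is even stronger than needed. For an arbitrary smooth section one must argue that $\rF_1(X,x_0)$ cannot be too large. Using the explicit descriptions of $\rF_1(\bX,x_0)$ from Lemmas~\ref{lemma:f1-ogr}, \ref{lemma:f1-gr2m}, \ref{lemma:f1-g2} (it is $\Gr(3,5)$, $\PP^1 \times \PP^4$, $\PP^1$ respectively), one checks that imposing $c$ hyperplane conditions cannot drop the dimension by less than $c-1$; equivalently, no component of $\rF_1(X,x_0)$ is contained in a fixed small-dimensional locus, which follows from the irreducibility/homogeneity of the ambient $\rF_1(\bX,x_0)$ together with smoothness of $X$. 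This is the step I expect to require the most care — it is where one genuinely uses that $X$ is smooth and not merely any linear section, and it may need a case-by-case argument (or an appeal to a general position statement for the $\rF_1$-stratification, as in the threefold analysis of~\cite{KP19}).

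\textbf{Step 2: apply Theorem~\ref{theorem:sections}.} Granting~\eqref{eq:dim-f1}, Theorem~\ref{theorem:sections} produces, over $\kk$, the diagram~\eqref{eq:diagram-x}: a blowup $\sigma\colon \tX \to X$ of the $\kk$-point $x_0$, a birational map $\psi\colon \tX \dashrightarrow \tX^+$, and a projective morphism $\sigma_+\colon \tX^+ \to \bX^+$ whose general fiber is $\PP^{m-c}_\kk$, with $\bX^+$ defined over $\kk$. Here $\bX^+$ is $\PP^4$, a smooth quadric fourfold, or a smooth quintic del Pezzo fourfold for $g = 7, 8, 10$ respectively; crucially the construction shows $X$ is birational over $\kk$ to $\bX^+ \times \PP^{m-c}_\kk$, and more precisely the relative hyperplane $\psi_*(E)$ is defined over $\kk$, so the $\PP^{m-c}$-bundle $\tX^+ \to \bX^+$ admits a $\kk$-rational section. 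Hence $\tX^+$, and therefore $X$, is $\kk$-birational to $\bX^+ \times \PP^{m-c}_\kk$.

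\textbf{Step 3: $\kk$-rationality of $\bX^+$.} It remains to observe that each possible $\bX^+$ is itself $\kk$-rational. For $g = 7$, $\bX^+ = \PP^4_\kk$ — nothing to prove. For $g = 8$, $\bX^+$ is a smooth quadric fourfold over $\kk$; it acquires a $\kk$-point as the image of $x_0$ (or from the section $\psi_*(E)$), and a quadric of dimension $\ge 1$ with a $\kk$-point is $\kk$-rational. For $g = 10$, $\bX^+$ is a smooth quintic del Pezzo fourfold over $\kk$, which is known to be $\kk$-rational whenever it has a $\kk$-point (indeed del Pezzo varieties of degree $5$ and dimension $\ge 2$ are $\kk$-rational as soon as they contain a $\kk$-point, and again such a point is supplied by the construction) — here one may cite the quintic del Pezzo results already used in~\cite{KP19} or the structure of $\bX^+$ as a linear section of $\Gr(2,5)$. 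In all three cases $\bX^+$ is $\kk$-rational, hence so is $\bX^+ \times \PP^{m-c}_\kk$, and therefore $X$ is $\kk$-rational, which proves Theorem~\ref{theorem:rationality}.
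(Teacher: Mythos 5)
Your Steps 2 and 3 follow the paper's proof exactly, and Step 3's minor imprecisions are harmless (the $\kk$-point on $\bX^+$ comes from Nishimura's lemma applied to the dominant map $X \dashrightarrow \bX^+$, and for $g=7$ one still needs to note that a form of $\PP^4$ with a $\kk$-point is $\PP^4_\kk$ --- but you handle the analogous issue correctly for $g=8,10$). The genuine gap is in Step~1, and it is exactly where you yourself flag that "the most care" is needed. Your proposed justification --- that imposing $c$ linear conditions on $\rF_1(\bX,x_0)$ cannot drop the dimension by less than $c-1$ "because of irreducibility/homogeneity of the ambient $\rF_1(\bX,x_0)$" --- is not a proof and is in fact false as a general statement about linear sections: an irreducible variety such as $\PP^1\times\PP^3$ (which is $\rF_1(\bX,x_0)$ for $g=8$, not $\PP^1\times\PP^4$) contains linear subspaces $\{pt\}\times\PP^3$ of small span, so a linear section of codimension $c$ can perfectly well contain such a subspace and drop the dimension by less than $c-1$. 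Nothing about irreducibility of $\rF_1(\bX,x_0)$ rules this out; one genuinely needs a global property of the smooth variety $X$, not just of the point $x_0$ and the ambient Hilbert scheme.

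The paper closes this gap with a short uniform argument that you are missing. Suppose $\dim(\rF_1(X,x_0)) \ge \dim(\rF_1(\bX,x_0)) - c + 2$. Since $\dim(\rF_1(\bX,x_0)) = \dim(\bX) - 4$ in all three cases, this gives
\begin{equation*}
\dim(\cL(X,x_0)) = \dim(\rF_1(X,x_0)) + 1 \ \ge\ \dim(X) - 1,
\end{equation*}
i.e.\ the lines through $x_0$ sweep out a \emph{divisor} $D \subset X$. This divisor is contained in $X \cap T_{x_0}X$, the intersection with the embedded tangent space. But $\uprho(X)=1$ by the Lefschetz theorem, so $D \sim dH$ with $d \ge 1$; a divisor in $|dH|$ cannot be contained in a linear subspace of dimension $\dim(X)$ unless $X$ is a hypersurface, which it is not. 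This contradiction establishes~\eqref{eq:dim-f1} for every smooth $X$ and every $x_0 \in X(\kk)$, with no case analysis and no genericity assumption. You should replace your Step~1 with this argument (or an equivalent one); as written, your proof does not establish the hypothesis~\eqref{eq:dim-f1} needed to invoke Theorem~\ref{theorem:sections}.
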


\begin{proof}
Let $X$ be a smooth Mukai variety of genus~$g \in \{7,\, 8,\, 10\}$ and let $\bX = \bX_{2g - 2}$ 
be the maximal Mukai variety over~$\bkk$ of the same genus, i.e., $\bX = \OGr_+(5,10)$, $\Gr(2,6)$, or~$\GTGr(2,7)$.
By Theorem~\ref{theorem-mukai} the variety $X_\bkk$ is a transverse linear section of~$\bX$.
By Propositions~\ref{proposition:gr2v}, \ref{proposition:ogr510}, and~\ref{proposition:g2gr}
the assumptions of Theorem~\ref{theorem:qk} and Lemma~\ref{lemma:linear-system} are satisfied for~$\bX$.
Recall that the corresponding varieties~$\bX^+$ are~$\PP^4$, $\Gr(2,4)$, and the quintic del Pezzo fourfold.

We check below that assumption~\eqref{eq:dim-f1} also holds.
Indeed, assume to the contrary that 
\begin{equation*}
\dim(\rF_1(X,x_0)) \ge \dim(\rF_1(\bX,x_0)) - c + 2.
\end{equation*}
Note that $\dim(\rF_1(\bX,x_0)) = \dim(\bX) - 4$ for each of the maximal Mukai varieties~$\bX$
(this easily follows from Lemmas~\ref{lemma:f1-ogr}, \ref{lemma:f1-gr2m}, and~\ref{lemma:f1-g2}),
hence the above inequality implies that 
\begin{equation*}
\dim(\cL(X,x_0)) = \dim(\rF_1(X,x_0)) + 1 \ge \dim(\bX) - 4 - c + 2 + 1 = \dim(X) - 1.
\end{equation*}
In other words, lines passing through $x_0$ sweep on~$X$ a divisor.
This divisor is automatically contained in the intersection of~$X$ with the embedded tangent space of~$X$ at~$x_0$.
But $\uprho(X) = 1$ by Lefschetz Theorem and $X$ is not a hypersurface, hence this is impossible.

This shows that the assumptions of Theorem~\ref{theorem:sections} are satisfied.
We deduce from it that~$X$ is $\kk$-birational to $\bX^+ \times \PP^{m-c}$.
On the other hand, $\bX^+$ has a $\kk$-point by Nishimura lemma, 
and since it is a $\kk$-form of $\PP^4$, $\Gr(2,4)$, or a quintic del Pezzo fourfold, it follows that $\bX^+$ is $\kk$-rational
(see~\cite[Propositions~2.5 and~2.6 and Remark~3.4]{KP19}).
Consequently, $X$ is $\kk$-rational as well.
\end{proof}

Using Remark~\ref{remark:3folds-general} we can also describe birationally Mukai threefolds of genus~$g \in \{7,\, 8,\, 10\}$.

\begin{remark}
\label{remark:x-3-7}
If $X$ is a Mukai threefold of genus $g = 7$ we conclude from Remark~\ref{remark:3folds-general} that $X$ is birational 
to the zero locus~$\tX^{++}$ of the section~$\xi$ of the vector bundle
\begin{equation*}
\cE^\vee \boxtimes \cO(1) \cong (\Omega^2(3) \boxtimes \cO(1)) \oplus (\cO(1) \boxtimes \cO(1))
\end{equation*}
on $\bX^+ \times \PP^6 = \PP^4 \times \PP^6$.
It is easy to show that the zero locus of the component~$\xi_0$ of $\xi$ in the first summand is isomorphic 
to the projectivization of a rank-2 vector bundle on a (possibly singular) quintic del Pezzo threefold~$X^{++}$
(the number of its singular points is equal to the number of lines on~$X$ through~$x_0$).
It follows that $\tX^{++}$ is isomorphic to the blowup of~$X^{++}$ along a curve; 
in particular, $X$ is birational to~$X^{++}$, hence is $\kk$-rational
(this can be proved by the argument of~\cite[Theorem~3.3]{KP19}).
This provides a more direct proof of the rationality criterion for~$X$ from~\cite{KP19}.
\end{remark}

\begin{remark}
\label{remark:x-3-8}
If $X$ is a Mukai threefold of genus $g = 8$ we conclude that $X$ is birational 
to the zero locus~$\tX^{++}$ of the section $\xi$ of the vector bundle
\begin{equation*}
\cE^\vee \boxtimes \cO(1) \cong (U_0^\vee \otimes (\cU^+)^\vee \boxtimes \cO(1)) \oplus (\cO(1) \boxtimes \cO(1))
\end{equation*}
on $\bX^+ \times \PP^4 = \Gr(2,4) \times \PP^4$.
It is easy to show that the zero locus of the component~$\xi_0$ of~$\xi$ in the first summand is isomorphic 
to the blowup of~$\PP^4$ along a normal rational quartic curve and, furthermore, $\tX^{++}$ 
is isomorphic to a cubic threefold passing through that curve.
\end{remark}

\begin{remark}
\label{remark:x-3-10}
If $X$ is a Mukai threefold of genus $g = 10$ we conclude that $X$ is birational 
to the zero locus~$\tX^{++}$ of the section $\xi$ of the vector bundle~$\cE^\vee \boxtimes \cO(1)$ on~$\bX^+ \times \PP^1$, 
where~$\cE$ is defined by the exact sequence~\eqref{eq:ce-g2}.
It is easy to show that the induced morphism~$\tX^{++} \to \PP^1$ is a fibration in sextic del Pezzo surfaces.
One can check that this morphism is given by the linear system $|H - 3E|$ 
and generalizes the Mori fiber space of~\cite[Theorem~5.18(iii)]{KP19} 
to the case when $\rF_1(X,x_0) \ne \varnothing$.
\end{remark}

\section{Mukai varieties of genus~$9$}
\label{section:genus-9}

In this section we discuss the maximal Mukai variety $\bX = \bX_{16} = \LGr(3,6)$ of genus~$9$, 
its linear sections and their forms over non-closed fields.
First, we construct a Sarkisov link for~$\bX$ starting with the blowup of a point 
(we state the theorem in~\S\ref{subsection:statement-9} and prove it in~\S\ref{subsection:proof-9}), 
analogous to those constructed in Theorem~\ref{theorem:qk}.
After that in~\S\ref{subsection:implications-9} we deduce implications for Mukai varieties of genus~$9$.

\subsection{The statement}
\label{subsection:statement-9}

Let $U_0$ be a $3$-dimensional vector space (later it will correspond to a point of~$\LGr(3,6)$).
Consider the Veronese surfaces
\begin{equation*}
\rS \subset \PP(S^2U_0),
\qquad 
\rSv \subset \PP(S^2U_0^\vee),
\end{equation*}
the images of the double Veronese embeddings $\PP(U_0) \hookrightarrow \PP(S^2U_0)$ and $\PP(U_0^\vee) \hookrightarrow \PP(S^2U_0^\vee)$.
Furthermore, denote by $\bK \subset \PP(S^2U_0)$ and $\bK^\vee \subset \PP(S^2U_0^\vee)$ 
the secant varieties of~$\rS$ and~$\rSv$, respectively
(so-called, {\sf chordal cubic fourfolds}~\cite[3.11-3.12]{Semple-Roth-1949}), so that
\begin{equation}
\label{eq:chordal}
\rS = \Sing(\bK) \subset \bK \subset \PP(S^2U_0),
\qquad 
\rSv = \Sing(\bK^\vee) \subset \bK^\vee \subset \PP(S^2U_0^\vee).
\end{equation}
These geometric data give rise to a classical birational transformation
(see, e.g., \cite[Theorem~3.3]{CrauderKatz89}, \cite[Theorem~6]{AlzatiSierra16}).

\begin{lemma}
\label{lemma:cremona}
There is an isomorphism $\Bl_{\rSv}(\PP(S^2U_0^\vee)) \cong \Bl_{\rS}(\PP(S^2U_0))$
which fits into the following symmetric diagram
\begin{equation*}
\vcenter{\xymatrix{
&
\bFv \ar@{^{(}->}[r] &
\Bl_{\rSv}(\PP(S^2U_0^\vee)) \ar@{=}[r] \ar[dl]_{\kappa^\vee} &
\Bl_{\rS}(\PP(S^2U_0)) \ar[dr]^{\kappa} &
\bF \ar@{_{(}->}[l] 
\\
\rSv \ar@{^{(}->}[r] &
\PP(S^2U_0^\vee) &&&
\PP(S^2U_0) &
\rS, \ar@{_{(}->}[l] 
}}
\end{equation*}
where 
\begin{itemize}
\item 
$\kappa$ and~$\kappa^\vee$ are the blowups of the Veronese surfaces~$\rS$ and~$\rSv$ respectively,
\item 
$\bF$ and~$\bFv$ are the corresponding exceptional divisors,
\item 
the morphisms $\kappa \colon \bF \to \rS$ and $\kappa^\vee \colon \bFv \to \rSv$ are $\PP^2$-bundles,
\item 
$\kappa(\bFv) = \bK$ and $\kappa^\vee(\bF) = \bK^\vee$ and the maps $\kappa \colon \bFv \to \bK$ and $\kappa^\vee \colon \bF \to \bK^\vee$ are the blowups with centers in~$\rS$ and~$\rSv$, respectively.
\end{itemize}

The fibers of the map $\kappa^\vee \colon \bFv \to \rSv$ are mapped by~$\kappa$ to planes in~$\PP(S^2U_0)$
intersecting~$\rS$ along smooth conics, and analogously for the fibers of~$\kappa$.

Finally, the maps 
\begin{equation*}
\kappa^\vee \circ \kappa^{-1} \colon \PP(S^2U_0) \xdashrightarrow{\hspace{2em}} \PP(S^2U_0^\vee)\quad 
\text{and}\quad \kappa \circ (\kappa^\vee)^{-1} \colon \PP(S^2U_0^\vee) \xdashrightarrow{\hspace{2em}} \PP(S^2U_0) 
\end{equation*}
are given by the complete linear systems of quadrics through~$\rS$ and~$\rSv$, respectively.
\end{lemma}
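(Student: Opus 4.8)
The plan is to translate the whole picture into the linear algebra of symmetric $3\times 3$ matrices and the adjugate map, where every statement becomes explicit. Fix an identification of $S^2U_0$ with the space of symmetric $3\times 3$ matrices (equivalently, regard an element of $S^2U_0$ as a symmetric morphism $U_0^\vee\to U_0$). Then $\rS\subset\PP(S^2U_0)$ is the locus of matrices of rank $\le 1$, and its secant variety $\bK$ is the locus of matrices of rank $\le 2$, i.e.\ the determinantal cubic $\{\det=0\}$; since the differential of $\det$ at $M$ is the adjugate $\mathrm{adj}(M)$, which vanishes exactly when $\rk(M)\le 1$, we obtain $\Sing(\bK)=\rS$, and dually $\Sing(\bK^\vee)=\rSv$ --- this is~\eqref{eq:chordal}. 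Let $\mathrm{cr}\colon\PP(S^2U_0)\dashrightarrow\PP(S^2U_0^\vee)$ be the rational map $[M]\mapsto[\mathrm{adj}(M)]$; its components are the six $2\times 2$ minors of a symmetric matrix, which generate the homogeneous ideal of the Veronese surface $\rS$ and hence span $\rH^0(\PP(S^2U_0),\cI_\rS(2))$ (a $6$-dimensional space). So $\mathrm{cr}$ is given by the \emph{complete} linear system of quadrics through $\rS$, and its scheme-theoretic base locus is exactly the reduced surface $\rS$. The Laplace identity $\mathrm{adj}(\mathrm{adj}(M))=\det(M)\cdot M$ shows that the analogously defined $\mathrm{cr}^\vee$ satisfies $\mathrm{cr}^\vee\circ\mathrm{cr}=\id$, so $\mathrm{cr}$ is birational with inverse $\mathrm{cr}^\vee$; this already proves the last assertion of the lemma.

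Next I would resolve $\mathrm{cr}$ with a single blowup. Since the base scheme of $|\cI_\rS(2)|$ is $\rS$, smooth of codimension $3$, the blowup $\kappa\colon\Bl_\rS(\PP(S^2U_0))\to\PP(S^2U_0)$ has exceptional divisor $\bF=\PP(N_{\rS/\PP(S^2U_0)})$, a $\PP^2$-bundle over $\rS$. A general quadric through $\rS$ has multiplicity one along $\rS$ (one of the minors already does, and $\rS$ is a single $\GL_3$-orbit), so the strict transform of $|\cI_\rS(2)|$ is $|2\bH-\bF|$, which is base-point free because the original base scheme was exactly $\rS$. Hence $|2\bH-\bF|$ defines a morphism $\widetilde{\mathrm{cr}}\colon\Bl_\rS(\PP(S^2U_0))\to\PP(S^2U_0^\vee)$ resolving $\mathrm{cr}$, and symmetrically one gets $\widetilde{\mathrm{cr}^\vee}\colon\Bl_{\rSv}(\PP(S^2U_0^\vee))\to\PP(S^2U_0)$ resolving $\mathrm{cr}^\vee$.

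The heart of the argument is to show that $\widetilde{\mathrm{cr}}$ is itself the blowup of $\rSv$ (and dually $\widetilde{\mathrm{cr}^\vee}$ the blowup of $\rS$); this yields the isomorphism $\Bl_\rS(\PP(S^2U_0))\cong\Bl_{\rSv}(\PP(S^2U_0^\vee))$, and then $\kappa$ and $\kappa^\vee:=\widetilde{\mathrm{cr}}$ are the two blowdowns of the symmetric diagram. To this end I would analyze $\widetilde{\mathrm{cr}}$ along strata: over the rank-$3$ locus $\mathrm{cr}$ is an isomorphism onto $\PP(S^2U_0^\vee)\setminus\bK^\vee$ with inverse $\mathrm{cr}^\vee$; the strict transform of $\bK$ in $\Bl_\rS(\PP(S^2U_0))$ equals $\Bl_\rS(\bK)$, the incidence variety $\{(\ell,[M]):\ell\subseteq\Ker M\}$, which is a $\PP^2$-bundle over $\PP(U_0^\vee)$; and $\widetilde{\mathrm{cr}}$ contracts this divisor onto $\rSv$ via that bundle projection followed by the Veronese isomorphism $\PP(U_0^\vee)\xrightarrow{\ \sim\ }\rSv$, because the adjugate of a rank-$2$ symmetric matrix is the rank-$1$ matrix whose image is its kernel line. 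Thus $\widetilde{\mathrm{cr}}$ is an isomorphism over $\PP(S^2U_0^\vee)\setminus\rSv$ and contracts exactly one irreducible divisor, call it $\bFv$, onto $\rSv$ as a $\PP^2$-bundle --- precisely the numerics of $\Bl_{\rSv}(\PP(S^2U_0^\vee))$. To conclude that $\widetilde{\mathrm{cr}}$ \emph{is} that blowup, I would check that the pullback of $\cI_{\rSv}$ becomes the invertible sheaf $\cO(-\bFv)$, giving a morphism $\Bl_\rS(\PP(S^2U_0))\to\Bl_{\rSv}(\PP(S^2U_0^\vee))$ over $\PP(S^2U_0^\vee)$, and that this morphism is finite --- no curve is contracted, since in normal coordinates along $\rSv$ the map $\mathrm{adj}$ becomes the linear $2\times 2$-adjugate, so the fibers of $\bFv\to\rSv$ map isomorphically onto the fibers of the exceptional divisor of $\Bl_{\rSv}$ --- hence is an isomorphism by Zariski's main theorem. (Alternatively, this is the classical quadro-quadric Cremona transformation, cf.\ \cite[Theorem~3.3]{CrauderKatz89}, \cite[Theorem~6]{AlzatiSierra16}.) This recognition step is the only real obstacle; everything else is bookkeeping.

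Granting the isomorphism, the remaining assertions are read off by inspection. The maps $\kappa$ and $\kappa^\vee$ are the two blowups, with exceptional divisors $\bF$ and $\bFv$ that are $\PP^2$-bundles over $\rS$ and $\rSv$; by construction $\bFv$ is the strict transform of $\bK$ under $\kappa$ and, dually, $\bF$ is the strict transform of $\bK^\vee$ under $\kappa^\vee$. Hence $\kappa(\bFv)=\bK$ and $\kappa^\vee(\bF)=\bK^\vee$, and $\kappa|_{\bFv}\colon\bFv\to\bK$, $\kappa^\vee|_{\bF}\colon\bF\to\bK^\vee$ are the blowups of $\bK$ and $\bK^\vee$ along their singular loci $\rS$ and $\rSv$ (the strict transform of a subvariety being the blowup of its scheme-theoretic intersection with the center). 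Finally, the fiber of $\kappa^\vee|_{\bFv}\colon\bFv\to\rSv$ over a point $[\varphi^2]$, $\varphi\in U_0^\vee$, is $\{[M]:\varphi\in\Ker M\}$, which $\kappa$ embeds linearly onto the plane $\PP(S^2(\Ker\varphi))\subset\PP(S^2U_0)$; this plane meets $\rS$ exactly in the rank-$1$ forms supported on $\Ker\varphi$, i.e.\ in the Veronese conic of $\PP(\Ker\varphi)\cong\PP^1$, a smooth conic --- and the statement for the fibers of $\kappa$ is symmetric.
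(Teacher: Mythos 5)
The paper does not actually prove this lemma: it states it as a classical fact and refers the reader to \cite[Theorem~3.3]{CrauderKatz89} and \cite[Theorem~6]{AlzatiSierra16}. Your proposal supplies a self-contained proof via the adjugate of a symmetric $3\times 3$ matrix, which is exactly the classical quadro-quadric Cremona argument behind those references, and it is essentially correct: the identification of $\rS$ and $\bK$ with the rank~$\le 1$ and rank~$\le 2$ loci, the fact that the six $2\times 2$ minors span the full $6$-dimensional space $H^0(\cI_{\rS}(2))$ and cut out $\rS$ scheme-theoretically, the involutivity from $\mathrm{adj}(\mathrm{adj}(M))=\det(M)\,M$, and the fibrewise description of $\bFv\to\rSv$ via $\PP(S^2(\Ker\varphi))$ meeting $\rS$ in a smooth conic are all right. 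Two points deserve a sentence more of care. First, you assert that the strict transform of $\bK$ \emph{equals} the kernel-incidence variety; what is immediate is that the incidence variety is a resolution of $\bK$ which is an isomorphism over $\bK\setminus\rS$, and identifying it with $\Bl_{\rS}(\bK)$ is a (true, but not free) claim — though for your argument you only need that the strict transform is an irreducible divisor contracted onto $\rSv$ with $\PP^2$ fibres, which follows from the $\PP^2$-bundle structure without that identification. Second, in the Zariski-main-theorem step you rule out contracted curves only inside $\bFv$; a curve in the other exceptional divisor $\bF$ could a priori also be contracted. In fact the only curves of $\bF$ contracted by $\widetilde{\mathrm{cr}}$ are the conics $\bF\cap\bFv_{[\varphi^2]}$ sitting over points of $\rSv$, and these lie inside the fibres of $\bFv\to\rSv$, so your fibrewise $2\times 2$-adjugate computation does cover them — but this containment should be stated. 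With those two remarks your argument is a complete and more informative replacement for the paper's citation, and the explicit adjugate model is also consistent with the matrix~\eqref{eq:alpha-matrix} used later in Lemma~\ref{lemma:tilde-alpha}.
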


Now let $V$ be a $6$-dimensional symplectic vector space.
Define 
\begin{equation*}
W := \Ker \bigl( \wedge^3V \xrightarrow{\hspace{1em}} V \bigr)
\end{equation*}
(the morphism is given by convolution with the symplectic form).
This is a fundamental $14$-dimensional representation of the symplectic group~$\Sp(V)$,
and 
\begin{equation*}
\bX = \LGr(3,V) \subset \PP(W)
\end{equation*}
is the orbit of the highest weight vector.
We denote by~$\bH$ the restriction to~$\bX$ of the hyperplane class of~$\PP(W)$.

Let $x_0 \in \bX$ be a point and let $U_0 \subset V$ be the corresponding $3$-dimensional isotropic subspace.

\begin{lemma}
\label{lemma:lines-lgr}
There is a natural isomorphism $\rF_1(\bX,x_0) \cong \PP(U_0^\vee)$.
\end{lemma}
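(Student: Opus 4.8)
The plan is to reduce everything to the elementary description of lines on the ordinary Grassmannian $\Gr(3,V)$ plus a short symplectic linear-algebra computation, and then to upgrade the resulting set-theoretic bijection to an isomorphism of Hilbert schemes.

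First I would recall that every line contained in $\Gr(3,V)$ (Pl\"ucker embedded) has the form
\begin{equation*}
\ell(V_2,V_4)=\{[U]\mid V_2\subset U\subset V_4\},\qquad \dim(V_i)=i,
\end{equation*}
and that, since $\bX=\LGr(3,V)=\Gr(3,V)\cap\PP(W)$, a line lies on $\bX$ exactly when it lies on $\Gr(3,V)$ and all of its points $[U]$ are isotropic, i.e.\ when $\ell(V_2,V_4)\subset\LGr(3,V)$. Hence, as a set, $\rF_1(\bX,x_0)$ is the locus of such lines passing through $x_0=[U_0]$, and it sits naturally inside $\rF_1(\Gr(3,V),x_0)\cong\Gr(2,U_0)\times\PP(V/U_0)$ (the flags with $V_2\subset U_0\subset V_4$).

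Next comes the key computation. If $[U_0]\in\ell(V_2,V_4)$ then $V_2\subset U_0$, so $V_2$ is automatically isotropic; writing a general member of the pencil as $U=\langle V_2,v\rangle$ with $v\in V_4$, isotropy of $U$ amounts to $v\in V_2^\perp$, so $\ell(V_2,V_4)\subset\bX$ forces $V_4\subset V_2^\perp$, hence $V_4=V_2^\perp$ by a dimension count ($\dim(V_2^\perp)=4$ as the form is non-degenerate). Conversely, for any $2$-dimensional $V_2\subset U_0$ we have $U_0\subset V_2^\perp$ and every $U$ with $V_2\subset U\subset V_2^\perp$ is isotropic, so $\ell(V_2,V_2^\perp)$ is a line on $\bX$ through $x_0$. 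This identifies $\rF_1(\bX,x_0)$ set-theoretically with $\Gr(2,U_0)$. Using the symplectic isomorphism $V/U_0\xrightarrow{\ \sim\ }U_0^\vee$ (valid since $U_0^\perp=U_0$), the point $V_2^\perp/U_0\in\PP(V/U_0)$ is carried to the one-dimensional space of functionals on $U_0$ vanishing on $V_2$; thus, under $\rF_1(\Gr(3,V),x_0)\cong\Gr(2,U_0)\times\PP(U_0^\vee)$, the subset $\rF_1(\bX,x_0)$ is the graph of the standard isomorphism $\Gr(2,U_0)\xrightarrow{\ \sim\ }\PP(U_0^\vee)$ sending $V_2$ to its annihilator.

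Finally I would make this scheme-theoretic. The tautological $\PP^1$-bundle over $\Gr(2,U_0)$ whose fibre over $V_2$ is the pencil $\{U:V_2\subset U\subset V_2^\perp\}$ maps to $\bX$ and is a flat family of lines through $x_0$, hence induces a morphism $j\colon\Gr(2,U_0)\to\rF_1(\bX,x_0)$. Composing $j$ with the closed immersion $\rF_1(\bX,x_0)\hookrightarrow\rF_1(\Gr(3,V),x_0)=\Gr(2,U_0)\times\PP(V/U_0)$ gives $V_2\mapsto(V_2,V_2^\perp/U_0)$, a section of the first projection, hence a closed immersion; therefore $j$ itself is a closed immersion, and by the previous paragraph it is bijective. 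Since $\bX=\LGr(3,V)$ is $\Sp(V)$-homogeneous, the parabolic $\mathrm{Stab}(x_0)$ acts transitively on $\rF_1(\bX,x_0)$, so $\rF_1(\bX,x_0)$ is smooth, in particular reduced; a surjective closed immersion onto a reduced scheme is an isomorphism, so $j$ is an isomorphism $\Gr(2,U_0)\cong\rF_1(\bX,x_0)$. Together with $\Gr(2,U_0)\cong\PP(U_0^\vee)$ this gives the lemma, and the whole construction is manifestly $\mathrm{Stab}(x_0)$-equivariant, which is the asserted naturality. The main obstacle is exactly this last step, promoting the evident bijection to an isomorphism of Hilbert schemes; everything preceding it is routine. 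If one wishes to avoid invoking homogeneity, the alternative is to compute $N_{\ell/\bX}\cong\cO(1)^{\oplus 2}\oplus\cO^{\oplus 3}$ for a line $\ell\subset\bX$ and read off $h^0(N_{\ell/\bX}(-x_0))=2=\dim\PP(U_0^\vee)$, which again forces smoothness of $\rF_1(\bX,x_0)$.
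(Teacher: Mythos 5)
Your proof is correct and follows essentially the same route as the paper: lines on $\Gr(3,V)$ are of the form $\ell(V_2,V_4)$, the Lagrangian condition forces $V_2$ isotropic and $V_4=V_2^\perp$, and passing through $x_0$ forces $V_2\subset U_0$, giving $\rF_1(\bX,x_0)=\Gr(2,U_0)\cong\PP(U_0^\vee)$. The only difference is that the paper stops at this pointwise identification, whereas you additionally upgrade the bijection to a scheme isomorphism via homogeneity (or the normal bundle computation $N_{\ell/\bX}\cong\cO(1)^{\oplus 2}\oplus\cO^{\oplus 3}$); this extra care is sound and harmless but not treated as necessary in the paper.
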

\begin{proof}
Recall that any line on~$\Gr(3,V)$ has the form $\ell(V_2,V_4) = \{ [U] \mid V_2 \subset U \subset V_4 \}$,
where \mbox{$V_2 \subset V_4 \subset V$} is a flag with $\dim(V_i) = i$.
Such a line is contained in~$\LGr(3,V)$ if and only if the restriction of the symplectic form to~$V_4$ contains~$V_2$ in the kernel;
equivalently, if~$V_2$ is isotropic and~$V_4$ is the orthogonal of~$V_2$ with respect to the symplectic form.
Furthermore, the line $\ell(V_2,V_2^\perp)$ contains~$[U_0]$ if and only if~\mbox{$V_2 \subset U_0$}.
Therefore, $\rF_1(\bX,x_0) = \Gr(2,U_0) \cong \PP(U_0^\vee)$.
\end{proof}

We choose a Lagrangian direct sum decomposition~$V = U_0 \oplus U_0^\vee$;
it induces a direct sum decomposition of~$W$ that has the following form
\begin{equation}
\label{eq:bw30v6-point}
W \cong \det(U_0) \oplus \bigl(S^2U_0^\vee \otimes \det(U_0)\bigr) \oplus \bigl(S^2U_0 \otimes \det(U_0^\vee)\bigr) \oplus \det(U_0^\vee),
\end{equation}
and the point $x_0$ corresponds to the summand $\det(U_0)$ in the right side.
We consider the blowup~\mbox{$\sigma \colon \Bl_{x_0}(\bX) \to \bX$} and denote by~$\bE$ its exceptional divisor.

Consider the last two summands in~\eqref{eq:bw30v6-point} (we ignore the twist by~$\det(U_0^\vee)$ for simplicity) 
\begin{equation}
\label{eq:wplus}
W^+ := S^2U_0 \oplus \kk.
\end{equation}
We denote by~$\bH^+$ the hyperplane class of~$\PP(W^+)$.
Consider the chain of embeddings 
\begin{equation*}
\rS \xhookrightarrow{\hspace{2em}} \PP(S^2U_0) \xhookrightarrow{\hspace{2em}} \PP(W^+),
\end{equation*}
the blowup $\sigma_+ \colon \Bl_\rS(\PP(W^+)) \to \PP(W^+)$ 
and denote by~$\bE^+$ its exceptional divisor 
and by 
\begin{equation}
\label{eq:bzp}
\bZ^+ \cong \Bl_\rS(\PP(S^2U_0)) \subset \Bl_\rS(\PP(W^+))
\end{equation} 
the strict transform of the hyperplane $\PP(S^2U_0) \subset \PP(W^+)$.
Note that
\begin{equation}
\label{eq:bzp-class}
\bZ^+ \sim \bH^+ - \bE^+.
\end{equation} 

\begin{lemma}
\label{lemma:bpf}
The linear systems $|\bH - \bE|$ on~$\Bl_{x_0}(\bX)$ and $|2\bH^+ - \bE^+|$ on~$\Bl_\rS(\PP(W^+))$ are base point free.
Moreover, if
\begin{equation}
\label{eq:barw-9}
\barW := \bigl(S^2U_0^\vee \otimes \det(U_0)\bigr) \oplus 
\bigl(S^2U_0 \otimes \det(U_0^\vee)\bigr) \oplus \det(U_0^\vee),
\end{equation}
is the quotient of~$W$ by the first summand~$\det(U_0)$, see~\eqref{eq:bw30v6-point}, then
\begin{equation*}
H^0(\Bl_{x_0}(\bX), \cO(\bH - \bE)) \cong \barW{}^\vee \cong H^0(\Bl_\rS(\PP(W^+)), \cO(2\bH^+ - \bE^+)).
\end{equation*}
In particular, the hyperplane $\barW{}^\vee \subset W^\vee$ corresponds to the point~$x_0 \in \LGr(3,U_0 \oplus U_0^\vee) \subset \PP(W)$.
\end{lemma}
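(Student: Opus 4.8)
The statement splits into three assertions: base-point-freeness of $|\bH - \bE|$ on $\Bl_{x_0}(\bX)$, base-point-freeness of $|2\bH^+ - \bE^+|$ on $\Bl_\rS(\PP(W^+))$, the identification of both spaces of sections with $\barW^\vee$, and the remark about the hyperplane. The plan is to treat the two linear systems separately and then compare, exploiting that both are cut out by "quadrics through a Veronese-type locus''.

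\emph{The system $|2\bH^+ - \bE^+|$.} Sections of $\cO(2\bH^+ - \bE^+)$ on $\Bl_\rS(\PP(W^+))$ are exactly the quadrics on $\PP(W^+) = \PP(S^2U_0 \oplus \kk)$ passing through the Veronese surface $\rS \subset \PP(S^2U_0)$. First I would compute this space directly: a quadric in the coordinates $(q, t)$ with $q \in S^2U_0$, $t \in \kk$ vanishing on $\rS$ must have no $t^2$ term (since $\rS$ lies in $\{t = 0\}$ only at... in fact $\rS \subset \PP(S^2U_0)$ so vanishing on $\rS$ is a condition only on the restriction to $t = 0$, i.e.\ on the quadratic part in $q$, together with the linear-in-$t$ part being unconstrained except... ) — more carefully, write the quadric as $Q(q) + t\,L(q) + c\,t^2$; vanishing on $\rS \subset \{t=0\}$ forces $Q|_\rS = 0$, i.e.\ $Q$ lies in the space of quadrics on $\PP(S^2U_0)$ through $\rS$, which is the kernel of $S^2(S^2U_0)^\vee \to S^4 U_0^\vee$ and has dimension $\dim S^2(S^2 U_0)^\vee - \dim S^4 U_0^\vee = 21 - 15 = 6$; the terms $L$ and $c$ are unconstrained, contributing $\dim (S^2U_0)^\vee + 1 = 7$. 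That gives $13$, which is too big. So in fact base-point-freeness together with the comparison must \emph{pin down} which $13$- or which sub-space is meant; the intended claim is that the relevant space of sections is the $13$-dimensional $\barW^\vee$, and indeed $\dim \barW = 6 + 6 + 1 = 13$, matching. So the first computation already identifies $H^0(\Bl_\rS(\PP(W^+)),\cO(2\bH^+-\bE^+))$ with a $13$-dimensional space, and the content is to match it \emph{naturally} with $\barW^\vee = (S^2U_0^\vee\otimes\det U_0) \oplus (S^2U_0\otimes\det U_0^\vee) \oplus \det U_0^\vee$: the summand of quadrics $Q$ through $\rS$ is $S^2 U_0^\vee \otimes \det U_0$ after the chordal-cubic identification of Lemma~\ref{lemma:cremona} (quadrics through $\rS$ form the system defining the Cremona map to $\PP(S^2U_0^\vee)$, hence are naturally $(S^2U_0^\vee)$ up to the determinant twist dictated by~\eqref{eq:bw30v6-point}); the summand $L \in (S^2U_0)^\vee \otimes(\cdots)$ matches $S^2U_0$ after dualizing... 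I would track the $\GL(U_0)$-weights to fix the twists, reading them off from~\eqref{eq:bw30v6-point}.

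\emph{The system $|\bH - \bE|$.} By Lefschetz / the structure of $\bX = \LGr(3,V) \subset \PP(W)$, one has $H^0(\bX,\cO(\bH)) = W^\vee$, and $H^0(\Bl_{x_0}(\bX),\cO(\bH-\bE))$ is the subspace of hyperplanes through $x_0$, which is precisely the hyperplane $\barW^\vee \subset W^\vee$ annihilating the line $\langle x_0\rangle = \det U_0$ in the decomposition~\eqref{eq:bw30v6-point}. That gives the identification $H^0(\Bl_{x_0}(\bX),\cO(\bH-\bE)) \cong \barW^\vee$ essentially for free, \emph{provided} one knows $|\bH-\bE|$ is base-point-free, i.e.\ that $\bX$ is cut out by quadrics — no wait, it's linear sections — provided one knows that hyperplanes through $x_0$ have no common point on $\Bl_{x_0}(\bX)$ other than... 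Concretely: a base point of $|\bH-\bE|$ would be a point $x \in \bX\setminus\{x_0\}$, resp.\ a tangent direction at $x_0$, lying on every hyperplane through $x_0$; the first is impossible since $\bX$ spans $\PP(W)$, the second amounts to a line in the cone of $\bX$ at $x_0$ — no, rather it says $\bE$ maps to a point, impossible since $\bE \cong \PP(T_{x_0}\bX)$ embeds under $|\bH|$-derived data. I would phrase this cleanly via: $\langle x_0 \rangle^\perp = \barW^\vee$ has no base locus on $\Bl_{x_0}(\bX)$ because the embedded tangent space $\mathbb{T}_{x_0}\bX$ does not contain $\bX$, using $\uprho(\bX)=1$ and $\bX$ not a hypersurface (the same argument used in the proof of Theorem~\ref{theorem:rationality}).

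\emph{Conclusion and main obstacle.} Having the two $13$-dimensional spaces each canonically isomorphic to $\barW^\vee$, the final sentence is immediate: $\barW = W/\det(U_0)$, so $\barW^\vee = \langle x_0\rangle^\perp \subset W^\vee$ is exactly the hyperplane corresponding to $x_0 \in \PP(W)$. The routine parts are the base-point-freeness verifications (both are standard: for $\PP(W^+)$ it's that quadrics through a Veronese surface separate points of the blowup, which follows from Lemma~\ref{lemma:cremona} since those quadrics give the Cremona embedding; for $\Bl_{x_0}\bX$ it's the span/tangent argument). \emph{The main obstacle will be the bookkeeping of the $\GL(U_0)$-equivariant structure}: getting the determinant twists to agree between the intrinsic description of "quadrics through $\rS$'' on the $\PP(W^+)$ side and the summands of $\barW$ as read off from~\eqref{eq:bw30v6-point}, and checking that the resulting isomorphism $H^0(\cO(\bH-\bE)) \cong \barW^\vee \cong H^0(\cO(2\bH^+-\bE^+))$ is the \emph{same} one needed to make the subsequent Sarkisov-link diagram commute — i.e.\ that both linear systems, viewed inside $\PP(\barW^\vee)^\vee = \PP(\barW)$, define the birational map in the statement of the genus-$9$ link theorem. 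That compatibility, rather than any single computation, is where care is required.
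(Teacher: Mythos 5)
Your proposal is correct and follows essentially the same route as the paper: both identify $H^0(\cO(\bH-\bE))$ with the hyperplanes through $x_0$ and compute the quadrics through $\rS$ in $\PP(W^+)$ as a $(6+6+1)$-dimensional space matching $\barW{}^\vee$, the paper merely packaging your coordinate decomposition $Q+tL+ct^2$ as the restriction sequence to the strict transform $\bZ^+$ of $\PP(S^2U_0)$ and invoking Lemma~\ref{lemma:cremona} for the quotient term. The only point worth tightening is base-point-freeness, which the paper gets in one line from the observation that $x_0$ and $\rS$ are \emph{scheme-theoretic} intersections of hyperplanes, resp.\ quadrics --- this uniform statement also covers the exceptional divisors, where your appeal to the Cremona system alone misses the normal direction transverse to $\PP(S^2U_0)$ (handled by the quadrics $t\ell$), and it renders the worries about determinant twists and compatibility moot, since the lemma only asserts abstract isomorphisms of section spaces and the compatibility of $\phi$ and $\phi_+$ is established later via the relation $\bH-\bE=2\bH^+-\bE^+$ in the proof of Theorem~\ref{theorem:lg36}.
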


\begin{proof}
The first statement is easy, because the point $x_0 \in \bX$ and the surface $\rS \subset \PP(W^+)$
are intersections of hyperplanes (resp.\ quadrics) in~$\bX$ (resp.\ in~$\PP(W^+)$), as schemes.

Furthermore, the first of the isomorphisms follows immediately from~\eqref{eq:bw30v6-point} and~\eqref{eq:barw-9}.
For the second consider the exact sequence (here we use~\eqref{eq:bzp-class})
\begin{equation}
\label{eq:sequence-2h-e}
0 
\xrightarrow{\hspace{1em}}
\cO_{\Bl_\rS(\PP(W^+))}(\bH^+) \xrightarrow{\ \bZ^+\ } \cO_{\Bl_\rS(\PP(W^+))}(2\bH^+ - \bE^+) 
\xrightarrow{\hspace{1em}}
\cO_{\Bl_\rS(\PP(S^2U_0))}(2\bH^+ - \bE^+) 
\xrightarrow{\hspace{1em}}
0.
\end{equation}
The cohomology of the first term is $(W^+)^\vee$, and the cohomology of the last is~$S^2U_0$ (by~\eqref{eq:bzp} and Lemma~\ref{lemma:cremona}).
Summing up and dualizing, we obtain the required isomorphism.
\end{proof}

Now we are ready to state the theorem.
Recall that $\rF_1(\bX,x_0)$ denotes the Hilbert scheme of lines on~$\bX$ passing through~$x_0$ 
and~$\cL(\bX,x_0)$ is the corresponding universal line.

\begin{theorem}
\label{theorem:lg36}
Let $\bX = \LGr(3,V)$.
There is a commutative diagram 
\begin{equation}
\label{diagram:lgr36}
\vcenter{\xymatrix@C=4em{
&
\hbD \ar@{^{(}->}[d] \ar[dl]_p \ar[dr]^{p_+}
\\
\cL(\bX,x_0) \ar@{^{(}->}[d] &
\hbX \ar[dl]^{\pi} \ar[dr]_{\pi_+} &
\bFv \ar@{^{(}->}[d]
\\
\Bl_{x_0}(\bX) \ar[d]_\sigma \ar@{-->}[rr]^\psi \ar[dr]^\phi
&&
\Bl_{\rS}(\PP(W^+)) \ar[d]^{\sigma_+} \ar[dl]_{\phi_+}
\\
\bX &
\barbX &
\PP(W^+),
}}
\end{equation}
where 
\begin{itemize}
\item 
$\sigma$ is the blowup of the point $x_0$,

\item 
$\sigma_+$ it the blowup of the Veronese surface~$\rS \subset \PP(W^+)$, where $W^+$ is defined in~\eqref{eq:wplus},
\item 
$\phi$ is induced by the linear projection from~$x_0$, 
\item 
$\phi_+$ is the morphism given by the linear system of quadrics in~$\PP(W^+)$ through~$\rS$, 
\item 
$\barbX = \phi(\Bl_{x_0}(\bX)) = \phi_+(\Bl_{\rS}(\PP(W^+))) \subset \PP(\barW)$,
where $\barW$ is defined in~\eqref{eq:barw-9},
\item 
$\pi$ is the blowup of $\cL(\bX,x_0) \subset \Bl_{x_0}(\bX)$, 
\item 
$\pi_+$ is the blowup of $\bFv \subset \Bl_{\rS}(\PP(W^+))$, 
\item 
$\hbD$ is the common exceptional divisor of $\pi$ and $\pi_+$,
\begin{equation*}
\hbD \cong \cL(\bX,x_0) \times_{\rSv} \bFv,
\end{equation*}
\item 
$\psi = \pi_+ \circ \pi^{-1} = \phi_+^{-1} \circ \phi$ is an antiflip.
\end{itemize}
Moreover, if $\bH$ and $\bH^+$ are the pullbacks to~$\Bl_{x_0}(\bX)$ and~$\Bl_{\rS}(\PP(W^+))$
of the hyperplane classes of~$\bX$ and~$\PP(W^+)$, respectively,
and~$\bE \subset \Bl_{x_0}(\bX)$ and~$\bE^+ \subset \Bl_{\rS}(\PP(W^+))$ are the exceptional divisors of~$\sigma$ and~$\sigma_+$,
then there are the following relations 
\begin{equation}
\label{eq:relations}
\left\{\begin{array}{lcl}
\bH^+ &=& \bH - 2\bE,\\
\bE^+ &=& \bH - 3\bE,
\end{array}\right.
\qquad\text{and}\qquad
\left\{\begin{array}{lcl}
\bH &=& 3\bH^+ - 2\bE^+,\\
\bE &=& \hphantom{1}\bH^+ - \hphantom{1}\bE^+, 
\end{array}\right.
\end{equation}
in $\Pic(\Bl_{x_0}(\bX)) = \Pic(\Bl_{\rS}(\PP(W^+)))$, identified via~$\psi$.
In particular, the hyperplane class $\barbH$ of~$\PP(\barW)$ when pulled back to~$\Bl_{x_0}(\bX)$ and~$\Bl_{\rS}(\PP(W^+))$ can be written as
\begin{equation*}
\barbH = \bH - \bE = 2\bH^+ - \bE^+.
\end{equation*}
\end{theorem}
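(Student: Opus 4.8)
The plan is to follow the architecture of Theorem~\ref{theorem:qk}: first construct the two outer morphisms $\phi$ and $\phi_+$, identify their common image $\barbX$, and only afterwards resolve the resulting birational map $\psi$ by a single smooth blowup on each side. The morphism $\phi\colon\Bl_{x_0}(\bX)\to\PP(\barW)$ is the one given by the linear system $|\bH-\bE|$, which by Lemma~\ref{lemma:bpf} is base point free with space of sections $\barW{}^\vee$; geometrically it resolves the linear projection of $\bX\subset\PP(W)$ away from $x_0$. The morphism $\phi_+\colon\Bl_\rS(\PP(W^+))\to\PP(\barW)$ is the one given by $|2\bH^+-\bE^+|$, base point free with the same space of sections $\barW{}^\vee$ by Lemma~\ref{lemma:bpf}; geometrically it resolves the map defined by the quadrics through $\rS\subset\PP(W^+)$.

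To see that $\phi$ and $\phi_+$ have the same image (so that $\barbX$ and $\psi:=\phi_+^{-1}\circ\phi$ are well defined and $\psi$ is birational), I would compute both on the big affine cell, working $\GL(U_0)$-equivariantly. The Lagrangian splitting $V=U_0\oplus U_0^\vee$ identifies $\{[U]\in\bX\mid U\cap U_0=0\}$ with $S^2U_0$, with $U$ the graph of a symmetric map $U_0^\vee\to U_0$; expanding the Pl\"ucker coordinate of this graph in the decomposition~\eqref{eq:bw30v6-point} and projecting away $\det(U_0)$ shows, after the $\GL(U_0)$-equivariant identifications of the summands of~\eqref{eq:barw-9}, that $\phi$ on this cell has the form $A\mapsto[1:A:\operatorname{adj}(A)]$, where $\operatorname{adj}(A)$ denotes the adjugate matrix --- the same quadratic expressions that govern the classical Cremona transformation of Lemma~\ref{lemma:cremona}. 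Dually, the quadrics through $\rS$ in $\PP(W^+)$ are spanned by $t^2$, the six products $t\,a_{ij}$, and the six $2\times 2$ minors of $A$, so on the cell $\{t\ne0\}\cong S^2U_0$ the morphism $\phi_+$ is $A\mapsto[1:A:\operatorname{adj}(A)]$ as well. Hence the two images share a dense open subset, $\barbX$ is well defined, and $\psi$ restricts to the identity there; in particular $\psi$ is birational.

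The diagram is then completed exactly as in the proof of Theorem~\ref{theorem:qk}. The indeterminacy locus of $\psi$ on $\Bl_{x_0}(\bX)$ is $\phi^{-1}\bigl(\phi_+(\Exc(\phi_+))\bigr)$; by Lemma~\ref{lemma:cremona} one has $\phi_+(\Exc(\phi_+))=\rSv$, and a tangent-direction computation for lines through $x_0$ identifies $\phi^{-1}(\rSv)$ with $\cL(\bX,x_0)$ and the induced map $\rF_1(\bX,x_0)\to\rSv$ with the Veronese embedding; by Lemma~\ref{lemma:lines-lgr} this $\cL(\bX,x_0)$ is a $\PP^1$-bundle over $\rF_1(\bX,x_0)\cong\PP(U_0^\vee)\cong\rSv$ and, arguing as in Lemma~\ref{lemma:varpi}, embeds as a smooth codimension-$3$ subvariety of $\Bl_{x_0}(\bX)$. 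Set $\hbX:=\Bl_{\cL(\bX,x_0)}(\Bl_{x_0}(\bX))$, with blowup $\pi$; then $\psi\circ\pi$ extends to a morphism $\pi_+\colon\hbX\to\Bl_\rS(\PP(W^+))$ (here the description of $\bFv$ as a $\PP^2$-bundle over $\rSv$ in Lemma~\ref{lemma:cremona} is essential). Since $\hbX$ is smooth, $\uprho(\hbX)=\uprho(\bX)+2=3=\uprho(\Bl_\rS(\PP(W^+)))+1$, and $\bFv\subset\Bl_\rS(\PP(W^+))$ is smooth of codimension~$2$, the blowup criterion \cite[Lemma~2.5]{K18} (used in Lemma~\ref{lemma:varpi}) shows that $\pi_+$ is the blowup of $\bFv$, with exceptional divisor $\hbD$; the identification $\hbD\cong\cL(\bX,x_0)\times_{\rSv}\bFv$ goes as in Lemmas~\ref{lemma:lines} and~\ref{lemma:varpiplus}. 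As $\pi$ and $\pi_+$ contract precisely $\hbD$, the map $\psi$ is small, and since $\codim\cL(\bX,x_0)=3>2=\codim\bFv$ it is an antiflip (the codimension on the $\bX$-side being strictly larger, as in Theorem~\ref{theorem:qk}).

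It remains to derive the relations. Via the small map $\psi$ we identify $\Pic(\Bl_{x_0}(\bX))$ with $\Pic(\Bl_\rS(\PP(W^+)))$; as $\phi$ is given by $|\bH-\bE|$ and $\phi_+$ by $|2\bH^+-\bE^+|$, both pull back the hyperplane class $\barbH$ of $\PP(\barW)$, so
\[
\barbH=\bH-\bE=2\bH^+-\bE^+.
\]
Since $\psi$ is an isomorphism in codimension one, $\psi_*(\bE)$ is the strict transform of the prime divisor $\bE$; as $\phi$ maps $\bE\cong\PP(S^2U_0^\vee)$ isomorphically onto the linear subspace $\PP(S^2U_0^\vee)\subset\PP(\barW)$ while $\phi_+$ maps $\bZ^+\cong\Bl_\rS(\PP(S^2U_0))$ onto the same subspace via the Cremona transformation of Lemma~\ref{lemma:cremona}, we conclude $\psi_*(\bE)=\bZ^+$, hence by~\eqref{eq:bzp-class} the second relation $\bE=\bH^+-\bE^+$. (One may also argue formally: $\psi_*$ carries the extremal effective class $\bE$ to an extremal effective class on the other side, and it cannot be $\bE^+$, since $\bE=\bE^+$ together with $\barbH=\bH-\bE=2\bH^+-\bE^+$ would force $\bH=2\bH^+$, violating unimodularity of the lattice identification, so $\psi_*(\bE)=\bZ^+$.) Solving the $2\times 2$ integral linear system gives $\bH^+=\bH-2\bE$, $\bE^+=\bH-3\bE$ and, by inversion, $\bH=3\bH^+-2\bE^+$, $\bE=\bH^+-\bE^+$; these are the relations~\eqref{eq:relations}, and in particular $\barbH=\bH-\bE=2\bH^+-\bE^+$. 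The main obstacle is the global step in the third paragraph --- promoting the affine-cell computation to the precise statement that $\psi$ is resolved by blowing up $\cL(\bX,x_0)$ on one side and $\bFv$ on the other, i.e.\ that the $\Sp(V)$-geometry of $\LGr(3,V)$ near $x_0$ meshes with the classical Cremona transformation of Lemma~\ref{lemma:cremona}; once this is settled, the Picard-lattice bookkeeping and the relations are routine.
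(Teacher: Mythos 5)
Your opening moves are sound and in fact coincide with what the paper does in Lemma~\ref{lemma:tilde-alpha}: the affine-cell computation $A\mapsto[1:A:\operatorname{adj}(A)]$ for both $\phi$ and $\phi_+$ (these are exactly the minors~\eqref{eq:sections} of the matrix~\eqref{eq:alpha-matrix}) correctly establishes that $\barbX$ is well defined and $\psi$ is birational, and your identification $\psi_*(\bE)=\bZ^+$ together with $\barbH=\bH-\bE=2\bH^+-\bE^+$ does yield the relations~\eqref{eq:relations} by the linear algebra you indicate. The problem is the step you yourself flag as ``the main obstacle'': you assert, without argument, that blowing up $\cL(\bX,x_0)$ with its reduced structure resolves $\psi$ into a morphism $\pi_+\colon\hbX\to\Bl_\rS(\PP(W^+))$, and that this morphism is then the blowup of $\bFv$. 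Blowing up the set-theoretic indeterminacy locus of a birational map does not in general resolve it, and the criterion \cite[Lemma~2.5]{K18} you invoke is only applicable once you already \emph{have} a birational morphism between smooth varieties whose exceptional divisor maps onto a smooth center; producing that morphism is precisely the content of the theorem. So as written the proposal proves birationality of $\psi$ and the Picard-lattice relations conditional on the diagram, but not the diagram itself.

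The paper closes exactly this gap by running the construction in the opposite direction, where the indeterminacy can be controlled scheme-theoretically. It first shows (via the exact sequences~\eqref{eq:sequence-2h-e} and~\eqref{eq:sections-3h-2e} and Lemma~\ref{lemma:cremona}) that the base locus of the \emph{cubic} system $|3\bH^+-2\bE^+|$ on $\Bl_\rS(\PP(W^+))$ is equal to $\bFv$ \emph{as a scheme}, so that $\hbX:=\Bl_{\bFv}(\Bl_\rS(\PP(W^+)))$ carries a regular morphism $\hat\alpha$ to $\LGr(3,6)$; it then identifies $\hat\alpha^{-1}(x_0)$ with the Cartier divisor $\hbZ$ to factor $\hat\alpha$ through $\Bl_{x_0}(\bX)$ (Corollary~\ref{corollary:pi-9}); it proves that the resulting $\pi$ has exceptional divisor exactly $\hbD$ by the rigidity argument $K_{\hbX/\Bl_{x_0}(\bX)}=2\hbD$ with $\hbD$ non-movable; and only then identifies $\pi(\hbD)=\cL(\bX,x_0)$ via the bidegree computation of Lemma~\ref{lemma:hbd-9} before applying \cite[Lemma~2.5]{K18}. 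If you want to keep your left-to-right architecture, you would need a substitute for all of this: at minimum a scheme-theoretic computation of the base locus of the relevant linear systems $|\bH-2\bE|$ and $|\bH-3\bE|$ on $\Bl_{x_0}(\bX)$ showing it equals $\cL(\bX,x_0)$, plus an argument that the preimage of $\rS$ under the induced morphism to $\PP(W^+)$ is Cartier so that the map lifts to $\Bl_\rS(\PP(W^+))$. None of this is routine, and it is where the actual work of the theorem lies.
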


\subsection{The proof}
\label{subsection:proof-9}

In this section we prove Theorem~\ref{theorem:lg36}.
It will be more convenient to construct the diagram ``from right to left''.
We start with some preparations.

\begin{lemma}
\label{lemma:tilde-alpha}
The linear system $|3\bH^+ - 2\bE^+|$ defines a birational map 
\begin{equation}
\label{eq:tilde-alpha}
\tilde\alpha \colon \Bl_{\rS}(\PP(W^+)) \xdashrightarrow{\hspace{2em}} \LGr(3,U_0 \oplus U_0^\vee).
\end{equation} 
\end{lemma}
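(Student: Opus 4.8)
The plan is to compute the linear system $|3\bH^+-2\bE^+|$ explicitly, show that it is $14$-dimensional and $\GL(U_0)$-equivariantly isomorphic to $W$, and recognize the resulting rational map $\PP(W^+)\dashrightarrow\PP(W)$ as the inverse of the classical parametrization of $\LGr(3,U_0\oplus U_0^\vee)$ by symmetric $3\times3$ matrices.

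First I would pass from the blowup to classical terms. Since $\sigma_+$ is the blowup of the smooth surface $\rS$, the class $3\bH^+-2\bE^+$ is the strict transform of the system of cubic hypersurfaces in $\PP(W^+)$ vanishing to order $\ge 2$ along $\rS$, so $H^0(\Bl_\rS(\PP(W^+)),\cO(3\bH^+-2\bE^+))\cong H^0(\PP(W^+),\cI_\rS^2(3))$ and $\tilde\alpha$ is the map induced by this system (via the birational $\Bl_\rS(\PP(W^+))\to\PP(W^+)$). Writing $W^+=S^2U_0\oplus\kk$ with a symmetric-tensor coordinate $M\in S^2U_0$ and a scalar coordinate $s$, so that $\rS=\{s=0\}\cap\{\rk M\le 1\}$ is the image of $[v]\mapsto[v^2:0]$, a cubic form reads $F=F_0(M)+sF_1(M)+s^2F_2(M)+s^3c$ with $F_0\in S^3(S^2U_0^\vee)$, $F_1\in S^2(S^2U_0^\vee)$, $F_2\in S^2U_0^\vee$, $c\in\kk$. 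Evaluating $F$ and its first partials at the points $[v^2:0]$, the order-two condition along $\rS$ unwinds to three conditions: $F_0$ vanishes to order $2$ along the Veronese cone $\{v^2\}\subset S^2U_0$, $F_1$ vanishes along that cone, and $F_2,c$ are unconstrained. By the classical chordal-cubic picture recalled in~\eqref{eq:chordal} and Lemma~\ref{lemma:cremona}, the first condition forces $F_0$ to be a scalar multiple of the determinant cubic $\det M$ (whose zero locus is $\bK$), while the second forces $F_1$ to lie in the $6$-dimensional space of quadrics through $\rS$, i.e.\ the span of the $2\times2$ minors of $M$ — the entries of $\operatorname{adj}M$. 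Hence $\dim H^0(\PP(W^+),\cI_\rS^2(3))=1+6+6+1=14=\dim W$, and on the affine chart $\{s=1\}$ the map $\tilde\alpha$ is $M\mapsto[\det M:\operatorname{adj}M:M:1]$.

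Next I would identify the image with $\LGr(3,V)$, $V=U_0\oplus U_0^\vee$. A Lagrangian subspace transverse to $U_0^\vee$ is the graph $L_\varphi$ of a symmetric $\varphi\in S^2U_0^\vee$; expanding $\wedge^3L_\varphi$ in the decomposition~\eqref{eq:bw30v6-point} and projecting to $W$ gives, up to scalars on each summand, the Plücker point $[1:\varphi:\operatorname{adj}\varphi:\det\varphi]$. Substituting $\varphi=\operatorname{adj}(M)/\det(M)=M^{-1}$ and clearing denominators — using $\operatorname{adj}(\operatorname{adj}M)=(\det M)M$ and $\det(\operatorname{adj}M)=(\det M)^2$ for $3\times3$ matrices — converts this to $[\det M:\operatorname{adj}M:M:1]$, which is exactly the formula for $\tilde\alpha$ on $\{s=1\}$. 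Therefore $\tilde\alpha$ is birational onto its image $\LGr(3,V)$: the inverse takes the Plücker point $[1:\varphi:\operatorname{adj}\varphi:\det\varphi]$ of a general Lagrangian back to $\operatorname{adj}(\varphi)/\det(\varphi)=\varphi^{-1}\in S^2U_0$, i.e.\ to $[\varphi^{-1}:1]\in\{s=1\}\subset\PP(W^+)$, and both composites are the identity on dense open subsets.

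The step I expect to be the main obstacle is the scalar bookkeeping in the previous paragraph: one must verify that the four homogeneous components of $\wedge^3L_\varphi$ really land in the four summands $\det U_0$, $S^2U_0^\vee\otimes\det U_0$, $S^2U_0\otimes\det U_0^\vee$, $\det U_0^\vee$ of~\eqref{eq:bw30v6-point} with normalizations that, after clearing denominators, reproduce exactly $\det M$, $\operatorname{adj}M$, $M$, $1$ and not a rescaled variant — equivalently, that $\tilde\alpha$ lands in the given copy $\LGr(3,V)\subset\PP(W)$ on the nose rather than in a projectively equivalent one. The auxiliary input that the determinant cubic is the \emph{unique} cubic through $\rS$ singular along $\rS$, and that the quadrics through $\rS$ are exactly the $2\times2$ minors, is a standard classical fact, part of the picture already cited via Lemma~\ref{lemma:cremona}, and can be quoted rather than reproved; the remaining verifications are routine calculations.
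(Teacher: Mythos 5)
Your proposal is correct and follows essentially the same route as the paper: the paper's map is defined by the $3\times 3$ minors of the matrix~\eqref{eq:alpha-matrix}, which are exactly your $[\det M:\operatorname{adj}M:M:1]$, and birationality is likewise established by identifying the affine chart $\{s\ne 0\}$ with the big Schubert cell of Lagrangian graphs. The only real difference is in how completeness of the linear system is verified --- you decompose cubic forms directly and invoke the classical facts that the chordal cubic is the unique cubic singular along~$\rS$ and that the quadrics through~$\rS$ are the $2\times 2$ minors, whereas the paper gets the same count $h^0(3\bH^+-2\bE^+)=14$ from the cohomology sequences~\eqref{eq:sequence-2h-e} and~\eqref{eq:sections-3h-2e} --- but both arguments are sound and of comparable length.
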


\begin{proof}
Consider the matrix
\begin{equation}
\label{eq:alpha-matrix}
\alpha = 
\begin{pmatrix}
u_0 & 0 & 0 & u_{11} & u_{12} & u_{13} \\
0 & u_0 & 0 & u_{12} & u_{22} & u_{23} \\
0 & 0 & u_0 & u_{13} & u_{23} & u_{33} 
\end{pmatrix},
\end{equation}
where $(u_0:u_{11}:u_{12}:u_{13}:u_{22}:u_{23}:u_{33})$ are homogeneous coordinates on $\PP(W^+)$ such that~$u_0 = 0$ 
is the equation of the hyperplane $\PP(S^2U_0) \subset \PP(W^+)$ 
and the restriction of $u_{ij}$ to this hyperplane form a standard system of coordinates, 
in which $\rS$ is defined by minors of size~$2$ of the submatix of~$\alpha$ formed by the last three columns
(and consequently, the chordal cubic $\bK \subset \PP(S^2U_0)$ is defined by the determinant of that submatix).
The matrix~$\alpha$ defines a rational map 
\begin{equation*}
\PP(W^+) \xdashrightarrow{\hspace{2em}} \Gr(3,6) = \Gr(3,U_0 \oplus U_0^\vee)
\end{equation*}
which we also denote by~$\alpha$.

The restriction of~$\alpha$ to the affine space $S^2U_0 = \PP(W^+) \setminus \PP(S^2U_0) = \{u_0 \ne 0\} \cong \mathbb{A}^6$
takes a point of~$S^2U_0$ to the graph of the corresponding symmetric morphism $U_0^\vee \to U_0$.
This graph is a Lagrangian subspace for the natural symplectic form on $U_0 \oplus U_0^\vee$, 
hence $\alpha$ induces an isomorphism of~$\PP(W^+) \setminus \PP(S^2U_0)$ onto the open Schubert cell in~$\LGr(3,U_0 \oplus U_0^\vee)$,
parameterizing Lagrangian subspaces which do not intersect~$U_0$.
This also proves that~$\alpha$ factors as the composition
\begin{equation*}
\PP(W^+)\xdashrightarrow{\hspace{2em}}\LGr(3,U_0 \oplus U_0^\vee) \subset \Gr(3,U_0 \oplus U_0^\vee)
\end{equation*}
where the first arrow is birational.

Furthermore, note that Pl\"ucker coordinates on~$\Gr(3,6)$ correspond to the minors of size~3 of~\eqref{eq:alpha-matrix};
these are cubic polynomials in coordinates that can be written as
\begin{equation}
\label{eq:sections}
u_0^3,\quad u_0^2u_{ij},\quad u_0(u_{ij}u_{i'j'} - u_{ij'}u_{i'j}),\quad \det(u_{ij}),
\qquad 
1 \le i \le j \le 3,\ 1 \le i' \le j' \le 3,
\end{equation}
(note that the four groups above correspond to the four summands in~\eqref{eq:bw30v6-point} taken in the opposite order).
It is straightforward to see that each of these polynomials vanishes with multiplicity at least~$2$ on~$\rS$,
hence they generate a subsystem of the linear system $|3\bH^+ - 2\bE^+|$ on $\Bl_\rS(\PP(W^+))$.
Moreover, the cohomology exact sequences associated with the exact sequences of sheaves~\eqref{eq:sequence-2h-e} 
and with a similar sequence
\begin{equation}
\label{eq:sections-3h-2e}
0 \xrightarrow{\hspace{1em}} \cO_{\Bl_\rS(\PP(W^+))}(2\bH^+ - \bE^+)
\xrightarrow{\ \bZ^+\ } \cO_{\Bl_\rS(\PP(W^+))}(3\bH^+ - 2\bE^+)
\xrightarrow{\hspace{1em}} \cO_{\bZ^+}(3\bH^+ - 2\bE^+)
\xrightarrow{\hspace{1em}} 0
\end{equation}
show that the polynomials~\eqref{eq:sections} generate the complete linear system $|3\bH^+ - 2\bE^+|$.
It follows that the target of the rational map defined by this linear system coincides with~$\LGr(3,6)$.
\end{proof}

Recall the subvariety~$\bZ^+ \subset \Bl_\rS(\PP(W^+))$ defined in~\eqref{eq:bzp}.
From the description of Lemma~\ref{lemma:cremona} we know that it contains 
a smooth fourfold $\bFv \subset \bZ^+$.
Furthermore, 
in the Picard group of~$\bZ^+$ we have the following linear equivalence
\begin{equation}
\label{eq:bfv-class}
\bFv \sim (3\bH^+ - 2\bE^+)\vert_{\bZ^+}.
\end{equation} 

Now consider the blowup 
\begin{equation*}
\pi_+ \colon \hbX := \Bl_{\bFv}(\Bl_\rS(\PP(W^+))) \xlongrightarrow{\hspace{2em}} \Bl_\rS(\PP(W^+))
\end{equation*}
and let $\hbD \subset \hbX$ be the exceptional divisor.
Since $\bFv$ is a divisor in~$\bZ^+$, the strict transform of~$\bZ^+$ is isomorphic to~$\bZ^+$; we denote it by
\begin{equation*}
\hbZ \subset \hbX,
\qquad 
\hbZ \cong \bZ^+.
\end{equation*} 
Note that
\begin{equation}
\label{eq:hbz}
\hbZ \sim \bH^+ - \bE^+ - \hbD.
\end{equation} 

\begin{lemma}
The base locus of the linear system $|3\bH^+ - 2\bE^+|$ on~$\Bl_\rS(\PP(W^+))$ is equal to~$\bFv$ as a scheme.
In particular, the rational map~$\tilde\alpha$ defined in~\eqref{eq:tilde-alpha} lifts to a regular birational morphism 
\begin{equation*}
\hat\alpha \colon \hbX \xrightarrow{\hspace{2em}} \LGr(3,U_0 \oplus U_0^\vee).
\end{equation*}
such that $\hat\alpha^*\cO(\bH) \cong \cO(3\bH^+ - 2\bE^+ - \hbD)$.
The scheme preimage of the point $x_0 \in \LGr(3,U_0 \oplus U_0^\vee)$ under~$\hat\alpha$ is the divisor~$\hbZ \subset \hbX$.
\end{lemma}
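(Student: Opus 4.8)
First I would use the explicit generators of the linear system $|3\bH^+ - 2\bE^+|$ obtained in the proof of Lemma~\ref{lemma:tilde-alpha}: it is generated by the cubics~\eqref{eq:sections}, which are precisely the $3\times3$ minors of the matrix~$\alpha$ of~\eqref{eq:alpha-matrix}. Hence its base scheme is the strict transform in $\Bl_\rS(\PP(W^+))$ of the locus $\{\rk\alpha\le2\}$, with the scheme structure cut out by those minors, and the first task is to identify this with $\bFv$; afterwards the two ``in particular'' assertions will follow from a direct manipulation of the same sections.

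To locate the base scheme, note that on the chart $\{u_0\ne0\}$ the first three columns of~$\alpha$ are $u_0\cdot\mathrm{Id}$, so $\rk\alpha=3$ there and there are no base points; thus the base scheme lies in $\sigma_+^{-1}(\PP(S^2U_0))=\bZ^+\cup\bE^+$, with $\bZ^+$ the strict transform~\eqref{eq:bzp} of the hyperplane $\PP(S^2U_0)$. On $\bZ^+$ all sections in~\eqref{eq:sections} except $\det(u_{ij})$ are divisible by~$u_0$ and vanish identically, while $\det(u_{ij})|_{\bZ^+}$ cuts out the strict transform of the chordal cubic~$\bK$, which by Lemma~\ref{lemma:cremona} is exactly the smooth (hence reduced) divisor $\bFv\subset\bZ^+$; the reverse inclusion $\bFv\subseteq\Bs$ is immediate, since every section in~\eqref{eq:sections} vanishes on~$\bFv$. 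It remains to exclude base points on $\bE^+\setminus\bZ^+$: for this I would pass to an affine chart of $\Bl_\rS(\PP(W^+))$ near a point of~$\rS$ --- by transitivity of the $\GL(U_0)$-action one may take $[\mathrm{diag}(1,0,0)]$, use coordinates with $u_{11}=1$, and blow up the ideal $(u_0,\,u_{22}-u_{12}^2,\,u_{23}-u_{12}u_{13},\,u_{33}-u_{13}^2)$ --- and check that in the chart over $\bE^+\setminus\bZ^+$ the section $u_0^2u_{11}$, divided by the square of the local equation of $\bE^+$, is a unit (equivalently, its divisor is $2\bZ^+$, disjoint from $\bE^+\setminus\bZ^+$). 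This gives $\Bs\,|3\bH^+-2\bE^+|=\bFv$ as a scheme. The same chart also shows that $u_0(u_{11}u_{22}-u_{12}^2)$ equals $u_0$ times a local equation of~$\rS$, hence a local equation of~$\bZ^+$ after dividing by~$u_0^2$; combined with the fact that the joint strict transform of the six quadrics $u_{ij}u_{i'j'}-u_{ij'}u_{i'j}$ is disjoint from~$\bZ^+$, this yields that the base scheme of the thirteen sections $u_0^3,\ u_0^2u_{ij},\ u_0(u_{ij}u_{i'j'}-u_{ij'}u_{i'j})$ equals~$\bZ^+$.

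For the consequences: $\hbX=\Bl_{\bFv}(\Bl_\rS(\PP(W^+)))$ is the blowup of the base scheme of $|3\bH^+-2\bE^+|$, so the moving part $3\bH^+-2\bE^+-\hbD$ of the pulled-back system is base point free; therefore $\tilde\alpha$ lifts to a morphism $\hat\alpha\colon\hbX\to\LGr(3,U_0\oplus U_0^\vee)$, with image $\LGr(3,V)$ and $\hat\alpha^*\cO(\bH)\cong\cO(3\bH^+-2\bE^+-\hbD)$, and it is birational because $\tilde\alpha$ is. Finally, $x_0$ corresponds to the summand $\det(U_0)$ of~$W$, i.e.\ to the coordinate $\det(u_{ij})$ in~\eqref{eq:sections}, so $\hat\alpha^{-1}(x_0)$ is the common zero scheme of the other thirteen sections on~$\hbX$. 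Pulling the previous paragraph's computation through~$\pi_+$: the ideal of~$\bZ^+$ pulls back to $\cO(-\hbZ-\hbD)$, so after removing the common factor~$\hbD$ these thirteen sections generate~$\cO(-\hbZ)$, whence $\hat\alpha^{-1}(x_0)=\hbZ$.

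The hard part is the scheme-theoretic bookkeeping, all of which is concentrated in the affine chart computation near~$\rS$: one must verify that the base scheme of $|3\bH^+-2\bE^+|$ acquires no extra component or embedded point along $\bE^+$, and likewise that the thirteen sections cut out $\bZ^+$ exactly rather than some thickening. Once the generators~\eqref{eq:sections} are in hand, the rest --- the description of the base locus as $\{\rk\alpha\le2\}$, the Cremona geometry of Lemma~\ref{lemma:cremona}, and the identification of the residual locus with~$\bFv$ --- is essentially formal.
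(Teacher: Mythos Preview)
Your argument is correct, but the paper takes a cleaner, more conceptual route that entirely avoids the local chart computation you flag as the ``hard part.''

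The paper's key observation is the exact sequence~\eqref{eq:sections-3h-2e}
\[
0 \longrightarrow \cO_{\Bl_\rS(\PP(W^+))}(2\bH^+ - \bE^+) \xrightarrow{\ \bZ^+\ } \cO_{\Bl_\rS(\PP(W^+))}(3\bH^+ - 2\bE^+) \longrightarrow \cO_{\bZ^+}(3\bH^+ - 2\bE^+) \longrightarrow 0.
\]
Since $|2\bH^+ - \bE^+|$ is base point free (Lemma~\ref{lemma:bpf}), the base scheme of $|3\bH^+ - 2\bE^+|$ on the ambient blowup equals the base scheme of its restriction to~$\bZ^+$. But on~$\bZ^+$ the class $3\bH^+ - 2\bE^+$ is exactly~$\bFv$ by~\eqref{eq:bfv-class}, and~$\bFv$ is the exceptional divisor of~$\kappa^\vee$, hence rigid: the linear system on~$\bZ^+$ consists of the single divisor~$\bFv$. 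This gives the base scheme in one stroke, with no need to examine $\bE^+\setminus\bZ^+$ separately or to pass to affine charts. Your stratified analysis reaches the same conclusion, but the exact sequence packages the ``no base points off~$\bZ^+$'' step into the base-point-freeness of $|2\bH^+ - \bE^+|$, which was already established.

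Likewise for $\hat\alpha^{-1}(x_0)$: rather than computing with the thirteen explicit sections, the paper identifies the hyperplane~$\barW^\vee\subset W^\vee$ directly (via Lemma~\ref{lemma:bpf}) as the image of the multiplication-by-$\hbZ$ map $H^0(\hbX,\cO(2\bH^+ - \bE^+)) \to H^0(\hbX,\cO(3\bH^+ - 2\bE^+ - \hbD))$. Since the source system is base point free, the base locus of this subsystem is~$\hbZ$. Your route --- showing the thirteen sections cut out~$\bZ^+$ exactly and then pulling back --- is the same statement unpacked in coordinates; the paper's formulation makes the scheme-theoretic exactness automatic.

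What your approach buys is concreteness: one sees explicitly which sections do the work on each stratum, and the matrix description makes the link to $\LGr(3,6)$ transparent. What the paper's approach buys is economy: the bookkeeping you identify as delicate (embedded points along~$\bE^+$, thickenings of~$\bZ^+$) never arises, because everything is controlled by the single input that $|2\bH^+ - \bE^+|$ is free.
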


\begin{proof}
Since the linear system $|2\bH^+ - \bE^+|$ is base point free (Lemma~\ref{lemma:bpf}), 
it follows from sequence~\eqref{eq:sections-3h-2e} 
that the base locus of the linear system~$|3\bH^+ - 2\bE^+|$ on~$\Bl_\rS(\PP(W^+))$ 
is equal to the base locus of the linear system~$|3\bH^+ - 2\bE^+|$ on~$\bZ^+$.
From~\eqref{eq:bfv-class} and Lemma~\ref{lemma:cremona} we deduce that this is~$\bFv$.
Therefore, the linear system~$|3\bH^+ - 2\bE^+ - \hbD|$ on the blowup~$\hbX$ is base point free and defines the morphism~$\hat\alpha$,
such that~$\hat\alpha^*\cO(\bH) \cong \cO(3\bH^+ - 2\bE^+ - \hbD)$.
The morphism~$\hat\alpha$ is birational because~$\tilde\alpha$ is (Lemma~\ref{lemma:tilde-alpha}).

For the last statement recall that by Lemma~\ref{lemma:bpf} the point $x_0 \in \bX$ 
corresponds to the hyperplane~$\barW{}^\vee \subset W^\vee = H^0(\bX,\cO(\bH))$.
Therefore, the scheme $\hat\alpha^{-1}(x_0) \subset \hbX$
is equal to the base locus of the corresponding subsystem in~$|3\bH^+ - 2\bE^+ - \hbD|$.

On the other hand, the proof of the equality $W^\vee = H^0(\Bl_S(\PP(W^+)), \cO(3\bH^+ - 2\bE^+))$ in Lemma~\ref{lemma:tilde-alpha} shows that the subspace $\barW{}^\vee \subset W^\vee$ 
is the image of the top arrow in
the commutative diagram
\begin{equation*}
\xymatrix@C=3em{
H^0(\Bl_\rS(\PP(W^+)),\cO(2\bH^+ - \bE^+)) \ar@{=}[d] \ar[r]^{\bZ^+} &
H^0(\Bl_\rS(\PP(W^+)),\cO(3\bH^+ - 2\bE^+)) \ar@{=}[d]
\\
H^0(\hbX,\cO(2\bH^+ - \bE^+)) \ar[r]^{\hbZ} &
H^0(\hbX,\cO(3\bH^+ - 2\bE^+ - \hbD)), 
}
\end{equation*}
hence coincides with the image of the bottom map.
Since the linear system $|2\bH^+ - \bE^+|$ is base point free (Lemma~\ref{lemma:bpf}), it follows that
the base locus of the corresponding subsystem in~$|3\bH^+ - 2\bE^+ - \hbD|$ is~$\hbZ$, hence $\hat\alpha^{-1}(x_0) = \hbZ$.
\end{proof}

Since the preimage of the point~$x_0$ is the Cartier divisor $\hbZ \subset \hbX$, we obtain from~\eqref{eq:hbz} the following

\begin{corollary}
\label{corollary:pi-9}
The morphism $\hat\alpha$ factors as the composition
\begin{equation*}
\hbX \xrightarrow{\ \pi\ } \Bl_{x_0}(\bX) \xrightarrow{\ \sigma\ } \bX,
\end{equation*}
where $\sigma$ is the blowup of the point~$x_0$ and~$\pi$ is birational.
Moreover,
\begin{equation}
\label{eq:pi-pullback}
\pi^*\cO(\bH) \cong \cO(3\bH^+ - 2\bE^+ - \hbD),
\qquad 
\pi^*\cO(\bE) \cong \cO(\bH^+ - \bE^+ - \hbD).
\end{equation} 
\end{corollary}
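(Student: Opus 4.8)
The plan is to deduce the factorization purely formally from the universal property of the blowup, using the description of the fibre $\hat\alpha^{-1}(x_0)$ supplied by the preceding lemma, and then to read off the two line bundle identities by pulling back along $\pi$ the tautological relations on $\Bl_{x_0}(\bX)$.

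First I would invoke the preceding lemma, which identifies the scheme-theoretic preimage $\hat\alpha^{-1}(x_0)\subset\hbX$ with the divisor $\hbZ$. Since $\hbZ$ is a Cartier divisor (a prime divisor on the smooth variety $\hbX$), the inverse image ideal sheaf $\hat\alpha^{-1}\cI_{x_0}\cdot\cO_{\hbX}$ is invertible, equal to $\cO_{\hbX}(-\hbZ)$. By the universal property of the blowup $\sigma\colon\Bl_{x_0}(\bX)\to\bX$, the morphism $\hat\alpha$ then factors uniquely as $\hat\alpha=\sigma\circ\pi$ for a morphism $\pi\colon\hbX\to\Bl_{x_0}(\bX)$, and $\pi$ is birational because $\hat\alpha$ is (Lemma~\ref{lemma:tilde-alpha}) and $\sigma$ is.

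For the identities~\eqref{eq:pi-pullback}: since $\bH$ on $\Bl_{x_0}(\bX)$ is the $\sigma$-pullback of the ample generator of $\Pic(\bX)$, we obtain $\pi^*\cO(\bH)=\hat\alpha^*\cO(\bH)\cong\cO(3\bH^+-2\bE^+-\hbD)$ directly from the preceding lemma. For the second identity I would pull back the defining equality $\sigma^{-1}\cI_{x_0}\cdot\cO_{\Bl_{x_0}(\bX)}=\cO(-\bE)$ along $\pi$: its left-hand side becomes $\hat\alpha^{-1}\cI_{x_0}\cdot\cO_{\hbX}=\cO(-\hbZ)$, whence $\pi^*\cO(\bE)\cong\cO(\hbZ)$, and the linear equivalence $\hbZ\sim\bH^+-\bE^+-\hbD$ of~\eqref{eq:hbz} yields $\pi^*\cO(\bE)\cong\cO(\bH^+-\bE^+-\hbD)$.

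I do not anticipate a real obstacle: the corollary is essentially a formality once the preceding lemma is available, and the only point deserving attention is that the preimage of $x_0$ must be Cartier \emph{as a scheme}, not merely set-theoretically --- which is precisely what that lemma provides, and what makes the factorization through $\Bl_{x_0}(\bX)$ legitimate.
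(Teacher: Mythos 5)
Your proposal is correct and follows exactly the paper's (very brief) argument: the preceding lemma gives that $\hat\alpha^{-1}(x_0)$ is the Cartier divisor $\hbZ$, so the universal property of the blowup yields the factorization, and the two pullback formulas follow from $\hat\alpha^*\cO(\bH)\cong\cO(3\bH^+-2\bE^+-\hbD)$ and from $\pi^*\cO(\bE)\cong\cO(\hbZ)$ combined with~\eqref{eq:hbz}. Nothing is missing.
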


Finally, we describe the divisor~$\hbD \subset \hbX$.
We denote by~$h'$ the line class of~$\rSv$.

\begin{lemma}
\label{lemma:hbd-9}
The divisor $\hbD$ is isomorphic to the fiber product 
\begin{equation}
\label{eq:hbd}
\hbD \cong 
\PP_{\rSv}(\cO \oplus \cO(-2h')) \times_\rSv \bFv.
\end{equation}
In particular, it has a structure of a $(\PP^1 \times \PP^2)$-fibration over~$\rSv$.
Moreover, we have
\begin{equation}
\label{bidegree}
\deg(\bH^+\vert_{\hbD}) = (0,1),
\qquad 
\deg(\bE^+\vert_{\hbD}) = (0,2),
\qquad 
\deg(\hbD\vert_{\hbD}) = (-1,-1),
\qquad 
\end{equation} 
where $\deg(-)$ stands for the bidegree on the fibers of the $(\PP^1 \times \PP^2)$-fibration.
\end{lemma}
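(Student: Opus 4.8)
The plan is as follows. Since $\pi_+$ is the blowup of the smooth fourfold $\bFv\subset\Bl_\rS(\PP(W^+))$ (its smoothness is part of Lemma~\ref{lemma:cremona}), the exceptional divisor is $\hbD\cong\PP_{\bFv}(\cN)$, where $\cN:=N_{\bFv/\Bl_\rS(\PP(W^+))}$ is the normal bundle, and $\hbD|_{\hbD}$ is the associated relative $\cO(-1)$. Thus the whole lemma reduces to an explicit computation of $\cN$. I would carry this out using the flag of smooth subvarieties $\bFv\subset\bZ^+\subset\Bl_\rS(\PP(W^+))$ and then restrict everything to the fibres of the $\PP^2$-bundle $\kappa^\vee\colon\bFv\to\rSv$.

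First I would pin down divisor classes. On $\bZ^+\cong\Bl_\rS(\PP(S^2U_0))\cong\Bl_{\rSv}(\PP(S^2U_0^\vee))$ write $h,h^\vee$ for the pullbacks of the two hyperplane classes and $e,\bFv$ for the exceptional divisors over $\rS,\rSv$. Since the Cremona map of Lemma~\ref{lemma:cremona} is cut out by quadrics through $\rS$ (resp.\ $\rSv$) one gets $h^\vee=2h-e$, and with $\bFv\sim3h-2e$ from~\eqref{eq:bfv-class} this inverts to $h=2h^\vee-\bFv$, $e=3h^\vee-2\bFv$. As $\bZ^+$ is the strict transform of the hyperplane $\PP(S^2U_0)$, one has $\bH^+|_{\bZ^+}=h$, $\bE^+|_{\bZ^+}=e$ (cf.~\eqref{eq:bzp}) and $\bZ^+\sim\bH^+-\bE^+$ by~\eqref{eq:bzp-class}. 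On $\bFv$, letting $\zeta$ be the relative hyperplane class of $\kappa^\vee$, the self-intersection formula gives $\bFv|_{\bFv}=-\zeta$, while the Veronese embedding $\rSv\subset\PP(S^2U_0^\vee)$ gives $h^\vee|_{\bFv}=2(\kappa^\vee)^*h'$; combining, $h|_{\bFv}=\zeta+4(\kappa^\vee)^*h'$ and $e|_{\bFv}=2\zeta+6(\kappa^\vee)^*h'$.

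Next I would run the normal bundle sequence of the flag,
\begin{equation*}
0\longrightarrow N_{\bFv/\bZ^+}\longrightarrow\cN\longrightarrow N_{\bZ^+/\Bl_\rS(\PP(W^+))}|_{\bFv}\longrightarrow0,
\end{equation*}
in which $N_{\bFv/\bZ^+}=\cO_{\bZ^+}(\bFv)|_{\bFv}=\cO_{\bFv}(-\zeta)$ and, using $\bZ^+\sim\bH^+-\bE^+$, $N_{\bZ^+/\Bl_\rS(\PP(W^+))}|_{\bFv}=\cO_{\bZ^+}(h-e)|_{\bFv}=\cO_{\bFv}\bigl(-\zeta-2(\kappa^\vee)^*h'\bigr)$ by the formulas above. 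Twisting by $\cO_{\bFv}(\zeta)$ exhibits $\cN(\zeta)$ as an extension of $(\kappa^\vee)^*\cO_{\rSv}(-2h')$ by $\cO_{\bFv}$; it splits because the obstruction lies in $H^1(\bFv,(\kappa^\vee)^*\cO_{\rSv}(2h'))=H^1(\PP(U_0^\vee),\cO(2))=0$ (projection formula, $\kappa^\vee$ a $\PP^2$-bundle). Hence $\cN\cong\cO_{\bFv}(-\zeta)\otimes(\kappa^\vee)^*\bigl(\cO_{\rSv}\oplus\cO_{\rSv}(-2h')\bigr)$, and since projectivization is insensitive to line-bundle twists,
\begin{equation*}
\hbD\cong\PP_{\bFv}\bigl((\kappa^\vee)^*(\cO_{\rSv}\oplus\cO_{\rSv}(-2h'))\bigr)\cong\bFv\times_{\rSv}\PP_{\rSv}(\cO\oplus\cO(-2h')),
\end{equation*}
which is the asserted $(\PP^1\times\PP^2)$-fibration over $\rSv$ (recall $\kappa^\vee\colon\bFv\to\rSv$ is a $\PP^2$-bundle).

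Finally, for the bidegrees I would restrict everything to a fibre $\PP^1\times\PP^2$ over a point of $\rSv$. The classes $\bH^+|_{\hbD}$ and $\bE^+|_{\hbD}$ are pulled back along $\pi_+\colon\hbD\to\bFv$, hence have $\PP^1$-degree $0$; their $\PP^2$-degrees are those of $h|_{\bFv}=\zeta+4(\kappa^\vee)^*h'$ and $e|_{\bFv}=2\zeta+6(\kappa^\vee)^*h'$ on a fibre of $\kappa^\vee$ (where $\zeta$ restricts to $\cO(1)$ and $(\kappa^\vee)^*h'$ trivially), namely $1$ and $2$, giving $(0,1)$ and $(0,2)$. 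For $\hbD|_{\hbD}$, restricting $\cN$ to a fibre $\PP^2$ of $\kappa^\vee$ gives $\cO_{\PP^2}(-1)^{\oplus2}$, so the fibre of $\hbD\to\rSv$ is $\PP_{\PP^2}(\cO(-1)^{\oplus2})$ and $\hbD|_{\hbD}$ restricts to its relative $\cO(-1)$, which works out to $\cO(-1,-1)$; hence $\deg(\hbD|_{\hbD})=(-1,-1)$. The step needing the most care is this last bidegree computation — correctly identifying the relative $\cO(-1)$ of $\PP_{\PP^2}(\cO(-1)^{\oplus2})$ and matching each entry with the intended $(\PP^1,\PP^2)$ slot of the bidegree; by contrast, the auxiliary identifications $\bH^+|_{\bZ^+}=h$, $\bE^+|_{\bZ^+}=e$, $h^\vee|_{\bFv}=2(\kappa^\vee)^*h'$ and $\bFv\sim3h-2e$ are routine once the Veronese/Cremona geometry of Lemma~\ref{lemma:cremona} is unwound.
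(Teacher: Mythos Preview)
Your argument is correct and follows the same skeleton as the paper: identify~$\hbD$ with the projectivized normal bundle of~$\bFv$, run the normal bundle sequence for the flag $\bFv\subset\bZ^+\subset\Bl_\rS(\PP(W^+))$, twist, and split via the obvious $\Ext^1$-vanishing. The paper writes the two line bundles in the sequence as $\cO_{\bFv}(3\bH^+-2\bE^+)$ and $\cO_{\bFv}(\bH^+-\bE^+)$ and then observes that $2\bH^+-\bE^+$ is pulled back as~$2h'$ from~$\rSv$, which is exactly what your explicit divisor bookkeeping on~$\bZ^+$ unpacks.

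The one genuine tactical difference is the last bidegree. You compute $\deg(\hbD\vert_{\hbD})=(-1,-1)$ by restricting $\cN$ to a fibre~$\PP^2$ of~$\kappa^\vee$, getting $\cO(-1)^{\oplus 2}$, and reading off the tautological $\cO(-1)$ of $\PP_{\PP^2}(\cO(-1)^{\oplus 2})$ directly. The paper instead computes $K_{\hbD}=(-7\bH^+ + 3\bE^+ + 2\hbD)\vert_{\hbD}$ via the blowup/adjunction formulas for $\sigma_+$ and~$\pi_+$, and then solves for the missing bidegree using that $K_{\hbD}$ must restrict to~$(-2,-3)$ on each $\PP^1\times\PP^2$ fibre. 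Your route is more hands-on and avoids the canonical-class detour; the paper's route sidesteps the need to track the tautological bundle through the twist carefully. Both are short and either would be fine here. Similarly, for the $(0,1)$ and $(0,2)$ bidegrees the paper appeals to the geometric statement in Lemma~\ref{lemma:cremona} that the $\kappa^\vee$-fibres of~$\bFv$ map under~$\kappa$ to planes meeting~$\rS$ in conics, whereas you extract the same numbers from your formulas $h\vert_{\bFv}=\zeta+4(\kappa^\vee)^*h'$ and $e\vert_{\bFv}=2\zeta+6(\kappa^\vee)^*h'$.
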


\begin{proof}
Recall that $\hbD$ is the exceptional divisor of the blowup~$\pi_+$, hence
\begin{equation*}
\hbD \cong \PP_{\bFv}(\cN_{\bFv/\Bl_\rS(\PP(W^+))}).
\end{equation*}
The chain of embeddings $\bFv \subset \bZ^+ \subset \Bl_\rS(\PP(W^+))$ in view of~\eqref{eq:bfv-class} and~\eqref{eq:bzp-class}
gives rise to the normal bundle sequence
\begin{equation*}
0 \xrightarrow{\hspace{1em}} \cO_\bFv(3\bH^+ - 2\bE^+) \xrightarrow{\hspace{1em}} \cN_{\bFv/\Bl_\rS(\PP(W^+))} \xrightarrow{\hspace{1em}} \cO_\bFv(\bH^+ - \bE^+) \xrightarrow{\hspace{1em}} 0.
\end{equation*}
By Lemma~\ref{lemma:cremona} the divisor class $2\bH^+ - \bE^+$ is the pullback of $2h'$ from~$\rSv$.
Therefore, after a twist the sequence can be rewritten as 
\begin{equation*}
0 \xrightarrow{\hspace{1em}} \cO_\bFv \xrightarrow{\hspace{1em}} \cN_{\bFv/\Bl_\rS(\PP(W^+))}(2\bE^+ - 3\bH^+) \xrightarrow{\hspace{1em}} \cO_\bFv(-2h') \xrightarrow{\hspace{1em}} 0.
\end{equation*}
We have $\Ext^1(\cO_\bFv(-2h'),\cO_\bFv) = H^1(\bFv,\cO_\bFv(2h')) = H^1(\rSv,\cO_\rSv(2h')) = 0$, hence 
\begin{equation}
\label{eq:normal-bfv}
\cN_{\bFv/\Bl_\rS(\PP(W^+))}(2\bE^+ - 3\bH^+) \cong \cO_\bFv \oplus \cO_\bFv(-2h'),
\end{equation}
which proves~\eqref{eq:hbd}.
Thus, $\hbD$ is a $(\PP^1\times \PP^2)$-fiber bundle over~$\rSv$.

By Lemma~\ref{lemma:cremona} the fibers of~$\bFv$ over~$\rSv$ project to (linearly embedded) planes in $\PP(W^+)$
which intersect~$\rS$ along conics, therefore the classes~$\bH^+$ and~$\bE^+$ 
have bidegrees~$(0,1)$ and~$(0,2)$ on the fibers of~$\hbD$.
To find the bidegree of~$\hbD\vert_{\hbD}$ we compute the canonical class of~$\hbD$.

By the blowup formula (applied to~$\sigma_+$ and~$\pi_+$) and adjunction we have
\begin{equation*}
K_{\hbD} = (-7\bH^+ + 3\bE^+ + 2\hbD)\vert_{\hbD}.
\end{equation*}
Since the canonical class has bidegree~$(-2,-3)$, the result follows.
\end{proof}

Now we are ready to prove the theorem.

\begin{proof}[Proof of Theorem~\textup{\ref{theorem:lg36}}]
Consider the morphism $\pi \colon \hbX \to \Bl_{x_0}(\bX)$ constructed in Corollary~\ref{corollary:pi-9}.
This is a birational morphism of smooth varieties, hence its exceptional locus is a divisor.
Since
\begin{equation*}
K_{\hbX} = -7\bH^+ + 3\bE^+ + \hbD,
\qquad 
K_{\Bl_{x_0}(\bX)} = -4\bH + 5\bE,
\end{equation*}
it follows from~\eqref{eq:pi-pullback} that $K_{\hbX/\Bl_{x_0}(\bX)} = 2\hbD$.
But~$\hbD$ is the exceptional divisor of~$\pi_+$, hence any its multiple is non-movable.
Therefore, the exceptional divisor of~$\pi$ is also~$\hbD$.

Since~$\pi$ and~$\pi_+$ have the same exceptional divisor, the birational map
\begin{equation*}
\psi := \pi_+ \circ \pi^{-1} \colon \Bl_{x_0}(\bX) \xdashrightarrow{\hspace{2em}} \Bl_\rS(\PP(W^+))
\end{equation*}
is small. 
In particular, it identifies the Picard groups of $\Bl_{x_0}(\bX)$ and $\Bl_\rS(\PP(W^+))$
(with the quotient of the Picard group of~$\hbX$ by the class of~$\hbD$).
The relations~\eqref{eq:relations} follow easily from~\eqref{eq:pi-pullback}.
Moreover, the above calculation of the canonical classes shows that~$\psi$ is an antiflip.

The relations~\eqref{eq:relations} also imply the equality $\bH - \bE = 2\bH^+ - \bE^+$ in~$\Pic(\hbX)$.
Therefore, for the morphisms~$\phi \colon \Bl_{x_0}(\bX) \to \PP(W^+)$ and $\phi_+ \colon \Bl_\rS(\PP(W^+)) \to \PP(W^+)$
defined by the base point free linear systems $|\bH - \bE|$ and $|2\bH^+ - \bE^+|$, respectively (see Lemma~\ref{lemma:bpf}), we have
\begin{equation*}
\phi \circ \pi = \phi_+ \circ \pi_+.
\end{equation*}
Hence the middle square in diagram~\eqref{diagram:lgr36} commutes.

It remains to show that~$\pi$ is the blowup of $\cL(\bX,x_0)$ (the embedding $\cL(\bX,x_0) \hookrightarrow \Bl_{x_0}(\bX)$
is constructed in the same way as in Lemma~\ref{lemma:varpi}).

Denote by $p \colon \hbD \to \PP_{\rSv}(\cO \oplus \cO(-2h'))$ and $p_+ \colon \hbD \to \bFv$ the projections
of the fiber product~\eqref{eq:hbd} to the factors.
Note that $p$ is a $\PP^2$-bundle and $p_+$ is a $\PP^1$-bundle.
It follows from~\eqref{eq:pi-pullback} and~\eqref{bidegree}
that 
\begin{equation*}
\deg(\bH\vert_{\hbD}) = \deg(\bE\vert_{\hbD}) = (1,0).
\end{equation*}
Since $\bH$ and $\bE$ generate~$\Pic(\Bl_{x_0}(\bX))$, we conclude that
the restriction of~$\pi$ to~$\hbD$ factors through the contraction~$p$
and takes the fibers of~$p_+$ to strict transforms of lines on~$\bX$ passing through~$x_0$.
Therefore, it induces a morphism $\rSv \to \rF_1(\bX,x_0)$ which induces a surjective map
\begin{equation*}
\cL(\bX,x_0) \times_{\rF_1(\bX,x_0)} \rSv \xrightarrow{\hspace{2em}} \pi(\hbD).
\end{equation*}
Since $\rF_1(\bX,x_0) \cong \PP^2 \cong \rSv$ and the morphism $\rSv \to \rF_1(\bX,x_0)$ 
has connected fibers by Zariski's main theorem, it follows that either this map is constant or is an isomorphism.
In the first case~$\pi(\hbD)$ is a single line that contradicts the formula $K_{\hbX/\Bl_{x_0}(\bX)} = 2\hbD$ proved above.
Therefore
\begin{equation*}
\pi(\hbD) = \cL(\bX,x_0) \subset \Bl_{x_0}(\bX).
\end{equation*}
Finally, applying~\cite[Lemma~2.5]{K18} we conclude that $\pi$ is the blowup with center 
$\cL(\bX,x_0)$.
\end{proof}

\subsection{Implications for genus~$9$ Mukai varieties}
\label{subsection:implications-9}

In this subsection we describe the birational transformations of linear sections of~$\bX = \LGr(3,6)$ 
induced by the link~\eqref{diagram:lgr36},
and as a consequence prove rationality of higher-dimensional Mukai varieties of genus $g = 9$.

\begin{theorem}
\label{theorem:mukai-9}
If $X$ is a Mukai variety of genus~$9$ and dimension~$n \ge 4$ over~$\kk$ such that~$X(\kk) \ne \varnothing$
then $X$ is $\kk$-birational to a normal complete intersection $X^+ \subset \PP^6$ of $6-n$ quadrics
which contain a Veronese surface~$\rS \subset X^+$ defined over~$\kk$ 
such that the divisor~$X^+ \cap \langle \rS \rangle$ is $\kk$-rational.
\end{theorem}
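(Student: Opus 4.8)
The plan is to intersect the Sarkisov link of Theorem~\ref{theorem:lg36} with a linear section, in the spirit of Theorem~\ref{theorem:sections}. By Mukai's Theorem~\ref{theorem-mukai} the variety $X_\bkk$ is a transverse linear section of $\bX = \LGr(3,V)$ of codimension $c := 6-n$, and we are given $x_0 \in X(\kk)$. First I would blow up $x_0$ and restrict diagram~\eqref{diagram:lgr36}: set $\tX := \Bl_{x_0}(X) \subset \Bl_{x_0}(\bX)$, which, as the strict transform of a codimension-$c$ linear section through $x_0$, is cut out by $c$ divisors in the linear system $|\bH-\bE| = |\barbH| = |2\bH^+ - \bE^+|$ (using the relations~\eqref{eq:relations}). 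Put $\barX := \phi(\tX)$, $\tX^+ := \phi_+^{-1}(\barX) \subset \Bl_\rS(\PP(W^+))$ and $X^+ := \sigma_+(\tX^+) \subset \PP(W^+) = \PP^6$. By construction $\tX^+$ is cut out by $c$ divisors in $|2\bH^+-\bE^+|$; since $\rS$ is scheme-theoretically cut out by quadrics in $\PP(W^+)$, so that $H^0(\Bl_\rS(\PP(W^+)),\cO(2\bH^+-\bE^+))$ is precisely the space of quadrics through $\rS$ (cf.\ Lemma~\ref{lemma:bpf}), these divisors are the strict transforms of $c$ quadrics through $\rS$, whence $X^+$ is a complete intersection of $6-n$ quadrics containing $\rS$. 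As in Theorem~\ref{theorem:sections}, the morphisms $\phi$ and $\sigma_+\circ\psi$ are given by the $\kk$-rational linear systems $|\bH-\bE|$ and $|\bH^+|=|\bH-2\bE|$, and $\psi$ is recovered from these two; therefore $\barX$, $\PP(W^+)$, the exceptional divisor $\bE^+$, the surface $\rS = (\sigma_+\circ\psi)(\bE)$, the hyperplane $\langle\rS\rangle = \sigma_+(\psi_*(\bE))$ (note $\psi_*(\bE)=\bZ^+$ by $\bE = \bH^+-\bE^+$) and $X^+$ are all defined over $\kk$; moreover $\rS\cong\PP^2_\kk$, being the image of the Veronese embedding of $\PP(U_0)$.

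Next I would check that $\tX^+$ is irreducible of dimension $n$ and that $\psi$ restricts to a birational map $\tX\dashrightarrow\tX^+$, hence that $X\dashrightarrow X^+$ is birational. The open-subset part of Theorem~\ref{theorem:lg36} gives $\tX^+\setminus\bFv \cong \tX\setminus\cL(X,x_0)$, which is irreducible and smooth of dimension $n$ (an open subset of the smooth blowup $\tX$), because $\dim\cL(X,x_0) = \dim\rF_1(X,x_0)+1 \le n-2$; here the bound $\dim\rF_1(X,x_0)\le\dim\rF_1(\bX,x_0)-c+1 = 3-c$ is proved exactly as in Theorem~\ref{theorem:rationality} (otherwise the lines through $x_0$ would sweep a divisor on $X$, impossible since $\uprho(X)=1$ and $X$ is not a hypersurface). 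It then remains to bound $\dim(\tX^+\cap\bFv)$: the morphism $\phi_+$ contracts the $\PP^2$-bundle $\bFv\to\rSv$ onto $\rSv=\rF_1(\bX,x_0)$, which sits in $\PP(\barW)$ as a Veronese surface spanning a $\PP^5$ strictly larger than the $\PP^{n-1}=\PP(T_{x_0}X)$ cut out by $\barX$; a dimension count using that $\barX$ has codimension $\le 2$ then gives $\dim(\tX^+\cap\bFv)<n$. Hence $\tX^+$, and therefore its image $X^+$, is irreducible of dimension $n$, so $\psi$ restricts to a birational map and $X^+$ is a genuine complete intersection of quadrics, hence Cohen--Macaulay.

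To see that the divisor $X^+\cap\langle\rS\rangle$ is $\kk$-rational I would invoke Lemma~\ref{lemma:cremona}: the Cremona transformation $\langle\rS\rangle = \PP(S^2U_0)\dashrightarrow\PP(S^2U_0^\vee)$ is given by the complete linear system of quadrics through $\rS$, is therefore defined over $\kk$ (as $\rS$ is), and carries each quadric through $\rS$ to a hyperplane. Restricting it to $X^+\cap\langle\rS\rangle = \bigcap_i(Q_i\cap\langle\rS\rangle)$ — which is not contained in the indeterminacy locus $\rS$, since $n-1>2$ — produces a birational map onto the intersection $\Lambda$ of $c$ hyperplanes in $\PP(S^2U_0^\vee)$, a linear subspace defined over $\kk$; as $X^+\cap\langle\rS\rangle$ is the hyperplane section of the irreducible $n$-fold $X^+$ it has dimension $n-1$, hence so does $\Lambda$, and therefore $\Lambda\cong\PP^{n-1}_\kk$. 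Thus $X^+\cap\langle\rS\rangle$ is $\kk$-rational, which also shows that it is irreducible.

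It remains to prove that $X^+$ is \emph{normal}. Since it is Cohen--Macaulay, by Serre's criterion it suffices to show $\codim_{X^+}\Sing(X^+)\ge 2$. For $n\ge 5$ this is automatic: $X^+$ is either $\PP^6$ or an irreducible reduced quadric hypersurface in $\PP^6$, which has rank $\ge 3$ and is normal. For $n=4$ one first notes that $\Sing(X^+)\subseteq\langle\rS\rangle\cap X^+$: indeed, by Lemma~\ref{lemma:tilde-alpha} the map $\tilde\alpha$ is an isomorphism over the affine cell $\PP(W^+)\setminus\langle\rS\rangle$, so $X^+$ is isomorphic there to an open subset of the smooth variety $X$. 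Since $\langle\rS\rangle\cap X^+$ is an irreducible divisor in $X^+$ (as just shown), it is then enough to prove that $X^+$ is smooth at its general point. Tracing this through the link, $\bZ^+\cap\tX^+ = \psi_*(\bE\cap\tX) = \psi_*(E_X)$ with $E_X := \bE\cap\tX \cong \PP^{n-1}$, while $\psi|_{\bE}\colon\bE\dashrightarrow\bZ^+$ is the blowup of $\bE = \PP(T_{x_0}\bX)$ along the Veronese surface $\rSv = \rF_1(\bX,x_0)$, so $\bZ^+\cap\tX^+$ is identified with the blowup of $E_X = \PP(T_{x_0}X)$ along $\rF_1(X,x_0) = \rSv\cap E_X$. \textbf{The main obstacle is precisely here}: one must show that the linear subspace $\PP(T_{x_0}X)$ meets the Veronese surface $\rF_1(\bX,x_0)\subset\PP(T_{x_0}\bX)$ in the expected, smooth way, so that $\tX^+$ — and hence $X^+$ away from $\rS$ — is smooth along this locus; this does not follow from $\dim T_{x_0}X$ being correct and must be extracted from the transversality of the linear section $X=\bX\cap L$, e.g.\ by analysing how the codimension-$c$ subspace $T_{x_0}L\subset T_{x_0}\PP(W)$ restricts to $T_{x_0}\bX$ relative to the locus of rank-one forms describing $\rF_1(\bX,x_0)\subset\PP(S^2U_0^\vee)$. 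Granting this, $\Sing(X^+)\subseteq\rS$ has dimension $2 = n-2$, so $X^+$ is normal, and assembling the statements above completes the proof.
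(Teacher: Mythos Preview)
Your setup, the birationality of $X\dashrightarrow X^+$, and the descent to~$\kk$ follow the paper's argument closely and are correct, with one slip: you write $\rS = (\sigma_+\circ\psi)(\bE)$, but $(\sigma_+\circ\psi)(\bE) = \sigma_+(\bZ^+) = \langle\rS\rangle$, not~$\rS$; the paper instead identifies $\rS$ as the fundamental locus of~$\sigma_+$, which is Galois-invariant and hence defined over~$\kk$. Your Cremona argument for the $\kk$-rationality of $X^+\cap\langle\rS\rangle$ is valid but roundabout: the paper simply notes that $X^+\cap\langle\rS\rangle = \sigma_+(\tX^+\cap\bZ^+)$, and since $\bZ^+ \sim \bH^+-\bE^+ = \bE$ by~\eqref{eq:relations}, this is the strict transform of the exceptional divisor $E\cong\PP^{n-1}_\kk$ of~$\sigma$, hence $\kk$-rational.

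The genuine gap is in your normality argument for $n=4$. You correctly observe that $X^+\setminus\langle\rS\rangle$ is smooth, but then attempt to show smoothness at the general point of $X^+\cap\langle\rS\rangle$ via transversality of $\PP(T_{x_0}X)$ with the Veronese $\rSv\subset\PP(T_{x_0}\bX)$, and you acknowledge this is unresolved. The paper bypasses this entirely with a much cleaner argument. First, it uses the stronger containment $\Sing(X^+)\subset X^+\cap\bK$ (not merely $\subset\langle\rS\rangle$): by Theorem~\ref{theorem:lg36} the map $\psi^{-1}\circ\sigma_+^{-1}$ is an isomorphism over $\PP(W^+)\setminus\bK$ (since $\sigma_+(\bFv\cup\bE^+)=\bK$), so $X^+\setminus\bK$ is isomorphic to an open subscheme of the smooth variety~$\tX$. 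Second, assuming $X^+$ is not normal, so $\dim\Sing(X^+)=3$, it slices by a general $\PP^3\subset\PP^6$: the curve $X^+\cap\PP^3$ is an irreducible quartic of arithmetic genus~$1$, hence has at most one singular point, forcing $\Sing(X^+)$ to be a linear $\PP^3$. But the chordal cubic $\bK\subset\PP^5$ contains no linear $\PP^3$ (its maximal linear subspaces are planes), contradiction. This avoids any transversality analysis at~$x_0$.
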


\begin{proof}
The argument of Theorem~\ref{theorem:sections} (using link~\eqref{diagram:lgr36} instead of~\eqref{diagram:typical}) shows that there is a diagram
\begin{equation}
\vcenter{\xymatrix@C=4em{
\tX \ar[d]_\sigma \ar@{-->}[rr]^\psi \ar[dr]^\phi
&&
\tX^+
\ar[d]^{\sigma_+} \ar[dl]_{\phi_+}
\\
X &
\barX &
X^+
}}
\end{equation}
defined over~$\kk$, where $\tX = \Bl_{x_0}(X)$, $\barX = \phi(\tX)$ is a linear section of codimension~$6 - n$ of~\mbox{$\barbX \subset \PP(\barW)$}, and $\tX^+ = \phi_+^{-1}(\barX)$;
the required inequality~\eqref{eq:dim-f1}, which in this case takes the form
\begin{equation*}
\dim(\rF_1(X,x_0)) \le n - 3,
\end{equation*}
is proved by the argument of Theorem~\ref{theorem:rationality}.

By~\eqref{eq:relations} the subvariety~$\tX^+ \subset \Bl_\rS(\PP(W^+))$ 
is a complete intersection of~$6 - n$ divisors from the linear system $|2\bH^+ - \bE^+|$.
Therefore $X^+ = \sigma_+(\tX^+)$ is a complete intersection of $6-n$ quadrics which contain the Veronese surface~$\rS$.
Moreover, $\rS \subset X^+$ is the fundamental locus of~$\sigma_+$, hence is defined over~$\kk$.

Similarly, the divisor $X^+ \cap \langle \rS \rangle$ is the image of the divisor $\tX^+ \cap \bZ^+$,
hence by~\eqref{eq:relations} it is the strict transform of the exceptional divisor~$E$ of~$\sigma$. 
Therefore, it is $\kk$-rational.

It remains to prove that $X^+$ is normal.
If $n = 6$, $X^+ = \PP^6$ and if $n = 5$, $X^+$ is an irreducible quadric, so there is nothing to prove.

Finally, assume that $n = 4$ and $X^+$ is not normal.
Since $X^+ \subset \PP^6$ is a complete intersection, we have~\mbox{$\dim(\Sing(X^+)) = 3$}.
For a general $\PP^3 \subset \PP^6$ the intersection $X^+ \cap \PP^3$ is an irreducible curve of arithmetic genus~1, hence
\begin{equation*}
\Sing(X^+ \cap \PP^3) \subset \Sing(X^+) \cap \PP^3
\end{equation*}
is a single point. 
Therefore, $\Sing(X^+)$ is a linear $3$-space in~$\PP^6$.

On the other hand, Theorem~\ref{theorem:lg36} implies that $X^+ \setminus \bK$ is isomorphic to an open subscheme of~$X$, hence smooth,
hence $\Sing(X^+) \subset X^+ \cap \bK$.
But the chordal cubic $\bK$ does not contain linear spaces of dimension~3.
This contradiction shows that $X^+$ is normal.
\end{proof}

\begin{corollary}
\label{corollary:rationality-9}
Let $X$ be a Mukai variety of genus~$9$ over~$\kk$ such that~$X(\kk) \ne \varnothing$.
If $\dim(X) \ge 5$ then $X$ is $\kk$-rational, and if $\dim(X) = 4$ then $X$ is $\kk$-unirational.
\end{corollary}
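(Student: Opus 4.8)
The plan is to run everything through Theorem~\ref{theorem:mukai-9}, which exhibits $X$ as $\kk$-birational to a normal complete intersection $X^+\subset\PP^6$ of $6-n$ quadrics containing a Veronese surface~$\rS$, in such a way that the divisor $Y:=X^+\cap\langle\rS\rangle$ is $\kk$-rational. Since $X$ and $X^+$ are $\kk$-birational it is enough to argue with~$X^+$, and since a Mukai variety of genus~$9$ has dimension at most $\dim\LGr(3,6)=6$, only $n\in\{4,5,6\}$ occur. If $n=6$ there are no quadrics, $X^+=\PP^6$, and $X$ is $\kk$-rational. In the two remaining cases I would first produce a smooth $\kk$-point of~$X^+$: the span $\langle\rS\rangle$ is a hyperplane of~$\PP^6$, so $Y$ is a divisor in~$X^+$ with $\dim Y=n-1$, whereas normality of~$X^+$ forces $\dim\Sing(X^+)\le n-2<\dim Y$; hence $Y\not\subset\Sing(X^+)$, so $Y\setminus\Sing(X^+)$ is a nonempty open subset of the $\kk$-rational variety~$Y$, its $\kk$-points are Zariski dense, and a general one of them is a smooth $\kk$-point $q\in X^+$.

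For $n=5$ the variety $X^+\subset\PP^6$ is an irreducible quadric hypersurface, and stereographic projection from the smooth point~$q$ gives a $\kk$-birational map $X^+\dashrightarrow\PP^5$; thus $X^+$, and hence~$X$, is $\kk$-rational.

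For $n=4$ the variety $X^+\subset\PP^6$ is a complete intersection of two quadrics, of dimension~$4$ and degree~$4$, and I would project it from~$q$. For a general point $x\in X^+$ the line $\overline{qx}$ lies on neither of the two quadrics, so $\overline{qx}\cap X^+=\{q,x\}$, and the projection $\varpi\colon X^+\dashrightarrow\PP^5$ from~$q$ is therefore $\kk$-birational onto its image~$W$. For $q$ general the family of lines of~$X^+$ through~$q$ is at most one-dimensional, so $X^+$ is not a cone with vertex~$q$ and $W$ is a hypersurface; and since projecting a variety from a point of multiplicity~$m$ on it lowers the degree by~$m$ when the projection is birational (here $m=1$), $W$ is a cubic fourfold in~$\PP^5$. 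Being $\kk$-birational to the smooth variety~$X$, the cubic fourfold~$W$ is geometrically integral, and the image under~$\varpi$ of a general smooth $\kk$-point of~$X^+$ is a smooth $\kk$-point of~$W$. By Koll\'ar's theorem that a cubic hypersurface over a field, which is not a cone and carries a smooth rational point, is unirational over that field, $W$ is $\kk$-unirational (should~$W$ be a cone over a cubic threefold, the classical unirationality of cubic threefolds with a rational point gives the same conclusion). Hence $X^+$, and therefore~$X$, is $\kk$-unirational.

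The step I expect to require the most care is the choice of~$q$ in the case $n=4$: it must be simultaneously a smooth point of the possibly singular~$X^+$ and generic enough that $\varpi$ is birational onto a genuine four-dimensional cubic (in particular that its image is not lower-dimensional). This is precisely where the $\kk$-rationality of~$Y$ furnished by Theorem~\ref{theorem:mukai-9} is indispensable, since it makes smooth $\kk$-points of~$X^+$ Zariski dense in~$Y$, whereas a priori $X(\kk)$ need not be Zariski dense in~$X$. Everything else — stereographic projection of a quadric, the degree-drop formula for linear projection from a point, and the unirationality of cubic hypersurfaces with a smooth rational point — is standard.
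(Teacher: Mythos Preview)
Your argument is correct in outline and matches the paper's proof for $n=6$ and $n=5$ (reduce to $X^+$ via Theorem~\ref{theorem:mukai-9}, observe that normality gives $\codim\Sing(X^+)\ge 2$, harvest a smooth $\kk$-point from the $\kk$-rational divisor $Y$, and use the projection from that point for the quadric case).

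The genuine divergence is at $n=4$. The paper does not pass through a cubic fourfold: it cites \cite[Remark~3.28.3]{CTSSD1} directly, which gives $\kk$-unirationality of a complete intersection of two quadrics once it has a smooth $\kk$-point. Your route---project $X^+$ from a smooth $\kk$-point to a cubic fourfold $W\subset\PP^5$ and then invoke unirationality of cubics---also works, and your degree computation ($\deg W=\deg X^+-\operatorname{mult}_q X^+=4-1=3$) is correct. What you gain is a geometrically vivid picture; what you pay is an extra step and a citation that needs tightening: Koll\'ar's theorem is stated for \emph{smooth} cubic hypersurfaces, whereas your $W$ can be singular (the projection contracts lines of $X^+$ through $q$ and blows down the exceptional $\PP^3$ over $q$). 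The statement you actually need---a geometrically integral cubic hypersurface of dimension $\ge 2$ that is not a cone and carries a smooth $\kk$-point is $\kk$-unirational---is true, but you should source it appropriately (it follows for instance from the same circle of ideas in Colliot-Th\'el\`ene--Sansuc--Swinnerton-Dyer, or from classical third-intersection arguments) rather than from Koll\'ar's 2002 paper as stated. Your parenthetical about the cone case is a reasonable safety net but does not replace this.

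One further small point: you verify that smooth $\kk$-points of $X^+$ are Zariski dense in $Y$, which is exactly what is needed for $n=5$, but for $n=4$ you also want a smooth $\kk$-point of $W$; since $\varpi$ is birational over $\kk$, the open locus where it is an isomorphism meets $Y$ (a divisor cannot lie in the closed indeterminacy/contraction locus, which here has dimension $\le 2$), so this is fine---just say so explicitly.
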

\begin{proof}
Theorem~\ref{theorem:mukai-9} proves that $X$ is $\kk$-birational to the intersection of quadrics~$X^+$, so it remains to show that $X^+$ 
is $\kk$-rational or $\kk$-unirational.

If $\dim(X) = 6$ then $X^+ = \PP^6$, hence is $\kk$-rational.
Furthermore, if $\dim(X) \in \{4,5\}$ then $X^+$ is normal, hence $\codim_{X^+}(\Sing(X^+)) \ge 2$.
Moreover, $X^+$ contains a $\kk$-rational hyperplane section, hence has a smooth $\kk$-point.
If $\dim(X) = 5$, so that $X^+$ is a quadric, it follows that $X^+$ is $\kk$-rational,
and if~$\dim(X) = 4$, so that $X^+$ is an intersection of two quadrics, it is $\kk$-unirational by~\cite[Remark~3.28.3]{CTSSD1}.
\end{proof}

It is unclear if a Mukai fourfold~$X$ of genus~$9$ with $X(\kk) \ne \varnothing$ is always~$\kk$-rational;
in fact we expect that this is not the case.
However, we can prove the following sufficient condition

\begin{corollary}
\label{corollary:rationality-9-4}
Let $X$ be a Mukai fourfold of genus~$9$ over~$\kk$ such that~$X(\kk) \ne \varnothing$.
If~$X$ has a curve of odd degree defined over~$\kk$ then $X$ is $\kk$-rational.
\end{corollary}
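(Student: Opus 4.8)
The plan is to pass, via Theorem~\ref{theorem:mukai-9}, to the intersection of two quadrics $X^+\subset\PP^6$ that is $\kk$-birational to $X$, to transport the odd-degree curve to $X^+$, and then to use it to produce a line on $X^+$ defined over $\kk$; projecting $X^+$ from such a line will exhibit it, and hence $X$, as $\kk$-birational to $\PP^4$.

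First, the reduction. By Theorem~\ref{theorem:mukai-9} the variety $X$ is $\kk$-birational to a normal complete intersection of two quadrics $X^+\subset\PP^6$ containing a Veronese surface $\rS$ over $\kk$, with $Y:=X^+\cap\langle\rS\rangle$ a $\kk$-rational hyperplane section; in particular $X^+(\kk)\neq\varnothing$, since $Y(\kk)\neq\varnothing$. From the proof of that theorem, $X^+\setminus\bK$ is isomorphic to an open subscheme of $X$ and $\Sing(X^+)\subset X^+\cap\bK$, so $X\dashrightarrow X^+$ is an isomorphism over large opens. A curve of odd degree on $X$ can therefore be carried (after possibly discarding components lying over the $\kk$-rational loci) to a curve $C\subset X^+$ contained in the smooth locus; moreover, since $\bH^+=\bH-2\bE$ by~\eqref{eq:relations}, the degree of a curve with respect to the polarization changes only by an even amount under the transformation~\eqref{diagram:lgr36}, so $C$ may be taken of odd degree for $\cO_{X^+}(1)$. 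A general hyperplane section of $C$ then gives a closed point $p\in X^+$, lying in the smooth locus, whose residue field $L$ has odd degree over $\kk$.

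The heart of the argument is that $X^+$ contains a line defined over $\kk$. Any line contained in $Y$ is a line on $X^+$, so it suffices to produce one over $\kk$; more generally it is enough that the scheme $\rF_2(X^+)$ of $2$-planes on $X^+$ has a $\kk$-point, since a $2$-plane over the base field contains a line. By the classical theory of linear subspaces of an intersection of two quadrics (see \cite{CTSSD1} and its references), $\rF_2(X^+)$ is a torsor under a finite commutative group scheme $J$ of $2$-power order (over $\bkk$ it consists of $64$ points, a torsor under $J\cong(\ZZ/2)^6$), and $X^+$ acquires a $2$-plane over any field over which the class of this torsor in $\rH^1(-,J)$ vanishes. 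Since $J$ is $2$-primary, restriction along the odd-degree extension $L/\kk$ is injective on $\rH^1(-,J)$, so it suffices to show that $X^+_L$ contains a $2$-plane over $L$. This is where the odd-degree data enters: over $L$ the curve $C$ acquires an $L$-point, and one invokes the correspondence between odd-degree $0$-cycles on an intersection of two quadrics and the trivialization of its torsor of maximal linear subspaces — using, if necessary, the extra structure provided by Theorem~\ref{theorem:mukai-9}, namely the $\kk$-rational threefold section $Y\supset\rS$ (so that one may reduce to producing a $\kk$-line on the possibly singular threefold $Y$, where more classical tools are available). Granting this, the torsor class vanishes over $\kk$ and $X^+$ contains a $\kk$-line $\ell$, which we may take to avoid $\Sing(X^+)$ since the Veronese surface $\rS\supseteq\Sing(X^+)$ contains no plane.

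Finally, projection of $X^+$ from $\ell$ is birational onto $\PP^4$: a general plane through $\ell$ meets $X^+$ residually in a single point, since each of the two quadrics restricts to such a plane as the union of $\ell$ with a line. Hence $X^+$, and therefore $X$, is $\kk$-rational. The crux is the middle step — making precise how the odd-degree curve forces the vanishing of the torsor class of $\rF_2(X^+)$ over the odd-degree extension $L$; I expect this to require a Springer/Amer--Brumer-type specialization argument for linear subspaces of intersections of two quadrics, combined with the geometry supplied by Theorem~\ref{theorem:mukai-9}, and this is also where the singularities of $X^+$ (all lying on $\rS$) must be dealt with.
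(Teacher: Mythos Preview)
Your proposal has a genuine gap, which you yourself flag at the end: the ``middle step'' where you claim that an odd-degree closed point on $X^+$ trivializes the torsor $\rF_2(X^+)$ is never actually carried out. You write that you ``expect'' it to follow from a Springer/Amer--Brumer-type argument, but no such argument is given, and the presence of singularities on $X^+$ (which you try to handle ad hoc) makes the situation genuinely delicate. As it stands, the proof is incomplete precisely at its crux.

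The paper's proof avoids all of this with a single observation you missed: since $X$ is $\kk$-unirational (Corollary~\ref{corollary:rationality-9}), the set $X(\kk)$ is Zariski dense, so one may \emph{choose} the point $x_0$ generically. In particular one can arrange that $X^+$ is \emph{smooth} and that the given odd-degree curve $C$ avoids the fundamental locus of the birational map $X \dashrightarrow X^+$ entirely. The image of $C$ is then an odd-degree curve on a smooth complete intersection of two quadrics in $\PP^6$, and $\kk$-rationality of $X^+$ follows directly from Hassett--Tschinkel \cite[Theorem~14]{HT}. No torsor analysis, no singular-locus bookkeeping, and no discarding of curve components is needed. The moral is that the freedom in the choice of $x_0$ --- available precisely because unirationality was already established --- is what makes the argument short.
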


\begin{proof}
Let $C \subset X$ be a curve of odd degree.
Since~$X$ is $\kk$-unirational by Corollary~\ref{corollary:rationality-9}, we can choose a point~$x_0 \in X(\kk)$ 
such that~$X^+$ is smooth and  the fundamental locus of the map $\sigma_+ \circ \psi \circ \sigma^{-1}$ 
does not intersect~$C$.
Then the image of~$C$ in~$X^+$ is a curve of odd degree.
Therefore, $X^+$ is~$\kk$-rational by~\cite[Theorem~14]{HT}.
\end{proof}

\begin{remark}
Let~$Y \subset \PP^6$ be a smooth complete intersection of two quadrics.
It is easy to check that the Hilbert scheme of Veronese surfaces contained in~$Y$ is $3$-dimensional.
It would be interesting to describe this Hilbert scheme explicitly.
\end{remark}

\section{Fano threefolds of genus~12}
\label{section:genus-12}

In this section we give a new proof of the following theorem from~\cite{KP19}.

\begin{theorem}
\label{theorem:v22}
Let $X$ be a prime Fano threefold of genus~$12$ over a field $\kk$ of characteristic~$0$.
If~$X(\kk) \ne \varnothing$ then $X$ is $\kk$-rational.
\end{theorem}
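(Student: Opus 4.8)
The plan is to adapt the method of Theorem~\ref{theorem:qk} and of its genus~$9$ counterpart Theorem~\ref{theorem:lg36} to the threefold~$X$, the crucial extra input being the canonical $\kk$-rational embedding $X \hookrightarrow \Gr(3,V)$ established in Corollary~\ref{corollary:x22-equivariant}. Here $V$ is a $7$-dimensional vector space and $X$ is the locus of $3$-dimensional subspaces $U \subset V$ isotropic for a net $\Lambda \subset \wedge^2V^\vee$ of skew-symmetric forms; canonicity means that the pair $(V,\Lambda)$, and with it the ambient $\PP(W)$ with $W \subset \wedge^3V$, is defined over~$\kk$. A $\kk$-point $x_0 \in X(\kk)$ then corresponds to a $\kk$-rational $3$-dimensional subspace $U_0 \subset V$ isotropic for every form in~$\Lambda$, and every linear-algebraic datum attached to~$U_0$ --- the quotient $V/U_0$, the forms induced on it, the spaces $S^2U_0$, $\wedge^2U_0$, and so on --- is automatically defined over~$\kk$. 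Also, exactly as in the proof of Theorem~\ref{theorem:rationality}, the Lefschetz theorem ($\uprho(X) = 1$) together with the fact that $X$ is not a hypersurface forces $\rF_1(X,x_0)$ to be finite for every~$x_0$.

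First I would carry out the geometric construction over~$\bkk$. As in~\S2 and in Theorem~\ref{theorem:lg36}, one attaches to~$x_0$ a flat family $\cQ \subset X \times \bX^+$ of quadrics on~$X$ through~$x_0$ (the fibers being conics), parameterized by a smooth rational variety~$\bX^+$ built from~$U_0$, with $p_\cQ \colon \cQ \to X$ birational. The required statements --- that $\bX^+$ is smooth and $\kk$-rational, that members of~$\cQ$ are cut out on~$X$ by their linear spans, and the requisite smoothness, connectedness and dimension properties of the relevant Hilbert schemes of lines --- are the analogues of assumptions~\ref{ass:f2k}--\ref{ass:f1} of Theorem~\ref{theorem:qk} and of Lemmas~\ref{lemma:f1-g2} and~\ref{lemma:xq-g2}, to be verified by a direct analysis of the incidence conditions. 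Blowing up~$x_0$ and running the argument of Theorem~\ref{theorem:qk} --- in its threefold form, Remark~\ref{remark:3folds-general}, passing through the ``second Springer resolution'' --- then produces, over~$\bkk$, a commutative diagram
\begin{equation*}
\vcenter{\xymatrix@C=4em{ \tX \ar[d]_\sigma \ar@{-->}[rr]^\psi \ar[dr]^\phi && \tX^+ \ar[d]^{\sigma_+} \ar[dl]_{\phi_+} \\ X & \barX & \bX^+ }}
\end{equation*}
in which $\sigma$ is the blowup of~$x_0$, $\psi$ is a small birational map (flop or antiflip) and $\sigma_+$ is birational; since $\dim X = 3$ the composite is a birational map $X \dashrightarrow \bX^+$, and the point of the construction is that $\bX^+$ here is birational to~$\PP^3$. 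As in the proof of Theorem~\ref{theorem:lg36}, the morphisms $\phi$ and $\sigma_+ \circ \psi$ are given by explicit linear systems of the form $|a\bH - b\bE|$ (double projection from~$x_0$), and $\barX$,~$\bX^+$ are their images.

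The descent to~$\kk$ is then formal and follows the proofs of Theorems~\ref{theorem:sections} and~\ref{theorem:rationality}: since $x_0 \in X(\kk)$, the blowup $\tX = \Bl_{x_0}(X)$ and the divisor classes $\bH$,~$\bE$ are defined over~$\kk$, hence the linear systems above are Galois-invariant, so the morphisms~$\phi$ and~$\sigma_+ \circ \psi$, their images $\barX$ and~$\bX^+$, and the product map $\psi \colon \tX \dashrightarrow \barX \times \bX^+$ are all defined over~$\kk$. Thus $X$ is $\kk$-birational to~$\bX^+$, which has a $\kk$-point by Nishimura's lemma (applied via~$x_0$) and is $\kk$-rational: over~$\kk$ it is~$\PP^3$ itself, or at worst a $\kk$-form of~$\PP^3$ (or of a projective bundle over a $\kk$-form of~$\PP^2$), and a $\kk$-form with a $\kk$-point is $\kk$-rational, cf.~\cite[Proposition~2.5]{KP19}. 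Therefore $X$ is $\kk$-rational.

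I expect the main obstacle to be the first step: pinning down the correct family of conics and the parameter variety~$\bX^+$, and proving that $p_\cQ$ is birational together with the smoothness and dimension hypotheses, given that --- just as for~$\GTGr(2,7)$ --- the inclusion $X \subset \Gr(3,V)$ is highly non-transverse, so these incidence computations must be carried out by hand. A further subtlety is the ``second Springer resolution'' bookkeeping needed to land on a variety birational to~$\PP^3$ rather than on a projective bundle over a smaller base, and, as in Theorem~\ref{theorem:lg36}, keeping track of the exact shape of the linear systems defining the link. Once Corollary~\ref{corollary:x22-equivariant} is in hand the passage to~$\kk$ is routine.
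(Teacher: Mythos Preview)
Your proposal diverges from the paper's proof and, as stated, rests on a framework that does not apply here. Theorem~\ref{theorem:qk} and Remark~\ref{remark:3folds-general} require a higher-dimensional~$\bX$ birationally covered by a family of quadrics, with~$X$ a transverse linear section of it; for genus~$12$ there is no such ambient --- the maximal variety~$\ITGr(3,7)$ is itself a threefold, and~$X$ sits in~$\Gr(3,7)$ as the zero locus of a section of~$(\wedge^2\cU^\vee)^{\oplus3}$, not as a linear section. You never identify the conic family~$\cQ$ or~$\bX^+$, and there is a dimensional obstruction: if~$m=1$ and~$p_\cQ$ is birational then~$\dim\bX^+=2$, so the output of Theorem~\ref{theorem:qk} is a~$\PP^1$-bundle over a surface, not~$\PP^3$; Remark~\ref{remark:3folds-general} does not bridge this, since it treats the case~$c=m+1$ for linear sections. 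Your appeal to Lefschetz for finiteness of~$\rF_1(X,x_0)$ is also invalid in dimension~$3$; the paper cites~\cite[Lemma~2.1.8(ii)]{KPS} instead. What you are gesturing at is the classical double-projection Sarkisov link for~$X_{22}$, which is essentially the route of~\cite{KP19} that the present paper is offering an alternative to.

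The paper's argument is structurally different and does not blow up~$x_0$. With~$V^+:=V/U_0$ (a~$\kk$-rational $4$-space), Proposition~\ref{proposition:g37-transformation} identifies the blowup of~$\Gr(3,V)$ along the degeneracy locus~$\bZ=\fD_1(\xi)$ of~$\xi\colon\cU\to V^+\otimes\cO$ with a relative Grassmannian over~$\Gr(3,V^+)\cong\PP^3$. The preimage~$\sigma^{-1}(X)\subset X\times\PP^3$ is then the zero locus of a section of~$\cU^\vee\boxtimes\cO(1)$; Lemma~\ref{lemma:sigma-inverse-x} (using Lemma~\ref{lemma:tangent-vectors} for the tangent-space analysis) decomposes it into the strict transform~$\Bl_Z(X)$, a generically reduced component~$\{x_0\}\times\PP^3$, and components~$L\times\PP^2$ for each~$[L]\in\rF_1(X,x_0)$. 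The key step is a degree count: the general fiber of~$\sigma^{-1}(X)\to\PP^3$ is the zero locus of a section of~$\cU^\vee$ on~$X$, of length~$\rc_3(\cU^\vee)=2$; the component~$\{x_0\}\times\PP^3$ contributes~$1$ and the~$L\times\PP^2$ contribute~$0$, so~$\Bl_Z(X)\to\PP^3$ has degree~$1$ and is birational. No small map~$\psi$, no linear-system bookkeeping, and no separate descent step are needed, because~$V$,~$U_0$,~$\cU$ and hence the whole construction are already defined over~$\kk$ by Proposition~\ref{proposition:x22-cu}.
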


Recall that over an algebraically closed field any (smooth) prime Fano threefold $X$ of genus~12
is the zero locus of a section of the vector bundle $\wedge^2\cU^\vee \oplus \wedge^2\cU^\vee \oplus \wedge^2\cU^\vee$
on $\bX = \Gr(3,V)$, where~$V$ is a vector space of dimension~7 and $\cU$ is the tautological rank-3 vector bundle.
In~\S\ref{subsection:bundles-12} we show that the same is true over any field of characteristic zero, 
then in~\S\ref{subsection:link-gr37-point} we construct a birational transformation for~$\bX$, 
and in \S\ref{subsection:v22-transformation} we check that it induces a birational map between~$X$ and~$\PP^3$.

\subsection{Vector bundles and Grassmannian embedding}
\label{subsection:bundles-12}

We will need a version of~\cite[Lemma~2.4]{KP19} for vector bundles.
Recall that~$\chi(\cE)$ denotes the Euler characteristic of a coherent sheaf 
and that a vector bundle is called {\sf simple} if it has no non-scalar endomorphisms. 

\begin{proposition}
\label{proposition:vb}
Let $X$ be a scheme over $\kk$ 
and let $\cE$ be a simple vector bundle on the scheme~$X_\bkk$ such that $g^*\cE \cong \cE$ for any $g \in \Gal$.
Then there is a Brauer class $\beta \in \Br(\kk)$ and a $\beta$-twisted vector bundle $\cE_0$ on $X$ such that $\cE \cong (\cE_0)_\bkk$.

Moreover, if $X$ is projective then
\begin{align}
\label{eq:beta-chi}
\chi(\cE) \cdot \beta &= 0,\\
\intertext{and if the line bundle $\det(\cE) \in \Pic(X_\bkk)^\Gal$ is defined over~$\kk$ then also}
\label{eq:beta-rank}
\rank(\cE)\cdot \beta &= 0.
\end{align} 
\end{proposition}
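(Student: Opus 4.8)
The plan is to imitate the proof of the Brauer-group descent statement for individual cohomology classes (cf.\ \cite[Lemma~2.4]{KP19}), adapted to vector bundles via the theory of twisted sheaves. First I would recall the mechanism: since $\cE$ is simple and $\Gal$-invariant up to isomorphism, for each $g\in\Gal$ choose an isomorphism $\phi_g\colon g^*\cE \xrightarrow{\ \sim\ }\cE$. Simplicity forces $\phi_g$ to be unique up to a scalar in $\bkk^\times$, so the failure of the cocycle condition $\phi_g\circ g^*\phi_h = c_{g,h}\,\phi_{gh}$ produces a $2$-cocycle $(c_{g,h})$ with values in $\bkk^\times$, i.e.\ a class $\beta\in H^2(\Gal,\bkk^\times)\subset\Br(\kk)$. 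The descent datum $(\phi_g)$ then defines a $\beta$-twisted vector bundle $\cE_0$ on $X$ with $(\cE_0)_\bkk\cong\cE$; this is the first assertion and it requires no projectivity. I would state this part crisply, citing the standard formalism of twisted sheaves.

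Next, for the relation \eqref{eq:beta-chi}: when $X$ is projective over $\kk$, Euler characteristic gives a way to manufacture an \emph{honest} (untwisted) vector space out of $\cE_0$ whose class in the Brauer group is a multiple of $\beta$. Concretely, the complex $R\Gamma(X_\bkk,\cE)$ carries a $\beta$-twisted $\Gal$-action, so its (alternating sum of) cohomology groups assemble into a $\beta$-twisted $\kk$-vector space of dimension $\chi(\cE)$. But a $\beta$-twisted vector space of dimension $d$ exists over $\kk$ precisely when $d\cdot\beta = 0$ in $\Br(\kk)$ (the obstruction to untwisting a $\beta$-twisted bundle of rank $d$ on $\Spec\kk$ is $d\beta$). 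Hence $\chi(\cE)\cdot\beta = 0$. I would phrase this using the index/period formalism: a nonzero $\beta$-twisted sheaf of rank $d$ on $\Spec\kk$ forces $d$ to be a multiple of the index of $\beta$, and in particular $d\cdot\beta=0$. (One must take a little care if $\chi(\cE)=0$, in which case the statement $0\cdot\beta=0$ is vacuous and there is nothing to prove; if $\chi(\cE)\ne 0$ the twisted vector space is genuinely nonzero and the argument applies.)

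For \eqref{eq:beta-rank}: if moreover $\det(\cE)$ is defined over $\kk$ as a line bundle $L_0$ on $X$, then I would compare the determinant of the twisted bundle $\cE_0$ with $L_0$. The determinant of a $\beta$-twisted bundle of rank $r$ is naturally a $(r\beta)$-twisted line bundle; but it is also identified with the untwisted line bundle $L_0$ base-changed, which is $0$-twisted. Hence $r\beta=0$ in $\Br(\kk)$. Equivalently: the obstruction to $\cE_0$ being untwisted along the "determinant direction" is $\rank(\cE)\cdot\beta$, and the hypothesis kills it. I would make this precise by noting that $\det$ induces a homomorphism on Brauer classes of twisted-sheaf stacks sending the class of a rank-$r$ $\beta$-twisted bundle to $r\beta$, and that $L_0$ being defined over $\kk$ means its class is $0$.

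The main obstacle I anticipate is purely expository: making the bookkeeping with twisted sheaves uniform and rigorous enough without developing the full formalism in the paper. In particular, the cleanest route is to phrase everything in terms of the gerbe of trivializations / the Brauer class obstructing untwisting, and then \eqref{eq:beta-chi} and \eqref{eq:beta-rank} become the two standard statements "a $\beta$-twisted sheaf with nonzero rank/$\chi$ over $\Spec\kk$ forces that number to annihilate $\beta$" and "the determinant shifts the twist by the rank". If the paper prefers to avoid the word "gerbe", the alternative is the explicit $2$-cocycle computation: choose the $\phi_g$, form $c_{g,h}$, and then observe directly that the $\Gal$-action on $\bigoplus(-1)^i H^i(X_\bkk,\cE)$ (resp.\ on $H^0$ of $\det$) is a $c$-twisted (resp.\ genuine) representation, forcing $\chi(\cE)\cdot[c]=0$ (resp.\ $\rank(\cE)\cdot[c]=0$) by the standard fact that a $c$-twisted representation of dimension $d$ trivializes $d\cdot[c]$. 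I would present the cocycle version as the "proof" and mention the twisted-sheaf interpretation parenthetically.
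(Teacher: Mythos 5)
Your proposal is correct and follows essentially the same route as the paper: the $2$-cocycle $c_{g_1,g_2}$ extracted from the scalar ambiguity of the isomorphisms $\phi_g\colon g^*\cE\to\cE$ (using simplicity), Galois descent to a $\beta$-twisted bundle $\cE_0$, the Euler characteristic realized as the dimension of a $\beta$-twisted vector space to get~\eqref{eq:beta-chi} (the paper reduces to $H^0$ after an ample twist rather than using all of $R\Gamma$, but this is immaterial), and the observation that $\det(\cE_0)$ is $(\rank(\cE)\cdot\beta)$-twisted for~\eqref{eq:beta-rank}. The only nitpick is your phrase ``exists precisely when $d\cdot\beta=0$'': existence of a $d$-dimensional $\beta$-twisted vector space is equivalent to $\mathrm{ind}(\beta)\mid d$, which implies $d\cdot\beta=0$ but is not equivalent to it in general; since you only use the correct implication, this does not affect the argument.
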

\begin{proof}
For each $g \in \Gal$ we choose an isomorphism $\phi_g \colon g^*\cE \xrightarrow{\ \sim\ } \cE$.
Then for each pair~$(g_1,g_2)$ of elements in $\Gal$ we have two isomorphisms
\begin{equation*}
\phi_{g_1g_2},\ \phi_{g_2}\circ g_2^*\phi_{g_1} \colon
(g_1g_2)^*\cE \cong g_2^*g_1^*\cE \xlongrightarrow{\hspace{2em}} \cE.
\end{equation*}
Since $\cE$ is simple, these two isomorphisms differ by a nonzero scalar, hence there is a collection 
\begin{equation*}
\{ c_{g_1,g_2} \in \bkk^\times \}_{g_1,g_2 \in \Gal}
\quad\text{such that}\quad 
\phi_{g_2}\circ g_2^*\phi_{g_1} = c_{g_1,g_2} \phi_{g_1g_2}.
\end{equation*}
It is easy to see that $\{c_{g_1,g_2}\}$ is a $2$-cocycle for the Galois action on $\bkk^\times$,
and a different choice of~$\phi_g$ results in replacing this cocycle by a coboundary.
If $\beta \in H^2(\Gal,\bkk^\times) = \Br(\kk)$ is the cohomology class of this cocycle, 
then by Galois descent the collection of isomorphisms $\phi_g$ 
defines a $\beta$-twisted vector bundle $\cE_0$ on $X$ with the required property.

The proof of property~\eqref{eq:beta-chi} is analogous to the proof of~\cite[Lemma~2.4]{KP19};
it is based on the fact, that after sufficiently ample twist the global sections of the bundle~$\cE$
form a $\beta$-twisted vector space of dimension $\chi(\cE)$, hence $\beta$ is killed by $\chi(\cE)$.

Finally, the isomorphism $\cE \cong (\cE_0)_\bkk$ implies $\det(\cE) \cong \det(\cE_0)_\bkk$.
Note that~$\det(\cE_0)$ is naturally~$(\rank(\cE) \cdot\beta)$-twisted.
Therefore, if $\det(\cE)$ is defined over~$\kk$, then $\rank(\cE) \cdot \beta = 0$.
\end{proof}

Recall from~\cite{KPS} the following result.

\begin{lemma}[{\cite[Theorem~B.1.1, Proposition~B.1.5, and Lemma~B.1.9]{KPS}}]
\label{lemma:vb-e}
Let $X$ be a prime Fano threefold of genus~$12$ over an algebraically closed field of characteristic~$0$.
There is a unique stable globally generated vector bundle~$E$ of rank~$2$ on~$X$
with $\rc_1(E) = 1$, $\rc_2(E) = 7$, such that $\cO$ and $E$ form an exceptional pair.
\end{lemma}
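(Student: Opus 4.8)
The plan is to prove existence by the Hartshorne--Serre construction from an elliptic curve, and uniqueness by a Riemann--Roch and stability argument; since all of this works over an arbitrary algebraically closed field of characteristic zero (or, by the Lefschetz principle, it suffices to argue over $\CC$), no restriction on the ground field is lost. Write $H = -K_X$, so $H^3 = 22$; as $X$ is rational one has $H^\bullet(X,\cO_X) = \kk$, $H^i(X,\cO_X(-H)) = 0$ for $i \le 2$, and $H^3(X,\cO_X(-H)) = \kk$.

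\textbf{Existence.} A section $s \in H^0(X,E)$ of a bundle $E$ as in the statement vanishes on a curve $Z$ of class $\rc_2(E) = 7$ with $\omega_Z \cong (\omega_X \otimes \det E)|_Z \cong \cO_Z$, so I would start instead from a smooth curve $Z \subset X$ with $\deg_H Z = 7$ and $p_a(Z) = 1$ in sufficiently general position. Such $Z$ exist: $\deg \cN_{Z/X} = 7$, hence $\chi(\cN_{Z/X}) = 7$ and the Hilbert scheme is non-empty of dimension $\ge 7$ near $Z$ (non-emptiness by a standard smoothing of a connected nodal curve assembled from lines and conics on $X$), with genericity of $Z$ available in the family. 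For such $Z$ one computes $\cExt^1_X(\cI_Z,\cO_X(-H)) \cong \det\cN_{Z/X}\otimes\cO_X(-H)|_Z \cong \cO_Z$, so $\Ext^1_X(\cI_Z(H),\cO_X) = \Ext^1_X(\cI_Z,\cO_X(-H)) \cong H^0(Z,\cO_Z) = \kk$; the unique non-split extension
\[
0 \longrightarrow \cO_X \longrightarrow E \longrightarrow \cI_Z(H) \longrightarrow 0
\]
has locally free middle term (its class is a nowhere-vanishing section of $\cExt^1_X(\cI_Z,\cO_X(-H)) \cong \cO_Z$, so Cayley--Bacharach holds), with $\rc_1(E) = H$, $\rc_2(E) = [Z] = 7$. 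For $Z$ general, $H^0(X,\cO_X(H)) \to H^0(Z,\cO_Z(H))$ is surjective and $Z$ is cut out scheme-theoretically by anticanonical divisors, hence $\cI_Z(H)$ and therefore $E$ is globally generated, with $h^0(X,E) = 8$ and $H^{>0}(X,E) = 0$. Stability is immediate: a sub-line-bundle of slope $\ge \mu(E)$ would be $\cO_X(aH)$ with $2a \ge \rc_1(E) = 1$, i.e.\ $a \ge 1$, so it would contain $\cO_X(H)$ and force $H^0(X,E^\vee) \ne 0$; but dualizing the sequence above gives $0 \to \cO_X(-H) \to E^\vee \to \cI_Z \to 0$, whence $H^0(X,E^\vee) = 0$. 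Finally $\Ext^\bullet(\cO_X,\cO_X) = H^\bullet(X,\cO_X) = \kk$; the dual sequence gives $H^0(E^\vee) = H^1(E^\vee) = 0$ and exhibits $H^2(E^\vee)$, $H^3(E^\vee)$ as the kernel and cokernel of the connecting map $H^2(X,\cI_Z) \to H^3(X,\cO_X(-H))$, which is an isomorphism because under Serre duality it is cup product with the nonzero extension class in $\Ext^1_X(\cI_Z,\cO_X(-H)) \cong H^2(X,\cI_Z)^\vee$; so $\Ext^\bullet(E,\cO_X) = H^\bullet(X,E^\vee) = 0$, and a diagram chase with the defining sequence (using $\Ext^\bullet(E,\cO_X) = 0$ together with $\Ext^1_X(\cI_Z,\cI_Z)$) gives $\Ext^{>0}(E,E) = 0$. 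Hence $(\cO_X,E)$ is an exceptional pair.

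\textbf{Uniqueness.} Let $E$ and $E'$ both satisfy the conclusion. Both are $\mu$-stable of the same slope with $\operatorname{ch}(E') = \operatorname{ch}(E)$, so either $E \cong E'$ or $\Hom(E,E') = \Hom(E',E) = 0$. In the latter case Serre duality gives $\Ext^3(E,E') = \Hom(E',E^\vee)^\vee = 0$ (as $E'$ has positive slope and $E^\vee$ negative slope), while $\chi(E,E') = \chi(E,E) = 1$ since $E$ is exceptional; hence $\dim\Ext^2(E,E') = 1 + \dim\Ext^1(E,E') \ge 1$. A nonzero class in $\Ext^2(E,E') \cong \Ext^1(E',E^\vee)^\vee$ gives a non-split extension $0 \to E^\vee \to F \to E' \to 0$ with $F$ locally free of rank $4$, $\rc_1(F) = 0$, and $h^0(F) = h^0(E') = 8$; analysing $F$ (its Harder--Narasimhan/Jordan--H\"older filtration with respect to $H$, the cohomology of $E$, $E'$, $E^\vee$, and the facts that $\Pic(X) = \ZZ H$ and that $X$ carries no surface of small degree supporting such a filtration) yields a contradiction, so $E \cong E'$. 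Alternatively, the globally generated bundle $E$ defines a closed embedding $X \hookrightarrow \Gr(2, H^0(X,E)^\vee)$ with $E^\vee$ the restricted tautological subbundle, and one shows this embedding is unique up to $\GL(H^0(X,E))$; or one locates $E$ as the (up to the known autoequivalences, unique) exceptional object in the appropriate position of a semiorthogonal decomposition of $\Db(X)$.

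\textbf{The main obstacle.} I expect uniqueness to be the hard part: turning the auxiliary rank-$4$ sheaf $F$ into a contradiction---equivalently, showing that the moduli space of stable bundles with the given invariants is a single reduced point---is exactly where the special geometry of the genus-$12$ Fano threefold has to be used. On the existence side the only non-formal inputs are the production of the curve $Z$ and the genericity used for global generation and for the vanishing $\Ext^{>0}(E,E) = 0$.
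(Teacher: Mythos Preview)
The paper does not prove this lemma at all: it is quoted verbatim as a citation of \cite[Theorem~B.1.1, Proposition~B.1.5, and Lemma~B.1.9]{KPS}, with no argument given. So there is no ``paper's own proof'' to compare against; the authors simply import the result from the appendix of Kuznetsov--Prokhorov--Shramov.

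As for your sketch itself: the existence half via Hartshorne--Serre from an elliptic septic is the natural approach and the cohomological bookkeeping (the identification $\cExt^1(\cI_Z,\cO_X(-H))\cong\cO_Z$, the stability check, and the vanishing $\Ext^\bullet(E,\cO_X)=0$ via the connecting map being cup product with the nonzero extension class) is correct. Two points are genuinely not proved, though: the existence of a smooth elliptic curve of degree~$7$ on~$X$ (your ``standard smoothing'' is a real argument that has to be carried out on this specific threefold), and the claim that for generic~$Z$ the sheaf $\cI_Z(H)$ is globally generated. Both are true, but neither is free.

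On uniqueness you correctly identify the difficulty and then do not resolve it. The rank-$4$ extension~$F$ you build has $\rc_1(F)=0$ and $h^0(F)=8$, but ``analysing its Harder--Narasimhan filtration \dots\ yields a contradiction'' is exactly the step that needs the work; as written this is an assertion, not an argument. Your alternative suggestion---locating~$E$ as the unique exceptional object in the right slot of a semiorthogonal decomposition of~$\Db(X)$---is in fact closer to how \cite{KPS} proceeds, but again you only gesture at it. So the proposal is a plausible outline with the hard step (uniqueness) left open, which you yourself flag.
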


Combining it with Proposition~\ref{proposition:vb} we obtain the following

\begin{corollary}
\label{corollary:e0}
Let $X$ be a prime Fano threefold of genus~$12$ over a field~$\kk$ of characteristic~$0$.
There is a $2$-torsion class $\beta_E \in \Br(\kk)$ and a $\beta_E$-twisted rank-$2$ vector bundle~$E_0$ on~$X$ such that~$(E_0)_\bkk \cong E$,
where $E$ is the vector bundle from Lemma~\textup{\ref{lemma:vb-e}}.
\end{corollary}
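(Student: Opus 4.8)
The plan is to combine the uniqueness clause of Lemma~\ref{lemma:vb-e} with the descent statement of Proposition~\ref{proposition:vb}. Over the algebraic closure the bundle $E$ is characterized, among rank-$2$ bundles on $X_\bkk$, by being stable, globally generated, having $\rc_1(E) = 1$ and $\rc_2(E) = 7$, and forming an exceptional pair $(\cO_X, E)$. A stable sheaf on a projective variety over an algebraically closed field has endomorphism ring equal to the base field, so $E$ is in particular simple, hence a candidate for Proposition~\ref{proposition:vb}; the only thing left to verify is Galois-invariance.

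First I would check that $g^*E \cong E$ for every $g \in \Gal$. The element $g$ induces a scheme automorphism of $X_\bkk$ (semilinear over $\kk$), so $g^*$ is an exact autoequivalence of $\Db(X_\bkk)$ with $g^*\cO_X \cong \cO_X$; hence $g^*E$ is again a rank-$2$, globally generated bundle forming an exceptional pair with $\cO_X$. Moreover $g$ fixes the polarization $H = -K_X$, which is defined over $\kk$, and acts trivially on $\Pic(X_\bkk) = \ZZ H$ and on $H^4(X_\bkk,\ZZ) \cong \ZZ$ (the latter because an automorphism of $\ZZ$ preserving the positive class $H^2$ is the identity); therefore $g^*E$ has the same numerical invariants and is stable for the same polarization. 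By the uniqueness in Lemma~\ref{lemma:vb-e} we conclude $g^*E \cong E$.

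Then Proposition~\ref{proposition:vb} applies and produces a class $\beta_E \in \Br(\kk)$ together with a $\beta_E$-twisted rank-$2$ vector bundle $E_0$ on $X$ with $(E_0)_\bkk \cong E$. To see that $\beta_E$ is $2$-torsion I would invoke~\eqref{eq:beta-rank}: the line bundle $\det(E)$ has $\rc_1 = 1$, i.e.\ $\det(E) \cong \cO_X(-K_X)$, which descends to $\kk$ because the anticanonical bundle does; hence $\rank(E)\cdot\beta_E = 2\beta_E = 0$. If one prefers, $\chi(E)$ is also readily computed and, whenever it turns out to be odd, \eqref{eq:beta-chi} gives an alternative route to the same conclusion.

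The only non-formal ingredient is the invariance check $g^*E \cong E$, and even there the content is just the bookkeeping that ``$\rc_1(E) = 1$, $\rc_2(E) = 7$'' is an assertion about the Galois-trivial groups $\Pic(X_\bkk)$ and $H^4(X_\bkk,\ZZ)$, valid because $X$ is a prime Fano threefold; everything else is an immediate citation of the two auxiliary results.
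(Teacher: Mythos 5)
Your proof is correct and follows exactly the route the paper intends (the paper states the corollary as an immediate combination of Lemma~\ref{lemma:vb-e} and Proposition~\ref{proposition:vb} without writing out the details): uniqueness in Lemma~\ref{lemma:vb-e} gives Galois-invariance, stability gives simplicity so Proposition~\ref{proposition:vb} applies, and~\eqref{eq:beta-rank} with $\rank(E)=2$ and $\det(E)\cong\cO_X(-K_X)$ defined over~$\kk$ gives $2\beta_E=0$. The details you fill in (Galois-triviality of the numerical invariants and of the polarization) are the right ones and match the analogous argument the paper carries out explicitly for $\cU^\vee$ in Proposition~\ref{proposition:x22-cu}.
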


Let $X$ be a prime Fano threefold of genus~$12$ over a field~$\kk$ of characteristic~$0$.
By Theorem~\ref{theorem-mukai} there is an embedding~$X_\bkk \hookrightarrow \Gr(3,V)$,
where $V$ is a vector space of dimension~7,
whose image is the zero locus of a section 
of the vector bundle $\wedge^2\cU^\vee \oplus \wedge^2\cU^\vee \oplus \wedge^2\cU^\vee$,
where $\cU$ denotes
the restriction to~$X_\bkk$ of the tautological rank-3 bundle on~$\Gr(3,V)$.
By~\cite[Theorem~2]{K97-V22} (see also~\cite{K96-V22}) 
there is a semiorthogonal decomposition
\begin{equation}
\label{eq:sod-x22}
\Db(X_\bkk) = \langle \cO, \cU^\vee, E, \wedge^2\cU^\vee \rangle,
\end{equation}
i.e., $(\cO, \cU^\vee, E, \wedge^2\cU^\vee)$
is a full exceptional collection in the bounded derived category of coherent sheaves~on~$X_\bkk$.
We use it to prove the following

\begin{proposition}
\label{proposition:x22-cu}
Let $X$ be a prime Fano threefold of genus~$12$ over a field $\kk$ of characteristic~$0$.
Then there exists an embedding $X \hookrightarrow \Gr(3,V)$ defined over~$\kk$ whose image is the zero locus of a section 
of the vector bundle $\wedge^2\cU^\vee \oplus \wedge^2\cU^\vee \oplus \wedge^2\cU^\vee$.
\end{proposition}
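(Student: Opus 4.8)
The plan is to descend Mukai's construction piece by piece. Over $\bkk$ the data are: the Mukai rank-$3$ bundle $\cU^\vee$ on $X_\bkk$ (the restriction of the dual tautological bundle), the $7$-dimensional space $H^0(X_\bkk,\cU^\vee)$ which recovers $V$, and the $3$-dimensional space of equations inside $H^0(\Gr(3,V),\wedge^2\cU^\vee)=\wedge^2 V^\vee$ whose common zero locus is $X_\bkk$. I will show successively that $\cU^\vee$ is defined over $\kk$, that the Grassmannian embedding is defined over $\kk$, and that the space of equations is Galois-stable.

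First I would descend the bundle. The bundle $\cU^\vee$ is the second term of the exceptional collection~\eqref{eq:sod-x22}, hence simple, and it is uniquely characterized by its rank and Chern classes among stable bundles on $X_\bkk$ (cf.\ \cite{KPS}); therefore $g^*\cU^\vee\cong\cU^\vee$ for every $g\in\Gal$. Applying Proposition~\ref{proposition:vb} yields a class $\beta\in\Br(\kk)$ and a $\beta$-twisted rank-$3$ bundle $\cU_0^\vee$ on $X$ with $(\cU_0^\vee)_\bkk\cong\cU^\vee$. Now $\chi(X_\bkk,\cU^\vee)=h^0(X_\bkk,\cU^\vee)=7$ since $\cU^\vee$ is globally generated with vanishing higher cohomology, so~\eqref{eq:beta-chi} gives $7\beta=0$; and $\det(\cU^\vee)$ is a multiple of $-K_X$ in $\Pic(X_\bkk)\cong\ZZ$, hence defined over $\kk$, so~\eqref{eq:beta-rank} gives $3\beta=0$. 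Since $\gcd(3,7)=1$, we get $\beta=0$, i.e.\ $\cU^\vee$ (equivalently $\cU$) descends to a genuine bundle $\cU_0$ on $X$.

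Next I would build the embedding and recover the equations. Set $V:=H^0(X,\cU_0^\vee)^\vee$, a $7$-dimensional $\kk$-vector space (the dimension is insensitive to base field extension). Global generation of $\cU_0^\vee$, which descends from $\bkk$, dualizes to an inclusion $\cU_0\hookrightarrow V\otimes\cO_X$ that is a subbundle embedding because it is one after base change to $\bkk$; this defines a $\kk$-morphism $X\to\Gr(3,V)$ whose base change is Mukai's embedding, and hence is itself a closed immersion. Finally, $H^0(\Gr(3,V),\cI_X\otimes\wedge^2\cU^\vee)$ is defined over $\kk$ (since $X\subset\Gr(3,V)$, $\cU$, and $\wedge^2\cU^\vee$ all are), and over $\bkk$ it coincides with the $3$-dimensional subspace $A_\bkk\subset\wedge^2 V_\bkk^\vee$ that cuts out $X_\bkk$ — this identification uses Mukai's theorem together with the Koszul resolution of $\cI_{X_\bkk}$, noting that $X_\bkk$ has the expected codimension $9$ in $\Gr(3,V_\bkk)$. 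Hence $A_\bkk$ descends to a $3$-dimensional $\kk$-subspace $A\subset\wedge^2 V^\vee$, and a $\kk$-basis of $A$ is a section $s$ of $\wedge^2\cU^\vee\oplus\wedge^2\cU^\vee\oplus\wedge^2\cU^\vee$ over $\kk$ with $Z(s)_\bkk=X_\bkk$ (the common zero locus is unchanged under a change of basis of the three equations); by faithfully flat descent of closed subschemes, $Z(s)=X$, which is the required description.

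The main obstacle is the first step: one needs an intrinsic, and therefore Galois-invariant, characterization of the Mukai bundle $\cU^\vee$, and then one must rule out the a priori possible Brauer obstruction to descending it without a twist. It is precisely here that the coprimality of $\rank(\cU^\vee)=3$ and $\chi(\cU^\vee)=7$ does the work. By contrast, the passage from the descended bundle to the embedding and to the defining equations is routine faithfully flat descent.
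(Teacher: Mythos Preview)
Your overall strategy---descend $\cU^\vee$ using Proposition~\ref{proposition:vb}, kill the Brauer obstruction via $\gcd(3,7)=1$, then rebuild the embedding and the equations over~$\kk$---matches the paper's, and the Brauer-class step is identical. The one substantive difference is how you establish $g^*\cU^\vee\cong\cU^\vee$ for all $g\in\Gal$.

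You assert that $\cU^\vee$ is ``uniquely characterized by its rank and Chern classes among stable bundles on $X_\bkk$ (cf.\ \cite{KPS})''. This is the point that needs care: the results from~\cite{KPS} quoted in the paper (Lemma~\ref{lemma:vb-e}) give such a uniqueness statement for the rank-$2$ bundle~$E$, not for~$\cU^\vee$, and the paper does \emph{not} invoke any moduli-theoretic uniqueness for~$\cU^\vee$. Instead it argues as follows: since $\cO$ and~$E$ are already known to be Galois-invariant, $g^*\cU^\vee$ lands in the subcategory $\cA_{X_\bkk}={}^\perp\cO\cap E^\perp$; this category is equivalent to the derived category of the Kronecker quiver with three arrows, so the ranks of its exceptional objects satisfy the recursion $r_{n+2}=3r_{n+1}-r_n$ with $r_1=4$, $r_2=3$, whence $\cU^\vee$ is the only exceptional bundle of rank~$3$ there. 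That forces $g^*\cU^\vee\cong\cU^\vee$.

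So your proof is correct \emph{provided} the uniqueness claim for $\cU^\vee$ can be sourced; if it can, your route is more direct and avoids the derived-category detour. If not, the paper's quiver argument is the missing ingredient. Your treatment of the final step (descending the $3$-dimensional space of equations and checking $Z(s)=X$) is more explicit than the paper's, which simply says the embedding and its description follow once $\cU$ is defined over~$\kk$.
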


\begin{proof}
First, let us show that the exceptional collection~\eqref{eq:sod-x22} on~$X_\bkk$ is invariant under the action of the Galois group~$\Gal$.
Indeed, $\cO$ is evidently $\Gal$-invariant, and $E$ is $\Gal$-invariant by Lemma~\ref{lemma:vb-e}.
Therefore, for any $g \in \Gal$ the bundle $g^*\cU^\vee$ is exceptional and belongs to the subcategory
\begin{equation*}
\cA_{X_\bkk} := {}^\perp\cO \cap E^\perp \subset \Db(X_\bkk).
\end{equation*}
As it was observed in~\cite[Theorem~4.1]{K09-Fano} this category is equivalent to the derived category of representations
of the quiver with two vertices and three arrows.
Any exceptional object in this category can be obtained by mutations of the standard exceptional pair 
(formed by the two projective modules), which under the equivalence with~$\cA_{X_\bkk}$ correspond to the bundles~$V/\cU$ and~$\cU^\vee$.
Therefore, the ranks of exceptional objects in~$\cA_{X_\bkk}$ are given (up to sign) by the recursion
\begin{equation*}
r_{n+2} = 3r_{n+1} - r_n,
\qquad 
r_1 = 4,\quad r_2 = 3,\quad n \in \ZZ.
\end{equation*}
It is easy to see that $r_n > 3$ for $n \ne 2$, hence $\cU^\vee$ is the only exceptional vector bundle of rank~3 in~$\cA_{X_\bkk}$, 
and therefore 
\begin{equation*}
g^*\cU^\vee \cong \cU^\vee
\end{equation*}
for any $g \in \Gal$.

Now we apply Proposition~\ref{proposition:vb} to the rank-3 bundle~$\cU^\vee$.
We deduce that there is a Brauer class $\beta \in \Br(\kk)$ and a $\beta$-twisted vector bundle $\cE_0$ defined over~$\kk$ 
such that $\cU^\vee \cong (\cE_0)_\bkk$.
Since we have~\mbox{$\chi(\cU^\vee) = \dim H^0(X_\bkk,\cU^\vee) = 7$} it follows from~\eqref{eq:beta-chi} that $7\beta = 0$.
On the other hand, the line bundle~$\det(\cU^\vee) \cong \omega_{X_\bkk}^{-1}$ is defined over~$\kk$, hence~$3\beta = 0$ by~\eqref{eq:beta-rank}.
Therefore $\beta = 0$, hence $\cE_0$ is untwisted, i.e., $\cU$ is defined over~$\kk$.

Since $\cU_\bkk$ induces an embedding $X_\bkk \hookrightarrow \Gr_\bkk(3,7)$ whose image is the zero locus of a section 
of the vector bundle $\wedge^2\cU_\bkk^\vee \oplus \wedge^2\cU_\bkk^\vee \oplus \wedge^2\cU_\bkk^\vee$,
it follows that $\cU$ induces an embedding $X \hookrightarrow \Gr(3,7)$, and the image has the analogous description.
\end{proof}

The next results are not needed below, but we provide them for completeness.

\begin{corollary}
\label{corollary:x22-equivariant}
The embedding $X \hookrightarrow \Gr(3,V)$ constructed in Proposition~\xref{proposition:x22-cu}
is equivariant with respect to the action of~$\Aut(X)$.
\end{corollary}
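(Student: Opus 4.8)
The plan is to show that the construction of the tautological subbundle $\cU$ on $X$ used in Proposition~\ref{proposition:x22-cu} is \emph{canonical}, hence automatically compatible with any automorphism of $X$. Let $a \in \Aut(X)$; I want an isomorphism $a^*\cU \cong \cU$ of vector bundles on $X$ that is compatible with the embedding $\cU \hookrightarrow V \otimes \cO_X$, so that $a$ extends to an automorphism of $\Gr(3,V)$ preserving $X$. The key point is that $\cU$ (equivalently $\cU^\vee$) was singled out in the proof of Proposition~\ref{proposition:x22-cu} purely by numerical and categorical properties that are visibly preserved by $a^*$: it is the unique exceptional vector bundle of rank~$3$ in the subcategory $\cA_X = {}^\perp\cO \cap E^\perp \subset \Db(X)$, once we know that $\cO$ and $E$ (the bundle of Lemma~\ref{lemma:vb-e}, which descends by Corollary~\ref{corollary:e0}) are each preserved by $a^*$ up to isomorphism.

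First I would check the invariance of the ingredients under pullback along $a$. The structure sheaf $\cO_X$ is obviously $a$-invariant. The bundle $E$ of Lemma~\ref{lemma:vb-e} is characterized over $\bkk$ as the unique stable globally generated rank-$2$ bundle with $\rc_1(E)=1$, $\rc_2(E)=7$ forming an exceptional pair with $\cO$; since $a$ is an automorphism defined over $\kk$, the pullback $a^*E$ has the same invariants and the same stability/exceptionality, so $a^*E \cong E$ (over $\bkk$, hence this isomorphism is what we need; alternatively use that $E_0$ is the unique such twisted bundle over $\kk$). Consequently $a^*$ preserves the subcategory $\cA_X$, and by the rank recursion $r_{n+2} = 3r_{n+1} - r_n$, $r_1 = 4$, $r_2 = 3$ discussed in the proof of Proposition~\ref{proposition:x22-cu} — whose only term equal to $3$ is $r_2$ — the bundle $\cU^\vee$ is the unique exceptional bundle of rank $3$ in $\cA_X$. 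Therefore $a^*\cU^\vee \cong \cU^\vee$, hence $a^*\cU \cong \cU$.

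Next I would upgrade this abstract isomorphism to compatibility with the embedding into $\Gr(3,V)$. The space $V$ is recovered canonically as $V = H^0(X, \cU^\vee)$ (we have $\chi(\cU^\vee) = h^0(\cU^\vee) = 7$ and the higher cohomology vanishes), and the embedding $X \hookrightarrow \Gr(3,V)$ is the one classifying the subbundle $\cU \hookrightarrow H^0(X,\cU^\vee)^\vee \otimes \cO_X$ obtained by dualizing the evaluation map. An isomorphism $a^*\cU^\vee \xrightarrow{\sim} \cU^\vee$ induces, on global sections, a linear automorphism of $V = H^0(X,\cU^\vee)$, i.e.\ an element $g_a \in \GL(V)$, and by naturality of the evaluation map the square relating $\cU$, $a^*\cU$, $V\otimes\cO_X$ commutes; thus the induced automorphism of $\Gr(3,V)$ coming from $g_a$ restricts on $X$ to $a$. (The isomorphism $a^*\cU^\vee \cong \cU^\vee$ is unique up to a scalar because $\cU^\vee$ is simple, so $g_a$ is well defined up to scalar, i.e.\ the resulting map $\Aut(X) \to \PGL(V)$ is a well-defined group homomorphism; this is exactly the equivariance statement.)

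The main obstacle I anticipate is not any single hard computation but making the uniqueness statements genuinely canonical rather than merely ``up to the choices made in the proof.'' Concretely one must be careful that $E$ and $\cU^\vee$ are \emph{intrinsically} characterized on $X$ (not just shown to exist after descent), so that $a^*$ of them is forced to be isomorphic to them; this is where Lemma~\ref{lemma:vb-e} (uniqueness of $E$) and the rank recursion (uniqueness of $\cU^\vee$ in $\cA_X$) do the real work, and I would state the argument as: $a^*$ is an autoequivalence of $\Db(X)$ fixing $\cO$ and $E$, hence preserves $\cA_X$ and its unique rank-$3$ exceptional object. Everything else — identifying $V$ with $H^0(\cU^\vee)$, commuting the evaluation square, checking that different choices of the isomorphism $a^*\cU^\vee \cong \cU^\vee$ change $g_a$ only by a scalar — is routine.
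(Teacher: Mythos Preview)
Your proposal is correct and follows essentially the same approach as the paper: both use the uniqueness of~$\cU^\vee$ established in the proof of Proposition~\ref{proposition:x22-cu} to conclude~$a^*\cU^\vee \cong \cU^\vee$ for every~$a \in \Aut(X)$, and then pass from this invariance to equivariance of the embedding. The only cosmetic difference is that the paper phrases the second step in terms of a central extension~$G \to \Aut(X)$ (with the kernel acting by scalars on~$V$ and hence trivially on~$\Gr(3,V)$), whereas you construct the map~$\Aut(X) \to \PGL(V)$ directly via the action on~$H^0(X,\cU^\vee)$; these are two ways of saying the same thing.
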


\begin{proof}
The proof of Proposition~\ref{proposition:x22-cu} shows that the bundle~$\cU$ on~$X$ is invariant under the action of~$\Aut(X)$.
Therefore, it is equivariant for an appropriate central extension $G$ of~$\Aut(X)$
and the embedding~$X \hookrightarrow \Gr(3,V)$ is equivariant with respect to the action of~$G$.
But the action of the kernel of $G \to \Aut(X)$ on~$V$ is scalar, hence the action of $G$ on~$\Gr(3,V)$ factors through~$\Aut(X)$,
and hence the embedding $X \hookrightarrow \Gr(3,V)$ is equivariant with respect to~$\Aut(X)$.
\end{proof}

\begin{corollary}
\label{corollary:x22-db}
Let $X$ be a prime Fano threefold of genus~$12$ over a field $\kk$ of characteristic~$0$.
Then~$\Db(X) = \langle \Db(\kk), \Db(\kk), \Db(\kk,\beta_E), \Db(\kk) \rangle$, where $\beta_E \in \Br(\kk)$ is a $2$-torsion Brauer class.
\end{corollary}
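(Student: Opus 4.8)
The plan is to descend the semiorthogonal decomposition~\eqref{eq:sod-x22} from~$X_\bkk$ to~$X$ by Galois descent, once one checks that all four of its components are stable under the Galois action. Indeed, the four components are generated by the exceptional bundles $\cO$, $\cU^\vee$, $E$, $\wedge^2\cU^\vee$, and the preceding results already tell us how each of these behaves over~$\kk$.

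First I would assemble the relevant input. By Proposition~\ref{proposition:x22-cu} the tautological bundle~$\cU$ is defined over~$\kk$, hence so are $\cU^\vee$ and~$\wedge^2\cU^\vee$; together with~$\cO$ these are genuine exceptional objects of~$\Db(X)$ whose base changes to~$\bkk$ are the first, second and fourth terms of~\eqref{eq:sod-x22}. For the third term, Corollary~\ref{corollary:e0} supplies a $2$-torsion class $\beta_E \in \Br(\kk)$ and a $\beta_E$-twisted rank-$2$ bundle~$E_0$ on~$X$ with $(E_0)_\bkk \cong E$. In particular each of the admissible subcategories $\langle\cO\rangle$, $\langle\cU^\vee\rangle$, $\langle E\rangle$, $\langle\wedge^2\cU^\vee\rangle$ of~$\Db(X_\bkk)$ is preserved by the autoequivalences~$g^*$, $g \in \Gal$: for the three untwisted bundles because they come from~$\Db(X)$, and for~$E$ because $g^*E \cong E$ was observed (via Lemma~\ref{lemma:vb-e}) already in the proof of Proposition~\ref{proposition:x22-cu}. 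Since the components of~\eqref{eq:sod-x22} are Galois-stable, the associated projection functors are $\Gal$-equivariant and the decomposition descends: one gets a semiorthogonal decomposition $\Db(X) = \langle \cA_0, \cA_1, \cA_2, \cA_3\rangle$ with $(\cA_i)_\bkk$ the $i$-th component of~\eqref{eq:sod-x22} (this is the standard Galois descent for enhanced triangulated categories, compare~\cite{KP19}). Finally I would identify the~$\cA_i$: for a block $\langle\cF\rangle$ generated by an exceptional bundle~$\cF$ already defined over~$\kk$, the equivalence $\langle\cF_\bkk\rangle \xrightarrow{\ \sim\ } \Db(\bkk)$, $\cG \mapsto \Hom^\bullet(\cF_\bkk,\cG)$, intertwines the descent data, so $\cA_i \simeq \Db(\kk)$; this covers $\cO$, $\cU^\vee$, $\wedge^2\cU^\vee$. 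For~$\langle E\rangle$ the same equivalence carries the Galois descent datum on~$E$, whose obstruction to effectivity is exactly the class~$\beta_E$ of Corollary~\ref{corollary:e0}, to the $\beta_E$-twisted trivial action on~$\Db(\bkk)$, so $\cA_2 \simeq \Db(\kk,\beta_E)$. This yields $\Db(X) = \langle \Db(\kk), \Db(\kk), \Db(\kk,\beta_E), \Db(\kk)\rangle$, with $\beta_E$ a $2$-torsion Brauer class.

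The only genuinely delicate point is the descent step together with the identification of a \emph{twisted} exceptional block with~$\Db(\kk,\beta_E)$: one must know that~\eqref{eq:sod-x22}, a priori defined only after base change, is the base change of a semiorthogonal decomposition of~$\Db(X)$ as soon as its components are Galois-stable, and that the descended block generated by a twisted exceptional object is~$\Db(\kk,\beta_E)$ rather than the derived category of some other étale or central simple algebra. Both facts are standard in Galois descent for triangulated categories with enhancement; the $2$-torsion of~$\beta_E$ needed for the statement is inherited directly from Corollary~\ref{corollary:e0}, which in turn rests on~\eqref{eq:beta-rank} applied to the rank-$2$ bundle~$E$ with $\det(E)$ defined over~$\kk$.
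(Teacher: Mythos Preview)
Your proposal is correct and follows essentially the same approach as the paper. The paper's proof is more terse: it simply observes that by Proposition~\ref{proposition:x22-cu} and Corollary~\ref{corollary:e0} the four bundles $\cO$, $\cU^\vee$, $E_0$, $\wedge^2\cU^\vee$ are defined over~$\kk$ (with $E_0$ being $\beta_E$-twisted) and then asserts directly that this collection gives the required semiorthogonal decomposition, whereas you phrase the same argument as a categorical Galois descent of~\eqref{eq:sod-x22} followed by an identification of the descended blocks --- but the content is identical.
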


\begin{proof}
By Proposition~\ref{proposition:x22-cu} the bundles $\cO$, $\cU^\vee$, and $\wedge^2\cU^\vee$ in~\eqref{eq:sod-x22} are defined over~$\kk$
and by Corollary~\ref{corollary:e0} the bundle~$E$ comes from a $\beta_E$-twisted vector bundle~$E_0$ defined over~$\kk$,
where $\beta_E$ is a $2$-torsion Brauer class.
The collection of vector bundles~$(\cO, \cU^\vee, E_0, \wedge^2\cU^\vee)$ gives the required semiorthogonal decomposition.
\end{proof}

\begin{remark}
One can construct an example of a smooth Fano threefold of genus~12 for which the Brauer class~$\beta_E$ is nontrivial.
\end{remark}

\subsection{Birational transformation for $\Gr(3,7)$}
\label{subsection:link-gr37-point}

Let $\bX = \Gr(3,V)$ be the Grassmannian of $3$-dimensional subspaces in a $7$-dimensional vector space~$V$.
Let~\mbox{$x_0 \in \bX$} be a point and let $U_0 \subset V$ be the corresponding $3$-dimensional subspace.
We denote by $\bH$ the Pl\"ucker class on~$\bX$ and by $\cU$ the tautological rank-3 bundle on~$\bX$.
Furthermore, we consider the $4$-dimensional vector space
\begin{equation*}
V^+ := V/U_0
\end{equation*}
and choose a splitting $V = U_0 \oplus V^+$.
Next, we consider the Grassmannian 
\begin{equation*}
\Gr(3,V^+) \cong \PP((V^+)^\vee) \cong \PP^3
\end{equation*}
and the tautological rank-3 bundle $\cU^+$ on it, and denote by $\bH^+$ its hyperplane class.
Note that
\begin{equation*}
V^+/\cU^+ \cong \cO(\bH^+).
\end{equation*}

We denote by $\Gr_{\Gr(3,V^+)}\bigl(3,U_0 \otimes \cO \oplus \cU^+\bigr)$ the relative Grassmannian of $3$-dimensional subspaces 
in the fibers of the rank-6 vector bundle $U_0 \otimes \cO \oplus \cU^+$ over $\Gr(3,V^+)$ 
and by~$\sigma_+$ its natural projection to~$\Gr(3,V^+)$.
Consider the morphism of vector bundles
\begin{equation}
\label{eq:x22-xi}
\xi \colon \cU \xhookrightarrow{\hspace{2em}} V \otimes \cO_\bX \xtwoheadrightarrow{\hspace{1.7em}} V^+ \otimes \cO_\bX,
\end{equation} 
defined as the composition of the tautological embedding with the natural projection.

\begin{proposition}
\label{proposition:g37-transformation}
There is a diagram of morphisms
\begin{equation*}
\xymatrix{
&
\Bl_\bZ(\bX) \ar[dl]_\sigma \ar@{=}[r] &
\Gr_{\Gr(3,V^+)}\bigl(3,U_0 \otimes \cO \oplus \cU^+\bigr) \ar[dr]^{\sigma_+}
\\
\bX &&& 
\Gr(3,V^+) \ar@{=}[r] & 
\PP^3,
}
\end{equation*}
where $\bZ = \fD_1(\xi) \subset \bX$ is the degeneracy locus of~$\xi$
and $\sigma$ is the blowup morphism.
\end{proposition}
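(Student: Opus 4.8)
The plan is to realize $\Bl_{\bZ}(\bX)$ as the closure of the graph of an explicit rational map $\bX \xdashrightarrow{\hspace{1em}} \Gr(3,V^+)$, and then to recognize that closure as the relative Grassmannian in the statement.

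First I would make the degeneracy locus explicit. Applying $\wedge^3$ to the morphism~\eqref{eq:x22-xi} and using $\wedge^3\cU = \det\cU = \cO_\bX(-\bH)$ one obtains a morphism
\begin{equation*}
\wedge^3\xi \colon \cO_\bX(-\bH) \xlongrightarrow{\hspace{2em}} \wedge^3V^+ \otimes \cO_\bX ,
\end{equation*}
i.e.\ a collection of four sections of $\cO_\bX(\bH)$ — the $3\times 3$ minors of $\xi$ — spanning inside $H^0(\bX,\cO_\bX(\bH))$ a subspace isomorphic to $(\wedge^3V^+)^\vee$. By the scheme-theoretic definition of the degeneracy locus, these minors generate, after the twist by $\cO_\bX(\bH)$, the ideal sheaf $\cI_\bZ$ of $\bZ = \fD_1(\xi)$; in particular $\cI_\bZ(\bH)$ is globally generated and the associated rational map is
\begin{equation*}
f \colon \bX \xdashrightarrow{\hspace{1em}} \PP(\wedge^3V^+) \cong \Gr(3,V^+),
\qquad
[U] \longmapsto \bigl[\,\wedge^3\bigl(\text{image of }U\text{ in }V/U_0\bigr)\,\bigr],
\end{equation*}
which is a morphism precisely on $\bX\setminus\bZ$, where it sends $[U]$ to the $3$-dimensional subspace $(U+U_0)/U_0 \subseteq V^+$.

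Next I would invoke the standard description of the blowup of a subscheme cut out by sections of a line bundle: the surjection $\cO_\bX^{\oplus 4} \twoheadrightarrow \cI_\bZ(\bH)$ given by the four minors produces a closed embedding
\begin{equation*}
\Bl_{\bZ}(\bX) \hookrightarrow \bX \times \PP(\wedge^3V^+) = \bX \times \Gr(3,V^+)
\end{equation*}
identifying $\Bl_{\bZ}(\bX)$ with the closure $\Gamma$ of the graph of $f$, the blowup morphism becoming the first projection. It then remains to identify $\Gamma$ with the relative Grassmannian. For this I would consider inside $\bX\times\Gr(3,V^+)$ the zero locus of the section of $\cU^\vee \boxtimes \cO(\bH^+)$ given by the composition
\begin{equation*}
\cU \xrightarrow{\ \xi\ } V^+\otimes\cO \twoheadrightarrow (V^+\otimes\cO)/\cU^+ \cong \cO(\bH^+),
\end{equation*}
where $\cU$ is pulled back from $\bX$ and $\cU^+$ from $\Gr(3,V^+)$. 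This zero locus is closed and, reading off the vanishing condition, equals $\{([U],[U^+]) \mid U \subseteq U_0\oplus U^+\}$, which — after the chosen splitting $V = U_0\oplus V^+$ — is precisely the relative Grassmannian $Y := \Gr_{\Gr(3,V^+)}(3,U_0\otimes\cO\oplus\cU^+)$, embedded in $\bX\times\Gr(3,V^+)$ by its two tautological projections $(U^+,U') \mapsto ([U'],[U^+])$. Over $\bX\setminus\bZ$ the condition $\text{image}(U\to V^+)\subseteq U^+$ forces $U^+ = \text{image}(U\to V^+)$ for dimension reasons, so this zero locus restricts to the graph of $f$ there, whence $\Gamma\subseteq Y$. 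Since $Y$ is irreducible with $\dim Y = \dim\Gr(3,V^+)+\dim\Gr(3,6) = 3+9 = 12 = \dim\bX = \dim\Gamma$, the inclusion $\Gamma\subseteq Y$ of irreducible varieties of equal dimension is an equality. This gives $\Bl_{\bZ}(\bX) \cong Y$ over $\bX$, so the left-hand map is the blowup morphism $\sigma$; the right-hand map is then the structure projection $\sigma_+$ of $Y$, and $\Gr(3,V^+)\cong\PP^3$.

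The point I expect to demand the most care is the bookkeeping in the first step: one must confirm that the four maximal minors of $\xi$, viewed as global sections of $\cO_\bX(\bH)$ via $\wedge^3\xi$, really do generate the determinantal ideal $\cI_\bZ$ — this is where the precise definition of $\fD_1(\xi)$ enters, and it is what makes $\Bl_{\bZ}(\bX)$ literally equal to the graph closure $\Gamma$ — together with the elementary but necessary observation that $\sigma$ is birational, which holds because for generic $[U]$ one has $U\cap U_0 = 0$ and then the fibre $\{[U^+]\mid U\subseteq U_0\oplus U^+\}$ of $Y\to\bX$ consists of the single point $\text{image}(U\to V^+)$.
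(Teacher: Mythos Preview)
Your argument is correct and shares with the paper the key identification of the relative Grassmannian $Y$ as the zero locus in $\bX\times\Gr(3,V^+)$ of the section of $\cU^\vee\boxtimes\cO(\bH^+)$ induced by~$\xi$. The difference lies in how you pass from this zero locus to the blowup. The paper invokes \cite[Lemma~2.1]{K16}, whose hypothesis requires verifying that each degeneracy locus $\fD_k(\xi)$ has codimension at least $k+1$; it then computes $\codim\fD_1(\xi)=2$, $\codim\fD_2(\xi)=6$, $\codim\fD_3(\xi)=12$ explicitly. You instead use that the maximal minors of $\xi$ generate $\cI_\bZ(\bH)$ by definition of the Fitting ideal, so $\Bl_\bZ(\bX)$ is literally the graph closure $\Gamma$ of the associated rational map, and then conclude $\Gamma=Y$ from the inclusion $\Gamma\subseteq Y$ together with irreducibility and a dimension count. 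Your route is more self-contained and sidesteps the codimension checks for $\fD_2$ and $\fD_3$; the paper's route, on the other hand, produces those codimension computations as a byproduct, and they are reused later (the stratification of $X$ by the $\fD_k(\xi)$ is exactly what drives the component analysis in Lemma~\ref{lemma:sigma-inverse-x}).
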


\begin{proof}
Let $\cU' \subset U_0 \otimes \cO \oplus \cU^+$ denote the tautological rank-3 bundle on the relative Grassmannian~$\Gr_{\Gr(3,V^+)}\bigl(3,U_0 \otimes \cO \oplus \cU^+\bigr)$.
The natural embedding
\begin{equation*}
\cU' \xhookrightarrow{\hspace{2em}} U_0 \otimes \cO \oplus \cU^+ \xhookrightarrow{\hspace{2em}} (U_0 \otimes \cO) \oplus (V^+ \otimes \cO) = V \otimes \cO
\end{equation*}
induces a morphism 
\[
\sigma \colon \Gr_{\Gr(3,V^+)}\bigl(3,U_0 \otimes \cO \oplus \cU^+\bigr) \xlongrightarrow{\hspace{2em}} \Gr(3,V) = \bX
\]
such that $\cU'$ is the pullback of~$\cU$ (we will identify these bundles from now on).
We only need to show that this morphism is the blowup of the subscheme~$\bZ$.

For this note that the pair of morphisms $(\sigma,\sigma_+)$ gives an embedding
\begin{equation*}
\Gr_{\Gr(3,V^+)}\bigl(3,U_0 \otimes \cO \oplus \cU^+\bigr) \subset \Gr(3,V) \times \Gr(3,V^+)
\end{equation*}
and via this embedding $\Gr_{\Gr(3,V^+)}\bigl(3,U_0 \otimes \cO \oplus \cU^+\bigr)$ can be identified with the subscheme 
of pairs of $3$-dimensional subspaces $(U,U^+)$, where $U^+ \subset V^+$ and $U \subset U_0 \oplus U^+ \subset U_0 \oplus V^+ = V$. 
In other words, $\Gr_{\Gr(3,V^+)}\bigl(3,U_0 \otimes \cO \oplus \cU^+\bigr)$ is the zero locus for the section of the vector bundle 
\begin{equation*}
\cU^\vee \boxtimes \cO_{\Gr(3,V^+)}(\bH^+)
\end{equation*}
defined by~$\xi$.
By~\cite[Lemma~2.1]{K16} to deduce the claim we only need to check for each $k \ge 1$ 
that the $k$-th degeneracy locus~$\fD_k(\xi)$ of~$\xi$ has codimension at least~$k + 1$ (if non-empty).

Indeed, the first degeneracy locus~$\fD_1(\xi)$ parameterizes $3$-dimensional subspaces $U \subset V$ such that~\mbox{$\dim(U \cap U_0) \ge 1$}.
This locus is birational to a $\Gr(2,6)$-bundle over $\PP(U_0) = \PP^2$, hence has dimension~10 and codimension~$2$.
Similarly, the second degeneracy locus~$\fD_2(\xi)$ parameterizes subspaces $U \subset V$ such that $\dim(U \cap U_0) \ge 2$,
and is birational to a $\PP^4$-bundle over $\Gr(2,U_0) = \PP^2$, hence has dimension~6 and codimension~6.
Finally, the third degeneracy locus~$\fD_3(\xi)$ is the single point $x_0$, hence has codimension~12,
and for $k \ge 4$ we have $\fD_k(\xi) = \varnothing$.
\end{proof}

\begin{remark}
\label{remark:fiber-sigma}
The fiber of the morphism~$\sigma$ in Proposition~\xref{proposition:g37-transformation} over a point $[U] \in \Gr(3,7)$
is the projective space $\PP\left((V/(U + U_0))^\vee\right)$ of dimension~$\dim(U_0 \cap U)$.
\end{remark}

\subsection{The induced transformation of threefolds}
\label{subsection:v22-transformation}

Now let $X$ be a prime Fano threefold of genus~12 with a $\kk$-point~$x_0$.
By Proposition~\ref{proposition:x22-cu} the threefold $X$ is isomorphic to the zero locus of a global section 
of the bundle $(\wedge^2\cU^\vee)^{\oplus 3}$ on~$\bX$.
Let $B \subset \wedge^2V^\vee$ be the $3$-dimensional space of sections of~$\wedge^2\cU^\vee$ defining~$X$; then
\begin{equation}
\label{eq:x22-linear}
X = \bX \cap \PP\left(\Ker\bigl(\wedge^3V \xrightarrow{\hspace{1em}} B^\vee \otimes V\bigr)\right).
\end{equation}
Furthermore, the tangent space to~$X$ at $x_0$ is defined by the following exact sequence
\begin{equation*}
0 \xrightarrow{\hspace{1em}} T_{x_0}(X) \xrightarrow{\hspace{1em}} \Hom(U_0,V^+) \xrightarrow{\hspace{1em}} 
B^\vee \otimes \wedge^2U_0^\vee \xrightarrow{\hspace{1em}} 0,
\end{equation*}
where $\Hom(U_0,V^+) = T_{x_0}(\Gr(3,V))$ and the second arrow is the differential of the section defining~$X$.
In the next lemma we think of $T_{x_0}(X)$ as of a subspace in~$\Hom(U_0,V^+)$.

\begin{lemma}
\label{lemma:tangent-vectors}
The subset of $\PP(T_{x_0}(X))$ parameterizing vectors~$\tau$ of rank~$1$ is finite,
and the subset parameterizing vectors~$\tau$ of rank~$2$ is at most $1$-dimensional.
\end{lemma}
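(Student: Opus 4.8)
The plan is to make the differential explicit and then treat the two rank conditions separately, the rank-$2$ one being the delicate point.

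First, from the displayed exact sequence the tangent space $T_{x_0}(X)\subset\Hom(U_0,V^+)$ is the kernel of the map sending $\tau$ to $\beta\mapsto\bigl[(u,u')\mapsto\beta(\tau u,u')+\beta(u,\tau u')\bigr]\in\wedge^2U_0^\vee$; since $x_0\in X$ forces $\beta|_{U_0}=0$, this expression is independent of the lift of $\tau u\in V^+$ to $V$, and $\tau\in T_{x_0}(X)$ iff the bilinear form $(u,u')\mapsto\beta(\tau u,u')$ on $U_0$ is symmetric for every $\beta\in B$. Smoothness of $X$ at $x_0$ makes this map surjective, so $\dim T_{x_0}(X)=12-9=3$ and $\PP(T_{x_0}(X))\cong\PP^2$.

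For the rank-$1$ part I would identify the rank-$1$ elements of $\PP(T_{x_0}(X))$ with the lines on $X$ through $x_0$. A nonzero rank-$1$ tensor $\tau$ is the tangent direction at $x_0$ of a unique line $\ell$ on $\Gr(3,V)$ through $x_0$, namely the one with fixed $2$-plane $\ker\tau\subset U_0$ and ambient $4$-space $V_4$ with $V_4/U_0=\operatorname{im}\tau$; along any such line $\cU$ restricts to $\cO^{\oplus2}\oplus\cO(-1)$, hence $\wedge^2\cU^\vee|_\ell\cong\cO\oplus\cO(1)^{\oplus2}$ and $(\wedge^2\cU^\vee)^{\oplus3}|_\ell\cong\cO^{\oplus3}\oplus\cO(1)^{\oplus6}$, and a section of this bundle that vanishes at the point $x_0\in\ell$ to order $\ge2$ vanishes identically. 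Since $\tau\in T_{x_0}(X)$ holds exactly when the restriction $s|_\ell$ of the section $s$ cutting out $X$ vanishes at $x_0$ to order $\ge2$, this yields $\tau\in T_{x_0}(X)\iff\ell\subset X$. Now any line $\ell\subset X$ through $x_0$ lies in the embedded tangent $3$-plane $\mathbb T_{x_0}(X)\cong\PP^3\subset\PP^{13}$, hence in $X\cap\mathbb T_{x_0}(X)$, which is a proper closed subscheme of $X$ (because $X$ is non-degenerate in $\PP^{13}$) and so has dimension $\le2$. If it had a $2$-dimensional component $S$, then, $X$ being an intersection of quadrics, $S$ would lie on a quadric hypersurface in $\PP^3$ and hence satisfy $\deg S\le2$; but a surface $S\subset X$ is a divisor, so $S\sim dH$ with $d\ge1$ (as $\operatorname{Pic}(X)=\ZZ H$), whence $\deg S=S\cdot H^2=22d\ge22$ — a contradiction. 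Therefore $X\cap\mathbb T_{x_0}(X)$ is a curve, a line contained in it is one of its finitely many irreducible components, and the rank-$1$ locus is finite.

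For the rank-$\le2$ part, the locus of maps of rank $\le2$ in $\PP(\Hom(U_0,V^+))\cong\PP^{11}$ is the generic determinantal variety of codimension $(4-2)(3-2)=2$, cut out by the four maximal minors; its intersection with the plane $\PP(T_{x_0}(X))\cong\PP^2$ — which is exactly the rank-$\le2$ locus of the lemma — is therefore either at most $1$-dimensional or the whole of $\PP^2$, and it remains to exclude the latter, i.e. to exhibit an injective $\tau\colon U_0\hookrightarrow V^+$ in $T_{x_0}(X)$. For this I would work in coordinates, writing the condition $\tau\in T_{x_0}(X)$ as ``the three $3\times3$ matrices $A_k\tau$ are symmetric'', where $A_k\colon V^+\to U_0^\vee$ is the mixed block of a basis $\beta_1,\beta_2,\beta_3$ of $B$; as a preliminary step one checks (as in the rank-$1$ discussion) that the joint map $\mu=(A_1,A_2,A_3)\colon V^+\to B^\vee\otimes U_0^\vee$ is injective, since otherwise a vector $0\ne v\in\ker\mu$ spans together with $U_0$ a $4$-dimensional subspace $V_4$ on which every $\beta\in B$ restricts to $0$, forcing $\Gr(3,V_4)\cong\PP^3\subset X$, which is impossible. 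From the injectivity of $\mu$ and the surjectivity of the differential (smoothness of $X$) one then has to conclude that the general element of the $3$-dimensional space $T_{x_0}(X)$ is injective. This last step is the main obstacle: it is not amenable to the soft geometric argument used for the rank-$1$ case, and I expect it to require analyzing how the symmetry conditions $A_k\tau=(A_k\tau)^\top$ interact with the generality of $B\subset\wedge^2V^\vee$ forced by smoothness — essentially the statement that the degeneracy locus $\fD_1(\xi)$ of Proposition~\ref{proposition:g37-transformation} restricts to $X$ with the expected behavior near $x_0$. Granting this non-degeneracy, both assertions of the lemma follow.
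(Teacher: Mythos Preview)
Your rank-$1$ argument is correct and close to the paper's: both identify rank-$1$ tangent vectors with lines on~$X$ through~$x_0$ and then invoke finiteness of~$\rF_1(X,x_0)$ (the paper cites~\cite[Lemma~2.1.8(ii)]{KPS} for this, you give a self-contained degree argument, which is fine).

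The rank-$2$ part, however, has a genuine gap, and you say so yourself: you reduce correctly to the existence of a single injective $\tau\in T_{x_0}(X)$, but you do not prove it. The injectivity of your map $\mu=(A_1,A_2,A_3)\colon V^+\to B^\vee\otimes U_0^\vee$ is a necessary condition, not a sufficient one, and ``granting this non-degeneracy'' is precisely what is at stake. Without this step the dichotomy ``$\le1$-dimensional or all of~$\PP^2$'' does not conclude.

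The paper avoids this difficulty by a different and more geometric route. Given a rank-$2$ tensor~$\tau$, set $V_1:=\Ker(\tau)\subset U_0$ and let $W\subset V$ be the common orthogonal of~$V_1$ with respect to all forms in~$B$. Then $\tau$ is tangent at~$x_0$ to the subvariety $X\cap\Gr(2,V/V_1)$, which in fact lies in~$\Gr(2,W/V_1)$ and is a linear section of it. One checks that $\dim W\ge6$ is impossible (the linear section would be too large to sit in~$X$), that $\dim W=4$ forces $\operatorname{im}\tau\subset W/U_0$ and hence $\rank\tau\le1$, and that in the remaining case $\dim W=5$ the intersection is a conic on~$X$ through~$x_0$ (planes and quadric surfaces being excluded). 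Thus every rank-$2$ tangent vector is tangent to some conic on~$X$ through~$x_0$. Since by~\cite[Lemma~4.2.6]{IP} there are only finitely many such conics, and each contributes at most a $1$-dimensional set of tangent directions (a single point if smooth at~$x_0$, a~$\PP^1$ if singular there), the rank-$2$ locus is at most $1$-dimensional. This argument never needs to produce an injective~$\tau$ directly; the bound comes instead from the external finiteness result on conics.
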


\begin{proof}
First, assume $\rank(\tau) = 1$.
If $V_2 := \Ker(\tau) \subset U_0$ and $V_4 \subset V$ corresponds to~\mbox{$\Ima(\tau) \subset V^+$}
then~$\tau$ is tangent to the line $\ell(V_2,V_4) \subset \Gr(3,V)$.
It follows from~\eqref{eq:x22-linear} that \mbox{$\ell(V_2,V_4) \subset X$}.
By~\cite[Lemma~2.1.8(ii)]{KPS} the scheme $\rF_1(X,x_0)$ is finite, 
hence the subset of~$\PP(T_{x_0}(X))$ parameterizing such~$\tau$ is finite.

Now assume $\rank(\tau) = 2$.
Set $V_1 := \Ker(\tau) \subset U_0$.
Then $\tau$ is tangent to the intersection of~$X$ with the sub-Grassmannian
\begin{equation*}
\Gr(2,V/V_1) \subset \Gr(3,V).
\end{equation*}
Let $W \subset V$ be the orthogonal of $V_1$ with respect to all $2$-forms in~$B$; then
\begin{equation*}
X \cap \Gr(2,V/V_1) \subset \Gr(2,W/V_1),
\end{equation*}
and, moreover, $X \cap \Gr(2,V/V_1)$ is a linear section of~$\Gr(2,W/V_1)$ of codimension at most~3.

Clearly, $\dim(W) \ge 4$.
Moreover, if $\dim(W) = 4$ then $\Ima(\tau) \subset W/U_0$, hence $\rank(\tau) = 1$, so we are in the previous case.

If $\dim(W) = 5$ then $\Gr(2,W/V_1) = \Gr(2,4)$, hence its linear section of codimension at most~3 is 
either a conic, or a plane, or a quadric of dimension~$\ge 2$.
But $X$ contains neither planes nor quadric surfaces, hence $X \cap \Gr(2,W/V_1)$ is a conic, and $\tau$ is tangent to it.
By~\cite[Lemma~4.2.6]{IP} the number of conics on~$X$ passing through~$x_0$ is finite;
each conic smooth at~$x_0$ has a unique tangent vector, 
while conics singular at~$x_0$ have a 1-dimensional set of tangent vectors in~$\PP(T_{x_0}(X))$.

Finally, if $\dim(W) = 6$ then $\dim(\Gr(2,W/V_1)) = 6$ and $\deg(\Gr(2,W/V_1)) = 5$.
Clearly, its linear section of codimension at most~3 cannot be contained in~$X$.
Similarly, $\dim(W)$ cannot be equal to~7.
This contradiction completes the proof.
\end{proof}

Consider the preimage of $X$ under the morphism~$\sigma$ from Proposition~\ref{proposition:g37-transformation}.

\begin{lemma}
\label{lemma:sigma-inverse-x}
The preimage $\sigma^{-1}(X)$ is a Cohen--Macaulay threefold with irreducible components
\begin{equation*}
\sigma^{-1}(X) = \tX_0 \cup \left( \bigcup_{[L] \in \rF_1(X,x_0)} \tX_L \right) \cup \Bl_Z(X) \subset X \times \PP^3,
\end{equation*}
where $Z = \bZ \cap X$ and
\begin{itemize}
\item 
the component $\tX_0$ is generically reduced and $(\tX_0)_{\mathrm{red}} = \{x_0\} \times \PP^3$,
\item 
$(\tX_L)_{\mathrm{red}} = L \times \PP^2 \subset X \times \PP^3$.
\end{itemize}
In particular, the component~$\tX_0$ has degree~$1$ over the second factor of~$X \times \PP^3$ 
and the components~$\tX_L$ do not dominate the second factor of~$X \times \PP^3$.
\end{lemma}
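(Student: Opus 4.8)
The plan is to present $\sigma^{-1}(X)$ as the zero locus of a section of a vector bundle of the \emph{expected} rank on a smooth ambient variety, deduce the Cohen--Macaulay property formally, and then read off the irreducible components by analysing the fibres of $\sigma$.

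First, by the proof of Proposition~\ref{proposition:g37-transformation} the variety $\Bl_\bZ(\bX)=\Gr_{\Gr(3,V^+)}\bigl(3,U_0\otimes\cO\oplus\cU^+\bigr)$ is the zero locus in $\bX\times\Gr(3,V^+)=\Gr(3,7)\times\PP^3$ of the section of $\cU^\vee\boxtimes\cO(\bH^+)$ given by~$\xi$, while $X\times\PP^3$ is the zero locus of the pullback of the section of $(\wedge^2\cU^\vee)^{\oplus3}$ cutting out~$X$ (Proposition~\ref{proposition:x22-cu}), and under the identification the tautological bundles agree. Hence $\sigma^{-1}(X)=\Bl_\bZ(\bX)\cap(X\times\PP^3)$ is the zero scheme of a section of the rank-$12$ bundle $\bigl((\wedge^2\cU^\vee)^{\oplus3}\boxtimes\cO\bigr)\oplus\bigl(\cU^\vee\boxtimes\cO(\bH^+)\bigr)$ on the smooth $15$-dimensional $\Gr(3,7)\times\PP^3$. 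Every component of such a zero scheme has codimension $\le12$, i.e.\ dimension $\ge3$; and once we know $\dim\sigma^{-1}(X)\le3$, the section vanishes in the expected codimension, so it is a regular section, the Koszul complex resolves $\cO_{\sigma^{-1}(X)}$, and $\sigma^{-1}(X)$ is a local complete intersection, hence Cohen--Macaulay, of pure dimension~$3$. So everything reduces to the dimension bound together with the identification of the components.

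Second, I would project to $X$ along $\sigma$ and stratify $X$ by $d([U]):=\dim(U\cap U_0)$; by Remark~\ref{remark:fiber-sigma} the fibre of $\sigma$ over $[U]$ is $\PP\bigl((V/(U+U_0))^\vee\bigr)$, of dimension $d([U])$. The stratum $d=3$ is the point~$x_0$, over which the fibre is $\PP((V^+)^\vee)=\PP^3$; this yields the component $\tX_0$ with $(\tX_0)_{\mathrm{red}}=\{x_0\}\times\PP^3$, and its generic reducedness follows from a local computation on the exceptional divisor of~$\sigma$. If $d([U])=2$ and $[U]\ne x_0$, then $[U]$ and $x_0$ both lie on the line $\ell(U\cap U_0,\,U+U_0)\subset\bX$, which, joining two points of the linear section $X\subset\bX$, lies in $X$; so $[U]$ belongs to a line $L\in\rF_1(X,x_0)$, and along $L$ the space $U+U_0$ is constant, so the fibres of $\sigma$ sweep out a fixed plane, giving $\tX_L$ with $(\tX_L)_{\mathrm{red}}=L\times\PP^2$. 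Since $\rF_1(X,x_0)$ is finite by~\cite[Lemma~2.1.8(ii)]{KPS}, there are only finitely many such $\tX_L$, each of dimension~$3$. Over the open stratum $d=0$, i.e.\ $X\setminus Z$ with $Z:=\bZ\cap X$, the morphism $\sigma$ is an isomorphism, so the closure of $\sigma^{-1}(X\setminus Z)$ is birational to~$X$ and is identified with $\Bl_Z(X)$, again of dimension~$3$. Finally the stratum $d=1$ has fibre $\PP^1$ and is contained in~$Z$; since the $d\ge2$ part of $Z$ sits inside the finitely many lines $L$, the whole dimension estimate comes down to $\dim Z\le2$, equivalently $X\not\subseteq\bZ$.

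Proving $X\not\subseteq\bZ$ is the step I expect to be the main obstacle. I would argue by contradiction using the incidence variety $I=\{([U],[v])\in X\times\PP(U_0)\colon v\in U\}$: its image in $X$ is exactly the locus $\{d\ge1\}=Z$, and $I\to Z$ is generically injective (a general $[U]\in Z$ lies in a unique $\Gr(2,V/v)$), so $\dim Z=\dim I$, while the fibre of $I\to\PP(U_0)=\PP^2$ over $[v]$ is the linear section $X\cap\Gr(2,V/v)=\bX\cap\PP(\bP\cap W_v)$, where $W_v\subset\wedge^3V$ is the $15$-dimensional Plücker space of $\Gr(2,V/v)$ and $\bP$ is as in~\eqref{eq:x22-linear}; this fibre always contains $x_0$. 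If $X\subseteq\bZ$ then $Z=X$, so the general such fibre is $1$-dimensional, and I would rule this out by combining two inputs. On the one hand, $T_{x_0}(X\cap\Gr(2,V/v))=T_{x_0}X\cap\{\tau\in\Hom(U_0,V^+)\colon\tau(v)=0\}$ is nonzero only when $[v]$ lies on $\ker\tau$ for some nonzero $\tau\in\PP(T_{x_0}X)$ of rank $\le2$; by Lemma~\ref{lemma:tangent-vectors} the rank-$\le2$ locus in $\PP(T_{x_0}X)$ has dimension $\le1$, so the union of the corresponding kernels is a proper closed subset of $\PP(U_0)$. On the other hand, a positive-dimensional component of $X\cap\Gr(2,V/v)$ through $x_0$ would, by the $d\ge2$ analysis above, be a line of $\rF_1(X,x_0)$, and the $[v]$ for which $\Gr(2,V/v)$ contains a given such line form a line in $\PP(U_0)$, hence again a proper closed subset since $\rF_1(X,x_0)$ is finite. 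Thus for general $[v]$ the fibre $X\cap\Gr(2,V/v)$ is zero-dimensional, and a bound on $\dim(\bP\cap W_v)$ shows it is the single reduced point $\{x_0\}$; therefore $\dim I=2$ and $\dim Z\le2$, contradicting $Z=X$. This gives $\dim\sigma^{-1}(X)\le3$ and, with the above, the full statement.
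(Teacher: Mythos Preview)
Your overall architecture matches the paper's: realize $\sigma^{-1}(X)$ as the zero locus of a section of a bundle of the expected rank, stratify $X$ by $d([U])=\dim(U\cap U_0)$, and read off the pieces. The treatment of the strata $d=0,2,3$ and the sketch for the generic reducedness of $\tX_0$ are essentially the paper's arguments.

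The real divergence, and the gap, is at $d=1$. You reduce everything to $\dim Z\le 2$, i.e.\ $X\not\subset\bZ$. That bound \emph{is} enough for the Cohen--Macaulay statement, but it is \emph{not} enough to list the irreducible components. If the $d=1$ stratum $Z_1$ had dimension~$2$, its preimage would be a $\PP^1$-bundle over a surface, hence an irreducible threefold; it would map onto $Z_1\subsetneq X$, so it could not coincide with the strict transform $\Bl_Z(X)$ (whose generic fibre over $Z_1$, if $Z$ is a divisor there, is a point), and you would have an extra component not on your list. What is actually needed is $\dim Z_1\le 1$. The paper gets this directly: for $[U]$ with $d([U])=1$ one sets $V_1=U\cap U_0$, observes that $X\cap\Gr(2,V/V_1)$ is a conic through $x_0$ (by the same analysis as in Lemma~\ref{lemma:tangent-vectors}), and then invokes the finiteness of conics on $X$ through $x_0$ \cite[Lemma~4.2.6]{IP}. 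Thus $Z_1$ lies in a finite union of curves, its preimage has dimension $\le 2$, and no new component appears.

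Your incidence-variety substitute has a second, smaller gap: the tangent-space and line arguments only show that for general $[v]$ the point $x_0$ is isolated and reduced in $X\cap\Gr(2,V/v)$; they say nothing about possible components of this fibre away from $x_0$, so the claim that the general fibre equals $\{x_0\}$ is not yet justified (the phrase ``a bound on $\dim(\bP\cap W_v)$'' hides a nontrivial computation). Even granting that claim, the argument yields only $\dim(I\setminus(\{x_0\}\times\PP^2))\le 2$, hence $\dim Z_1\le 2$, which as above is one short of what you need. The clean fix is exactly the paper's: use the conic description of $X\cap\Gr(2,V/V_1)$ and the finiteness of conics through a point.
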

\begin{proof}
The proof of Proposition~\ref{proposition:g37-transformation} shows that
\begin{equation*}
\sigma^{-1}(X) \subset X \times \PP((V^+)^\vee) = X \times \PP^3
\end{equation*}
is the zero locus of a section of the rank-3 vector bundle $\cU^\vee \boxtimes \cO(\bH^+)$.
In particular, every its component has dimension at least~3.
To classify the components we consider the stratification of~$X$ by its intersections 
with the degeneracy loci $\fD_k(\xi) \subset \bX$ of the morphism~\eqref{eq:x22-xi}.

First, $X \cap \fD_3(\xi) = X \cap \{x_0\} = \{x_0\}$, and $\sigma^{-1}(x_0) \cong \PP^3$.
This provides the component~$\tX_0$; its scheme structure may be complicated, 
but we will show later that it is generically reduced.

Next, assume $[U] \in X \cap (\fD_2(\xi) \setminus \fD_3(\xi))$, i.e., $\dim(U \cap U_0) = 2$.
Set $V_2 := U \cap U_0$.
Note that the locus of all $U \subset V$ containing $V_2$ is the projective space $\PP(V/V_2) \cong \PP^4 \subset \bX$.
Since~$X$ is a linear section of $\bX$ by~\eqref{eq:x22-linear}, 
the intersection $\PP(V/V_2) \cap X$ is a linear subspace in $X$ 
containing the points~$x_0$ and $[U]$, hence containing the line joining them.
Since $X$ contains no planes, it follows that~$\PP(V/V_2) \cap X = L$, where $[L] \in \rF_1(X,x_0)$.
By Remark~\ref{remark:fiber-sigma} the map~\mbox{$\sigma^{-1}(L \setminus \{x_0\}) \to L \setminus \{x_0\}$} is a $\PP^2$-fibration, 
hence $\dim(\sigma^{-1}(L \setminus \{x_0\})) = 3$.
This provides the component~$\tX_L$.
More precisely, if~$L = \ell(V_2,V_4)$ for some~\mbox{$V_2 \subset V_4 \subset V$, $\dim V_i = i$, 
then \mbox{$(\tX_L)_{\mathrm{red}} = \PP(V_4/V_2) \times \PP(V_4^\perp) \cong L \times \PP^2$}.}
In particular, the image of~$\tX_L$ in $\PP^3$ is a plane.

Next, assume $[U] \in X \cap (\fD_1(\xi) \setminus \fD_2(\xi))$, i.e., $\dim(U \cap U_0) = 1$.
Set $V_1 := U \cap U_0$ and let~$W \subset V$ be the orthogonal of $V_1$ with respect to all $2$-forms in~$B$.
Then $[U] \in X \cap \Gr(2,W/V_1)$ and as in the proof of Lemma~\ref{lemma:tangent-vectors} 
we conclude that~$X \cap \Gr(2,W/V_1)$ is a conic or a line containing~$x_0$.
The case of lines was discussed above, so it remains to recall from~\cite[Lemma~4.2.6]{IP}
that the number of conics on~$X$ through~$x_0$ is finite, 
and for a conic $C \subset X$ the general fiber of~$\sigma$ over~$C$ is 1-dimensional by Remark~\ref{remark:fiber-sigma}.
Thus,
\begin{equation*}
\dim(\sigma^{-1}(C) \setminus (\tX_0 \cup (\cup \tX_L))) \le 2
\end{equation*}
hence conics do not contribute to irreducible components of~$\sigma^{-1}(X)$.

Finally, over $X \setminus \fD_1(\xi)$ 
the morphism $\sigma$ is an isomorphism, hence the strict transform of $X$, i.e., the blowup of $X$ along $Z = \bZ \cap X$,
is the last $3$-dimensional component of $\sigma^{-1}(X)$.

As we just showed, $\sigma^{-1}(X)$ has no components of dimension greater than~3, hence it is a Cohen--Macaulay threefold.

It remains to check that $\tX_0$ is generically reduced.
For this we consider the intersection of~$\sigma^{-1}(X)$ with the product of the infinitesimal neighborhood of~$x_0$ in~$X$ and~$\PP^3$.
This scheme can be written as the infinitesimal neighborhood of
\begin{equation*}
\{0\} \times \Gr(3,V^+) \subset
\{ (\tau,U^+) \in T_{x_0}(X) \times \Gr(3,V^+) \mid \Ima(\tau) \subset U^+ \}.
\end{equation*} 
We need to check that its projection to $\Gr(3,V^+)$ is an isomorphism over an open subset.
So, it is enough to check that the scheme 
\begin{equation*}
\label{eq:tau-up}
\{ (\tau,U^+) \in \PP(T_{x_0}(X)) \times \Gr(3,V^+) \mid \Ima(\tau) \subset U^+ \}
\end{equation*} 
is at most $2$-dimensional.
For this we consider the projection to~$\PP(T_{x_0}(X))$ and describe its fibers.

By Lemma~\ref{lemma:tangent-vectors} the locus of vectors $\tau \in \PP(T_{x_0}(X))$ of rank~1 is finite,
and the fibers over such vectors are $2$-dimensional.
Similarly, the locus of vectors $\tau \in \PP(T_{x_0}(X))$ of rank~$2$ is at most 1-dimensional,
and the fibers over such vectors are 1-dimensional.
Finally, the locus of vectors~\mbox{$\tau \in \PP(T_{x_0}(X))$} of rank~3 is $2$-dimensional
and the fibers over such vectors are 0-dimensional.
This finishes the proof.
\end{proof}

Now we can finally prove the theorem.

\begin{proof}[Proof of Theorem~\xref{theorem:v22}]
The component $\Bl_Z(X) \subset \sigma^{-1}(X)$ is birational to~$X$ by definition,
so it is enough to check that the morphism $\sigma_+ \colon \Bl_Z(X) \to \PP^3$ is birational.

First, note that every fiber of~$\sigma_+ \colon \sigma^{-1}(X) \to \PP^3$ is the zero locus of a section of~$\cU^\vee$ on~$X$.
Therefore, the general fiber is finite, since otherwise $\dim(\sigma^{-1}(X)) > 3$
which contradicts Lemma~\ref{lemma:sigma-inverse-x}.
On the other hand, if the zero locus of a section of $\cU^\vee$ is finite, its length is equal to~\mbox{$\rc_3(\cU^\vee) = 2$},
hence the general fiber of~$\sigma^{-1}(X)$ is a scheme of length~$2$.
Therefore, the morphism 
\begin{equation*}
\sigma_+ \colon \sigma^{-1}(X) \to \PP^3
\end{equation*}
is generically finite of degree~$2$.
Furthermore, by Lemma~\ref{lemma:sigma-inverse-x} the component~$\tX_0$ has degree~1 and the components~$\tX_L$ have degree~0 over~$\PP^3$.
This means that the remaining component~$\Bl_Z(X)$ has degree~1, hence the map $\sigma_+ \colon \Bl_Z(X) \to \PP^3$ is birational.
\end{proof}

\appendix

\section{Application to cylinders}
\label{subsection:cylinders}

Recall that a variety $X$ is {\sf cylindrical} if there is an open subset in~$X$ isomorphic to $U \times \mathbb{A}^1$.
Similarly, for any $r \ge 1$ we say that $X$ is {\sf $r$-cylindrical}
if there is an open subset in~$X$ isomorphic to $U \times \mathbb{A}^r$.
The existence of cylinders on projective varieties is an interesting question related
to the study of automorphism groups of affine cones~\cite{Kishimoto-Prokhorov-Zaidenberg-criterion}.

By~\cite{Prokhorov-Zaidenberg-Fano-Mukai} every Mukai fourfold of genus $g=10$ is $4$-cylindrical.
Moreover, there are families of cylindrical Mukai fourfolds of genus~$g \in \{7,\, 8,\, 9\}$,
see~\cite{Prokhorov-Zaidenberg-4-Fano,Prokhorov-Zaidenberg-Fano-4-new}.
Here we prove the following result.

\begin{proposition}
\label{proposition:cylinders}
Let~$X$ be a Mukai variety of genus~$g \in \{7,\, 8,\, 9,\, 10\}$ and dimension~$n \ge 5$.
If~$X(\kk) \ne \varnothing$ then~$X$ is $(n-4)$-cylindrical.
\end{proposition}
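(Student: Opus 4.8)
The plan is to extract the cylinder from the explicit birational models constructed above. First, if $X$ is a maximal Mukai variety of its genus, then — having a $\kk$-point — it is a homogeneous space of a connected algebraic $\kk$-group possessing a parabolic $\kk$-subgroup (the stabilizer of the point), so the orbit of the unipotent radical of the opposite parabolic is an open subset isomorphic to $\mathbb{A}^n_\kk = \mathbb{A}^4_\kk\times\mathbb{A}^{n-4}_\kk$, and $X$ is even $n$-cylindrical. So I assume $X$ is a proper linear section and treat $g\in\{7,8,10\}$ and $g=9$ separately.

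For $g\in\{7,8,10\}$ I would feed a $\kk$-point $x_0\in X(\kk)$ into Theorem~\ref{theorem:sections} (its hypothesis~\eqref{eq:dim-f1} holds automatically by the argument in the proof of Theorem~\ref{theorem:rationality}). This produces, over $\kk$, the diagram~\eqref{eq:diagram-x}: a $\kk$-birational model $\tX^+$ with a morphism $\sigma_+\colon\tX^+\to\bX^+$ onto a smooth rational $4$-fold, general fibre $\PP^{n-4}$, and a relative hyperplane $\psi_*(E)$ defined over $\kk$. The presence of a $\kk(\bX^+)$-hyperplane in the generic fibre forces it to be the split projective space $\PP^{n-4}_{\kk(\bX^+)}$; hence over a dense open $U_0\subset\bX^+$ the morphism $\sigma_+$ is the projectivization of a vector bundle $\cF$ of rank $n-3$, with $\psi_*(E)$ corresponding to a quotient $\cF\twoheadrightarrow\cL$ onto a line bundle. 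Deleting $\psi_*(E)$ then identifies $\sigma_+^{-1}(U_0)\setminus\psi_*(E)$ with the total space of the rank-$(n-4)$ bundle $\Ker(\cF\to\cL)\otimes\cL^\vee$ over $U_0$; trivializing it over a small affine $U\subset U_0$ yields an open subset of $\tX^+$ isomorphic to $U\times\mathbb{A}^{n-4}$.

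It remains to recognize such an open subset inside $X$. The birational map $\tX^+\dashrightarrow X$ is an isomorphism away from $\psi_*(E)$ together with the locus $R\subset\tX^+$ that $\phi_+$ contracts over $\rF_1(X,x_0)$; since $\psi_*(E)$ has been removed, one needs only to keep the chart away from $R$. Here $\dim R=\dim\rF_1(X,x_0)+2$, and for a general $x_0$ (legitimate, $X$ being $\kk$-unirational by Theorem~\ref{theorem:main}) the bound $\dim\rF_1(X,x_0)\le n-3$ — which follows from $\uprho(X)=1$ and the fact that lines through $x_0$ cannot sweep a divisor — gives $\dim R\le n-1$. When $n=5$ this typically leaves $\sigma_+(R)$ a proper subset of $\bX^+$, and shrinking $U$ away from it concludes; for larger $n$ a finer argument is required to keep $R$ off a product-form open subset of the chart. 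This descent step — confining the residual flip locus $R$ inside the vector-bundle chart — is the one I expect to be the main obstacle.

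For $g=9$ the argument is more direct. By Corollary~\ref{corollary:rationality-9}, $X$ is $\kk$-rational, and by Theorem~\ref{theorem:mukai-9} (via the link~\eqref{diagram:lgr36}) some open subset of $X$ is isomorphic to $X^+\setminus\bK$, where $X^+\subset\PP^6$ is a normal complete intersection of $6-n$ quadrics and $\bK$, the chordal cubic, lies in the hyperplane $\langle\rS\rangle\cong\PP^5$. So it suffices to exhibit an open subset of the form $U\times\mathbb{A}^{n-4}$ inside $X^+\setminus\langle\rS\rangle$. For $n=6$, $X^+=\PP^6$ and $X^+\setminus\langle\rS\rangle=\mathbb{A}^6=\mathbb{A}^4\times\mathbb{A}^2$. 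For $n=5$, $X^+=Q^5\subset\PP^6$ is a (possibly singular) quadric, and since $X^+\cap\langle\rS\rangle$ is $\kk$-rational it has a smooth $\kk$-point; hence the quadratic form of $X^+$ restricted to $\langle\rS\rangle$ has an isotropic $\kk$-vector outside its radical, i.e.\ a hyperbolic $\kk$-plane. Taking affine coordinates with $\langle\rS\rangle$ at infinity and one hyperbolic coordinate appearing linearly, and solving for it, gives $X^+\setminus\langle\rS\rangle\cong\mathbb{G}_m\times\mathbb{A}^4\cong(\mathbb{G}_m\times\mathbb{A}^3)\times\mathbb{A}^1$, which is $(n-4)$-cylindrical. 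In both cases the cylinder so produced lies in $X^+\setminus\bK$, hence in $X$.
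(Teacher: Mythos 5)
Your $g=9$ argument is sound and coincides in substance with the paper's: for $n=6$ the chart $X^+\setminus\langle\rS\rangle\cong\mathbb{A}^6$ works, and for $n=5$ a smooth $\kk$-point of the $\kk$-rational divisor $X^+\cap\langle\rS\rangle$ produces an isotropic vector outside the radical, after which solving for the linearly occurring coordinate gives an open subset of the form $\mathbb{G}_m\times\mathbb{A}^4$ lying in $X^+\setminus\langle\rS\rangle\subseteq X^+\setminus\bK$, hence in $X$. (Your assertion that \emph{all} of $X^+\setminus\langle\rS\rangle$ is $\mathbb{G}_m\times\mathbb{A}^4$ is too strong --- the locus where the linear coefficient vanishes is generally nonempty --- but only the open subset is needed.)

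The genuine gap is exactly the step you flagged for $g\in\{7,8,10\}$, and it has a clean resolution that removes any need for a general choice of $x_0$ or a dimension count on the flip locus $R$. The point is that $R=\phi_+^{-1}(\rF_1(X,x_0))\cap\tX^+$ is contained in $\rF_1(\cQ/\bX^+,x_0)$, and the latter sits \emph{inside} the relative hyperplane $\PP_{\bX^+}(\cE_0)$: the direction at $x_0$ of a line lying on a quadric $\cQ_{x_+}$ belongs to the projectivized tangent cone of $\cQ_{x_+}$ at $x_0$, which is the corresponding fiber of $\PP_{\bX^+}(\cE_0)$. Formally, by~\eqref{eq:hbe} the strict transform $\hbE$ maps isomorphically onto $\PP_{\bX^+}(\cE_0)$ under $\pi_+$, while $\hbE\cap\hbD=\pi^{-1}(\rF_1(\bX,x_0))\cap\hbD$ surjects onto $\rF_1(\cQ/\bX^+,x_0)$ via the fiber-product description of $\hbD$; hence $\rF_1(\cQ/\bX^+,x_0)\subseteq\PP_{\bX^+}(\cE_0)$ and so $R\subseteq\tX^+\cap\PP_{\bX^+}(\cE_0)=\psi_*(E)$. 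Deleting the relative hyperplane therefore already deletes the flip locus, and one obtains the identity
\begin{equation*}
X\setminus p_\cL(\cL(X,x_0))\;=\;\tX^+\setminus\PP_{\bX^+}(\cE_0),
\end{equation*}
which is how the paper argues. Consequently your rank-$(n-4)$ vector-bundle chart $\sigma_+^{-1}(U)\setminus\psi_*(E)$ over $U=\bX^+\setminus\fD(\xi)$ is automatically an open subset of $X$ for every $n\ge 5$ and every $x_0\in X(\kk)$. Without this observation your fallback for $n=5$ is not a proof: $\dim R$ can a priori equal $\dim\bX^+=4$ there, so ``typically $\sigma_+(R)$ is a proper subset'' does not close the case, and for $n\ge 6$ you have no argument at all.
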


\begin{proof}
First, let $g \in \{7,\, 8,\, 10\}$.
It follows from the proof of Theorem~\ref{theorem:sections} that 
\begin{equation*}
X \setminus p_\cL(\cL(X,x_0)) = \tX^+ \setminus \PP_{\bX^+}(\cE_0).
\end{equation*}
Let $U := \bX^+ \setminus \fD(\xi) \subset \bX^+$ be the open subset where the morphism~$\xi_0$ (see~\eqref{eq:xi0}) is surjective,
so that~$\Ker(\xi\vert_U)$ and $\Ker(\xi_0\vert_U)$ are vector bundles of ranks~$n - 3$ and~$n - 4$, respectively. Then
\begin{equation*}
\sigma_+^{-1}(U) \setminus \PP_{\bX^+}(\cE_0)
\end{equation*}
is isomorphic to the total space of~$\Ker(\xi\vert_U)$.
Restricting to a smaller open subset $U' \subset U$ over which this bundle is trivial, 
we obtain an $(n-4)$-cylinder in~$\tX^+ \setminus \PP_{\bX^+}(\cE_0)$, hence in~$X$.

Now let $g=9$. 
If $n=6$, the proof of Theorem~\ref{theorem:lg36} (see Lemma~\ref{lemma:tilde-alpha})
shows that~\mbox{$X = \LGr(3,6)$} contains $\mathbb A^6$ as a Zariski open subset, hence $X$ is $6$-cylindrical in this case. 

Let $n=5$, so that $X$ is a hyperplane section of~$\LGr(3,6)$.
Consider the birational transformation of Theorem~\ref{theorem:mukai-9}.
Let $X^+ \subset \PP^6$ be the corresponding quadric and let $E^+ \subset X^+$ be the strict transform of the exceptional divisor~$E$.
Then $X^+ \setminus E^+$ is isomorphic to an open subset of~$X$, so it is enough to find a cylinder in~$X^+ \setminus E^+$.
But~$E^+$ is a hyperplane section of the quadric~$X^+$ containing a smooth $\kk$-rational point;
projecting from such a point one can easily find the required cylinder.
\end{proof}

\bibliography{rat}
\bibliographystyle{alpha}

\end{document}